\documentclass{jams-l}

\usepackage{amssymb}

\usepackage{dsfont}

\usepackage{mathabx}

\usepackage{graphicx}

\usepackage{hyperref}

\usepackage{xcolor}

\usepackage{mathbbol}

\usepackage{array} 

\usepackage{float}

\usepackage{geometry}

\usepackage{accents} 

\usepackage[autostyle]{csquotes} 

\usepackage[round]{natbib}

\newtheorem{theorem}{Theorem}[section]

\newtheorem{proposition}[theorem]{Proposition}
\newtheorem{lemma}[theorem]{Lemma}

\theoremstyle{definition}
\newtheorem{definition}[theorem]{Definition}
\newtheorem{example}[theorem]{Example}
\newtheorem{assumption}[theorem]{Assumption}

\theoremstyle{remark}
\newtheorem{remark}[theorem]{Remark}

\numberwithin{equation}{section}

\DeclareSymbolFontAlphabet{\amsmathbb}{AMSb}%

\usepackage{tikz}

\usetikzlibrary{arrows,positioning,calc} 
\usetikzlibrary{decorations.pathreplacing}
\tikzset{
    >=stealth',
    punkt/.style={
           rectangle,
           rounded corners,
           draw=black, thick,
           text width=5.5em,
           minimum height=2em,
           text centered},
    punktl/.style={
           rectangle,
           rounded corners,
           draw=black, thick,
           text width=7em,
           minimum height=2em,
           text centered},
    pil/.style={
           ->,
           shorten <=4pt,
       shorten >=4pt
    },
    pildotted/.style={
           ->,
           shorten <=4pt,
           shorten >=4pt,
  dotted,
  },
    punktf/.style={
           rectangle,
           text width=4.0em,
           minimum height=1.5em,
           text centered},
    punktfTop/.style={
           rectangle,
           text width=4.0em,
           minimum height=1.5em,
           text centered,
           append after command={
               [thick,shorten >=0.2bp, shorten <=0.2bp]
               (\tikzlastnode.north west)edge(\tikzlastnode.north east)
}
    },
    punktfBot/.style={
           rectangle,
           text width=4.0em,
           minimum height=1.5em,
           text centered,
           append after command={
               [thick,shorten >=0.2bp, shorten <=0.2bp]
               (\tikzlastnode.south west)edge(\tikzlastnode.south east)
            }
    }
}

\newcommand{\ti}[1]{\widetilde{#1}}

\newcommand{\ul}[1]{\underaccent{\bar}{#1}}

\usepackage{prodint}
\theoremstyle{plain}

\begin{document}

\title[]{Robust Control for Marked Point Processes under Transition-Rate Uncertainty}


\author{Sascha Desmettre}
\address{Institute of Financial Mathematics and Applied Number Theory, Johannes Kepler University Linz, Altenbergerstr. 69, 4040, Linz, Austira}
\curraddr{}
\email{sascha.desmettre@jku.at}
\thanks{}

\author{Philipp C.\ Hornung}
\address{Department of Mathematical Sciences, University of Copenhagen, Universitetsparken 5, DK-2100 Copenhagen, Denmark}
\curraddr{}
\email{\href{mailto:pcho@math.ku.dk}{pcho@math.ku.dk}}
\thanks{}


\date{}

\dedicatory{\today}

\begin{abstract}
We consider a novel robust utility maximisation problem under bounded cumulative transition rate uncertainty within the class of non-Markovian marked point processes on a finite state-space. Utility is maximised over the class of admissible controls, while Nature chooses a worst-case biometric scenario from the class of admissible, path-dependent cumulative transition rates restricted by path-dependent upper and lower bounds. We prove a martingale optimality principle and a novel existence and uniqueness result for a non-standard worst-case backwards stochastic differential equation, which allows us to establish existence and uniqueness of worst-case and best-case prospective reserves of life and health insurance contracts with reserve-dependent payments. Finally, we find an explicit solution of a novel robust consumption-insurance problem with power utility preferences.
\vspace{3mm}
\\\noindent\textbf{Keywords:} Robust reserving; Non-Markovian multi-state models; Worst-case prospective reserves; Robust consumption-insurance; Biometric risk; Model uncertainty \end{abstract}
\maketitle
\section{Introduction}
Multi-state models are a classical and widely used framework for the stochastic modelling of an individual's evolution through a finite set of biometric, contractual, or economic states. This makes multi-state models particularly suitable for the valuation of life and health insurance contracts and the study of an individual's optimal choice of consumption and insurance when exposed to biometric risk, within a unified continuous-time framework. Originally such multi-state models were developed for the purpose of valuation of life and health insurance contracts within the class of continuous-time Markov chains with finite state-spaces, see~\cite{Hoem1969,Norberg1991,MilbrodtStracke1997}, but in parallel the class of continuous-time Markov chains also came to play a central role in the literature on optimal consumption-insurance problems under biometric risk, see~\cite{Richard1975,PliskaYe2007,KraftSteffensen2008}. 

Defining for the class of Markovian multi-state models is that the biometric scenario is modelled by deterministic, time-dependent cumulative transition rates, or, in the smooth case, deterministic, time-dependent transition intensities. However, as the cumulative transition rates for health-related risks such as disability, long-term care, and unemployment may depend on the duration already spent in the current state, or on other features of the observed history, the Markov assumption is often too restrictive for accurate modelling. Thus recent literature on the valuation of life and health insurance contracts has moved towards more general multi-state models, such as semi-Markov and duration-dependent models,~\cite{Christiansen2012,BuchardtMoellerSchmidt2015,AdekambiChristiansen2017}, or so-called non-Markov models,~\cite{Christiansen&Djehiche2020,ChristiansenFurrer2021}, where the biometric scenario may be modelled by path-dependent cumulative transition rates dependent on the entire observed history, see~\cite{Christiansen&Furrer2024}. This move has been mirrored by the statistics literature for multi-state models, see~\cite{PutterSpitoni2018,NiesslEtAl2023,BladtFurrer2024}.

Another difficulty, which is common for all multi-state models, and the main focus of this paper, is model uncertainty. In actuarial practice, regardless of what class of multi-state models is used, the cumulative transition rates have to be estimated from finite, heterogeneous and often non-stationary data sets. This motivates robust or safe-side approaches in which one considers a whole class of admissible models whose cumulative transition rates are allowed to vary within a set of plausible alternatives. Within the class of Markovian models \cite{Christiansen2010} studies the worst-case valuation of multi-state life and health insurance contracts, given deterministic upper and lower bounds for the cumulative transition rates, while~\cite{ChristiansenSteffensen2013} and~\cite{ChristiansenEtAl2016} extend this approach to joint financial and biometric worst-case scenarios. Within the literature on optimal consumption and insurance problems \cite{HanHung2021} consider the robust optimal choice of consumption and annuity demand under mortality rate uncertainty within a Markovian survival model. In all of these contributions, the class of alternative models is restricted to Markovian models, and extensions allowing for non-Markovian alternative models are, to the best of our knowledge, absent from the literature. Thus, motivated by the recent shift towards non-Markov models by not only the actuarial and statistical literature, but also actuarial practice, we aim to provide such a non-Markovian extension.

In this paper, we formulate a general max-min objective which aims at maximising a path-dependent utility process under the least favourable admissible biometric scenario, where the admissible biometric scenarios are described by path-dependent lower and upper bounds on the cumulative transition rates of a finite state non-Markovian marked point process. To formulate our problem, we use the canonical measurable space of non-explosive marked point process paths without trivial jumps on a finite state space, as constructed by~\cite{Christiansen&Furrer2024}. This canonical construction has the crucial advantage that it separates the measurable path space from the choice of probability measure, since, as shown by~\cite{Christiansen&Furrer2024}, each unique set of cumulative transition rates induces a unique probability measure on the same canonical measurable space. In particular, this setup does not require that all admissible probability measures have to be equivalent.

After formulating our max-min objective, we prove a martingale optimality principle which provides a sufficient condition of optimality and which we subsequently use to obtain verification theorems in two specialised cases. The first application of the martingale optimality principle shows that the worst- and best- case prospective reserves of an insurance contract with reserve-dependent payments are characterised by a non-standard backwards stochastic differential equation (BSDE). The second application of the martingale optimality principle yields a verification theorem for a robust and path-dependent generalisation of the consumption-insurance problem of~\cite{KraftSteffensen2008}, where the individual aims to maximise utility of consumption and terminal wealth by choosing a consumption stream and transition-contingent insurance coverage, while Nature chooses, within the admissible upper and lower bounds, the transition intensities that minimise the individual's expected utility. In the case of power utility preferences, the classical separation Ansatz still applies, with the deterministic time-dependent function replaced by a stochastic process satisfying a BSDE of the same non-standard type as the worst- and best-case prospective reserves. This yields explicit path-dependent feedback forms for the robust optimal consumption stream and insurance coverage of the same structural form as in the non-robust Markov-chain model of~\cite{KraftSteffensen2008}, but evaluated under worst-case preference-dependent transition intensities.

The BSDEs characterising the best- and worst-case prospective reserves and the solution of the power utility problem are non-standard in two ways. Firstly, the distribution of the jump times may have point masses, violating the assumption of inaccessible jump times. Secondly, due to the reserve-dependence of the worst/best-case cumulative transition rates, the generator of our BSDE no longer satisfies standard Lipschitz assumptions. Both assumptions are central for the existence and uniqueness results in the papers of \cite{ConfortolaFuhrman2013,ConfortolaFuhrmanJacod2016,Christiansen&Djehiche2020} and while~\cite{Christiansen&Furrer2024} allow for accessible jump times, they require a linear generator. Nevertheless, a key feature of the approach of~\cite{ConfortolaFuhrmanJacod2016} is the reduction of the BSDE to a finite or infinite system of ordinary differential equations between jumps. We similarly prove existence and uniqueness of our non-standard BSDE by first establishing equivalence to the existence and uniqueness of a solution of an infinite system of Lebesgue-Stieltjes integral equations and then by proving that this system indeed has a unique solution. This is aided by our chosen canonical setting, which allows for path-wise sure constructions.

The worst-case biometric scenario we obtain in the case of the worst-case prospective reserve is determined by the sums-at-risk. As long as a particular transition increases the prospective reserve, the upper bound is selected as the worst-case cumulative transition rate; if it decreases the prospective reserve, the lower bound is selected. Thus, our worst-case scenario can be viewed as a direct path-wise extension of~\cite{Christiansen2010}, who, in the Markovian case, obtained a worst-scenario of the same structural form. In the power utility case, a similar worst-case scenario is obtained. The difference is that the worst-case cumulative transition rates are determined by the sums-at-risk of a utility-valued process and the relative risk aversion parameter.

Our robust approach is related to the broader literature on ambiguity and robust decision making. The max-min expected utility criterion originates from~\cite{GilboaSchmeidler1989}, while robust control under model misspecification has been developed systematically by~\cite{HansenSargent2008}. Variational and multiple-prior approaches to ambiguity are discussed in~\cite{MaccheroniMarinacciRustichini2006}. In continuous-time finance, robust portfolio and consumption problems have been studied, for example, by~\cite{Maenhout2004} and by~\cite{RiederWopperer2012}. In contrast to much of this literature, the model uncertainty considered here concerns the cumulative transition rates of a biometric finite-state marked point process rather than the drift or volatility of a financial diffusion.

The main contribution of the paper is twofold. First, we formulate a novel robust utility maximisation problem under bounded cumulative transition rate uncertainty and provide a sufficient condition for optimality. This allows us to characterise best- and worst-case prospective reserves under bounded cumulative transition rate uncertainty and identify the corresponding extremal biometric scenarios. Further, it allows us to characterise the solution of a robust consumption-insurance problem under transition intensity uncertainty, showing how the classical continuous-time Markov-chain solution is modified by the endogenous worst-case scenario. Second, within the framework of marked point processes on a finite state-space, we provide a novel existence and uniqueness result for a class of highly non-linear worst-case BSDEs, which allows us to provide sufficient conditions for the existence of the best- and worst-case prospective reserves and the solution of the robust consumption-insurance problem.

The remainder of the paper is organised as follows. Section 2 introduces the canonical marked point process framework and develops necessary jump-count and state-wise decompositions of optional and predictable processes. Section 3 studies optional projections and proves the existence and uniqueness result of the worst-case BSDE. Section 4 formulates the general robust utility maximisation problem under bounded cumulative transition rate uncertainty and provides the martingale optimality principle. Section 5 characterises best- and worst-case prospective reserves. Section 6 studies the robust consumption-insurance problem, proves the verification theorem, and discusses explicit optimal controls in the power-utility case. Known technical results on finite-variation functions and Stieltjes integration are collected in the appendix for the reader's convenience.

\section{The canonical probability space}\label{sec:2}
In this section, we start with a description of the canonical filtered measurable space of non-explosive marked point process paths without trivial jumps on a finite state space, as constructed in~\cite{Christiansen&Furrer2024}, which will be the filtered measurable space we use throughout the rest of the paper. This is followed by a discussion, which links the jump-count decomposition of optional and predictable processes known from the marked point process literature (see~\cite{Jacobsen2006,Last&Brandt1995}) with the state-wise decomposition of optional processes introduced in~\cite{Christiansen&Furrer2024}. In particular, we introduce a unique jump-count decomposition in Proposition~\ref{prop:Rep_adapted_process_N}, which in Proposition~\ref{prop:Rep_adapted_process_J} allows us to generalise and extend the state-wise decomposition result obtained by~\cite{Christiansen&Furrer2024}. The combination of these two results links the two decompositions and enables us to seamlessly switch between them. This is crucial for our purposes because both decompositions play central (but different) roles throughout the paper. The state-wise representation is central for linking optional projections to BSDEs, and it is less unwieldy when solving the robust optimisation problems we are interested in. The jump-count representation is the key to proving the existence and uniqueness of the worst-case BSDE that characterises the worst- and best-case prospective reserves and the solution of the robust consumption-insurance problem with power utility preferences. Finally, we end this section by outlining how to endow the canonical filtered measurable space with a probability measure given by specified cumulative transition rates, as constructed in~\cite{Christiansen&Furrer2024}.

\subsection{The canonical filtered measurable space}
Let $\mathcal{J}\subset\amsmathbb{N}_0$ be a finite set. A marked point process path without trivial jumps consists of jump times $t_0=0<t_1<t_2<\ldots$ and corresponding states $z_0,z_1,\ldots\in\mathcal{J}$ with $z_0\neq z_1, z_1\neq z_2,z_2\neq z_3,\ldots$ which together form a sequence of events $(0,z_0),(z_1,t_1),\dots\,$. For our purposes, we can think of these events as the initial value and the jump times and values of a piecewise constant jump process describing, for example, the biometric state of an individual. Since we are only interested in non-explosive sequences, the total number of events may be countably infinite on the full time line $[0,\infty)$, but must be finite on any bounded time interval. In the case that the total number of events occurring in $[0,\infty)$ is finite, there exists $n\in\amsmathbb{N}_0$ such that the corresponding marked point process path $(t_{\ell},z_{\ell})_{\ell\leq n}$ is an element of
\begin{align*}
  \Omega^n:=\big\{(t_{\ell},z_{\ell})_{\ell\leq n}\in [0,\infty)^{n+1}\times \mathcal{J}^{n+1}\,:\,0=t_0<t_1<\cdots<t_n,\,z_{\ell}\neq z_{\ell+1}\big\}.
\end{align*}
If there is an infinite number of events in $[0,\infty)$, then the corresponding marked point process path $(t_{\ell},z_{\ell})_{\ell\in\amsmathbb{N}_0}$ is an element of 
\begin{align*}
  \Omega^{\infty}:=\big\{(t_{n},z_{n})_{n\in\amsmathbb{N}_0}\in [0,\infty)^{\amsmathbb{N}_0}\times \mathcal{J}^{\amsmathbb{N}_0}\,:\,t_0=0,\, t_{n}<t_{n+1}<\infty,\,z_{n}\neq z_{n+1},\,\sup_{n}t_n=\infty\big\}.
\end{align*}
All possible non-explosive marked point process paths without trivial jumps could thus be written as
\begin{align*}
  \Omega^{\infty}\cup \bigcup_{n\in\amsmathbb{N}_0}\Omega^n.
\end{align*}
As this is mathematically unwieldy to work with, we will extend any path with finite events to a path with countably infinite events, by extending the finite sequence with the artificial event that never happens $(\infty,\nabla)$. The state $\nabla$ is unreachable, which is why its jump time is $\infty$. By defining the extended state space $\bar{\mathcal{J}}:=\mathcal{J}\cup\{\nabla\}$, the canonical space of non-explosive marked point process paths without trivial jumps may be defined as
\begin{align*}
  \Omega:=\big\{(t_n,z_n)_{n\in\amsmathbb{N}_0}\in [0,\infty]^{\amsmathbb{N}_0}\times\bar{\mathcal{J}}^{\amsmathbb{N}_0}\,:\,&t_0=0,\, t_n<t_{n+1} \text{ for } t_n<\infty,\, t_n=t_{n+1}\text{ for } t_n=\infty,\\
  &z_n\neq z_{n+1} \text{ for } t_n<\infty,\,z_n=\nabla \text{ for } t_n=\infty,\,\sup_n t_n=\infty\big\},
\end{align*}
which can be viewed as a subset of $[0,\infty]^{\amsmathbb{N}_0}\times\bar{\mathcal{J}}^{\amsmathbb{N}_0}$. Using the projection mappings
\begin{align*}
  &\tau_n:\Omega\rightarrow[0,\infty],\quad \tau_n(\omega)=t_n,\\
  &\zeta_n:\Omega\rightarrow \bar{\mathcal{J}},\quad \zeta_n(\omega)=z_n,
\end{align*}
we can define the canonical marked point process $\omega\mapsto (\tau_n(\omega),\zeta_n(\omega))_{n\in\amsmathbb{N}_0}$ and the $n$-event projection mappings
\begin{align*}
  \forall n\in\amsmathbb{N}_0:\quad \xi^n:\Omega\rightarrow [0,\infty]^{n+1}\times \bar{\mathcal{J}}^{n+1},\quad \xi^n(\omega)=(\tau_{\ell}(\omega),\zeta_{\ell}(\omega))_{0\leq\ell\leq n},
\end{align*}
which extracts the initial value and the first $n$ jump times and states of a path $\omega\in\Omega$. The mapping $\xi^n$ has the image $\bar{\Omega}^n:=\xi^n(\Omega)\subset [0,\infty]^{n+1}\times\bar{\mathcal{J}}^{n+1}$ equal to
\begin{align*}
  \bar{\Omega}^n=\big\{(t_{\ell},z_{\ell})_{0\leq \ell\leq n}\in [0,\infty]^{n+1}\times\bar{\mathcal{J}}^{n+1}\,:\,&t_0=0,\, t_{\ell}<t_{\ell+1} \text{ for } t_{\ell}<\infty,\, t_{\ell}=t_{\ell+1}\text{ for } t_{\ell}=\infty,\\
  &z_{\ell}\neq z_{\ell} \text{ for } t_{\ell}<\infty,\,z_{\ell}=\nabla \text{ for } t_{\ell}=\infty\big\}.
\end{align*}
Contrary to the set $\Omega^n$, which contains all possible sequences of exactly $n$ events, the set $\bar{\Omega}^n$ contains all possible sequences with at most $n$ events and thus $\Omega^n\subset\bar{\Omega}^n$, the only exception being the case $n=0$, where $\bar{\Omega}^0=\Omega^0=\{0\}\times\mathcal{J}$. A generic element of $\Omega^n$ or $\bar{\Omega}^n$ will be denoted by $\omega^n=((0,z_0),\dots,(t_n,z_n))$. 

The set $\Omega$ can be equipped with the $\sigma$-algebra generated by the canonical marked point process
\begin{align*}
  \mathcal{F}:=\sigma((\tau_n,\zeta_n)_{n\in\amsmathbb{N}_0})
\end{align*}
and the two filtrations $\amsmathbb{F}:=(\mathcal{F}_t)_{t\geq 0}$ and $\amsmathbb{F}^-:=(\mathcal{F}_{t-})_{t\geq 0}$ generated by the canoncial marked point process given by
\begin{align*}
  \mathcal{F}_t:&=\sigma(\mathds{1}_{(\tau_n\leq t)}(\tau_n,\zeta_n):n\in\amsmathbb{N}_0)\\
  \mathcal{F}_{t-}:&=\sigma(\mathds{1}_{(\tau_n<t)}(\tau_n,\zeta_n):n\in\amsmathbb{N}_0),
\end{align*}
where the former contains past and present information, while the latter only concerns itself with past information. Note that the filtration $\amsmathbb{F}$ is right-continuous. 

The canonical marked point process can be used to define three other types of important fundamental processes, which all generate the same above filtrations $\amsmathbb{F}$ and $\amsmathbb{F}^-$. First, we consider the counting processes
\begin{align*}
  N_{ij}:[0,\infty)\times\Omega\rightarrow \amsmathbb{N}_0,\quad N_{ij}(t):=\sum_{n\in\amsmathbb{N}}\mathds{1}_{(\tau_n\leq t)}\mathds{1}_{(\zeta_{n-1}=i,\zeta_{k}=j)},\quad i,j\in\mathcal{J}:i\neq j,
\end{align*}
which count the number of jumps from state $i$ to state $j$, while 
\begin{align*}
  N:[0,\infty)\times \Omega\rightarrow\amsmathbb{N}_0,\quad N(t):=\sum_{n\in\amsmathbb{N}}\mathds{1}_{(\tau_n\leq t)}=\sum_{i,j:i\neq j}N_{ij}(t)
\end{align*}
counts the total number of jumps. Then, we consider the indicator processes 
\begin{align*}
  &I^n:[0,\infty)\times\Omega\rightarrow\{0,1\},\quad I^n(t):=\mathds{1}_{(\tau_n\leq t< \tau_{n+1})},\quad \text{for }n\in\amsmathbb{N}_0,\\
  &I_i:[0,\infty)\times\Omega\rightarrow\{0,1\},\quad I_i(t):=\sum_{n\in\amsmathbb{N}_0}\mathds{1}_{(\tau_n\leq t< \tau_{n+1})}\mathds{1}_{(\zeta_n=i)},\quad \text{for }i\in\mathcal{J},
\end{align*}
which can be used to construct the jump process
\begin{align*}
  Z:[0,\infty)\times\Omega\rightarrow\mathcal{J},\quad Z(t):=\sum_{n\in\amsmathbb{N}_0}\zeta_n I^n(t)=\sum_{i\in\mathcal{J}}iI_i(t),
\end{align*}
with $Z(0-):=Z(0)=\zeta_0$. By construction, $Z$ has càdlàg paths of finite variation, with a finite number of jumps in finite time. It holds that
\begin{align*}
  \mathcal{F}_t&=\sigma(\zeta_0,N_{ij}(s):s\leq t,i,j\in\mathcal{J},i\neq j)=\sigma(I_i(s):s\leq t,i\in\mathcal{J})=\sigma(Z(s):s\leq t).
\end{align*}
and 
\begin{align*}
  \mathcal{F}_{t-}&=\sigma(\zeta_0,N_{ij}(s):s<t,i,j\in\mathcal{J},i\neq j)=\sigma(I_i(s):s< t,i\in\mathcal{J}=\sigma(Z(s):s< t).
\end{align*}
\begin{definition}
  The set $\Omega$ equipped with the sigma-algebra $\mathcal{F}$ and the filtration $\amsmathbb{F}$ is the canonical filtered measurable space.
\end{definition}

\subsection{Decompositions of optional and predictable processes}
Let $(\Omega,\mathcal{F},\amsmathbb{F})$ be the canonical filtered measurable space. Each $\amsmathbb{F}$-optional or $\amsmathbb{F}$-predictable (henceforth just "optional" and "predictable") process $X$ can be decomposed in two different ways. The first is the jump-count decomposition, which decomposes $X$ into the family of functions $(f^n)_{n\in\amsmathbb{N}_0}$, each being a measurable function of the first $n$ jump times and marks. The second is the state-wise decomposition, which decomposes $X$ into a family of predictable processes $(\ti{X}_i)_{i\in\mathcal{J}}$, which can be interpreted as counterfactuals in a certain sense. We start by discussing the former. 

By Theorem~2.2.22 of~\cite{Last&Brandt1995}, it holds that a process $X$ is optional if and only if there exists a family of functions $f^n:[0,\infty)\times\bar{\Omega}^n\rightarrow\amsmathbb{R}$, $n\in\amsmathbb{N}_0$ such that 
\begin{align}\label{eq:Rep_optional_process_N}
  X(t,\omega)=\sum_{n\in\amsmathbb{N}_0}I^n(t,\omega)f^n(t,\xi^n(\omega)),\quad \forall (t,\omega)\in [0,\infty)\times\Omega,
\end{align}
and a process $X$ is predictable if and only if 
\begin{align}\label{eq:Rep_predictable_process_N}
  X(t,\omega)=\sum_{n\in\amsmathbb{N}_0}I^n(t-,\omega)f^n(t,\xi^n(\omega)),\quad \forall (t,\omega)\in [0,\infty)\times\Omega.
\end{align}
Thus, not only can any optional or predictable process be decomposed into a family of functions $(f^n)_{n\in\amsmathbb{N}_0}$, but this decomposition property actually characterises being optional or predictable. From (\ref{eq:Rep_optional_process_N}) and (\ref{eq:fn_uniqueness_condition_predictable}), it can be seen that each $f^n$ does not have to be uniquely defined on the entire domain $[0,\infty)\times\bar{\Omega}^n$ in order for $X$ to be unique. Since we later want to construct adapted processes by constructing the family $(f^n)_{n\in\amsmathbb{N}_0}$, it is important to know on what domain the $f^n$ uniquely determine $X$ and on what domain they are uniquely determined by $X$.

\begin{proposition}\label{eq:Rep_N_uniqueness}
  Let $X$ and $Y$ be two optional or predictable processes satisfying (\ref{eq:Rep_optional_process_N}) or (\ref{eq:Rep_predictable_process_N}) with $(f^n)_{n\in\amsmathbb{N}_0}$ and $(g^n)_{n\in\amsmathbb{N}_0}$. Then the following two statements hold:
  \begin{enumerate}
    \item[(i)] If $X$ and $Y$ are optional, then $X(t,\omega)=Y(t,\omega)$ for all $(t,\omega)\in [0,\infty)\times\Omega$ if and only if 
    \begin{align*}
      \forall n\in\amsmathbb{N}_0:\quad f^n(t,\omega^n)=g^n(t,\omega^n),\quad \forall\omega^n=((t_0,z_0),\ldots,(t_n,z_n))\in\Omega^n,\quad\forall t\geq t_n.
    \end{align*}
    \item[(ii)] If $X$ and $Y$ are predictable, then $X(t,\omega)=Y(t,\omega)$ for all $(t,\omega)\in [0,\infty)\times\Omega$ if and only if 
    \begin{align*}
      \forall n\in\amsmathbb{N}_0:\quad f^n(t,\omega^n)=g^n(t,\omega^n),\quad \forall\omega^n=((t_0,z_0),\ldots,(t_n,z_n))\in\Omega^n,\quad\forall t> t_n.
    \end{align*}
  \end{enumerate}
\end{proposition}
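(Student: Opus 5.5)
The plan is to compare the two representations pointwise, using that for fixed $(t,\omega)$ at most one indicator in the sums (\ref{eq:Rep_optional_process_N}) or (\ref{eq:Rep_predictable_process_N}) is nonzero. In the optional case, for every $(t,\omega)$ there is exactly one $n$ with $I^n(t,\omega)=1$, namely $n=N(t,\omega)$. This holds because $\tau_0=0$, the $\tau_n$ are strictly increasing while finite, and $\sup_n\tau_n=\infty$. Hence $X(t,\omega)=f^{N(t,\omega)}(t,\xi^{N(t,\omega)}(\omega))$. In the predictable case the unique index is $n=N(t-,\omega)$: for $t>0$ this is the $n$ with $\tau_n<t\le\tau_{n+1}$, and for $t=0$ we use $I^0(0-)=1$ under the convention $Z(0-)=Z(0)$, i.e.\ $N(0-)=0$.

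For (i), sufficiency: fix $(t,\omega)$ and set $n=N(t,\omega)$. Then $\tau_n(\omega)\le t<\tau_{n+1}(\omega)$, so $\tau_n<\infty$ and $\xi^n(\omega)$ has $n$ genuine events. Thus $\xi^n(\omega)\in\Omega^n$ and $t\ge t_n$, and the hypothesis gives $X(t,\omega)=f^n(t,\xi^n(\omega))=g^n(t,\xi^n(\omega))=Y(t,\omega)$. Necessity: given $n$, $\omega^n\in\Omega^n$ and $t\ge t_n$, I construct a path $\omega\in\Omega$ with $\xi^n(\omega)=\omega^n$ and $\tau_{n+1}(\omega)=\infty$. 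This is done by appending $(\infty,\nabla)$ forever, which is admissible in $\Omega$ since $\sup_k\tau_k=\infty$. Then $I^n(t,\omega)=1$, so $f^n(t,\omega^n)=X(t,\omega)=Y(t,\omega)=g^n(t,\omega^n)$.

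For (ii) the argument is the same with strict inequalities. Sufficiency: for $t>0$ and $n=N(t-,\omega)$ we have $\tau_n<t$, so $\xi^n(\omega)\in\Omega^n$ with $t>t_n$ and the hypothesis applies. The boundary case $t=0$ is the delicate point. There $n=0$ and $t=t_0=0$, which is not covered by ``$t>t_n$''. So I would check how the paper's convention for $I^n(0-)$ and the predictable $\sigma$-algebra on $\{0\}\times\Omega$ is meant to be read. Either the statement is interpreted with $t\ge t_0$ for $n=0$ (equivalently $X(0)$ is determined by $\mathcal F_{0-}$), or processes are compared only on $(0,\infty)$. I expect this to require a small clarification rather than a real argument. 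Necessity: for $t>t_n$, take the same extended path $\omega$ with $\tau_{n+1}=\infty$. Then $\tau_n<t\le\tau_{n+1}$, so $I^n(t-,\omega)=1$ and the values of $f^n$ and $g^n$ agree.

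The only substantive ingredients are the uniqueness of the active index, which follows from the structure of $\Omega$, and the existence of an extension of any $\omega^n\in\Omega^n$ with no further jumps. I expect the main (minor) obstacle to be the $t=0$ convention in (ii) and making sure that paths with fewer than $n$ events, which lie in $\bar\Omega^n\setminus\Omega^n$, are never reached at index $n$. The latter holds because $I^n$ vanishes when $\tau_n=\infty$.
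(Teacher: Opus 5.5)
Your proposal is correct and follows the paper's proof essentially verbatim. Necessity comes from evaluating on the path $(\omega^n,(\infty,\nabla),\dots)$, and sufficiency from noting that only indices with $\tau_n(\omega)<\infty$ are active, so $\xi^n(\omega)\in\Omega^n$ with $t\geq t_n$ (resp.\ $t>t_n$). Your caveat about $t=0$ in (ii) is legitimate and is not addressed by the paper, which simply calls (ii) analogous: with $I^0(0-)=1$ the value $X(0)=f^0(0,\xi^0)$ is not controlled by the condition $t>t_0=0$, so that case needs the extra convention you describe.
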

\begin{proof}
  We only prove (i) as the proof of (ii) is analogous, and we start with the direction "$\Rightarrow$". By (\ref{eq:Rep_optional_process_N}) it holds that
  \begin{align*}
    \forall n\in\amsmathbb{N}_0,\,\omega\in\Omega:\quad f^n(t,\xi^n(\omega))=X(t,\omega)=Y(t,\omega)=g^n(t,\xi^n(\omega)),\quad \tau_n(\omega)\leq t <\tau_{n+1}(\omega).
  \end{align*}
  In particular for any $n\in\amsmathbb{N}_0$ this holds for all paths of the form $\omega=(\omega_n,(\infty,\nabla),\dots)$ with $\omega^n\in\Omega^n$. Thus we obtain
  \begin{align*}
    \forall n\in\amsmathbb{N}_0,\,\omega^n\in \Omega^n:\quad f^n(t,\omega^n)=g^n(t,\omega^n),\quad t_n\leq t <\infty.
  \end{align*}
  For the direction "$\Leftarrow$" let $\omega\in\Omega$ be arbitrary. Note that since $t\in [0,\infty)$, it is only the $f^n$ and $g^n$ with $n\in\{m\in\amsmathbb{N}_0:\tau_m(\omega)<\infty\}$ that determine $X(t,\omega)$ and $Y(t,\omega)$. By definition if $\tau_n(\omega)<\infty$, then $\xi^n(\omega)\in\Omega^n$ and since for all $n\in\amsmathbb{N}_0$ and $\omega^n\in\Omega^n$ it holds that $f^n(t,\omega^n)=g^n(t,\omega^n)$ for $t\geq t_n$, we have that 
  \begin{align*}
    X(t,\omega)=f^n(t,\xi^n(\omega))=g^n(t,\xi^n(\omega))=Y(t,\omega),\quad \tau_n(\omega)\leq t <\tau_{n+1}(\omega),
  \end{align*}
  whenever $\tau_n(\omega)<\infty$. Thus we can conclude $X(t,\omega)=Y(t,\omega)$ for all $(t,\omega)\in [0,\infty)\times\Omega$.
\end{proof}

Proposition~\ref{eq:Rep_N_uniqueness} shows that there are two cases in which the functions $(f^n)_{n\in\amsmathbb{N}_0}$ are not uniquely determined. The first case is $\omega^n\in\Omega^n$ and $t<t_n$, meaning that we would like to evaluate $f^n$ before the $n$'th jump has happened, while the second case is $\omega^n\in\bar{\Omega^n}\setminus\Omega^n$ (implying $t_n=\infty$), meaning that we would like to evaluate $f^n$ when there never will be an $n$'th jump. In order to obtain a unique decomposition $(f^n)_{n\in\amsmathbb{N}_0}$ on the entire domain $[0,\infty)\times\bar{\Omega}^n$, we have to choose a convention for the two previously described cases. For optional processes we choose the convention,
\begin{align}\label{eq:fn_uniqueness_condition_optional}
  f^n(t,\omega^n)=
  \begin{cases}
    f^n(t_n,\omega^n)&\quad \text{when }\omega^n\in\Omega^n\text{ and } t<t_n\\
    f^{n-1}(t,\xi^{n-1}(\omega^n))&\quad \text{when }\omega^n\in\bar{\Omega}^n\setminus\Omega^n
  \end{cases},
\end{align}
and for predictable processes which have limits from the right, we choose 
\begin{align}\label{eq:fn_uniqueness_condition_predictable}
  f^n(t,\omega^n)=
  \begin{cases}
    f^n(t_n+,\omega^n)&\quad\text{when }\omega^n\in\Omega^n\text{ and } t<t_n\\
    f^{n-1}(t,\xi^{n-1}(\omega^n))&\quad \text{when }\omega^n\in\bar{\Omega}^n\setminus\Omega^n
  \end{cases}.
\end{align}
The rationale behind this choice is very natural. In the first case, the $n$'th jump has not happened yet, but will happen, so $f^n$ equals the value that $X$ will take, as soon as the $n$'th jump happens. In the second case, the $n$'th jump will never happen, which is why $f^n$ equals the value $X$ takes at time $t$ after the $n-1$'st jump. Using these two conventions, we can obtain a unique jump-count decomposition.

\begin{proposition}\label{prop:Rep_adapted_process_N}
  Let $X=(X_t)_{t\geq 0}$ be a real-valued stochastic process.
  \begin{enumerate}
    \item[(i)] $X$ is $\amsmathbb{F}$-optional if and only if for every $n\in\amsmathbb{N}_0$ there exists a unique measurable function $f^n:[0,\infty)\times\bar{\Omega}^n\rightarrow\amsmathbb{R}$ satisfying (\ref{eq:fn_uniqueness_condition_optional}) such that:
    \begin{align*}
      X(t,\omega)=\sum_{n\in\amsmathbb{N}_0}I^n(t,\omega)f^n(t,\xi^n(\omega)),\quad\forall\,(t,\omega)\in [0,\infty)\times\Omega.
    \end{align*}
    \item[(ii)] Let $X$ have limits from the right. Then $X$ is $\amsmathbb{F}$-predictable if and only if for every $n\in\amsmathbb{N}_0$ there exists a unique measurable function $f^n:[0,\infty)\times\bar{\Omega}^n\rightarrow\amsmathbb{R}$ satisfying (\ref{eq:fn_uniqueness_condition_predictable}) such that:
    \begin{align*}
      X(t,\omega)=\sum_{n\in\amsmathbb{N}_0}I^n(t-,\omega)f^n(t,\xi^n(\omega)),\quad\forall\,(t,\omega)\in [0,\infty)\times\Omega.
    \end{align*}
  \end{enumerate}
\end{proposition}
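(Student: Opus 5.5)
The plan is to combine the existence part of Theorem~2.2.22 of \cite{Last&Brandt1995} (as recalled in (\ref{eq:Rep_optional_process_N}) and (\ref{eq:Rep_predictable_process_N})) with the uniqueness criterion of Proposition~\ref{eq:Rep_N_uniqueness}. Existence comes from modifying an arbitrary representing family so that it obeys the chosen convention. Uniqueness then holds because the convention pins down the values of $f^n$ exactly on the set where Proposition~\ref{eq:Rep_N_uniqueness} leaves them free. The direction ``$\Leftarrow$'' is immediate in both (i) and (ii): a measurable family satisfying the representation makes $X$ optional (resp.\ predictable) by the cited theorem.

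For ``$\Rightarrow$'' in (i), start from some measurable family $(g^n)$ representing $X$, and define $f^n$ recursively in $n$. Set $f^0:=g^0$; this is fine since $\bar\Omega^0=\Omega^0$ and $t_0=0$, so no convention is needed. For $n\ge1$ put
\begin{align*}
  f^n(t,\omega^n):=\mathds{1}_{\Omega^n}(\omega^n)\,g^n(t\vee t_n,\omega^n)+\mathds{1}_{\bar\Omega^n\setminus\Omega^n}(\omega^n)\,f^{n-1}(t,\xi^{n-1}(\omega^n)),
\end{align*}
where $\xi^{n-1}(\omega^n)$ means dropping the last event. Measurability follows by induction, since $\Omega^n=\{t_n<\infty\}$ is a measurable subset of $\bar\Omega^n$ and $(t,\omega^n)\mapsto (t\vee t_n,\omega^n)$ is measurable. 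The convention (\ref{eq:fn_uniqueness_condition_optional}) holds by construction: for $t<t_n<\infty$ we get $f^n(t,\omega^n)=g^n(t_n,\omega^n)=f^n(t_n,\omega^n)$. Moreover $f^n=g^n$ on $\{\omega^n\in\Omega^n,\ t\ge t_n\}$, so by Proposition~\ref{eq:Rep_N_uniqueness}(i) the family $(f^n)$ represents the same process $X$.

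For uniqueness, let $(f^n)$ and $(h^n)$ both satisfy the convention and both represent $X$. Proposition~\ref{eq:Rep_N_uniqueness}(i) gives $f^n=h^n$ on $\{\omega^n\in\Omega^n,\ t\ge t_n\}$. The first line of the convention extends this equality to $t<t_n$. On $\bar\Omega^n\setminus\Omega^n$ we argue by induction on $n$: the second line reduces $f^n$ and $h^n$ to $f^{n-1}$ and $h^{n-1}$, which agree everywhere by the induction hypothesis (the base case $n=0$ being trivial).

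Part (ii) follows the same steps with $t>t_n$ in place of $t\ge t_n$ and Proposition~\ref{eq:Rep_N_uniqueness}(ii). The one genuinely new point, and the main obstacle, is making sense of $f^n(t_n+,\omega^n)$ and proving it measurable. The right-limit assumption on $X$ only gives, for each $\omega$ with $\tau_n(\omega)<\infty$, that $g^n(s,\xi^n(\omega))=X(s,\omega)$ for $s\in(\tau_n,\tau_{n+1}]$, so the limit $\lim_{s\downarrow t_n}g^n(s,\omega^n)$ exists for every $\omega^n\in\Omega^n$. This uses again paths extended by $(\infty,\nabla)$, where every $\omega^n\in\Omega^n$ is realised. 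Measurability of $\omega^n\mapsto g^n(t_n+,\omega^n)$ I would obtain by writing it as $\lim_{k\to\infty}g^n(t_n+1/k,\omega^n)$, a pointwise limit of measurable functions. With this in hand, define $f^n(t,\omega^n):=g^n(t,\omega^n)$ for $t>t_n$ and $f^n(t,\omega^n):=g^n(t_n+,\omega^n)$ for $t\le t_n$ on $\Omega^n$, and define it recursively on $\bar\Omega^n\setminus\Omega^n$ exactly as in (i). The existence and uniqueness arguments then go through verbatim. Note that the value at $t=t_n$ is irrelevant for the representation, because $I^n(t_n-)=0$.
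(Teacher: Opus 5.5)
Your argument for (i) is correct and follows the paper's route: take the Last--Brandt family, normalise it to the convention (\ref{eq:fn_uniqueness_condition_optional}), and get uniqueness from Proposition~\ref{eq:Rep_N_uniqueness}. Defining $f^n$ on $\bar\Omega^n\setminus\Omega^n$ recursively through $f^{n-1}$ is in fact the right choice. The paper's literal choice $\tilde f^{n-1}(t,\xi^{n-1}(\omega^n))$ violates the second line of the convention whenever $t<t_{n-1}$ or $\xi^{n-1}(\omega^n)\notin\Omega^{n-1}$, and only its ``repeat the argument'' remark repairs this. The existence half of (ii) is also fine, including the measurability of $\omega^n\mapsto g^n(t_n+,\omega^n)$ as $\lim_k g^n(t_n+1/k,\omega^n)$.

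The gap is your claim that uniqueness in (ii) goes through verbatim. Proposition~\ref{eq:Rep_N_uniqueness}(ii) only fixes $f^n$ for $t>t_n$, and the first line of (\ref{eq:fn_uniqueness_condition_predictable}) only for $t<t_n$. So nothing constrains $f^n(t_n,\omega^n)$, precisely because, as you observe yourself, $I^n(t_n-)=0$.

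Here is a counterexample. Take $X\equiv 0$ and compare $f^n\equiv 0$ with the family obtained by setting $f^1(t_1,\omega^1):=1$ for $\omega^1\in\Omega^1$, leaving everything else on $\Omega^n$ zero and propagating to $\bar\Omega^n\setminus\Omega^n$ via the second line of the convention. Both families are measurable, both satisfy (\ref{eq:fn_uniqueness_condition_predictable}), and both represent $X$. The same defect breaks your induction on $\bar\Omega^n\setminus\Omega^n$, which needs $f^{n-1}=h^{n-1}$ at $t=t_{n-1}$ as well.

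Uniqueness in (ii) is therefore only true if the convention is read as $f^n(t,\omega^n)=f^n(t_n+,\omega^n)$ for $t\le t_n$. Your construction already satisfies this, and with that amendment your proof goes through; the paper's ``analogously'' has the same lacuna.

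One related detail: in (ii) keep $f^0:=g^0$ rather than letting your rule replace $f^0(0,\cdot)$ by $g^0(0+,\cdot)$. With $Z(0-)=Z(0)$, i.e.\ $I^0(0-)=1$, the value $X(0)$ is carried by $f^0(0,\cdot)$. A predictable $X$ with right limits, such as $\mathds{1}_{\{t=0\}}$, need not be right-continuous at $0$.
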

\begin{proof}
  We prove only (i), since (ii) is proven analogously. The direction "$\Leftarrow$" follows directly from Theorem~2.2.22 of~\cite{Last&Brandt1995} and Proposition~\ref{eq:Rep_N_uniqueness}. For the other direction Theorem~2.2.22 of~\cite{Last&Brandt1995} yields the existence of measurable functions $\ti{f}^n:[0,\infty)\times\bar{\Omega}^n\rightarrow\amsmathbb{R}$ such that 
  \begin{align*}
    X(t,\omega)=\sum_{n\in\amsmathbb{N}_0}I^n(t,\omega)\ti{f}^n(t,\xi^n(\omega)),\quad\forall\,(t,\omega)\in [0,\infty)\times\Omega.
  \end{align*}
  From this we define $f^n:[0,\infty)\times\bar{\Omega}^n\rightarrow\amsmathbb{R}$ given by
  \begin{align*}
    f^n(t,\omega^n):=
    \begin{cases}
      \ti{f}^n(t,\omega^n) &\quad \text{when }\omega^n\in\Omega\text{ and }t\geq t_n\\
      \ti{f}^n(t_n,\omega^n) &\quad  \text{when }\omega^n\in\Omega\text{ and }t< t_n\\
      \ti{f}^{n-1}(t,\xi^{n-1}(\omega^n)) &\quad \text{when }\omega^n\in\bar{\Omega}^n\setminus\Omega
    \end{cases}.
  \end{align*}
  Rewriting this into the form 
  \begin{align*}
    f^n(t,\omega^n)=\mathds{1}_{\Omega}(\omega^n)\mathds{1}_{[t_n,\infty)}(t)\ti{f}^n(t,\omega^n)+\mathds{1}_{\Omega}(\omega^n)\mathds{1}_{[0,t_n)}(t)\ti{f}^n(t_n,\omega^n)+\mathds{1}_{\bar{\Omega}^n\setminus\Omega^n}(\omega^n)\ti{f}^{n-1}(t,\xi^{n-1}(\omega^n))
  \end{align*}
  shows that $(t,\omega^n)\mapsto f^n(t,\omega^n)$ is measurable. By Proposition~\ref{eq:Rep_N_uniqueness} $\ti{f}^n(t,\omega^n)$ is unique whenever $\omega^n\in\Omega^n$ and $t\geq t_n$ and thus it holds that $f^n$ is unique in the first two cases. In the third case, there are two options. Either $\xi^{n-1}(\omega^n)\in\Omega^{n-1}$, in which case the value is unique, or $\xi^{n-1}(\omega^n)\in\bar{\Omega}^{n-1}\setminus\Omega^{n-1}$. In that case, one repeats this argument over and over again, until one reaches some $0\leq m<n$ such that $\xi^m(\omega^n)\in\Omega^m$. If this fails for all $1\leq m\leq n$, the procedure terminates at $m=0$ where it holds that $\bar{\Omega}^0=\Omega^0=\{0\}\times\mathcal{J}$ and thus Proposition~\ref{eq:Rep_N_uniqueness} yields that $\ti{f}^0(t,\omega^0)$ is unique for all $(t,\omega^0)\in [0,\infty)\times\bar{\Omega}^0$. Thus $\ti{f}^0$ is always uniquely determined by $X$ on all of $[0,\infty)\times\bar{\Omega}^0$.
\end{proof}

\begin{remark}\label{rem:path_properties}
  Note that as a consequence of Proposition~\ref{eq:Rep_N_uniqueness} and the convention (\ref{eq:fn_uniqueness_condition_optional}) or (\ref{eq:fn_uniqueness_condition_predictable}), $t\mapsto X(t,\omega)$ has càdlàg paths of finite variation for all $\omega\in\Omega$ if and only if $t\mapsto f^n(t,\omega^n)$ is càdlàg and of finite variation for all $\omega^n\in\Omega^n$ and $n\in\amsmathbb{N}_0$. Similarly, any boundedness conditions uniform in $(t,\omega)$ that $X$ satisfies carry over to $(f^n)_{n\in\amsmathbb{N}_0}$ and vise-versa.
\end{remark}

Proposition~\ref{prop:Rep_adapted_process_N} provides a unique jump-count decomposition of optional and predictable processes, and throughout the rest of the paper, when referring to the jump-count decomposition of a process, we refer to the one given by Proposition~\ref{prop:Rep_adapted_process_N}.

We now turn towards the state-wise decomposition of optional and predictable processes given by 
\begin{align*}
  X(t)=\sum_{i\in\mathcal{J}}I_i(t)\ti{X}_i(t)\quad \text{ or }\quad X(t)=\sum_{i\in\mathcal{J}}I_i(t-)\ti{X}_i(t)
\end{align*}
for predictable processes $(\ti{X}_i)_{i\in\mathcal{J}}$. The idea is that $\ti{X}_i(t,\omega)$ always equals the value that $X(t,\omega)$ would have if $Z(t,\omega)$ were equal to $i$, even though $Z(t,\omega)$ for that particular $(t,\omega)$ might be different from $i$. In that sense, $\ti{X}_i$ can be understood as a counterfactual. Following~\cite{Christiansen&Furrer2024}, the construction of such processes $(\ti{X}_i)_{i\in\mathcal{J}}$ requires the construction of counterfactual paths. For this define for each $i\in\mathcal{J}$ the $(t,i)$-stopped path given by
\begin{align*}
  \omega_{i}^t((t_{\ell},z_{\ell})_{\ell\in\amsmathbb{N}_0}):=
  \begin{cases}
    ((0,z_0),\dots,(t_n,z_n),(\infty,\nabla),\dots),\quad &\text{for } t_n<t\leq t_{n+1}, z_n=i\\
    ((0,z_0),\dots,(t_n,z_n),(t,i),(\infty,\nabla),\dots),\quad &\text{for } t_n<t\leq t_{n+1}, z_n\neq i\\
    ((0,i),(\infty,\nabla),\dots),\quad &\text{for } t=0\\
    ((t_{\ell},z_{\ell})_{\ell\in\amsmathbb{N}_0}),\quad &\text{for } t=\infty.
  \end{cases}
\end{align*}
If the current state at time $t$ is equal to $i$, then all future jumps are removed, and the path remains in state $i$ forever. If the current state at time $t$ is different from $i$, then a jump to state $i$ at time $t$ is added, and all future jumps are removed. Let $\omega^{t_1,t_2}_{i,j}$ for $t_1\leq t_2$ denote the composition $\omega_j^{t_2}(\omega_i^{t_1})$, which first adds a jump to $i$ at time $t_1$ (if the state at $t_1$ is different from $i$) followed by a jump to $j$ at time $t_2$. Using this, we construct the two counterfactuals
\begin{align}
  X_i(t,\omega)&:=\sum_{n\in\amsmathbb{N}}I^n(t-)X(t,\omega_i^{\tau_n}(\omega)),\label{eq:CF_sojourn}\\
  X_{ij}(t,\omega)&=\sum_{n\in\amsmathbb{N}_0}I^n(t-)X(t,\omega^{\tau_n,t}_{i,j}(\omega)).\label{eq:CF_jump}
\end{align}
For any $t\geq 0$ the process $X_i(t)$ equals the value $X(t)$ would have whenever $Z(t)=i$ for $t$ in between jump times, while $X_{ij}(t)$ equals the value $X(t)$ would have if $Z$ were to jump from state $i$ to state $j$ at time $t$. Invoking Proposition~\ref{prop:Rep_adapted_process_N} we can also write 
\begin{align}\label{eq:CF_n}
  X_i(t,\omega)=\sum_{n\in\amsmathbb{N}_0}I^n(t-)f^n(t,\xi^n(\omega_i^{\tau_n}))\quad\text{and}\quad X_{ij}(t,\omega)=\sum_{n\in\amsmathbb{N}_0}I^{n}(t-)f^{n+1}(t,\xi^{n+1}(\omega^{\tau_n,t}_{i,j})),
\end{align}
which shows that both (\ref{eq:CF_sojourn}) and (\ref{eq:CF_jump}) define predictable processes. In fact, they are not only predictable, but $i$-predictable:
\begin{definition}
  A process $X$ is $i$-predictable if $X(t)$ is deterministic for $0\leq t\leq\tau_1$ and\\ $\sigma((\tau_{\ell},\zeta_{\ell})_{\ell\leq n-1},\mathds{1}_{(\zeta_{n-1}\neq i)}\tau_n)$-measurable for $\tau_n<t\leq \tau_{n+1}$ for all $n\in\amsmathbb{N}$.
\end{definition}
This is slightly stricter than just being predictable, since by Proposition~\ref{prop:Rep_adapted_process_N}(ii) any predictable process is $\sigma((\tau_{\ell},\zeta_{\ell})_{\ell\leq n})$-measurable when $\tau_n<t\leq \tau_{n+1}$. In the case of (\ref{eq:CF_sojourn}) and (\ref{eq:CF_jump}), we have set $\zeta_n=i$ whenever $\zeta_n\neq i$ or removed the $n$'th jump whenever $\zeta_n=i$, thus in both cases we have removed the dependence on $\zeta_n$ and in the last case we have removed the dependence on $\tau_n$. By p.7 of~\cite{Christiansen&Furrer2024} and Proposition~\ref{prop:Rep_adapted_process_N}(ii) a process $X$ with jump-count decomposition $(f^n)_{n\in\amsmathbb{N}_0}$ being $i$-predictable is equivalent to 
\begin{align}\label{eq:i-predictable}
  \forall n\in\amsmathbb{N}_0:\quad X(t,\omega)=X(t,\omega_i^{\tau_n})\quad \text{and}\quad f^n(t,\xi^n(\omega))=f^n(t,\xi^n(\omega_i^{\tau_n})),\quad \tau_n<t\leq\tau_{n+1}.
\end{align}
Using the concept of $i$-predictability, we now get the following state-wise decomposition result.

\begin{proposition}\label{prop:Rep_adapted_process_J}
  Let $X$ be a real-valued stochastic process.
  \begin{enumerate}
    \item[(i)] $X$ is $\amsmathbb{F}$-optional if and only if there for all $i\in\mathcal{J}$ exist unique $i$-predictable processes $X_i$ and $(X_{ij})_{j:j\neq i}$ satisfying (\ref{eq:CF_sojourn}) and (\ref{eq:CF_jump}) such that 
    \begin{align*}
      X(t)=\sum_{i\in\mathcal{J}}I_i(t)\ti{X}_i(t)\quad\text{with}\quad \ti{X}_i(t)=I_i(t-)X_i(t)+\sum_{j:j\neq i}I_j(t-)X_{ji}(t).
    \end{align*}
    \item[(ii)] $X$ is $\amsmathbb{F}$-predictable if and only if there for all $i\in\mathcal{J}$ exist a unique $i$-predictable process $X_i$ satisfying (\ref{eq:CF_sojourn}) such that 
    \begin{align*}
      X(t)=\sum_{i\in\mathcal{J}}I_i(t-)X_i(t)
    \end{align*}
  \end{enumerate}
\end{proposition}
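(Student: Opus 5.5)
The plan is to reduce both parts to the jump-count decomposition of Proposition~\ref{prop:Rep_adapted_process_N} and then read off the state-wise processes from the path-level identities satisfied by the stopped paths $\omega_i^t$ and $\omega^{t_1,t_2}_{i,j}$. I would prove (ii) first, since (i) follows from the same argument with one extra case.

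For the direction ``$\Leftarrow$'' in (ii), each $X_i$ defined by (\ref{eq:CF_sojourn}) is predictable by (\ref{eq:CF_n}). Each $I_i(t-)$ is left-continuous and adapted, hence predictable. So $\sum_i I_i(t-)X_i(t)$ is predictable. In (i), products of $I_i(t)$, $I_j(t-)$ and the predictable $X_i,X_{ji}$ are optional, so $X$ is optional.

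For ``$\Rightarrow$'' in (ii), I take the unique $(f^n)$ from Proposition~\ref{prop:Rep_adapted_process_N}(ii) and fix $\omega$ and $t$ with $\tau_n<t\le\tau_{n+1}$ and $\zeta_n=i$. Then $\omega_i^{\tau_n}(\omega)$ agrees with $\omega$ in its first $n$ events and has no further jumps, so $\xi^n(\omega_i^{\tau_n}(\omega))=\xi^n(\omega)$. Hence $X(t,\omega_i^{\tau_n}(\omega))=f^n(t,\xi^n(\omega))=X(t,\omega)$, because the stopped path satisfies $I^n(t-)=1$ there. Summing over $n$ gives $X(t)=\sum_i I_i(t-)X_i(t)$; the case $t=0$ is covered by the convention $Z(0-)=\zeta_0$. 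The process $X_i$ is $i$-predictable because (\ref{eq:CF_n}) shows that on $\{\tau_n<t\le\tau_{n+1}\}$ it depends on $\omega$ only through $\xi^n(\omega_i^{\tau_n})$. That vector contains $(\tau_\ell,\zeta_\ell)_{\ell\le n-1}$, the value $\zeta_n$ replaced by $i$, and $\tau_n$ only when $\zeta_{n-1}\neq i$. The case $\zeta_{n-1}=i$ is the delicate one: there the $n$-th jump is removed and $f^n$ is evaluated on a path in $\bar\Omega^n\setminus\Omega^n$. Here the convention (\ref{eq:fn_uniqueness_condition_predictable}) is exactly what makes the value equal $f^{n-1}$, so the dependence on $\tau_n$ disappears. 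I should also check the index bookkeeping in the ``stopping'' at $\tau_n$, namely that $\omega_i^{\tau_n}$ refers to the state just before $\tau_n$, so that the definition of $\omega_i^t$ with $t_{n-1}<t\le t_n$ applies.

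Uniqueness is where I expect the main obstacle to lie. Any $i$-predictable $X_i$ satisfying (\ref{eq:CF_sojourn}) is determined by $X$, since (\ref{eq:CF_sojourn}) defines it explicitly from $X$. The subtle point is the converse: why a representing $i$-predictable family must coincide with the counterfactual (\ref{eq:CF_sojourn}). I would argue as follows. If $Y_i$ is $i$-predictable with $X=\sum_i I_i(t-)Y_i$, then (\ref{eq:i-predictable}) gives $Y_i(t,\omega)=Y_i(t,\omega_i^{\tau_n})$. On the path $\omega_i^{\tau_n}$ the state at $t-$ is $i$, so $Y_i(t,\omega_i^{\tau_n})=X(t,\omega_i^{\tau_n})=X_i(t,\omega)$.

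For (i), I apply the same reasoning to the optional $f^n$ of Proposition~\ref{prop:Rep_adapted_process_N}(i) and split $\{Z(t)=i\}$ into two cases. If $Z(t-)=i$, there is no jump at $t$, and $X(t)=X_i(t)$ by the argument above together with the right-continuity convention (\ref{eq:fn_uniqueness_condition_optional}). If $Z(t-)=j\ne i$, a jump from $j$ to $i$ occurs at $t=\tau_{n+1}$, and $\omega^{\tau_n,t}_{j,i}(\omega)$ reproduces $\xi^{n+1}(\omega)$ exactly. Hence $X(t)=f^{n+1}(t,\xi^{n+1}(\omega))=X_{ji}(t)$. The $i$-predictability and uniqueness of $X_{ij}$ follow exactly as for $X_i$, now using that (\ref{eq:CF_jump}) removes the dependence on $\tau_n$ and on $\zeta_n$ in the same way.
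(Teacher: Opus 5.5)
Your proposal is correct and follows essentially the paper's route: reduce to the jump-count decomposition of Proposition~\ref{prop:Rep_adapted_process_N}, use the path identities $\xi^n(\omega_i^{\tau_n})=\xi^n(\omega)$ on $\{\zeta_n=i\}$ and $\xi^{n+1}(\omega^{\tau_n,t}_{j,i})=\xi^{n+1}(\omega)$ for a $j\to i$ jump at $t=\tau_{n+1}$, and obtain uniqueness by evaluating (\ref{eq:i-predictable}) on the counterfactual paths. One small correction: in the ``$\Leftarrow$'' direction, the predictability of $X_i$ should come from the assumed $i$-predictability, not from (\ref{eq:CF_n}), since (\ref{eq:CF_n}) presupposes a jump-count decomposition of $X$ and hence that $X$ is already optional.
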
 
\begin{proof}
  (i): We start with the direction "$\Rightarrow$". For this let $t\in[0,\infty)$ be arbitrary and let $(X_i,(X_{ij})_{j:j\neq j})_{i\in\mathcal{J}}$ be the $i$-predictable processes uniquely defined by (\ref{eq:CF_sojourn}) and (\ref{eq:CF_jump}). It remains to check whether the decomposition holds. We begin with the case where $Z$ does not have a jump at time $t$. Then for all $n\in\amsmathbb{N}_0$ the definition of $\ti{X}_i$, Equation (\ref{eq:CF_n}) and Proposition~\ref{prop:Rep_adapted_process_N}(i) yields
  \begin{align*}
    \forall\omega\in (\tau_n<t<\tau_{n+1},\zeta_n=i):\quad \ti{X}_i(t,\omega)=X_i(t,\omega)=f^n(t,\xi^n(\omega_i^{\tau_n}))=f^n(t,\xi^n(\omega))=X(t,\omega),
  \end{align*}
  since $\xi^n(\omega^{\tau_n}_i)=\xi^n(\omega)$ when $\zeta_n=i$. It remains to check the case where $Z$ has a jump at time $t$. Then for all $n\in\amsmathbb{N}_0$ and $j\neq i$ we have by the definition of $\ti{X}_i$, Equation (\ref{eq:CF_n}) and Proposition~\ref{prop:Rep_adapted_process_N}(i) that
  \begin{align*}
    \forall\omega\in(\tau_n=t,\zeta_{n-1}=j,\zeta_n=i):\quad \ti{X}_i(t,\omega)=X_{ji}(t,\omega)=f^n(t,\xi^n(\omega_{j,i}^{\tau_{n-1},t}))=f^n(t,\xi^n(\omega))=X(t,\omega),
  \end{align*}
  as $\xi^n(\omega_{j,i}^{\tau_{n-1},t})=\xi^n(\omega)$ when $\zeta_{n-1}=j$, $\zeta_n=i$ and $\tau_n=t$. This concludes the proof of the direction "$\Rightarrow$". For the direction "$\Leftarrow$", note that $X$ defined by the decomposition consists of sums and products of optional processes yielding optionality of $X$ itself. Finally the definition of $X$ and $\ti{X}_i$ in conjunction with (\ref{eq:i-predictable}) yields $X(t,\omega^{\tau_n}_i)=X_i(t,\omega^{\tau_n}_i)=X_i(t,\omega)$ and
  \begin{align*}
    X(t,\omega^{\tau_n,t}_{i,j})&=X_{ij}(t,\omega^{\tau_n,t}_{i,j})=X_{ij}(t,\omega^{\tau_n}_i(\omega^{\tau_n,t}_{i,j}))=X_{ij}(t,\omega)
  \end{align*}
  for all $\tau_n<t\leq\tau_{n+1}$ and all $n\in\amsmathbb{N}_0$. This concludes the proof of (i).

  (ii): We start with the direction "$\Rightarrow$". Let $t\in[0,\infty)$ be arbitrary and the $i$-predictable $X_i$ be given by (\ref{eq:CF_sojourn}). Then for all $n\in\amsmathbb{N}_0$ Equation (\ref{eq:CF_n}) and Proposition~\ref{prop:Rep_adapted_process_N}(ii) yields
  \begin{align*}
    \forall\omega\in (\tau_n<t\leq \tau_{n+1},\zeta_n=i):\quad X_i(t,\omega)=f^n(t,\xi^n(\omega_i^{\tau_n}))=f^n(t,\xi^n(\omega))=X(t,\omega),
  \end{align*}
  as we again have $\xi^n(\omega^{\tau_n}_i)=\xi^n(\omega)$, concluding the proof of the direction "$\Rightarrow$". The direction "$\Leftarrow$" is proven as in (i). 
\end{proof}

\begin{remark}\label{rem:minimal}
  Note that $\ti{X}_i(t)$ as defined in Proposition~\ref{prop:Rep_adapted_process_J}(i) is equal to $X(t,\omega_i^t)$, which is how the state-wise decomposition is defined in Proposition~6.3 of~\cite{Christiansen&Furrer2024}. They do not explicitly discuss the decomposition of predictable processes, but since any predictable process $X$ is optional, and since $I_i(t-)\ti{X}_i(t)=I_i(t-)X_i(t)$ for all $t\geq 0$, the decomposition $X(t)=\sum_{i\in\mathcal{J}}I_i(t-)\ti{X}_i(t)$ would be just as valid for predictable $X$. But unlike $X_i(t)$, the process $\ti{X}_i(t)$ is only predictable and not $i$-predictable, thus using superfluous information. In that sense using the decomposition $(X_i)_{i\in\mathcal{J}}$ is minimal.
\end{remark}

\begin{remark}\label{rem:path_properties_i}
  Note that since $t\mapsto I_i(t-,\omega)$ and $t\mapsto I^n(t-,\omega)$ for $i\in\mathcal{J}$ and $n\in\amsmathbb{N}_0$ are not right-continuous in the points $t\in\{\tau_n:n\in\amsmathbb{N}_0\}$, the paths $t\mapsto X_i(t,\omega)$, $t\mapsto X_{ij}(t,\omega)$ and $t\mapsto \ti{X}_i(t,\omega)$ might not be right-continuous in those same points, even though $t\mapsto X(t,\omega)$ is. But due to (\ref{eq:CF_n}) and Remark~\ref{rem:path_properties} it always holds that $
  (\ti{X}_i)_{i\in\mathcal{J}}$, $(X_i,(X_{ij})_{j:j\neq i})_{i\in\mathcal{J}}$ are of finite variation, whenever $X$ is of finite variation.
\end{remark}

Proposition~\ref{prop:Rep_adapted_process_J} provides a unique state-wise decomposition of optional and predictable processes, and throughout the rest of the paper, when referring to the state-wise decomposition of a process, we refer to the one given by Proposition~\ref{prop:Rep_adapted_process_J}.

Having introduced both the jump-count representation of Proposition~\ref{prop:Rep_adapted_process_N} and the state-wise representation of Proposition~\ref{prop:Rep_adapted_process_J}, we now restrict ourselves to càdlàg processes of finite variation. In this case, one can obtain dynamic (in the sense of Lebesgue-Stieltjes integration) versions of the decomposition result. Throughout the paper we use for any subintervals $I,J\subseteq [0,\infty)$ the shorthand notation
\begin{align*}
  X(\mathrm{d}t)=H(t)Y(\mathrm{d}t)\quad \forall t\in I \quad \Leftrightarrow\quad \int_J F(\mathrm{d}t)=\int_J H(t)Y(\mathrm{d}t),\quad\forall J\subseteq I.
\end{align*}
\begin{proposition}\label{prop:Rep_dyn_N}
  Let $X$ be a real-valued càdlàg, finite variation process.
  \begin{enumerate}
    \item[(i)] If $X$ is adapted with jump-count decomposition $(f^n)_{n\in\amsmathbb{N}_0}$, then
    \begin{align*}
      X(\mathrm{d}t)&=\sum_{n\in\amsmathbb{N}_0}I^n(t-)\Big(f^n(\mathrm{d}t,\xi^{n})+\sum_{j:j\neq\zeta_n}\ti{\Delta} f^{n+1}_{\zeta_n j}(t,\xi^n) N_{\zeta_n j}(\mathrm{d}t)\Big),\quad\forall t\in (0,\infty)
    \end{align*}
    where $\ti{\Delta} f^{n+1}_{\zeta_n j}(t,\xi^n):=f^{n+1}(t,\xi^{n+1}(\omega^{\tau_n,t}_{\zeta_n,j}))-f^n(t,\xi^{n})$.
    \item[(ii)] If $X$ is predictable with jump-count decomposition $(f^n)_{n\in\amsmathbb{N}_0}$, then
    \begin{align*}
      X(\mathrm{d}t)=\sum_{n\in\amsmathbb{N}_0}I^n(t-)f^n(\mathrm{d}t,\xi_{n}),\quad\forall t\in (0,\infty).
    \end{align*}
  \end{enumerate}
\end{proposition}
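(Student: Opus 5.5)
Fix $\omega\in\Omega$ and argue pathwise. Since $\omega$ is non-explosive, $[0,\infty)$ splits into the disjoint pieces $[\tau_n,\tau_{n+1})$, and only finitely many of them meet any bounded interval. So it suffices to verify the identity of measures on a single piece. Because both sides have no mass at $0$ (we work on $(0,\infty)$), it is convenient to use the half-open pieces $(\tau_n,\tau_{n+1}]\cap(0,\infty)$ with $\tau_n<\infty$. On such a piece the decomposition of Proposition~\ref{prop:Rep_adapted_process_N}(i) gives:
\begin{itemize}
\item $X(t,\omega)=f^n(t,\xi^n(\omega))$ for $\tau_n\le t<\tau_{n+1}$;
\item $I^n(t-)=1$ exactly for $t\in(\tau_n,\tau_{n+1}]$, and $I^m(t-)=0$ for $m\ne n$.
\end{itemize}
By Remark~\ref{rem:path_properties}, $t\mapsto f^n(t,\xi^n(\omega))$ is càdlàg of finite variation, so $f^n(\mathrm{d}t,\xi^n)$ is a well-defined signed Lebesgue--Stieltjes measure.

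\emph{Case (ii) first.} Here $X(t)=f^n(t,\xi^n(\omega))$ for all $t\in(\tau_n,\tau_{n+1}]$. Take a subinterval $(a,b]$ with $\tau_n\le a<b\le\tau_{n+1}$. If $a>\tau_n$, then $X(b)-X(a)=f^n(b,\xi^n)-f^n(a,\xi^n)$ directly. If $a=\tau_n$, one has to compare $X(\tau_n)$ with $f^n(\tau_n,\xi^n)$. This uses the predictable convention \eqref{eq:fn_uniqueness_condition_predictable}, which extends $f^n$ to the left of $t_n$ by its right limit $f^n(t_n+)$. That right limit coincides with $X(\tau_n+)=X(\tau_n)$ because $X$ is càdlàg, and $f^n(\cdot,\xi^n)$ is right-continuous at $\tau_n$. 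The two measures therefore agree on all intervals $(a,b]$ inside the piece, hence on all Borel subsets. Summing over $n$ gives (ii).

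\emph{Case (i).} On the open part $(\tau_n,\tau_{n+1})$, $X$ coincides with $f^n(\cdot,\xi^n)$, and $N_{\zeta_n j}$ charges no point there. So, exactly as above, $X(\mathrm{d}t)=f^n(\mathrm{d}t,\xi^n)$ on that set.

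It remains to handle the atom at $t=\tau_{n+1}<\infty$. There:
\begin{itemize}
\item $X(\tau_{n+1})=f^{n+1}(\tau_{n+1},\xi^{n+1}(\omega))$;
\item $X(\tau_{n+1}-)=f^n(\tau_{n+1}-,\xi^n)$, since $X=f^n$ on $[\tau_n,\tau_{n+1})$.
\end{itemize}
The jump of $X$ is therefore
\begin{align*}
\Delta X(\tau_{n+1})=f^n(\tau_{n+1},\xi^n)-f^n(\tau_{n+1}-,\xi^n)+f^{n+1}(\tau_{n+1},\xi^{n+1})-f^n(\tau_{n+1},\xi^n).
\end{align*}
The first difference is the atom of $f^n(\mathrm{d}t,\xi^n)$ at $\tau_{n+1}$.

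For the second difference, note that $\zeta_{n+1}=j$ for exactly one $j\ne\zeta_n$, and $N_{\zeta_n j}$ jumps by one at $\tau_{n+1}$. The key identity is
\[
\xi^{n+1}(\omega)=\xi^{n+1}\bigl(\omega^{\tau_n,\tau_{n+1}}_{\zeta_n,j}\bigr)\quad\text{whenever }\zeta_{n+1}=j.
\]
It holds because $\omega^{\tau_n}_{\zeta_n}$ keeps the first $n$ events and deletes later ones, and $\omega_j^{\tau_{n+1}}$ then appends $(\tau_{n+1},j)$. Consequently the second difference equals $\ti{\Delta} f^{n+1}_{\zeta_n j}(\tau_{n+1},\xi^n)$, which is exactly the atom of the counting term. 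Summing over the disjoint pieces yields (i).

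The main obstacle is this bookkeeping at the jump time. One must make sure that $f^n(\mathrm{d}t,\xi^n)$ on $(\tau_n,\tau_{n+1}]$ captures precisely the left-continuous part of the increment up to and including $\tau_{n+1}$, and that the convention \eqref{eq:fn_uniqueness_condition_optional} does not create a spurious atom at $\tau_n$. The latter is harmless because the interval is open at $\tau_n$: the jump at $\tau_n$ was already accounted for on the previous piece.

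A second point to check is that the right-hand side of (i) is a well-defined measure. The functions $f^{n+1}(t,\xi^{n+1}(\omega^{\tau_n,t}_{\zeta_n,j}))$ are measurable in $t$, since they are compositions with measurable maps. Only their values at the finitely many atoms of $N_{\zeta_n j}$ in bounded sets matter, so no further regularity is needed.
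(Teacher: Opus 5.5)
Your proof is correct, and it differs from the paper's mainly in part (ii).

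\textbf{Part (i).} This is the same computation as the paper's, organised differently. The paper applies the product rule to $I^n(t)f^n(t,\xi^n)$, sums over $n$, and uses $I^n(\mathrm{d}t)=(I^{n-1}(t-)-I^n(t-))N(\mathrm{d}t)$ to collect the jump terms into $\big(f^{n+1}(t,\xi^{n+1})-f^n(t,\xi^n)\big)N(\mathrm{d}t)$. It then uses the same identity $\xi^{n+1}(\omega^{\tau_n,t}_{\zeta_n,j})=\xi^{n+1}$ at a jump to $j$ that you verify. Your split of $\Delta X(\tau_{n+1})$ into $\Delta f^n(\tau_{n+1},\xi^n)$ plus $f^{n+1}(\tau_{n+1},\xi^{n+1})-f^n(\tau_{n+1},\xi^n)$ is exactly what the product rule produces. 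The paper's version is a compact global calculation. Yours is pathwise and makes explicit why no mass sits at $\tau_n$ and why the conventions play no role there.

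\textbf{Part (ii).} Here the routes genuinely differ. The paper derives (ii) from (i): it argues that the residual sum $\sum\ti{\Delta}f^{n+1}_{\zeta_n j}\Delta N_{\zeta_n j}$ equals a predictable process, and so must vanish because $\Delta N_{ij}$ is ``optional but not predictable''. That step is stated without real justification. It also tacitly identifies the optional and predictable jump-count decompositions. Your direct argument from $X=f^n(\cdot,\xi^n)$ on $(\tau_n,\tau_{n+1}]$ avoids both issues. It in fact supplies the missing justification, since it gives $X(\tau_{n+1})=f^n(\tau_{n+1},\xi^n)$, which is precisely the vanishing of those jump terms.

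\textbf{A small simplification.} You do not need the boundary case $a=\tau_n$. Intervals $(a,b]$ with $\tau_n<a<b\le\tau_{n+1}$ already determine the measure on $(\tau_n,\tau_{n+1}]$, by continuity from below. This also sidesteps a defect in the convention: (\ref{eq:fn_uniqueness_condition_predictable}) as written only fixes $f^n(t,\omega^n)$ for $t<t_n$, not at $t=t_n$. The right-continuity at $\tau_n$ that you invoke therefore implicitly requires reading that convention with $t\le t_n$.
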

\begin{proof}
  (i): Applying integration by parts to the decomposition of Proposition~\ref{prop:Rep_adapted_process_N}(i), we arrive at
  \begin{align*}
    X(\mathrm{d}t)=\sum_{n\in\amsmathbb{N}_0}I^n(t-)f^n(\mathrm{d}t,\xi^n)+\sum_{n\in\amsmathbb{N}_0}f^n(t,\xi^n)I^n(\mathrm{d}t),\forall t\in(0,\infty).
  \end{align*}
  Utilising that $I^n(\mathrm{d}t)=(I^{n-1}(t-)-I^{n}(t-))N(\mathrm{d}t)$ we obtain 
  \begin{align*}
    X(\mathrm{d}t)=\sum_{n\in\amsmathbb{N}_0}I^n(t-)\bigg(f^n(\mathrm{d}t,\xi^n)+\big(f^{n+1}(t,\xi^{n+1})-f^n(t,\xi^n)\big)N(\mathrm{d}t)\bigg),\quad\forall t\in(0,\infty).
  \end{align*}
  The desired result then follows from the fact that $I^n(t-)N(\mathrm{d}t)=I^n(t-)\sum_{j:j\neq\zeta_n}N_{\zeta_n j}(\mathrm{d}t)$ and the fact that $\xi^{n+1}(\omega^{\tau_n,t}_{\zeta_n,j})=\xi^{n+1}$ when $\Delta N_{\zeta_n j}(t)=1$.

  (ii): Since any predictable process is optional, (i) holds true, and we obtain
  \begin{align*}
    X(t)-X(0)-\int_{(0,t]}\sum_{n\in\amsmathbb{N}_0}I^n(s-)f^n(\mathrm{d}s,\xi^n)=\sum_{0<s\leq t}\sum_{n\in\amsmathbb{N}_0}I^n(s-)\sum_{j:j\neq\zeta_n}\ti{\Delta} f^{n+1}_{\zeta_n j}(s,\xi^{n+1})\Delta N_{\zeta_n j}(s),
  \end{align*}
  for $t\geq 0$. The left-hand side is predictable, and consequently, the right-hand side must be as well. However as the processes $\Delta N_{ij}$ are optional but not predictable, the quantities $\ti{\Delta} f^{n+1}_{ij}(s,\xi^{n+1})$ must be zero whenever $\Delta N_{ij}(s)\neq 0$, implying that the entire right-hand side must be zero.
\end{proof}

\begin{proposition}\label{prop:Rep_dyn_J}
  Let $X$ be a real-valued, càdlàg finite variation process.
  \begin{enumerate}
    \item[(i)] If $X$ is adapted with state-wise decomposition $(\ti{X}_i)_{i\in\mathcal{J}}$ then
    \begin{align*}
      X(\mathrm{d}t)&=\sum_{i\in\mathcal{J}}I_i(t-)\bigg(\ti{X}_i(\mathrm{d}t)+\sum_{j:j\neq i}\ti{X}_j(t)-\ti{X}_i(t) N_{ij}(\mathrm{d}t)\bigg)\\
      &=\sum_{i\in\mathcal{J}}I_i(t-)\bigg(X_i(\mathrm{d}t)+\sum_{j:j\neq i}\ti{\Delta} X_{ij}(t) N_{ij}(\mathrm{d}t)\bigg),\quad\forall t\in(0,\infty),
    \end{align*}
    where $\ti{\Delta} X_{ij}(t):=X_{ij}(t)-X_i(t)$.
    \item[(ii)] If $X$ is predictable with state-wise decomposition $(X_i)_{i\in\mathcal{J}}$ then
    \begin{align*}
      X(\mathrm{d}t)=\sum_{i\in\mathcal{J}}I_i(t-)\ti{X}_i(\mathrm{d}t)=\sum_{i\in\mathcal{J}}I_i(t-)X_i(\mathrm{d}t),\quad\forall t\in(0,\infty).
    \end{align*}
  \end{enumerate}
\end{proposition}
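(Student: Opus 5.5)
The plan is to derive Proposition~\ref{prop:Rep_dyn_J} from its jump-count counterpart, Proposition~\ref{prop:Rep_dyn_N}. Each term in the jump-count expansion will be rewritten in state-wise form using the identities (\ref{eq:CF_n}) together with the decomposition of Proposition~\ref{prop:Rep_adapted_process_J}.

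\emph{Step 1: identify the driving terms.} Fix $n\in\amsmathbb{N}_0$ and work on the set where $I^n(t-)=1$ and $\zeta_n=i$, that is, $\tau_n<t\le\tau_{n+1}$. On this set $\xi^n(\omega_i^{\tau_n})=\xi^n(\omega)$. By (\ref{eq:CF_n}), $X_i(s)=f^n(s,\xi^n)$ for all $s$ in the interval $(\tau_n,\tau_{n+1}]$, and hence for every $t$ in the interior of this stochastic interval
\begin{align*}
I^n(t-)f^n(\mathrm{d}t,\xi^n)=I^n(t-)X_i(\mathrm{d}t).
\end{align*}
The one delicate point is the left endpoint $\tau_n$. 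By Remark~\ref{rem:path_properties_i}, $X_i$ need not be right-continuous at jump times. However, $I^n(t-)$ vanishes at $t=\tau_n$, so only the measure of half-open intervals $(\tau_n,\cdot\,]$ matters. On such intervals $X_i$ and $f^n(\cdot,\xi^n)$ have the same increments, because $X_i(s)=f^n(s,\xi^n)$ there and, by convention (\ref{eq:fn_uniqueness_condition_predictable}), their right limits at $\tau_n$ also agree.

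\emph{Step 2: identify the jump terms.} Likewise, (\ref{eq:CF_n}) gives $X_{ij}(t)=f^{n+1}(t,\xi^{n+1}(\omega^{\tau_n,t}_{i,j}))$ on the set $\{I^n(t-)=1,\ \zeta_n=i\}$. Therefore $\ti{\Delta} f^{n+1}_{\zeta_n j}(t,\xi^n)=X_{ij}(t)-X_i(t)=\ti{\Delta}X_{ij}(t)$ there.

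Next, sum over $n$ using $I_i(t-)=\sum_n I^n(t-)\mathds{1}_{(\zeta_n=i)}$. Substituting the two identifications into Proposition~\ref{prop:Rep_dyn_N}(i) yields the second line of (i).

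\emph{Step 3: the first line of (i).} Here I use Proposition~\ref{prop:Rep_adapted_process_J}. On the set $\{I_i(t-)=1\}$ we have $\ti{X}_i(t)=X_i(t)$ and $\ti{X}_j(t)=X_{ij}(t)$. Hence $I_i(t-)(\ti{X}_j(t)-\ti{X}_i(t))=I_i(t-)\ti{\Delta}X_{ij}(t)$. For the measures, I need $I_i(t-)\ti{X}_i(\mathrm{d}t)=I_i(t-)X_i(\mathrm{d}t)$. This holds because on each interval $(\tau_n,\tau_{n+1}]$ with $\zeta_n=i$ the two processes coincide, again up to the endpoint issue handled in Step 1.

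\emph{Part (ii).} Combine Proposition~\ref{prop:Rep_dyn_N}(ii) with Step 1. Alternatively, start from (i) and note that the jump part must vanish by the predictability argument used in the proof of Proposition~\ref{prop:Rep_dyn_N}(ii).

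The main obstacle I expect is the rigorous handling of the measures $X_i(\mathrm{d}t)$ and $\ti{X}_i(\mathrm{d}t)$. Since $X_i$ and $\ti{X}_i$ are not càdlàg, these measures must be interpreted on half-open intervals $(\tau_n,\tau_{n+1}]\cap J$. I would make this precise by restricting to such intervals, where everything reduces to $f^n(\cdot,\xi^n)$.
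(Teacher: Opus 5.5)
Your proposal is correct and follows the paper's proof essentially verbatim. Both start from Proposition~\ref{prop:Rep_dyn_N} and use (\ref{eq:CF_n}), together with $\xi^n(\omega_i^{\tau_n})=\xi^n$ on $\{\zeta_n=i\}$, to identify $I_i(t-)I^n(t-)f^n(\mathrm{d}t,\xi^n)$ with $I_i(t-)I^n(t-)X_i(\mathrm{d}t)=I_i(t-)I^n(t-)\ti{X}_i(\mathrm{d}t)$, and the jump coefficient with $X_{ij}(t)-X_i(t)=\ti{X}_j(t)-\ti{X}_i(t)$ on $\{I_i(t-)=1\}$, before summing over $n$. Your explicit treatment of the endpoint $\tau_n$, where $X_i$ and $\ti{X}_i$ need not be right-continuous, fills in a detail the paper passes over silently, and it is sound because $I^n(\tau_n-)=0$ and $X_i=f^n(\cdot,\xi^n)$ on $(\tau_n,\tau_{n+1}]$.
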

\begin{proof}
Note that by (\ref{eq:CF_n}), Proposition~\ref{prop:Rep_dyn_N}(ii), and the fact that $\sum_{i\in\mathcal{J}}I_i(t-)=1$ we obtain 
\begin{align*}
    I_i(t-)I^n(t-)f^n(\mathrm{d}t,\xi^n)=I_i(t-)I^n(t-)f^n(\mathrm{d}t,\xi^n(\omega^{\tau_n}_i)=I_i(t-)I^n(t-)\ti{X}_i(\mathrm{d}t)=I_i(t-)I^n(t-)X_i(\mathrm{d}t)
\end{align*}
By (\ref{eq:CF_n}) and the definition of $\ti{X}_i$ we obtain that 
\begin{align*}
    I_i(t-)I^n(t-)(f^{n+1}(t,\xi^{n+1}(\omega^{\tau_n,t}_{i,j}))-f^n(t,\xi^n))&=I_i(t-)I^n(t-)(X_{ij}(t)-X_i(t))\\
    &=I_i(t-)I^n(t-)(\ti{X}_j(t)-\ti{X}_i(t)).
\end{align*}
The desired results follow now directly from Proposition~\ref{prop:Rep_dyn_N}.
\end{proof}

\subsection{The probability measure}
In order to specify a probability measure on the filtered canonical measurable space $(\Omega,\mathcal{F},\amsmathbb{F})$, we will use the approach developed in~\cite{Christiansen&Furrer2024}, which constructs a probability measure by specifying the cumulative transition rates and an initial distribution. 
\begin{definition}\label{def:Cumulative_transition_rates}
  Given a probability measure $\amsmathbb{P}$ the predictable processes $(\Lambda_{ij})_{i\neq j}$ with $\Lambda_{ij}:[0,\infty)\times\Omega\rightarrow [0,\infty)$ are cumulative transition rates if for all $n\in\amsmathbb{N}_0$
\begin{align}\label{eq:Cumulative_transition_rates}
  I_i(t-)\Lambda_{ij}(\mathrm{d}t)=I_i(t-)\frac{\amsmathbb{E}[\mathds{1}_{(\tau_{n+1}\geq t)}N_{ij}(\mathrm{d}t)|\mathcal{F}_{\tau_n}]}{\amsmathbb{E}[\mathds{1}_{(\tau_{n+1}\geq t)}|\mathcal{F}_{\tau_n}]},\quad \tau_n<t\leq\tau_{n+1}.
\end{align}
\end{definition}
Note that (\ref{eq:Cumulative_transition_rates}) does not uniquely define a set of processes, which then are the cumulative transition rates, as it does not define the values of $\Lambda_{ij}$ whenever $Z(t-)\neq i$. As an example, let $C^j$ be the unique predictable compensator of $N_j(t)=\sum_{i:i\neq j}N_{ij}(t)$ given by 
\begin{align*}
    C^j(\mathrm{d}t)=\sum_{i:i\neq j}I_i(t-)\Lambda_{ij}(\mathrm{d}t),\quad C^j(0)=0.
\end{align*}
By Proposition~\ref{prop:Rep_dyn_J}(ii) and Remark~\ref{rem:minimal} it holds that 
\begin{align*}
    C^j(\mathrm{d}t)=\sum_{i:i\neq j}I_i(t-)\ti{C}_{i}^j(\mathrm{d}t)=\sum_{i:i\neq j}I_i(t-)C_{i}^j(\mathrm{d}t),\quad C^j(0)=0,
\end{align*}
meaning that both $(\ti{C}_i^j)_{i:i\neq j}$ and $(C_i^j)_{i:i\neq j}$ for $j\in\mathcal{J}$ satisfy Definition~\ref{def:Cumulative_transition_rates}. Therefore, we have to choose a unique set of processes $(\Lambda_{ij})_{i\neq j}$, which then should end up satisfying Definition~\ref{def:Cumulative_transition_rates}. Thus following~\cite{Christiansen&Furrer2024} we make the following assumptions about $(\Lambda_{ij})_{i\neq j}$:

\begin{assumption}\label{ass:cumulative_tr}
  Assume that 
  \begin{enumerate}
    \item[(i)] For each $i\in\mathcal{J}$ the processes $\Lambda_i$ and $(\Lambda_{ij})_{j:j\neq i}$ are $i$-predictable.
    \item[(ii)] $t\mapsto\Lambda_{ij}(t,\omega)$ is non-negative, non-decreasing and càdlàg with 
    \begin{align*}
      \sum_{j\in\mathcal{J}}\Delta\Lambda_{ij}(t,\omega)\leq 1,\quad \forall t\in[0,\infty),\,\omega\in\Omega.
    \end{align*}
    \item[(iii)] For each $i,j\in\mathcal{J}$ with $i\neq j$ there exists a non-decreasing, non-negative càdlàg function $\alpha_{ij}:[0,\infty)\rightarrow [0,\infty)$, with $\alpha_{ij}(0)=0$ and bounded on compact sets, such that 
    \begin{align*}
      \Lambda_{ij}(s,\omega)-\Lambda_{ij}(t,\omega)\leq \alpha_{ij}(s)-\alpha_{ij}(t),\quad \forall 0\leq t\leq s,\,\forall\omega\in\Omega.
    \end{align*}
  \end{enumerate}
\end{assumption}
\begin{remark}
  Note that compared to~\cite{Christiansen&Furrer2024}, we make more restrictive assumptions, as we do not allow the cumulative transition rates to have poles followed by reset points. The exclusion of poles and reset points brings our setup closer to the marked point process literature~\cite{Jacobsen2006,Last&Brandt1995}, which does not include poles and reset points.
\end{remark}
The $i$-predictability of Assumption~\ref{ass:cumulative_tr}(i) in conjunction with (\ref{eq:i-predictable}) ensures that
\begin{align*}
  \sum_{n\in\amsmathbb{N}_0}I_n(t-)C^j(\mathrm{d}t,\omega_i^{\tau_n})=\sum_{n\in\amsmathbb{N}_0}I_n(t-)\Lambda_{ij}(\mathrm{d}t,\omega_i^{\tau_n})=\Lambda_{ij}(\mathrm{d}t,\omega),\quad\forall (t,\omega)\in [0,\infty)\times\Omega,
\end{align*}
which therefore guarantees that our chosen cumulative transition rates $(\Lambda_{ij})_{i:i\neq j}$ for each $j\in\mathcal{J}$ equal the state-wise decomposition of the compensator $C_j$, while using the minimal amount of information, see also~\ref{rem:minimal}. Assumption~\ref{ass:cumulative_tr}(ii) ensures that the transition probabilities are always in the interval $[0,1]$, while Assumption~\ref{ass:cumulative_tr}(iii) ensures non-explosiveness. Theorem~4.1 of~\cite{Christiansen&Furrer2024} now immediately gives the following result:
\begin{theorem}\label{th:Probability_measure_existence}
  Let $\pi:\mathcal{J}\rightarrow [0,1]$ with $\sum_{i\in\mathcal{J}}\pi(i)=1$ and assume that $(\Lambda_{ij})_{i\neq j}$ satisfy Assumption~\ref{ass:cumulative_tr}. Then there exists a uniqe probability measure $\amsmathbb{P}^{\Lambda}$ and unique $\mathcal{F}_{t-}$-measurable probability kernels $(\amsmathbb{P}_{t,i}^{\Lambda})_{t\in [0,\infty),i\in\mathcal{J}}$ on $(\Omega,\mathcal{F})$ such that 
  \begin{enumerate}
    \item[(i)] For all $i\in\mathcal{J}$ it holds that $\amsmathbb{P}^{\Lambda}(Z(0)=i)=\pi(i)$.
    \item[(ii)] For all $A\in\mathcal{F}$ and $i\in\mathcal{J}$ it holds almost surely that 
    \begin{align*}
      I_i(t)\amsmathbb{P}_{t,i}^{\Lambda}(A)=I^i(t)\amsmathbb{P}^{\Lambda}(A|\mathcal{F}_t).
    \end{align*}
    \item[(iii)] Let $\amsmathbb{E}_{t,i}^{\Lambda}$ denote the expectation under $\amsmathbb{P}_{t,i}^{\Lambda}$. Then for all $i\neq j$ the processes $(\Lambda_{ij})_{i\neq j}$ satisfy
    \begin{align*}
      \Lambda_{ij}(\mathrm{d}t)=\frac{\amsmathbb{E}_{\tau_n,i}^{\Lambda}[\mathds{1}_{(\tau_{n+1}\geq t)}N_{ij}(\mathrm{d}t)]}{\amsmathbb{E}^{\Lambda}_{\tau_n,i}[\mathds{1}_{(\tau_{n+1}\geq t)}]},\quad \tau_n<t\leq\tau_{n+1},\quad \forall n\in\amsmathbb{N}_0.
    \end{align*}
  \end{enumerate}
\end{theorem}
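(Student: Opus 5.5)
The paper attributes the result to Theorem~4.1 of~\cite{Christiansen&Furrer2024}. The plan is therefore to check that our setting meets the hypotheses of that theorem, and to recall the constructive argument behind it so the ingredients are visible.

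\emph{Construction of the law.} The law is built sequentially through the jump-count decomposition of Proposition~\ref{prop:Rep_adapted_process_N}(ii). Each $\Lambda_{ij}$ is predictable, so its decomposition consists of functions $\lambda^n_{ij}(t,\omega^n)$. By $i$-predictability, (\ref{eq:i-predictable}), on $\{\zeta_n=i\}$ these depend only on $\omega^n$ and are non-decreasing and càdlàg in $t>t_n$.

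For fixed $\omega^n\in\Omega^n$ with $z_n=i$, let $\lambda^n_i:=\sum_{j\neq i}\lambda^n_{ij}$. Define the conditional survival function by the product integral
\begin{align*}
S^n(t,\omega^n):=\Prodi_{(t_n,t]}\big(1-\lambda^n_i(\mathrm{d}s,\omega^n)\big).
\end{align*}
Define the conditional law of $(\tau_{n+1},\zeta_{n+1})$ given $\xi^n=\omega^n$ by
\begin{align*}
\kappa^n(\omega^n;\mathrm{d}t\times\{j\}):=S^n(t-,\omega^n)\,\lambda^n_{ij}(\mathrm{d}t,\omega^n),
\end{align*}
with the remaining mass $S^n(\infty,\omega^n)$ placed on $(\infty,\nabla)$. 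If $t_n=\infty$, the kernel is $\delta_{(\infty,\nabla)}$.

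Assumption~\ref{ass:cumulative_tr}(ii) gives $\sum_j\Delta\lambda^n_{ij}\le1$. Hence $S^n$ is non-increasing with values in $[0,1]$, and $\kappa^n$ is a sub-probability kernel completed to a probability kernel. Measurability in $\omega^n$ follows from the measurability of the $f^n$ and of the product integral. The Ionescu--Tulcea theorem, started from the initial law $\pi$, then yields a unique measure on $[0,\infty]^{\amsmathbb{N}_0}\times\bar{\mathcal{J}}^{\amsmathbb{N}_0}$.

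\emph{Main obstacle: non-explosion.} We must show this measure charges $\Omega$, i.e.\ that $\sup_n\tau_n=\infty$ almost surely. This is where Assumption~\ref{ass:cumulative_tr}(iii) enters. The counting process $N$ has compensator $\sum_i\int I_i(s-)\Lambda_i(\mathrm{d}s)$, which is bounded on $[0,T]$ by the deterministic quantity $\sum_{i,j}\alpha_{ij}(T)$. Then $\amsmathbb{E}[N(T\wedge\tau_m)]\le \sum_{i,j}\alpha_{ij}(T)<\infty$ uniformly in $m$. Letting $m\to\infty$ gives $N(T)<\infty$ almost surely for every $T$, which excludes explosion.

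A subtle point is that atoms can occur at the same time as a jump. Since $\Lambda$ is evaluated at $t>t_n$, the product integral starts strictly after $t_n$, and simultaneous jumps are excluded.

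\emph{Properties (i)--(iii).} Property (i) holds by construction. For (ii), define $\amsmathbb{P}^{\Lambda}_{t,i}$ as the law obtained by freezing the history $\omega^t_i$ up to time $t$ and running the same kernels from there. This is a regular version of the conditional law on $\{Z(t)=i\}$; checking it reduces, via the jump-count representation of $\mathcal{F}_t$-measurable events, to the Markov-type factorisation of the Ionescu--Tulcea construction. For (iii), use that $\amsmathbb{P}(\tau_{n+1}\ge t\mid\mathcal{F}_{\tau_n})=S^n(t-)$ and that the joint law of the next jump is $S^n(t-)\lambda_{ij}^n(\mathrm{d}t)$; dividing the two gives the ratio in (iii).

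\emph{Uniqueness.} Any measure satisfying (iii) has the same conditional next-event laws, because the hazard determines the survival function through the product integral (Volterra uniqueness). By induction on $n$, the finite-dimensional laws of $\xi^n$ therefore coincide. Since $\mathcal{F}$ is generated by the $\xi^n$, the measures agree.
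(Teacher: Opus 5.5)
Your route matches the paper's. The paper gives no argument of its own and simply invokes Theorem~4.1 of \cite{Christiansen&Furrer2024}; the only check is that Assumption~\ref{ass:cumulative_tr} is a restricted form of the hypotheses there (no poles and no reset points, as the preceding remark notes). Your Ionescu--Tulcea/product-integral sketch, including the non-explosion bound via $\sum_{i,j}\alpha_{ij}(T)$, is the standard construction behind that theorem and is sound in outline.

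\emph{Property (iii).} As written, your sketch does not give (iii), and the missing ingredient is the actual role of Assumption~\ref{ass:cumulative_tr}(i). Statement (iii) carries no factor $I_i(t-)$. It concerns the kernel $\amsmathbb{E}^{\Lambda}_{\tau_n,i}$ at every $\omega$, including paths with $\zeta_n\neq i$. There your kernel continues the counterfactual path $\omega^{\tau_n}_i$, whose next-jump hazard is $\Lambda_{ij}(\mathrm{d}t,\omega^{\tau_n}_i)$. So on such paths (iii) asserts that the increments of $\Lambda_{ij}(\cdot,\omega)$ on $(\tau_n,\tau_{n+1}]$ agree with those of $\Lambda_{ij}(\cdot,\omega^{\tau_n}_i)$, and this is what (\ref{eq:i-predictable}) provides.

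Dividing $\amsmathbb{P}^{\Lambda}$-conditional probabilities given $\mathcal{F}_{\tau_n}$, as you propose, only recovers the identity multiplied by $I_i(t-)$, and only almost surely. That is (\ref{eq:Cumulative_transition_rates}), which the paper stresses does not determine $\Lambda_{ij}$ off $\{Z(t-)=i\}$. Conversely, the property you ascribe to $i$-predictability (dependence on $\omega^n$ only) holds for every predictable process by Proposition~\ref{prop:Rep_adapted_process_N}(ii).

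\emph{Definition of the kernels.} You should state explicitly that $\amsmathbb{P}^{\Lambda}_{t,i}$ restarts the product integral at $t$, rather than conditioning $\kappa^n$ on $\{\tau_{n+1}>t\}$. The ratio $S^n(s-)/S^n(t)$ is undefined where $S^n(t)=0$. Yet the kernels are needed for every $\omega$, both for (iii) and for the pathwise statements of Proposition~\ref{prop:conditional_expectation}(ii)--(iii).

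\emph{Uniqueness.} Your uniqueness paragraph treats only $\amsmathbb{P}^{\Lambda}$. The statement also asserts uniqueness of the kernels, which in your argument is inherited from the cited theorem rather than proved.
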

Using the kernels $\amsmathbb{P}_{t,i}^{\Lambda}$ we can for any $\mathcal{F}$-measurable random variable $Y$, which for all $i\in\mathcal{J}$ is $\amsmathbb{P}_{t,i}$-integrable, define 
\begin{align*}
  \amsmathbb{E}^{\Lambda}_{t,Z(t)}[Y]:=\sum_{i\in\mathcal{J}}I_i(t)\amsmathbb{E}^{\Lambda}_{t,i}[Y],
\end{align*}
which has the following properties:
\begin{proposition}\label{prop:conditional_expectation}
  Let $\pi$ be any initial distribution, let $\Lambda=(\Lambda_{ij})_{i\neq j}$ satisfy Assumption~\ref{ass:cumulative_tr} and let $Y$ be an $\mathcal{F}$-measurable random variable. 
  \begin{enumerate}
    \item[(i)] If $Y$ is $\amsmathbb{P}_{t,i}$-integrable for all $i\in\mathcal{J}$ then 
    \begin{align*}
      \amsmathbb{E}^{\Lambda}_{t,Z(t)}[Y]=\amsmathbb{E}^{\Lambda}[Y|\mathcal{F}_t],\quad \amsmathbb{P}^{\Lambda}-\text{a.s.}
    \end{align*}
    \item[(ii)] If $Y$ is $\mathcal{F}_{t-}$-measurable and $\amsmathbb{P}_{t,i}$-integrable for all $i\in\mathcal{J}$ then
    \begin{align*}
      \forall i\in\mathcal{J}: \amsmathbb{E}^{\Lambda}_{t,i}[Y](\omega)=Y(\omega),\quad\forall\omega\in\Omega.
    \end{align*}
    \item[(iii)] If $Y$ is $\mathcal{F}_t$-measurable and $\amsmathbb{P}_{t,i}$-integrable for all $i\in\mathcal{J}$ then
    \begin{align*}
      \forall i\in\mathcal{J}: \amsmathbb{E}^{\Lambda}_{t,i}[Y](\omega)&=\ti{Y}_i(\omega),\quad\forall\omega\in\Omega,\\
      \amsmathbb{E}^{\Lambda}_{t,Z(t)}[Y](\omega)&=Y(\omega),\quad\forall\omega\in\Omega.
    \end{align*}
  \end{enumerate}
\end{proposition}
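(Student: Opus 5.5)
We prove the three statements in order, relying on Theorem~\ref{th:Probability_measure_existence}(ii) together with the structure of the kernels $\amsmathbb{P}^{\Lambda}_{t,i}$ constructed in~\cite{Christiansen&Furrer2024}. The informal picture to keep in mind is that, for a fixed $\omega$, the kernel $\amsmathbb{P}^{\Lambda}_{t,i}(\cdot)(\omega)$ is the law of a path that agrees with $\omega$ strictly before $t$, sits in state $i$ at time $t$, and evolves afterwards according to $\Lambda$.

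For (i), we multiply the identity of Theorem~\ref{th:Probability_measure_existence}(ii) by $Y$ and sum over $i$, using $\sum_{i}I_i(t)=1$. Applied to indicators $Y=\mathds{1}_A$, this gives
\begin{align*}
  \amsmathbb{E}^{\Lambda}_{t,Z(t)}[\mathds{1}_A]=\sum_{i}I_i(t)\amsmathbb{P}^{\Lambda}_{t,i}(A)=\amsmathbb{P}^{\Lambda}(A|\mathcal{F}_t)\quad \amsmathbb{P}^{\Lambda}\text{-a.s.}
\end{align*}
Linearity extends this to simple functions. Monotone convergence, applied separately to $Y^+$ and $Y^-$ and on both sides, then extends it to all integrable $Y$. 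At this step one has to check that the integrability hypothesis under every $\amsmathbb{P}_{t,i}$ makes both sides finite. This is routine, since $\amsmathbb{P}^{\Lambda}$-integrability of $Y$ follows by taking expectations in the indicator identity and applying monotone convergence.

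For (ii), the strategy is to prove the statement for generators and then pass to a Dynkin class. Recall that $\mathcal{F}_{t-}$ is generated by $\mathds{1}_{(\tau_n<t)}(\tau_n,\zeta_n)$. First we show that for $A\in\mathcal{F}_{t-}$ one has $\amsmathbb{P}^{\Lambda}_{t,i}(A)(\omega)=\mathds{1}_A(\omega)$ for every $\omega$, and not merely almost surely. This must come from the explicit construction of the kernel: under $\amsmathbb{P}^{\Lambda}_{t,i}(\cdot)(\omega)$ the pre-$t$ history is frozen at the stopped path $\omega^t_i$, or equivalently the kernel is a point mass on the pre-$t$ marks. Given this, we verify the identity on the $\pi$-system of sets of the form $\{\xi^n\in B,\tau_n<t\}$, extend it to all of $\mathcal{F}_{t-}$ by Dynkin's $\pi$–$\lambda$ theorem, and then pass from indicators to integrable $Y$ by simple functions and monotone convergence. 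We expect this to be the main obstacle. The surely-valid property is not stated in Theorem~\ref{th:Probability_measure_existence}, so it has to be extracted from Theorem~4.1 of~\cite{Christiansen&Furrer2024}, where $\amsmathbb{P}_{t,i}$ is built as a product-integral measure started at $(t,i)$ along the stopped path.

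For (iii), we use the state-wise decomposition of Proposition~\ref{prop:Rep_adapted_process_J}. An $\mathcal{F}_t$-measurable $Y$ can be viewed as $X(t)$ for an optional process $X$; for instance, take $X(s)=Y$ for $s\geq t$. This gives
\begin{align*}
  Y=\sum_j I_j(t)\ti{Y}_j(t),
\end{align*}
where each $\ti{Y}_j(t)=Y(\omega^t_j)$ by Remark~\ref{rem:minimal}. Each $\ti{Y}_j(t)$ is predictable at time $t$, hence $\mathcal{F}_{t-}$-measurable. Under $\amsmathbb{P}_{t,i}$ we have $I_j(t)=\delta_{ij}$ surely, because the kernel places the path in state $i$ at time $t$. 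Applying (ii) to the $\mathcal{F}_{t-}$-measurable variable $\ti{Y}_i(t)$ then gives
\begin{align*}
  \amsmathbb{E}^{\Lambda}_{t,i}[Y]=\amsmathbb{E}^{\Lambda}_{t,i}[\ti{Y}_i(t)]=\ti{Y}_i.
\end{align*}
The second identity in (iii) follows by summing: $\sum_i I_i(t)\ti{Y}_i=Y$ for every $\omega$, again by Proposition~\ref{prop:Rep_adapted_process_J}(i).
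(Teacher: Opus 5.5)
Your proposal is correct and follows the paper's proof essentially step by step: (i) comes from Theorem~\ref{th:Probability_measure_existence}(ii) via simple functions and a convergence theorem, (ii) from the sure identity $\amsmathbb{P}^{\Lambda}_{t,i}(A)=\mathds{1}_A$ for $A\in\mathcal{F}_{t-}$ (which the paper imports directly as Proposition~4.4(i) of~\cite{Christiansen&Furrer2024}), and (iii) from the state-wise decomposition $Y=\sum_j I_j(t)\ti{Y}_j$ combined with (ii) and $\amsmathbb{E}^{\Lambda}_{t,i}[I_j(t)]=\mathds{1}_{(j=i)}$. One side remark in (i) is false, though it does not break the proof: pointwise finiteness of $\amsmathbb{E}^{\Lambda}_{t,i}[|Y|]$ does not imply $\amsmathbb{P}^{\Lambda}$-integrability of $Y$ (for example, $Y=g(N(t-))$ with $g$ growing fast enough need not be integrable), so you should read $\amsmathbb{E}^{\Lambda}[Y|\mathcal{F}_t]$ as $\amsmathbb{E}^{\Lambda}[Y^+|\mathcal{F}_t]-\amsmathbb{E}^{\Lambda}[Y^-|\mathcal{F}_t]$, where both terms are a.s.\ finite by your monotone-convergence identity.
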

\begin{proof}
  (i): This follows from Theorem~\ref{th:Probability_measure_existence}(ii) by splitting $Y$ in a positive and negative part, approximating each from below by an increasing sequence of simple functions, and applying the dominated convergence theorem (for both conditional and unconditional expectations).

  (ii): This follows from Proposition~4.4(i) of~\cite{Christiansen&Furrer2024} by splitting $Y$ in a positive and negative part, approximating each from below by an increasing sequence of simple functions, and applying the dominated convergence theorem.

  (iii): By Proposition~\ref{prop:Rep_adapted_process_J}(i), the just proven (ii) and Proposition~4.4(i) of~\cite{Christiansen&Furrer2024} it holds that
  \begin{align*}
    \amsmathbb{E}_{t,i}^{\Lambda}[Y]=\sum_{j\in\mathcal{J}}\amsmathbb{E}_{t,i}^{\Lambda}[I_j(t)\ti{Y}_j]=\sum_{j\in\mathcal{J}}\ti{Y}_j\amsmathbb{E}_{t,i}^{\Lambda}[I_j(t)]=\sum_{j\in\mathcal{J}}\ti{Y}_j\mathds{1}_{j=i}=\ti{Y}_i,\quad \forall i\in\mathcal{J},
  \end{align*}
  which is the first statement. Using this, we thus obtain 
  \begin{align*}
    \amsmathbb{E}_{t,Z(t)}^{\Lambda}[Y]=\sum_{i\in\mathcal{J}}I_i(t)\amsmathbb{E}_{t,i}^{\Lambda}[Y]=\sum_{i\in\mathcal{J}}I_i(t)\ti{Y}_i=Y.
  \end{align*}
\end{proof}

The non-explosiveness assumption, Assumption~\ref{ass:cumulative_tr}(iii), implies the following integrability condition.
\begin{proposition}\label{prop:exp_bound_N}
  Let $c>0$. Then for any $\Lambda=(\Lambda_{ij})_{i\neq j}$ satisfying Assumption~\ref{ass:cumulative_tr} it holds that
  \begin{align*}
    \sup_{0\leq t\leq s\leq T}\sup_{\omega\in\Omega}\amsmathbb{E}_{t,i}^{\Lambda}\big[e^{c (N(s)-N(t))}\big](\omega)<\infty,\quad\forall i\in\mathcal{J}.
  \end{align*}
\end{proposition}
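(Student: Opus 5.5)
The plan is to compare the counting process $N$ under $\amsmathbb{P}^{\Lambda}_{t,i}$ with a counting process whose compensator is the deterministic function $\alpha:=\sum_{i\neq j}\alpha_{ij}$ from Assumption~\ref{ass:cumulative_tr}(iii). By that assumption, the total compensator $C(\mathrm{d}u)=\sum_{k}I_k(u-)\sum_{j:j\neq k}\Lambda_{kj}(\mathrm{d}u)$ of $N$ satisfies $C(s)-C(t)\leq \alpha(s)-\alpha(t)$ pathwise, uniformly in $\omega$. Under $\amsmathbb{P}^{\Lambda}_{t,i}$ the past up to $t$ is frozen, by Proposition~\ref{prop:conditional_expectation}(ii). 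The process $u\mapsto N(u)-N(t)$ for $u\geq t$ is therefore a counting process started at $0$ with compensator $C(u)-C(t)$, which is dominated by the deterministic increments of $\alpha$.

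The key step is an exponential martingale estimate. Write $M(u):=N(u)-N(t)$ and $\beta:=e^c-1$. I would consider the process
\begin{align*}
  L(u):=e^{cM(u)}\prod_{(t,u]}\big(1+\beta\,\alpha(\mathrm{d}r)\big)^{-1},
\end{align*}
where the product integral is finite and strictly positive because $\alpha$ is bounded on compacts. The Stieltjes integration by parts formula from the appendix gives the decomposition of $e^{cM}$: its jumps are $e^{cM(u-)}\beta\,\Delta N(u)$, so
\begin{align*}
  e^{cM}(\mathrm{d}u)=e^{cM(u-)}\beta N(\mathrm{d}u)=e^{cM(u-)}\beta C(\mathrm{d}u)+\text{local martingale}.
\end{align*}
Since $C(\mathrm{d}u)\leq \alpha(\mathrm{d}u)$ as measures, $L$ is a nonnegative local supermartingale, hence a supermartingale. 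It follows that
\begin{align*}
  \amsmathbb{E}^{\Lambda}_{t,i}\big[e^{cM(s)}\big]\leq \prod_{(t,s]}\big(1+\beta\,\alpha(\mathrm{d}r)\big)\leq e^{\beta(\alpha(T)-\alpha(0))},
\end{align*}
and this bound is independent of $t$, $s$ and $\omega$. An equivalent and more elementary route, if the supermartingale argument is awkward with atoms, is to condition successively on $\mathcal{F}_{\tau_n}$. The conditional survival function, by Theorem~\ref{th:Probability_measure_existence}(iii), is a product integral of $-\Lambda$. From this one shows by induction that $\amsmathbb{P}^{\Lambda}_{t,i}(N(s)-N(t)\geq n)\leq \amsmathbb{Q}(\Pi\geq n)$ for a counting process $\Pi$ whose jumps occur with deterministic cumulative hazard $\alpha$. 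The tail of $\Pi$ is then bounded by $(\alpha(T)-\alpha(0))^n/n!$ up to atom corrections, and this is summable against $e^{cn}$.

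I expect the main obstacle to be rigour at the accessible jump times, that is, at the atoms of $\Lambda$ and $\alpha$. There the compensator comparison must respect $\Delta\Lambda\leq 1$, and the product integral replaces the exponential. I would handle this by working with the product integral $\prod(1+\beta\,\alpha(\mathrm{d}r))$ throughout rather than $e^{\beta\alpha}$, using $1+x\leq e^x$ only at the very end. I would also take care that the localisation is justified: with localising times $\tau_n\wedge s$, Fatou's lemma passes the supermartingale bound to the limit, since $\tau_n\to\infty$ by non-explosiveness.
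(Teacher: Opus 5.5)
Your argument is correct. It ends with the same bound as the paper, $\amsmathbb{E}^{\Lambda}_{t,i}[e^{c(N(s)-N(t))}]\leq\mathcal{E}_{\beta\alpha}(t,s)\leq e^{\beta\alpha(T)}$ with $\beta=e^c-1$ and $\alpha=\sum_{i\neq j}\alpha_{ij}$, but it performs the steps in a different order.

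The paper takes $\amsmathbb{E}^{\Lambda}_{t,i}$ of the identity $e^{c(N(s)-N(t))}=1+\int_{(t,s]}\beta e^{c(N(u-)-N(t))}N(\mathrm{d}u)$ straight away, using the compensator identity from Proposition~4.2 of~\cite{Christiansen&Furrer2024} and monotone convergence. It then dominates $\Lambda_{ij}(\mathrm{d}u)$ by $\alpha_{ij}(\mathrm{d}u)$ and uses Tonelli to get a deterministic integral inequality for $g(u)=\amsmathbb{E}^{\Lambda}_{t,i}[e^{c(N(u)-N(t))}]$. It closes with the Stieltjes Gronwall inequality, Lemma~\ref{lem:gronwall}.

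You instead divide pathwise by the deterministic product integral $P(u)=\mathcal{E}_{\beta\alpha}(t,u)$ first. Then $L(\mathrm{d}u)=\beta e^{cM(u-)}P(u)^{-1}\big(N(\mathrm{d}u)-\alpha(\mathrm{d}u)\big)$ is a local martingale minus an increasing process, and you conclude by stopping at $\tau_n\wedge s$ and applying Fatou.

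The proof of Lemma~\ref{lem:gronwall} is itself a division by $\mathcal{E}_f$, so the two computations are close. Your ordering does buy something. Lemma~\ref{lem:gronwall} requires $g$ to be finite-valued, and the paper never checks this beforehand. Your localisation bounds the integrand by $\beta e^{cn}$ before $\tau_n$, so no a priori integrability is needed. In exchange, the paper's route needs no supermartingale theory under the kernel measures.

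Two small corrections:
\begin{enumerate}
\item You need that $N(u)-N(t)$, $u\geq t$, has compensator $\int_{(t,u]}\sum_k I_k(r-)\sum_{j\neq k}\Lambda_{kj}(\mathrm{d}r)$ under $\amsmathbb{P}^{\Lambda}_{t,i}$. This does not follow from Proposition~\ref{prop:conditional_expectation}(ii) (``the past is frozen''). It is the separate input from Proposition~4.2 of~\cite{Christiansen&Furrer2024}, the same one the paper uses, and you should cite it as such.
\item Drop the fallback tail-comparison route. Since $\Delta\alpha$ may exceed $1$, a counting process with cumulative hazard $\alpha$ need not exist, and the ``$\alpha^n/n!$ up to atom corrections'' bound is not a precise statement.
\end{enumerate}
Neither affects your main argument.
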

\begin{proof}
  By the change of variables formula for càdlàg finite variation functions, we obtain 
  \begin{align*}
    e^{c(N(s)-N(t))}=1+\int_{(t,s]}(e^c-1)e^{c (N(u-)-N(t))}N(\mathrm{d}u).
  \end{align*}
  Taking the expectation and applying Proposition~4.2 of~\cite{Christiansen&Furrer2024} in conjunction with the monotone convergence theorem yields
  \begin{align*}
    \amsmathbb{E}_{t,i}^{\Lambda}\big[e^{c(N(s)-N(t))}\big]=1+\amsmathbb{E}_{t,i}^{\Lambda}\bigg[\int_{(t,s]}(e^c-1)e^{c (N(u-)-N(t))}\sum_{i,j:i\neq j}\Lambda_{ij}(\mathrm{d}u)\bigg]
  \end{align*}
  Using Assumption~\ref{ass:cumulative_tr}(iii) and Tonelli's theorem we get 
  \begin{align*}
    \amsmathbb{E}_{t,i}^{\Lambda}\big[e^{c(N(s)-N(t))}\big]\leq 1+\int_{(t,s]}\amsmathbb{E}_{t,i}^{\Lambda}\big[e^{c (N(u-)-N(t))}\big](e^c-1)\sum_{i,j:i\neq j}\alpha_{ij}(\mathrm{d}u).
  \end{align*}
  Set $\alpha(t):=(e^c-1)\sum_{i,j:i\neq j}\alpha_{ij}(t)$. An application of Gronwall's inequality (Lemma~\ref{lem:gronwall}) yields
  \begin{align*}
    \amsmathbb{E}_{t,i}^{\Lambda}\big[e^{c(N_{ij}(s)-N_{ij}(t))}\big]\leq\frac{\mathcal{E}_{\alpha}(s)}{\mathcal{E}_{\alpha}(t)}\leq \mathcal{E}_{\alpha}(T).
  \end{align*}
  As this bound does not depend on $s$, $t$ or $\omega$, taking the suprema yields the desired result. 
\end{proof}

In many practical applications, one assumes the cumulative transition rates to be absolutely continuous with respect to the Lebesgue measure, that is 
\begin{equation}\label{eq:absolutely_continuous_Lambda}
  \Lambda_{ij}(\mathrm{d}t)=\mu_{ij}(t)\mathrm{d}t,\quad t\in[0,\infty),\,i,j\in\mathcal{J},i\neq j
\end{equation}
for non-negative, $i$-predictable transition intensities $(\mu_{ij})_{i\neq j}$. In order for $(\Lambda_{ij})_{i\neq j}$ defined as in (\ref{eq:absolutely_continuous_Lambda}) to satisfy Assumption~\ref{ass:cumulative_tr} and Theorem~\ref{th:Probability_measure_existence} to remain valid, the following set of assumptions for $(\mu_{ij})_{i\neq j}$ is sufficient:
\begin{assumption}\label{ass:intensities}
  Assume that 
  \begin{enumerate}
    \item[(i)] For any $i\in\mathcal{J}$ the processes $(\mu_{ij})_{j:j\neq i}$ are $i$-predictable.
    \item[(ii)] For each $i,j$ with $i\neq j$ there exists a measurable, non-negative function $\alpha_{ij}:[0,\infty)\rightarrow [0,\infty)$ bounded on compact sets such that $\mu_{ij}(t,\omega)\leq \alpha_{ij}(t)$ for all $t\geq 0$ and $\omega\in\Omega$.
  \end{enumerate}
\end{assumption}
Assumption~\ref{ass:intensities}(i)+(ii) guarantee the validity of Assumption~\ref{ass:cumulative_tr}(i)+(iii), while the non-negativity of the $(\mu_{ij})_{i\neq j}$ and (\ref{eq:absolutely_continuous_Lambda}) implies the automatic satisfaction of Assumption~\ref{ass:cumulative_tr}(ii).

\begin{example}[Smooth semi-Markov process]\label{ex:semi-Markov_1}
  In the case of the smooth semi-Markov process, the cumulative transition rates are of the form
  \begin{align*}
    \Lambda_{ij}(\mathrm{d}t)=\mu_{ij}(t)\mathrm{d}t\quad\text{with}\quad \mu_{ij}(t)=\ti{\mu}_{ij}(t,t-\tau(t-)),
  \end{align*}
  where $\ti{\mu}_{ij}:[0,\infty)\times [0,\infty)\rightarrow\amsmathbb{R}$ is a non-negative measurable function bounded on compact sets and 
  \begin{align*}
    \tau(t):=\sum_{n\in\amsmathbb{N}_0}I_n(t)\tau_n
  \end{align*}
  is the most recent jump time. Thus, the transition intensities are allowed to depend on the duration $U(t):=t-\tau(t)$ since the last jump. The issue here is that $\mu_{ij}$ defined in this way is not $i$-predictable. This is best seen by finding a violation of (\ref{eq:i-predictable}). Evaluating $\tau(t-,\omega)$ in $\omega^{\tau_n}_i$ yields
  \begin{align*}
    \forall n\in\amsmathbb{N}_0: \quad\tau(t-,\omega^{\tau_n}_i)=\mathds{1}_{(\zeta_{n-1}=i)}\tau_{n-1}+\mathds{1}_{(\zeta_{n-1}\neq i)}\tau_n,\quad \tau_n<t\leq\tau_{n+1},
  \end{align*}
  as setting $\zeta_n$ to be equal to $i$ when $\zeta_{n-1}$ already equals $i$ removes the $n$'th jump. We therefore obtain 
  \begin{align*}
    \mu_{ij}(t)=\ti{\mu}_{ij}(t,t-\tau_n)\neq\ti{\mu}_{ij}(t,t-\tau_{n-1})=\mu_{ij}(t,\omega^{\tau_n}_i),\quad\text{whenever },\tau_n<t\leq\tau_{n+1},\zeta_{n-1}=i
  \end{align*}
  which violates (\ref{eq:i-predictable}). The process $\mu_{ij}$ is not its own counterfactual. Instead, we can via (\ref{eq:CF_n}) define 
  \begin{align*}
    \tau_i(t):=I^0(t-)\tau_0+\sum_{n\in\amsmathbb{N}}I^n(t-)(\mathds{1}_{(\zeta_{n-1}=i)}\tau_{n-1}+\mathds{1}_{(\zeta_{n-1}\neq i)}\tau_n),\quad i\in\mathcal{J},
  \end{align*}
  and set $\mu_{ij}(t):=\ti{\mu}_{ij}(t,t-\tau_i(t))$, which is $i$-predictable. 
\end{example}

\section{Optional projections}\label{sec:3}
The central object of the robust optimisation problems we are interested in, are quantities such as the expected present value of future insurance payments or the expected present value of future utility. In both cases, the mathematical object of interest is thus the optional projection of either the (discounted) insurance payment process or the (discounted) utility process. As we are using the canonical filtered probability space, the filtration and the probability space are not complete, and thus the standard results guaranteeing the existence of the optional projection do not apply. Therefore, in Subsection~\ref{subsec:3.1} we start by showing in Proposition~\ref{prop:optional_projection_path} that the optional projection exists under minimal regularity conditions. This is followed by Theorem~\ref{th:optional_projection} and Theorem~\ref{th:optional_projection_discounted}, which for two different types of processes show that the optional projection is characterised by a BSDE. Note that Theorem~\ref{th:optional_projection} and Theorem~\ref{th:optional_projection_discounted} are closely related to Theorem~6.4 and Theorem~7.2 of~\cite{Christiansen&Furrer2024}. While they formulate their result in terms of state-wise conditional expectation processes and stochastic Thiele equations, we use the optional projection and BSDEs and generalise the type of process that the optional projection can be taken of. We end Subsection~\ref{subsec:3.1} by using the link between the jump-count and state-wise decomposition established in Section~\ref{sec:2} to show in Proposition~\ref{prop:BSDE_equiv_ODE} and Proposition~\ref{prop:BSDE_equiv_ODE_discounted} that the existence and uniqueness of the BDSs obtained in Theorem~\ref{th:optional_projection} and Theorem~\ref{th:optional_projection_discounted} is equivalent to the existence and uniqueness of an infinite-dimensional system of Lebesgue-Stieltjes integral equations.  In Subsection~\ref{subsec:3.2} we wish to specify the optional projection of a process, where both the cumulative transition rates determining the probability measure under which the optional projection is taken and the process which the optional projection is taken of can depend on the optional projection itself. This circular case brings us to the two main results of this section, Theorem~\ref{th:BSDE_existence} and Theorem~\ref{th:BSDE_existence_discounted}. They show existence and uniqueness of two worst-case BSDEs that characterise this type of circular optional projection, which will turn out to characterise the worst- and best-case prospective reserves of life insurance contracts.

\subsection{Optional projections and BSDEs}\label{subsec:3.1} Let $(\Omega,\mathcal{F},\amsmathbb{F})$ be the canonical filtered measurable space, let $\pi$ be any initial distribution and let $\Lambda=(\Lambda_{ij})_{i\neq j}$ be any set of cumulative transition rates satisfying Assumption~\ref{ass:cumulative_tr}. We endow the space $(\Omega,\mathcal{F},\amsmathbb{F})$ with the probability measure $\amsmathbb{P}^{\Lambda}$ generated by the initial distribution $\pi$ and the cumulative transition rates $\Lambda=(\Lambda_{ij})_{i\neq j}$ as given by Theorem~\ref{th:Probability_measure_existence} and let $(\amsmathbb{P}_{t,i}^{\Lambda})_{t\in [0,\infty),i\in\mathcal{J}}$ be the associated kernels. 

Let $T>0$ be a deterministic terminal time and for a càdlàg process of finite variation $C$ and an $\mathcal{F}_T$-measurable random variable $Y$ we define the process $L(t,T)$ by 
\begin{align*}
  L(t,T):=C(T)-C(t)+Y,\quad t\in [0,T].
\end{align*}
We are now interested in the optional projection of $L(t,T)$, that is a càdlàg process $X$ which satisfies
\begin{align*}
   X(t)=\amsmathbb{E}^{\Lambda}[L(t,T)|\mathcal{F}_t]\quad \amsmathbb{P}^{\Lambda}-\text{a.s.}\quad \forall t\in [0,T].
\end{align*}
As the filtered probability space $(\Omega,\mathcal{F},\amsmathbb{F},\amsmathbb{P}^{\Lambda})$ is not complete, it does not satisfy the usual conditions, and therefore we cannot rely on the standard results which guarantee the existence of such an optional projection. We therefore aim to show that the process 
\begin{align*}
  X(t):=\amsmathbb{E}^{\Lambda}_{t,Z(t)}[L(t,T)]=\sum_{i\in\mathcal{J}}I_i(t)\amsmathbb{E}_{t,i}[L(t,T)]
\end{align*}
is an optional projection of $L(t,T)$. We start with the simple case $L(t,T)=Y$ and prove the following extension of the martingale representation result of Remark~6.5 in~\cite{Christiansen&Furrer2024}.

\begin{proposition}\label{prop:MG-representation}
  Let $Y$ be an $\mathcal{F}_T$-measurable, real-valued random variable such that 
  \begin{align*}
    \forall i\in\mathcal{J}:\quad \sup_{0\leq t\leq T}\amsmathbb{E}_{t,i}^{\Lambda}[|Y|](\omega)<\infty,\quad\forall\omega\in\Omega.
  \end{align*}
  Then the process $X(t)=\amsmathbb{E}_{t,Z(t)}^{\Lambda}[Y]$ is an optional projection with paths of finite variation and 
  \begin{align*}
    X(t)=Y-\int_{(t,T]}\sum_{i,j:i\neq j}(\ti{X}_{j}(t)-\ti{X}_i(t))(N_{ij}(\mathrm{d}s)-I_i(t-)\Lambda_{ij}(\mathrm{d}t)),\quad t\in[0,T].
  \end{align*}
\end{proposition}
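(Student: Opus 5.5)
The plan is to work pathwise through the jump-count decomposition, using that between jumps the conditional expectation is an explicit ratio in terms of the survival function of the next jump time. There are three steps: optional projection, explicit form of $X$, and the Stieltjes dynamics.

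\emph{Step 1: optional projection.} Optionality and the projection property come almost for free. By Theorem~\ref{th:Probability_measure_existence}(ii) and Proposition~\ref{prop:conditional_expectation}(i), which applies by the integrability assumption on $Y$, we have $X(t)=\amsmathbb{E}^{\Lambda}[Y|\mathcal{F}_t]$ $\amsmathbb{P}^{\Lambda}$-a.s.\ for each $t\in[0,T]$. For measurability in $(t,\omega)$ I would use the kernel construction of \cite{Christiansen&Furrer2024}: on $\{\tau_n\le t<\tau_{n+1}\}$ the kernel $\amsmathbb{P}^{\Lambda}_{t,i}$ only depends on $\xi^n$ and $t$. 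This gives measurable $f^n(t,\omega^n)$, so $X$ is optional by Proposition~\ref{prop:Rep_adapted_process_N}(i).

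\emph{Step 2: explicit form of $f^n$.} Fix $n$ and $\omega^n\in\Omega^n$ with $z_n=i$. Set $S(t):=\prod_{(t_n,t]}(1-\sum_{j}\Lambda_{ij}(\mathrm{d}u,\omega^n))$, the conditional survival function of $\tau_{n+1}$. Writing $Y=\sum_m \mathds{1}_{(\tau_m\le T<\tau_{m+1})}y^m(\xi^m)$ and conditioning on the next jump, one obtains for $t_n\le t\le T$
\begin{align*}
  f^n(t,\omega^n)S(t)=S(T)\,y^n(\omega^n)+\int_{(t,T]}S(u-)\sum_{j\neq i}f^{n+1}\big(u,\xi^{n+1}(\omega^{t_n,u}_{i,j})\big)\Lambda_{ij}(\mathrm{d}u,\omega^n).
\end{align*}
This is the formula of \cite{Christiansen&Furrer2024} (Prop.~4.2, 4.4). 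The right-hand side is a finite-variation function of $t$. As long as $S(t)>0$, which holds on the relevant support, $t\mapsto f^n(t,\omega^n)$ is therefore càdlàg of finite variation. By Remark~\ref{rem:path_properties}, $X$ has càdlàg finite-variation paths.

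\emph{Step 3: dynamics.} Differentiating the identity in the Stieltjes sense (integration by parts with $S(\mathrm{d}t)=-S(t-)\sum_j\Lambda_{ij}(\mathrm{d}t)$) gives
\begin{align*}
  f^n(\mathrm{d}t,\omega^n)=-\sum_{j\neq i}\big(f^{n+1}(t,\xi^{n+1}(\omega^{t_n,t}_{i,j}))-f^n(t,\omega^n)\big)\Lambda_{ij}(\mathrm{d}t).
\end{align*}
Here the atoms $\Delta\Lambda_{ij}$ need care: the factor $S(t)/S(t-)$ must cancel correctly, and this should be checked explicitly. Inserting this into Proposition~\ref{prop:Rep_dyn_N}(i) and translating via Proposition~\ref{prop:Rep_dyn_J}(i) gives
\begin{align*}
  X(\mathrm{d}t)=\sum_{i\neq j}\ti\Delta X_{ij}(t)\big(N_{ij}(\mathrm{d}t)-I_i(t-)\Lambda_{ij}(\mathrm{d}t)\big).
\end{align*}
Here $\ti\Delta X_{ij}=\ti X_j-\ti X_i$ on $\{Z(t-)=i\}$. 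Integrating over $(t,T]$ and using $X(T)=Y$, which holds by Proposition~\ref{prop:conditional_expectation}(iii), gives the claimed representation.

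The main obstacle is Step 2–3 with atoms in $\Lambda$: proving that the conditional expectation has exactly this product-integral form and that differentiating it yields the compensated martingale increment even at atoms, including where $\sum_j\Delta\Lambda_{ij}=1$ makes $S$ vanish. There $f^n$ is irrelevant after $S$ hits zero, since $\tau_{n+1}$ has already occurred almost surely. I would handle this by defining $f^n$ via the convention (\ref{eq:fn_uniqueness_condition_optional}) beyond that point and checking the identity only up to the first time $S$ vanishes.
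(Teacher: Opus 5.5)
Your route differs from the paper's. The paper computes no kernels itself. For $Y=\mathds{1}_A$ it takes from Theorem~5.2 of \cite{Christiansen&Furrer2024} both the path regularity and the backward equation $I_i(t-)\ti{X}_i(\mathrm{d}t)=-\sum_{j\neq i}(\ti{X}_j(t)-\ti{X}_i(t))I_i(t-)\Lambda_{ij}(\mathrm{d}t)$, and gets the representation by integration by parts. It then extends to non-negative $Y$ by simple-function approximation and dominated convergence, and to general $Y$ via $Y=Y^+-Y^-$.

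You instead derive that backward equation yourself, in the jump-count decomposition, from the conditional law of the next mark. This covers every $Y$ satisfying the integrability hypothesis at once. It avoids the pathwise interchange of limits and Stieltjes integrals in the paper's approximation step, and it is exactly the linear case of the system in Proposition~\ref{prop:BSDE_equiv_ODE}. The cost is two extra justifications:
\begin{itemize}
\item the next-jump formula for $\amsmathbb{P}^{\Lambda}_{t,i}$ must hold for every $\omega$, not just almost surely, which rests on the construction and tower property of the kernels;
\item you need $\int_{(t_n,T]}|f^{n+1}(u,(\omega^n,(u,j)))|\Lambda_{ij}(\mathrm{d}u,\omega^n)<\infty$ so that the right-hand side of your Step~2 identity is of finite variation; this follows from $\amsmathbb{E}^{\Lambda}_{t,i}[|Y|]<\infty$.
\end{itemize}
Your concern about atoms in Step~3 is harmless. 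With $H:=f^nS$ and $H(\mathrm{d}t)=S(t-)f^n(\mathrm{d}t)+f^n(t)S(\mathrm{d}t)$, you only ever divide by $S(t-)$. The identity then comes out correctly even at the atom where $S$ first vanishes.

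The step that fails as written is your treatment of an atom $u_0$ with $\sum_{j\neq i}\Delta\Lambda_{ij}(u_0,\omega^n)=1$.
\begin{itemize}
\item The representation is a sure, pathwise identity; the paper uses it on every path, e.g.\ for all $\omega^n\in\Omega^n$ in Proposition~\ref{prop:BSDE_equiv_ODE}.
\item $X$ is already fixed on every path by the kernels, so by Proposition~\ref{eq:Rep_N_uniqueness} $f^n(t,\omega^n)$ is determined for all $t\ge t_n$.
\item The convention (\ref{eq:fn_uniqueness_condition_optional}) only governs $t<t_n$ and $\omega^n\in\bar{\Omega}^n\setminus\Omega^n$. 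You cannot use it to redefine $f^n$ after $S$ vanishes.
\item An ``almost surely irrelevant'' argument does not establish the identity on these null paths. Beyond $u_0$ your identity anchored at $t_n$ reads $0=0$ and says nothing about $f^n$.
\end{itemize}

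The repair is to anchor the survival function at $t$ rather than $t_n$. Set $\Lambda_{i\cdot}:=\sum_{j\neq i}\Lambda_{ij}(\cdot,\omega^n)$, $S_t(u):=\prod_{(t,u]}(1-\Lambda_{i\cdot}(\mathrm{d}v))$ and $G(\mathrm{d}u):=\sum_{j\neq i}f^{n+1}(u,(\omega^n,(u,j)))\Lambda_{ij}(\mathrm{d}u,\omega^n)$. The kernel then gives $f^n(t)=S_t(T)y^n+\int_{(t,T]}S_t(u-)G(\mathrm{d}u)$ for every $t\ge t_n$. Insert the backward equations $S_t(T)=1-\int_{(t,T]}S_v(T)\Lambda_{i\cdot}(\mathrm{d}v)$ and $S_t(u-)=1-\int_{(t,u)}S_v(u-)\Lambda_{i\cdot}(\mathrm{d}v)$, and apply Fubini. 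This yields $f^n(t)=y^n+\int_{(t,T]}G(\mathrm{d}u)-\int_{(t,T]}f^n(v)\Lambda_{i\cdot}(\mathrm{d}v)$ on all of $[t_n,T]$, with no division anywhere. Alternatively, restart your argument at each atom of mass one; there are at most $\sum_{j\neq i}\alpha_{ij}(T)$ of them in $(t_n,T]$.
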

\begin{proof}
  The fact that $X(t)=\amsmathbb{E}^{\Lambda}[Y|\mathcal{F}_t]$ $\amsmathbb{P}^{\Lambda}$--a.s. for all $t\in[0,T]$ follows directly from Proposition~\ref{prop:conditional_expectation}(i). It remains to show the path properties. We start with the simple case $Y=\mathds{1}_A$ for some $A\in\mathcal{F}_T$. Set $\ti{X}_i(t)=\amsmathbb{E}_{t,i}^{\Lambda}[\mathds{1}_A]$ for $i\in\mathcal{J}$. Then by Theorem~5.2 of~\cite{Christiansen&Furrer2024} the processes 
  \begin{align*}
    (t,\omega)\mapsto\int_{(0,t]}I_i(s-)\ti{X}_i(\mathrm{d}s),\quad i\in\mathcal{J}
  \end{align*}
  are predictable, càdlàg and of finite variation, which implies that 
  \begin{align*}
    X(t)=\sum_{i\in\mathcal{J}}I_i(T)\ti{X}_i(T)-\sum_{i\in\mathcal{J}}\int_{(t,T]}I_i(s-)\ti{X}_i(\mathrm{d}s)-\int_{(t,T]}\sum_{i,j:i\neq j}(\ti{X}_j(s)-\ti{X}_i(s))N_{ij}(\mathrm{d}s).
  \end{align*}
  Since $t\mapsto N_{ij}(t,\omega)$ is càdlàg and of finite variation for all $i,j\in\mathcal{J}:i\neq j$, and $t\mapsto\ti{X}_i(t,\omega)$ is bounded for all $i\in\mathcal{J}$ it follows that $X$ is càdlàg and of finite variation. The integration by parts formula now yields $X(t)=\sum_{i\in\mathcal{J}}I_i(t)\ti{X}_i(t)=\amsmathbb{E}_{t,Z(t)}^{\Lambda}[\mathds{1}_A]$. By Theorem~5.2 of~\cite{Christiansen&Furrer2024} it also holds that 
  \begin{align*}
    I_i(t-)\ti{X}_i(\mathrm{d}t)=-\sum_{j:j\neq i}(\ti{X}_j(t)-\ti{X}_i(t))I_i(t-)\Lambda_{ij}(\mathrm{d}t),\quad \ti{X}_i(T)=\amsmathbb{E}_{T,i}^{\Lambda}[\mathds{1}_A],\quad \forall i\in\mathcal{J},
  \end{align*}
  which, when inserted into the above expression for $X$, yields the desired
  \begin{align*}
    X(t)=\mathds{1}_A-\sum_{i,j:i\neq j}\int_{(t,T]}(\ti{X}_{j}(s)-\ti{X}_i(s))(N_{ij}(\mathrm{d}s)-I_i(s-)\Lambda_{ij}(\mathrm{d}s)),\quad \forall t\in [0,T].
  \end{align*}
  Next, we let $Y$ be non-negative such that the above assumed integrability condition is satisfied. Set $X(t):=\amsmathbb{E}^{\Lambda}_{t,Z(t)}[Y]$ and $\ti{X}_i(t)=\amsmathbb{E}_{t,i}^{\Lambda}[Y]$ for $i\in\mathcal{J}$. Then there exists $(Y_m)_{m\in\amsmathbb{N}}$ given by 
  \begin{align*}
    Y_m(\omega)=\sum_{\ell=1}^m a_{\ell,m}\mathds{1}_{A_{\ell,m}}(\omega)\quad\text{with}\quad \sup_{m\in\amsmathbb{N}}Y_m(\omega)=Y(\omega),\quad \forall\omega\in\Omega,
  \end{align*}
  where $(a_{\ell,m})_{\ell=1,\ldots,n}\subset[0,\infty)$ and $A_{\ell,m}\subset\mathcal{F}_T$ for all $m\in\amsmathbb{N}$. The dominated convergence theorem then yields
  \begin{align*}
    \ti{X}_i(t)=\amsmathbb{E}_{t,i}^{\Lambda}\Big[\lim_{m\rightarrow\infty}Y_m\Big]=\lim_{m\rightarrow\infty}\amsmathbb{E}_{t,i}[Y_m]=\lim_{m\rightarrow\infty}\sum_{\ell=1}^m a_{\ell,m}\amsmathbb{E}_{t,i}^{\Lambda}[\mathds{1}_{A_{\ell,m}}],\quad \forall t\in [0,T],\,\forall i\in\mathcal{J}.
  \end{align*}
  Combining this with the fact that our desired result holds for indicator random variables, we obtain
  \begin{align*}
    X(s)-X(t)&=\lim_{m\rightarrow\infty}\sum_{\ell=1}^m a_{\ell,m}(\amsmathbb{E}_{s,Z(s)}^{\Lambda}[\mathds{1}_{A_{\ell,m}}]-\amsmathbb{E}_{t,Z(t)}^{\Lambda}[\mathds{1}_{A_{\ell,m}}])\\
    &=-\lim_{m\rightarrow\infty}\sum_{\ell=1}^n a_{\ell,m}\bigg(\sum_{i,j:i\neq j}\int_{(t,s]}(\amsmathbb{E}_{u,j}^{\Lambda}[\mathds{1}_{A_{\ell,m}}]-\amsmathbb{E}_{u,i}^{\Lambda}[\mathds{1}_{A_{\ell,m}}])M_{ij}(\mathrm{d}u)\bigg)\\
    &=-\lim_{m\rightarrow\infty}\sum_{i,j:i\neq j}\int_{(t,s]}(\amsmathbb{E}_{u,j}^{\Lambda}[Y_m]-\amsmathbb{E}_{u,i}^{\Lambda}[Y_m])M_{ij}(\mathrm{d}u),\quad0\leq t\leq s\leq T.
  \end{align*}
  The fact that all $Y_m$ are dominated by $Y$ in combination with the above assumed integrability condition allows another application of the dominated convergence theorem, which yields
  \begin{align*}
    X(s)-X(t)=-\sum_{i,j:i\neq j}\int_{(t,s]}(\ti{X}_j(u)-\ti{X}_i(u))M_{ij}(\mathrm{d}u),\quad \forall 0\leq t\leq s\leq T,
  \end{align*}
  which together with $X(T)=\amsmathbb{E}_{T,Z(T)}^{\Lambda}[Y]=Y$ yields the desired result. That $t\mapsto X(t,\omega)$ is càdlàg and of finite variation follows again from the fact that $t\mapsto N_{ij}(t,\omega)$ and $t\mapsto\Lambda_{ij}(t,\omega)$ are càdlàg and of finite variation for all $i,j\in\mathcal{J}:i\neq j$, and that $t\mapsto\ti{X}_i(t,\omega)$ and $t\mapsto I_i(t-,\omega)$ are bounded for all $i\in\mathcal{J}$. The result for the case of real-valued random variables $Y$ now follows from the result for non-negative random variables and from the fact that $Y=Y^+-Y^-$, where $Y^+=\max(0,Y)$ and $Y^-=-\min(0,Y)$ both are both non-negative.
\end{proof}

\begin{proposition}\label{prop:optional_projection_path}
  Assume that $C$ is càdlàg and of finite variation and that 
  \begin{align*}
    \forall i\in\mathcal{J}:\quad \sup_{t\in[0,T]}\amsmathbb{E}_{t,i}^{\Lambda}[|L(t,T)|](\omega)<\infty,\quad \forall\omega\in\Omega.
  \end{align*}
  Then $X(t):=\amsmathbb{E}^{\Lambda}_{t,Z(t)}[L(t,T)]$ is an optional projection of $L(t,T)$ with paths of finite variation.
\end{proposition}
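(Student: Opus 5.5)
The plan is to reduce the statement to Proposition~\ref{prop:MG-representation} by splitting off the part of $L(t,T)$ that is already known at time $t$. Write
\begin{align*}
  L(t,T)=\big(C(T)-C(0)+Y\big)-\big(C(t)-C(0)\big)=:\bar{Y}-\bar{C}(t),\quad t\in[0,T].
\end{align*}
Here $\bar{Y}$ is $\mathcal{F}_T$-measurable, and $\bar{C}(t)$ is $\mathcal{F}_t$-measurable; I read "càdlàg of finite variation" for $C$ as including adaptedness, which is the intended setting of payment processes.

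\textbf{Step 1: conditional expectation.} For each fixed $t$, Proposition~\ref{prop:conditional_expectation}(iii) gives $\amsmathbb{E}^{\Lambda}_{t,Z(t)}[\bar{C}(t)]=\bar{C}(t)$ surely. By linearity of the kernels,
\begin{align*}
  X(t)=\amsmathbb{E}^{\Lambda}_{t,Z(t)}[\bar{Y}]-\bar{C}(t),
\end{align*}
provided both terms are integrable under each $\amsmathbb{P}^{\Lambda}_{t,i}$. Once this identity holds, Proposition~\ref{prop:conditional_expectation}(i) yields $X(t)=\amsmathbb{E}^{\Lambda}[L(t,T)|\mathcal{F}_t]$ $\amsmathbb{P}^{\Lambda}$-a.s. for each $t$.

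\textbf{Step 2: path properties.} Apply Proposition~\ref{prop:MG-representation} to $\bar{Y}$. It gives that $t\mapsto \amsmathbb{E}^{\Lambda}_{t,Z(t)}[\bar{Y}]$ is càdlàg and of finite variation, with an explicit martingale representation. Subtracting the càdlàg finite-variation process $\bar{C}$ preserves these properties, so $X$ is càdlàg, of finite variation, and adapted. Hence $X$ is an optional projection of $L(t,T)$.

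\textbf{Step 3: integrability (main obstacle).} The hypothesis only controls $\sup_t\amsmathbb{E}^{\Lambda}_{t,i}[|L(t,T)|]$. Proposition~\ref{prop:MG-representation}, however, requires $\sup_{t\le T}\amsmathbb{E}^{\Lambda}_{t,i}[|\bar{Y}|]<\infty$, and Step~1 requires $\bar{C}(t)$ to be $\amsmathbb{P}^{\Lambda}_{t,i}$-integrable.

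The second requirement is immediate. Since $\bar{C}(t)$ is $\mathcal{F}_t$-measurable, Proposition~\ref{prop:conditional_expectation}(iii) applied to $|\bar{C}(t)|$ gives
\begin{align*}
  \amsmathbb{E}^{\Lambda}_{t,i}[|\bar{C}(t)|]=|\widetilde{C}_i(t)|<\infty.
\end{align*}
This uses the pathwise evaluation of the counterfactual, which is real-valued; any truncation needed to apply the proposition is handled by monotone convergence.

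For the first requirement, note that $\bar{Y}=L(t,T)+\bar{C}(t)$, so
\begin{align*}
  \amsmathbb{E}^{\Lambda}_{t,i}[|\bar{Y}|]\le \amsmathbb{E}^{\Lambda}_{t,i}[|L(t,T)|]+|\widetilde{C}_i(t)(\omega)|.
\end{align*}
The first term is bounded uniformly in $t$ by hypothesis. For the second, take $t=0$ in the same decomposition: $\amsmathbb{E}^{\Lambda}_{0,i}[|\bar{Y}|]=\amsmathbb{E}^{\Lambda}_{0,i}[|L(0,T)|]<\infty$, since $\bar{C}(0)=0$. Then $\widetilde{C}_i(t,\omega)=C(t,\omega^t_i)-C(0)$ is a value of a càdlàg path evaluated at the stopped path $\omega^t_i$. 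Its boundedness in $t\in[0,T]$ should follow from the finite-variation path properties of the jump-count decomposition (Remark~\ref{rem:path_properties} and Remark~\ref{rem:path_properties_i}): on $\tau_n<t\le\tau_{n+1}$ it equals $f^n$ or $f^{n+1}$ evaluated at finitely many fixed histories, and these are locally bounded.

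If that uniform bound turns out to be delicate, the fallback is to avoid needing it. In that case I would apply Proposition~\ref{prop:MG-representation} only on each interval $[t,T]$ with $\omega$ fixed, using the finiteness of $\amsmathbb{E}^{\Lambda}_{s,i}[|\bar{Y}|]$ for each $s$. The finite-variation and càdlàg conclusions are pathwise and local, so they should still follow.
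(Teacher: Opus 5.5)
\textbf{Gap: the uniform bound on $\ti{C}_i$.} Your reduction is exactly the paper's. You write $L(t,T)=L(0,T)-(C(t)-C(0))$, apply Proposition~\ref{prop:MG-representation} to $L(0,T)$, and control its hypothesis through $\amsmathbb{E}^{\Lambda}_{t,i}[|L(0,T)|]\le |C(0)|+|\ti{C}_i(t)|+\amsmathbb{E}^{\Lambda}_{t,i}[|L(t,T)|]$. Everything therefore rests on $\sup_{t\le T}|\ti{C}_i(t,\omega)|<\infty$. The paper gets this from Remark~\ref{rem:path_properties_i}: $\ti{C}_i$ is of finite variation, so $|\ti{C}_i(t)|\le|\ti{C}_i(0)|+|\ti{C}_i|(T)$.

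Your justification of that bound does not work. On $\{\tau_n<t\le\tau_{n+1}\}\cap\{Z(t-)=j\}$ with $j\neq i$, one has $\ti{C}_i(t)=C_{ji}(t)=f^{n+1}(t,\omega^{n+1}_{t,ji})$. This history has its last jump at time $t$ itself, so it moves with $t$; $t\mapsto C(t,\omega^t_i)$ is not a path of $C$. Finite variation of $s\mapsto f^{n+1}(s,\omega^{n+1})$ for fixed $\omega^{n+1}$ (Remark~\ref{rem:path_properties}) says nothing about the transition payment as a function of the transition time. Only the sojourn part $C_i(t)=f^n(t,\omega^n_i)$ is evaluated at a fixed history.

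\textbf{The bound can fail under the stated hypotheses.} Take $\mathcal{J}=\{0,1\}$, $\Lambda_{01}(\mathrm{d}t)=\Lambda_{10}(\mathrm{d}t)=\mathrm{d}t$, $T=1$, $Y=0$, and $C(t)=\sum_{n\ge1}\mathds{1}_{(\tau_n\le t)}h(\tau_n)$ with $h(s)=|s-1/2|^{-1/2}$ for $s\neq 1/2$ and $h(1/2)=0$.
\begin{itemize}
\item Every path of $C$ is a step function with finitely many finite jumps on $[0,1]$.
\item $\amsmathbb{E}^{\Lambda}_{t,i}[|L(t,T)|]=\int_t^1 h(s)\,\mathrm{d}s\le 2\sqrt{2}$, so the hypotheses hold. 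The conclusion also holds, with $X(t)=\int_t^1 h(s)\,\mathrm{d}s$.
\item For the jump-free path sitting in state $1-i$, one gets $\ti{C}_i(t)=h(t)$ for $t>0$. Hence $\sup_t\amsmathbb{E}^{\Lambda}_{t,i}[|L(0,T)|]=\infty$, and Proposition~\ref{prop:MG-representation} cannot be applied to $L(0,T)$.
\end{itemize}

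So this step needs an extra assumption making transition payments locally bounded in the transition time. Two options:
\begin{itemize}
\item $C$ bounded on finite intervals, so that $|\ti{C}_i(t,\omega)|=|C(t,\omega^t_i)|\le\sup|C|$.
\item $C\in\amsmathbb{Y}$. The growth bound together with $N(t,\omega^t_i)\le N(T,\omega)+1$ then gives the uniform bound. This is the situation in Theorem~\ref{th:optional_projection}.
\end{itemize}
The paper's appeal to Remark~\ref{rem:path_properties_i} has the same defect: that remark's claim for $X_{ij}$ and $\ti{X}_i$ fails in this example.

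Your fallback does not repair it. Proposition~\ref{prop:MG-representation} needs a supremum of $\amsmathbb{E}^{\Lambda}_{s,i}[|\bar{Y}|]$ over the time interval, not finiteness at each $s$. Moreover, $h$ can be chosen integrable yet unbounded on every subinterval, so restricting to intervals $[t,T]$ gains nothing.
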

\begin{proof}
  Proposition~\ref{prop:Rep_adapted_process_J}(i) and Proposition~\ref{prop:conditional_expectation}(iii) yield
  \begin{align*}
    \amsmathbb{E}_{t,i}^{\Lambda}[|L(0,T)|](\omega)\leq |C(0,\omega)|+|\ti{C}_i(t,\omega)|+\amsmathbb{E}_{t,i}^{\Lambda}[|L(t,T)|](\omega)<\infty,\quad \forall (t,\omega)\in[0,T]\times\Omega.
  \end{align*}
  Since $C$ has paths of finite variation, so has $\ti{C}_i$, see Remark~\ref{rem:path_properties_i}, and thus we obtain
  \begin{align*}
    \sup_{t\in[0,T]}\amsmathbb{E}_{t,i}^{\Lambda}[|L(0,T)|](\omega)\leq |C(0,\omega)|+|\ti{C}_i(0,\omega)|+|\ti{C}_i|(T,\omega)+\sup_{t\in[0,T]}\amsmathbb{E}_{t,i}^{\Lambda}[|L(t,T)|](\omega)<\infty,\quad \forall \omega\in \Omega,
  \end{align*}
  where $|\ti{C}_i|(T,\omega)$ denotes the variation of $t\mapsto\ti{C}_i(t,\omega)$ on the interval $[0,T]$. By Proposition~\ref{prop:MG-representation} we thus obtain that $W(t)=\amsmathbb{E}_{t,Z(t)}[L(0,T)]=C(t)-C(0)+\amsmathbb{E}_{t,Z(t)}^{\Lambda}[L(t,T)]$ is càdlàg and of finite variation, which implies the same for $X(t)=\amsmathbb{E}_{t,Z(t)}^{\Lambda}[L(t,T)]$.
\end{proof}

Now that we have proven the existence of the optional projection, we turn towards establishing that the optional projection constructed in Proposition~\ref{prop:optional_projection_path} can be characterised by a BDS. This requires additional integrability conditions, as one has to establish a priori that the integral on the right-hand side of the martingale representation result of Proposition~\ref{prop:MG-representation} is a martingale. For this, we follow~\cite{Christiansen&Furrer2024} and define the following class of processes, which guarantees sufficient integrability, regardless of which specific pair $(\pi,\Lambda)$ is used to generate $\amsmathbb{P}^{\Lambda}$ and $(\amsmathbb{P}_{t,i}^{\Lambda})_{t\in [0,\infty),i\in\mathcal{J}}$.

\begin{definition}
  Let $m\in\amsmathbb{N}_0$ and let $\amsmathbb{Y}^+_m$ denote the set of all $\amsmathbb{F}$-adapted, non-negative, non-decreasing, univariate càdlàg processes $X$ such that 
  \begin{align*}
    X(s,\omega)-X(t,\omega)\leq g(t,s)\big(1+N(s,\omega)-N(t,\omega)\big)^m,\quad \forall 0\leq t\leq s,\,\forall\omega\in\Omega
  \end{align*}
  for some non-negative measurable function $g:[0,\infty)\times [0,\infty)\rightarrow [0,\infty)$ which is bounded on compact sets. Set $\amsmathbb{Y}^+:=\cup_{m\in\amsmathbb{N}_0}\amsmathbb{Y}^+_m$ and denote by $\amsmathbb{Y}$ the class of processes that can be written as a difference of two processes from $\amsmathbb{Y}^+$. 
\end{definition}

\begin{proposition}\label{prop:ClassY_bound}
  For any $X\in\amsmathbb{Y}$ and any $\Lambda=(\Lambda_{ij})_{i\neq j}$ satisfying Assumption~\ref{ass:cumulative_tr} it holds that 
  \begin{align*}
    \sup_{0\leq t\leq s\leq T}\sup_{\omega\in\Omega}\amsmathbb{E}_{t,i}^{\Lambda}\big[|X(s)-X(t)|\big]<\infty.
  \end{align*}
\end{proposition}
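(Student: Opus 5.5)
The plan is to reduce to the case $X\in\amsmathbb{Y}^+_m$ and then use the exponential moment bound of Proposition~\ref{prop:exp_bound_N}. Write $X=X^1-X^2$ with $X^1\in\amsmathbb{Y}^+_{m_1}$ and $X^2\in\amsmathbb{Y}^+_{m_2}$, with dominating functions $g_1,g_2$. By the triangle inequality,
\begin{align*}
  |X(s)-X(t)|\leq |X^1(s)-X^1(t)|+|X^2(s)-X^2(t)|.
\end{align*}
It therefore suffices to bound $\sup_{0\leq t\leq s\leq T}\sup_{\omega}\amsmathbb{E}^{\Lambda}_{t,i}[|X^k(s)-X^k(t)|]$ for each $k$. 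The bound for $X$ then follows by adding the two finite quantities.

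Fix $X\in\amsmathbb{Y}^+_m$ with function $g$. Since $X$ is non-decreasing, $|X(s)-X(t)|=X(s)-X(t)\leq g(t,s)(1+N(s)-N(t))^m$ pointwise in $\omega$. The function $g$ is bounded on compact sets, so $G_T:=\sup_{0\leq t\leq s\leq T}g(t,s)<\infty$. Next we use the elementary inequality $(1+x)^m\leq m!\,e^{x}\,e$ for $x\geq 0$, or more simply $(1+x)^m\leq C_m e^{x}$ with $C_m:=\sup_{x\geq0}(1+x)^me^{-x}<\infty$. This gives
\begin{align*}
  \amsmathbb{E}^{\Lambda}_{t,i}\big[|X(s)-X(t)|\big](\omega)\leq G_T\,C_m\,\amsmathbb{E}^{\Lambda}_{t,i}\big[e^{N(s)-N(t)}\big](\omega).
\end{align*}
Applying Proposition~\ref{prop:exp_bound_N} with $c=1$, the right-hand side is bounded uniformly in $0\leq t\leq s\leq T$ and $\omega\in\Omega$. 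Taking suprema concludes the argument.

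The only delicate point is making sure the measurability and the kernel expectation are legitimate. The random variable $X(s)-X(t)$ is $\mathcal{F}_s$-measurable and non-negative, so its $\amsmathbb{P}^{\Lambda}_{t,i}$-expectation is well defined in $[0,\infty]$. Monotonicity of the integral under the kernel then justifies the comparison, so no integrability needs to be assumed in advance. I expect no real obstacle. The substantive input is Proposition~\ref{prop:exp_bound_N}, whose bound $\mathcal{E}_\alpha(T)$ is uniform in $\omega$ and in $(t,s)$, and therefore independent of the particular $\Lambda$ beyond the dominating $\alpha_{ij}$.
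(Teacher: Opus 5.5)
Your proof is correct and follows the paper's own argument: you split $X$ into its two $\amsmathbb{Y}^+$ components, dominate $(1+N(s)-N(t))^m$ by an exponential, and invoke Proposition~\ref{prop:exp_bound_N}, whose bound is uniform in $(t,s)$ and $\omega$. The only cosmetic difference is that you use $(1+x)^m\leq C_m e^{x}$ with $c=1$, where the paper uses $(1+x)^m\leq e^{mx}$ with $c=m$.
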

\begin{proof}
  This follows directly from the fact that $X$ is the difference of two processes from $\amsmathbb{Y}^+$, the two inequalities $(1+x)^m\leq e^{mx}$ and $(x+y)^m\leq 2^{m-1}(x^m+y^m)$ and Proposition~\ref{prop:exp_bound_N}.
\end{proof}

We say that a process $X$ is bounded on finite intervals if the mapping $X:I\times\Omega\rightarrow\amsmathbb{R}$ is bounded for all finite intervals $I\subset[0,\infty)$.

\begin{theorem}\label{th:optional_projection}
  Assume that $C\in\amsmathbb{Y}$ and that $Y$ is bounded. The following two statements are equivalent:
  \begin{enumerate}
    \item[(i)] The process $X(t)=\amsmathbb{E}^{\Lambda}_{t,Z(t)}[L(t,T)]$ has state-wise decomposition $\ti{X}_i(t)=\amsmathbb{E}_{t,i}^{\Lambda}[L(t,T)]$ and satisfies 
    \begin{align*}
      \forall t\in [0,T]: X(t)=\amsmathbb{E}^{\ul{\Lambda}}[L(t,T)|\mathcal{F}_t]\quad\amsmathbb{P}^{\ul{\Lambda}}-\text{a.s.}
    \end{align*}
    \item[(ii)] The process $X$ is càdlàg, of finite variation, bounded on finite intervals and a solution of the BSDE
    \begin{align}\label{eq:BSDE}
      X(\mathrm{d}t)&= - C(\mathrm{d}t) + \sum_{i,j:i\neq j} R_{ij}(t)(N_{ij}(\mathrm{d}t)-I_i(t-)\Lambda_{ij}(\mathrm{d}t)),\quad X(T)=Y,
    \end{align}
    where $R_{ij}(t):=\ti{\Delta} C_{ij}(t)+X_{ij}(t)-X_i(t)$.
  \end{enumerate}
\end{theorem}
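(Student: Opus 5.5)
We treat the two implications separately. Both rest on the auxiliary process
\begin{align*}
  W(t):=\amsmathbb{E}^{\Lambda}_{t,Z(t)}[L(0,T)]=C(t)-C(0)+X(t),
\end{align*}
which is well defined by Proposition~\ref{prop:ClassY_bound} and the boundedness of $Y$. (We read $\ul{\Lambda}$ in (i) as the fixed $\Lambda$ generating $\amsmathbb{P}^{\Lambda}$.)

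\emph{(i)$\Rightarrow$(ii).} First we check the integrability hypothesis of Proposition~\ref{prop:optional_projection_path}. Since $|L(t,T)|\leq |C(T)-C(t)|+\sup|Y|$, Proposition~\ref{prop:ClassY_bound} gives $\sup_{t,\omega}\amsmathbb{E}^{\Lambda}_{t,i}[|L(t,T)|]<\infty$. Hence $X$ is càdlàg, of finite variation and bounded on $[0,T]\times\Omega$. Next we apply Proposition~\ref{prop:MG-representation} to the random variable $L(0,T)$ and obtain
\begin{align*}
  W(t)=L(0,T)-\int_{(t,T]}\sum_{i,j:i\neq j}(\ti{W}_j(s)-\ti{W}_i(s))\big(N_{ij}(\mathrm{d}s)-I_i(s-)\Lambda_{ij}(\mathrm{d}s)\big).
\end{align*}
Substituting $W=C-C(0)+X$ and using $X(T)=Y$ yields the differential form \eqref{eq:BSDE}. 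It remains to identify the integrand on $\{I_i(s-)=1\}$. By Proposition~\ref{prop:Rep_adapted_process_J}(i) we have $\ti{W}_j-\ti{W}_i=\ti{C}_j-\ti{C}_i+\ti{X}_j-\ti{X}_i$, and on $\{I_i(s-)=1\}$ this equals $C_{ij}-C_i+X_{ij}-X_i=R_{ij}$. This is the same identification already carried out in the proof of Proposition~\ref{prop:Rep_dyn_J}. Finally, the claimed state-wise decomposition $\ti{X}_i(t)=\amsmathbb{E}^{\Lambda}_{t,i}[L(t,T)]$ follows from Proposition~\ref{prop:conditional_expectation}(iii) applied to $W$, together with linearity.

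\emph{(ii)$\Rightarrow$(i).} Let $X$ be any càdlàg, finite-variation solution of \eqref{eq:BSDE} that is bounded on finite intervals. Integrating \eqref{eq:BSDE} over $(t,T]$ gives
\begin{align*}
  X(t)=L(t,T)-\int_{(t,T]}\sum_{i,j:i\neq j}R_{ij}(s)\big(N_{ij}(\mathrm{d}s)-I_i(s-)\Lambda_{ij}(\mathrm{d}s)\big).
\end{align*}
We fix $i$ and take $\amsmathbb{E}^{\Lambda}_{t,i}$ of both sides. The key step is to show that the stochastic integral has zero $\amsmathbb{E}^{\Lambda}_{t,i}$-expectation, i.e.\ that $\Lambda_{ij}$ compensates $N_{ij}$ under the kernel (Proposition~4.2 of~\cite{Christiansen&Furrer2024}, as used in Proposition~\ref{prop:exp_bound_N}). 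For this we need $\amsmathbb{E}^{\Lambda}_{t,i}\int_{(t,T]}|R_{ij}(s)|\,(N_{ij}+\Lambda_{ij})(\mathrm{d}s)<\infty$. This is the main obstacle. I would bound $|X_{ij}-X_i|$ by $2\sup|X|$ and $|\ti{\Delta}C_{ij}(s)|$ by the $\amsmathbb{Y}$-growth bound $g(s-,s)(1+1)^m$ coming from a single jump. I would then control $\int N_{ij}(\mathrm{d}s)$ and $\int\Lambda_{ij}(\mathrm{d}s)$ by $N(T)-N(t)$ and $\alpha_{ij}(T)$ respectively, with exponential moments supplied by Proposition~\ref{prop:exp_bound_N}. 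The predictable integrand is handled through its $i$-predictable state-wise version, via Proposition~\ref{prop:Rep_dyn_J}(ii).

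Granting that, we obtain $\amsmathbb{E}^{\Lambda}_{t,i}[X(t)]=\amsmathbb{E}^{\Lambda}_{t,i}[L(t,T)]$. Proposition~\ref{prop:conditional_expectation}(iii) identifies the left-hand side with $\ti{X}_i(t)$, which gives the claimed state-wise decomposition. Summing against $I_i(t)$ gives $X(t)=\amsmathbb{E}^{\Lambda}_{t,Z(t)}[L(t,T)]$. Proposition~\ref{prop:conditional_expectation}(i) then yields $X(t)=\amsmathbb{E}^{\Lambda}[L(t,T)|\mathcal{F}_t]$ $\amsmathbb{P}^{\Lambda}$-a.s., which completes (i).
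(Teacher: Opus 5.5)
Your proposal is correct and follows the paper's proof essentially step for step. For (i)$\Rightarrow$(ii) both use the auxiliary process $W=C-C(0)+X$ with Propositions~\ref{prop:optional_projection_path} and~\ref{prop:MG-representation}, and for (ii)$\Rightarrow$(i) both take $\amsmathbb{E}^{\Lambda}_{t,i}$ of the integrated BSDE and use the compensator property together with Proposition~\ref{prop:conditional_expectation}. The differences are only cosmetic: you get $\ti{W}_j-\ti{W}_i=\ti{\Delta}C_{ij}+X_{ij}-X_i$ on $\{I_i(s-)=1\}$ from linearity of the state-wise decomposition rather than by evaluating the jump term $\amsmathbb{E}^{\Lambda}_{t,i}[\sum_{j,k}\ti{\Delta}C_{jk}(t)\Delta N_{jk}(t)]$ explicitly, and you spell out the integrability of $R_{ij}$ that the paper only asserts.
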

\begin{proof}
  (i) $\Rightarrow$ (ii): Let $X(t)=\amsmathbb{E}^{\Lambda}_{t,Z(t)}[L(t,T)]$. By definition $\ti{X}_i(t)=\amsmathbb{E}_{t,i}^{\ul{\Lambda}}[L(t,T)]$ is the state-wise decomposition of $X$ and since $C\in\amsmathbb{Y}$ and $Y$ is bounded, Proposition~\ref{prop:ClassY_bound} yields that $\ti{X}_i(t)$ (and thus $X$) is bounded on finite intervals. As a consequence, Proposition~\ref{prop:optional_projection_path} yields that $X$ is càdlàg and of finite variation and we can apply Proposition~\ref{prop:MG-representation} to conclude that $W(t):=\amsmathbb{E}_{t,Z(t)}^{\Lambda}[L(0,T)]$ satisfies
  \begin{align*}
    W(t)=L(0,T)-\sum_{i,j:i\neq j}\int_{(t,T]} \ti{W}_j(s)-\ti{W}_i(s) (N_{ij}(\mathrm{d}s)-I_i(s-)\ul{\Lambda}_{ij}(\mathrm{d}s)),\quad t\in[0,T]
  \end{align*}
  where $\ti{W}_i(t)=\amsmathbb{E}_{t,i}^{\Lambda}[L(0,T)]$. By applying Proposition~\ref{prop:conditional_expectation}(iii) we obtain
  \begin{align*}
    W(t)=\amsmathbb{E}_{t,Z(t)}^{\Lambda}[Y+C(T)-C(0)]=C(t)-C(0)+\amsmathbb{E}_{t,Z(t)}^{\Lambda}[Y+C(T)-C(t)]=C(t)-C(0)+X(t).
  \end{align*}
  Equating this expression for $W$ with the previous one, we get 
  \begin{align}\label{eq:MG-representation}
    X(t)=Y+C(T)-C(t)-\sum_{i,j:i\neq j}\int_{(t,T]} \ti{W}_j(s)-\ti{W}_i(s) (N_{ij}(\mathrm{d}s)-I_i(s-)\ul{\Lambda}_{ij}(\mathrm{d}s)).
  \end{align}
  By Proposition~\ref{prop:Rep_dyn_J}(i) and Proposition~\ref{prop:conditional_expectation}(ii) we have for all $i\in\mathcal{J}$ that 
  \begin{align*}
    \ti{W}_i(t)=&\amsmathbb{E}_{t,i}^{\Lambda}[L(0,T)]=\int_{(0,t]}\sum_{j\in\mathcal{J}}I_j(s-)C_j(\mathrm{d}t)+\int_{(0,t)}\sum_{j,k:j\neq k}\ti{\Delta} C_{jk}(s) N_{jk}(\mathrm{d}s)\\
    &+\amsmathbb{E}_{t,i}^{\ul{\Lambda}}\Bigg[\sum_{j,k:j\neq k}\ti{\Delta} C_{jk}(t)\Delta N_{jk}(t)\Bigg]+\ti{X}_i(t),\quad t\in[0,T],
  \end{align*}
  and utilising that $\Delta N_{jk}(t)=I_j(t-)I_k(t)$ Proposition~4.4(i) of~\cite{Christiansen&Furrer2024} yields
  \begin{align*}
    \amsmathbb{E}_{t,i}^{\ul{\Lambda}}\Bigg[\sum_{j,k:j\neq k}\ti{\Delta} C_{jk}(t)\Delta N_{jk}(t)\Bigg]=\sum_{j,k:j\neq k}\ti{\Delta} C_{jk}(t)\amsmathbb{E}_{t,i}[I_j(t-)I_k(t)]=\sum_{j:j\neq i}I_j(t-)\ti{\Delta} C_{ji}(t).
  \end{align*}
  These two identities after~(\ref{eq:MG-representation}) yield
  \begin{align*}
    I_i(s-)(\ti{W}_j(s)-\ti{W}_i(s))=I_i(s-)(\ti{\Delta}C_{ij}(s)+\ti{X}_j(s)-\ti{X}_i(s))=I_i(s-)(\ti{\Delta}C_{ij}(s)+X_{ij}(s)-X_i(s)),
  \end{align*}
  for all $s\in(0,T]$, which in conjunction with (\ref{eq:MG-representation}) allows us to conclude that $X$ is a solution of (\ref{eq:BSDE}). 

  (ii) $\Rightarrow$ (i): Using (\ref{eq:BSDE}) we get
  \begin{align*}
    X(t)=L(t,T)-\sum_{i,j:i\neq j}\int_{(t,T]}R_{ij}(s)(N_{ij}(\mathrm{d}s)-I_i(s-)\Lambda_{ij}(\mathrm{d}s)),\quad t\in [0,T].
  \end{align*}
  Since $C\in\amsmathbb{Y}$ and $X$ is bounded on finite intervals, $R_{ij}(t)$ is sufficiently integrable in order for Proposition~4.2 of~\cite{Christiansen&Furrer2024} to hold. Thus taking the expectations with respect to $\amsmathbb{P}^{\Lambda}_{t,i}$ on both sides and applying Proposition~\ref{prop:conditional_expectation}(iii) yields
  \begin{align*}
    \forall i\in\mathcal{J}:\ti{X}_i(t)=\amsmathbb{E}^{\Lambda}_{t,i}[X(t)]=\amsmathbb{E}_{t,i}^{\Lambda}[L(t,T)]\quad\Rightarrow\quad X(t)=\amsmathbb{E}_{t,Z(t)}^{\Lambda}[L(t,T)],\quad t\in [0,T],
  \end{align*}
  which in conjunction with Proposition~\ref{prop:conditional_expectation}(i) yields
  \begin{align*}
    X(t)=\amsmathbb{E}[L(t,T)|\mathcal{F}_t],\quad\amsmathbb{P}^{\Lambda}-\text{a.s.}\quad\forall t\in[0,T].
  \end{align*}
\end{proof}

Often we are also interested in the optional projection of a discounted version of the process $L(t,T)$ given by:
\begin{align*}
  L^*(t,T):=\int_{(t,T]}\frac{\mathcal{E}_{\Phi}(t)}{\mathcal{E}_{\Phi}(s)}C(\mathrm{d}s)+\frac{\mathcal{E}_{\Phi}(t)}{\mathcal{E}_{\Phi}(T)}Y,\quad t\in (0,T],
\end{align*}
where the process $\mathcal{E}_{\Phi}$ is defined as the generalised exponential
\begin{align*}
  \mathcal{E}_{\Phi}(\mathrm{d}t)=\mathcal{E}_{\Phi}(t-)\Phi(\mathrm{d}t),\quad \mathcal{E}_{\Phi}(0)=1\quad\Leftrightarrow\quad\mathcal{E}_{\Phi}(t):=e^{\Phi^c(t)-\Phi^c(0)}\prod_{0<s\leq t}(1+\Delta \Phi(s)).
\end{align*}
We assume that $\Phi$ is a predictable, càdlàg, finite variation process that is bounded on finite intervals and satisfies $\inf_{(t,\omega)\in[0,T]\times\Omega}\Delta\Phi(t,\omega)>-1$. Since this implies that $\mathcal{E}_{\Phi}$ and $\mathcal{E}_{\Phi}^{-1}$ are bounded on finite intervals, Proposition~\ref{prop:optional_projection_path} can be utilised to conclude that the process 
\begin{align*}
  X(t)=\amsmathbb{E}^{\Lambda}_{t,Z(t)}[L^*(t,T)]
\end{align*}
indeed is an optional projection of $L^*(t,T)$ with paths of finite variation. Furthermore, we obtain the following analogue to Theorem~\ref{th:optional_projection}:
\begin{theorem}\label{th:optional_projection_discounted}
  Assume that $C\in\amsmathbb{Y}$, that $Y$ is bounded, and that $\Phi$ is a predictable, càdlàg finite variation process bounded on finite intervals such that $\inf_{(t,\omega)\in[0,T]\times\Omega}\Delta \Phi(t,\omega)>-1$. The following two statements are equivalent:
  \begin{enumerate}
    \item[(i)] The process $X(t)=\amsmathbb{E}_{t,Z(t)}^{\Lambda}[L(t,T)]$ has state-wise decomposition $\ti{X}_i(t)=\amsmathbb{E}^{\Lambda}_{t,i}[L^*(t,T)]$ and satisfies
    \begin{align*}
      \forall t\in [0,T]: X(t)=\amsmathbb{E}^{\ul{\Lambda}}[L^*(t,T)|\mathcal{F}_t]\quad\amsmathbb{P}^{\ul{\Lambda}}-\text{a.s.}
    \end{align*}
    \item[(ii)] The process $X$ is càdlàg, of finite variation, bounded on finite intervals and a solution of the BSDE
    \begin{align}\label{eq:BSDE_discounted}
      X(\mathrm{d}t)&=X(t-)\Phi(\mathrm{d}t) - C(\mathrm{d}t) + \sum_{i,j:i\neq j} R_{ij}(t)(N_{ij}(\mathrm{d}t)-I_i(t-)\Lambda_{ij}(\mathrm{d}t)),\quad X(T)=Y,
    \end{align}
     where $R_{ij}(t):=\ti{\Delta} C_{ij}(t)+X_{ij}(t)-X_i(t)$.
  \end{enumerate}
\end{theorem}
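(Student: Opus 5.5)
The plan is to reduce Theorem~\ref{th:optional_projection_discounted} to Theorem~\ref{th:optional_projection} by undiscounting. Put $\bar X(t):=\mathcal{E}_\Phi(t)^{-1}X(t)$, $\bar C(\mathrm{d}s):=\mathcal{E}_\Phi(s)^{-1}C(\mathrm{d}s)$ with $\bar C(0)=0$, and $\bar Y:=\mathcal{E}_\Phi(T)^{-1}Y$. Since $\mathcal{E}_\Phi$ is predictable, hence $\mathcal{F}_{t-}$-measurable at time $t$, Proposition~\ref{prop:conditional_expectation}(ii) lets us pull $\mathcal{E}_\Phi(t)$ out of $\amsmathbb{E}^{\Lambda}_{t,i}$. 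This gives
\begin{align*}
\amsmathbb{E}^{\Lambda}_{t,i}[L^*(t,T)]=\mathcal{E}_\Phi(t)\,\amsmathbb{E}^{\Lambda}_{t,i}[\bar C(T)-\bar C(t)+\bar Y],
\end{align*}
so statement (i) for $X$ is equivalent to statement (i) of Theorem~\ref{th:optional_projection} for $\bar X$, with data $(\bar C,\bar Y)$. The conditional expectation identity under $\amsmathbb{P}^{\Lambda}$ transfers in the same way. The state-wise decomposition transfers because, by Proposition~\ref{prop:Rep_adapted_process_J}(ii), $\mathcal{E}_\Phi$ equals its own counterfactual on each $I_i(t-)$-piece.

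Next we check the hypotheses of Theorem~\ref{th:optional_projection}. $\bar Y$ is bounded because $\mathcal{E}_\Phi^{-1}$ is bounded on $[0,T]$, which uses $\inf\Delta\Phi>-1$ and that $\Phi$ is bounded on finite intervals. For $\bar C$, write $C=C^+-C^-$ with $C^\pm\in\amsmathbb{Y}^+$ and set $\bar C^\pm(\mathrm{d}s):=\mathcal{E}_\Phi(s)^{-1}C^\pm(\mathrm{d}s)$. Each $\bar C^\pm$ is adapted, non-decreasing and càdlàg, and its increments are bounded by $K\,g(t,s)(1+N(s)-N(t))^m$ with $K:=\sup_{[0,T]\times\Omega}\mathcal{E}_\Phi^{-1}$. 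Hence $\bar C\in\amsmathbb{Y}$ on $[0,T]$; if needed we freeze $\mathcal{E}_\Phi$ after $T$ so that the bound also holds globally. Theorem~\ref{th:optional_projection} then says that (i) is equivalent to $\bar X$ being càdlàg, of finite variation, bounded on finite intervals and solving
\begin{align*}
\bar X(\mathrm{d}t)=-\bar C(\mathrm{d}t)+\sum_{i\neq j}\bar R_{ij}(t)\big(N_{ij}(\mathrm{d}t)-I_i(t-)\Lambda_{ij}(\mathrm{d}t)\big),\qquad \bar X(T)=\bar Y,
\end{align*}
where $\bar R_{ij}=\ti\Delta\bar C_{ij}+\bar X_{ij}-\bar X_i$.

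It remains to show that this BSDE for $\bar X$ is equivalent to \eqref{eq:BSDE_discounted} for $X=\mathcal{E}_\Phi\bar X$. The path regularity properties (càdlàg, finite variation, bounded on finite intervals) transfer directly, since $\mathcal{E}_\Phi^{\pm1}$ is bounded and of finite variation. Integration by parts gives
\begin{align*}
X(\mathrm{d}t)=\bar X(t-)\mathcal{E}_\Phi(\mathrm{d}t)+\mathcal{E}_\Phi(t)\bar X(\mathrm{d}t)=X(t-)\Phi(\mathrm{d}t)+\mathcal{E}_\Phi(t)\bar X(\mathrm{d}t),
\end{align*}
and then $\mathcal{E}_\Phi(t)\bar C(\mathrm{d}t)=C(\mathrm{d}t)$. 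For the jump coefficients, $\mathcal{E}_\Phi$ is predictable and $i$-predictable in the relevant sense, so it factors through the counterfactual constructions \eqref{eq:CF_sojourn} and \eqref{eq:CF_jump}. This gives $\mathcal{E}_\Phi(t)(\bar X_{ij}-\bar X_i)(t)=X_{ij}(t)-X_i(t)$ and $\mathcal{E}_\Phi(t)\ti\Delta\bar C_{ij}(t)=\ti\Delta C_{ij}(t)$, so $\mathcal{E}_\Phi(t)\bar R_{ij}(t)=R_{ij}(t)$. The terminal conditions correspond as $X(T)=Y$ if and only if $\bar X(T)=\bar Y$. The converse direction uses the same computation with $\mathcal{E}_\Phi^{-1}$, applying integration by parts to $\bar X=\mathcal{E}_\Phi^{-1}X$.

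I expect the main obstacle to be the jump-coefficient identity $\mathcal{E}_\Phi\,\ti\Delta\bar C_{ij}=\ti\Delta C_{ij}$. Here $\ti\Delta C_{ij}$ is the jump of $C$ at a counterfactual transition time, and $\bar C$ integrates against $\mathcal{E}_\Phi(s)^{-1}$ evaluated at $s$, not at $s-$. So one has to verify that at a jump time $t$ of $N$ the factor $\mathcal{E}_\Phi(t)$ is unaffected by the added jump. This holds because $\mathcal{E}_\Phi(t)$ is $\mathcal{F}_{t-}$-measurable, and I would argue it through the jump-count decomposition of Proposition~\ref{prop:Rep_dyn_N}(i) combined with Proposition~\ref{prop:Rep_dyn_N}(ii) for $\mathcal{E}_\Phi$.
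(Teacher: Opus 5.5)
Your proposal is correct and follows the paper's proof essentially step for step. The paper likewise sets $W=X/\mathcal{E}_\Phi$, $C^*(\mathrm{d}s)=\mathcal{E}_\Phi(s)^{-1}C(\mathrm{d}s)$ and $Y^*=Y/\mathcal{E}_\Phi(T)$, applies Theorem~\ref{th:optional_projection} to the undiscounted data, and passes between the two BSDEs by integration by parts, using predictability of $\mathcal{E}_\Phi$ to get $W_i=X_i/\mathcal{E}_{\Phi,i}$, $W_{ij}=X_{ij}/\mathcal{E}_{\Phi,i}$ and hence $\mathcal{E}_\Phi R^W_{ij}=R_{ij}$ on $\{Z(t-)=i\}$; you are in fact more explicit than the paper about why $\bar C\in\amsmathbb{Y}$ and why the counterfactual jump $\ti{\Delta} C_{ij}$ scales correctly, both of which the paper simply asserts.
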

\begin{proof}
  Let $Y^*:=\frac{Y}{\mathcal{E}_{\Phi}(t)}$ and $C^*(t):=C(0)+\int_{(0,T]}\frac{1}{\mathcal{E}_{\Phi}(s)}C(\mathrm{d}s)$. Due to our assumptions on $\Phi$, we have that $\frac{1}{\mathcal{E}_{\Phi}(t)}$ is strictly positive and bounded on finite intervals, which implies that $Y^*$ and $C^*$ satisfy the assumptions of Theorem~\ref{th:optional_projection}.

  (ii) $\Rightarrow$ (i): Let $W(t):=\frac{X(t)}{\mathcal{E}_{\Phi}(t)}$. By Ito's formula, we get
  \begin{align*}
    W(\mathrm{d}t)=&-\frac{X(t-)}{\mathcal{E}_{\Phi}(t)}\Phi(\mathrm{d}t)+\frac{1}{\mathcal{E}_{\Phi}(t)}X(\mathrm{d}t)\\
    =&-C^*(\mathrm{d}t) + \sum_{i,j:i\neq j} \frac{1}{\mathcal{E}_{\Phi}(t)}R_{ij}(t)(N_{ij}(\mathrm{d}t)-I_i(t-)\Lambda_{ij}(\mathrm{d}t)),\quad t\in (0,T].
  \end{align*}
  As $\mathcal{E}_{\Phi}$ is predictable and $W$ is optional, Proposition~\ref{prop:Rep_adapted_process_J} yields that $W_i(t)=\frac{X_i(t)}{\mathcal{E}_{\Phi,i}(t)}$ and $W_{ij}(t)=\frac{X_{ij}(t)}{\mathcal{E}_{\Phi,i}(t)}$ and thus we obtain
  \begin{align*}
    I_i(t-)\frac{1}{\mathcal{E}_{\phi}(t)}R_{ij}(t)=I_i(t-)\bigg(\ti{\Delta}C^*_{ij}(t)+W_{ij}(t)-W_i(t)\bigg)=I_i(t-)R_{ij}^W(t),
  \end{align*}
  and we can conclude that $W$ satisfies the BSDE
  \begin{align*}
    W(\mathrm{d}t)=-C^*(\mathrm{d}t)-\sum_{i,j:i\neq j}R_{ij}^W(t)(N_{ij}(\mathrm{d}t)-I_i(t-)\Lambda_{ij}(\mathrm{d}t)),\quad W(T)=Y^*.
  \end{align*}
  Theorem~\ref{th:optional_projection} in conjunction with the relationship $X(t)=\mathcal{E}_{\Phi}(t)W(t)$ and Proposition~\ref{prop:conditional_expectation}(iii) now yields the desired result.

  (i) $\Rightarrow$ (ii): By Proposition~\ref{prop:conditional_expectation}(iii) we have that 
  \begin{align*}
    X(t)=\amsmathbb{E}_{t,Z(t)}^{\Lambda}[L(t,T)]=\mathcal{E}_{\Phi}(t)\amsmathbb{E}_{t,Z(t)}[Y^*+C^*(T)-C^*(t)].
  \end{align*}
  By Theorem~\ref{th:optional_projection} we obtain the above BSDE for $W(t):=\amsmathbb{E}_{t,Z(t)}[Y^*+C^*(T)-C^*(t)]$. Since $X(t)=\mathcal{E}_{\Phi}(t)W(t)$, integration by parts yields the desired result.
\end{proof}

Theorem~\ref{th:optional_projection} and Theorem~\ref{th:optional_projection_discounted} provide a one-to-one correspondence between a BSDE and an optional projection. This turns out to be very useful for two reasons: The first reason is that one can establish existence and uniqueness of the optional projection by establishing existence and uniqueness of the BSDE, which is key when considering the case where the process $C$ or the cumulative transition rates $\Lambda$ depend on the optional projection itself. The second reason is that by using the jump-count representation, it is possible to show that the BSDE is equivalent to an infinite-dimensional system of Lebesgue-Stieltjes integral equations. This allows, in principle, to calculate the optional projection numerically and reduces the question of existence and uniqueness of the BSDE to the question of existence and uniqueness of the infinite-dimensional system of Lebesgue-Stieltjes integral equations.

By Proposition~\ref{prop:Rep_adapted_process_N} the processes $C$ and $(\Lambda_{ij})_{i\neq j}$ each have a unique jump-count representation $(C^n)_{n\in\amsmathbb{N}_0}$ and $(\Lambda_{ij}^n)_{n\in\amsmathbb{N}_0}$. Before we state the system of integral equations properly, we have to define counterfactual mappings in the spirit of $\omega_i^t$, which have $\Omega^n$ as their domain. For $i\in\mathcal{J}$ and $n\in\amsmathbb{N}_0$ define the mapping 
\begin{align*}
  \omega^n_{i}:\Omega^n\rightarrow\bar{\Omega}^{n},\quad \omega^n_{i}(\omega^{n}):=
  \begin{cases}
    ((t_{\ell},z_{\ell})_{\ell\leq n-1},(t_n,i)) & \text{for } z_{n-1}\neq i\\
    ((t_{\ell},z_{\ell})_{\ell\leq n-1},(\infty,\nabla)) & \text{for } z_{n-1}=i
  \end{cases},
\end{align*}
which for any given sequence of $n$ jumps sets $z_n=i$ whenever $z_{n-1}\neq i$, while it removes the $n$'th jump if $z_{n-1}=i$. Thus for any $\omega\in\Omega$ with $\omega=(\omega^n,\ldots)$ for some $\omega^n\in\Omega^n$ it holds that $\omega^n_i=\xi^n(\omega^{t_n}_i)$. Furthermore for $i,j\in\mathcal{J}$ with $i\neq j$ and $n\in\amsmathbb{N}_0$ we define the mapping
\begin{align*}
  \omega^{n+1}_{t,ij}:[0,\infty)\times\Omega^n\rightarrow\bar{\Omega}^{n+1},\quad \omega^{n+1}_{t,ij}(\omega^{n}):=
  \begin{cases}
    ((t_{\ell},z_{\ell})_{\ell\leq n-1},(t_n,i),(t,j)) & \text{for } z_{n-1}\neq i,\,t>t_{n}\\
    ((t_{\ell},z_{\ell})_{\ell\leq n-1},(t,j),(\infty,\nabla)) & \text{for } z_{n-1}=i,\,t>t_{n}
  \end{cases},
\end{align*}
which for any given sequence of $n$ jumps sets $z_n=i$ whenever $z_{n-1}\neq i$ and then adds a jump to $j$ at time $t$, while it removes the $n$'th jump if $z_{n-1}=i$ and then adds a jump to $j$ at time $t$. Thus for any $\omega\in\Omega$ with $\omega=(\omega^n,\ldots)$ for some $\omega^n\in\Omega^n$ it holds that $\omega^{n+1}_{t,ij}=\xi^{n+1}(\omega^{t_n,t}_{i,j})$.

\begin{proposition}\label{prop:BSDE_equiv_ODE}
  Let $X$ be an adapted càdlàg process of finite variation with jump-count representation $(f^n)_{n\in\amsmathbb{N}_0}$. Then $X$ is the unique solution of (\ref{eq:worst_case_BSDE}) if and only if $(f^n)_{n\in\amsmathbb{N}_0}$ is the unique solution of the system 
  \begin{align}\label{eq:system_of_equations}
      \begin{split}
      f^n(\mathrm{d}t,\omega^n)=&-C^n(\mathrm{d}t,\omega^n)-\sum_{j:j\neq z_n}R_{z_nj}^{n+1}(t,\omega^n)\Lambda^n_{z_n j}(\mathrm{d}t,\omega^n),\quad t\in(0,T],\\
      f^n(T,\omega^n)=&Y^n(\omega^n),\quad \omega^n\in\Omega^n,\,n\in\amsmathbb{N}_0     
    \end{split}
    \end{align}
    and where $R_{ij}^{n+1}(t,\omega^n):=\ti{\Delta}C^{n+1}_{ij}(t)+f^{n+1}(t,\omega^{n+1}_{t,ij})-f^n(t,\omega^n_i)$.
\end{proposition}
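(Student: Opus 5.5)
The statement refers to (\ref{eq:worst_case_BSDE}); at this point I read it as the BSDE (\ref{eq:BSDE}) of Theorem~\ref{th:optional_projection}, written in jump-count form. The plan is to show that, for any adapted càdlàg finite variation $X$ with jump-count representation $(f^n)_{n\in\amsmathbb{N}_0}$, the following are equivalent: $X$ solves the BSDE, and $(f^n)$ solves (\ref{eq:system_of_equations}) on the uniqueness domain of Proposition~\ref{eq:Rep_N_uniqueness}, namely $\omega^n\in\Omega^n$, $t\geq t_n$. Once this is done, the bijection $X\leftrightarrow(f^n)$ of Proposition~\ref{prop:Rep_adapted_process_N}(i) transfers existence and uniqueness in both directions. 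By Remark~\ref{rem:path_properties}, the path properties also correspond under this bijection.

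\textbf{Step 1: rewrite both sides of the BSDE in jump-count form.} The left-hand side $X(\mathrm{d}t)$ is given by Proposition~\ref{prop:Rep_dyn_N}(i). On the right-hand side, I apply the same proposition to $C$ with $(C^n)$, producing a drift part $C^n(\mathrm{d}t,\xi^n)$ and jump parts $\ti{\Delta}C^{n+1}_{\zeta_nj}$. For $\Lambda_{ij}$, which is predictable, I use Proposition~\ref{prop:Rep_dyn_N}(ii). The $i$-predictability in Assumption~\ref{ass:cumulative_tr}(i) together with (\ref{eq:i-predictable}) lets me replace $\xi^n$ by $\omega^n_i=\xi^n(\omega^{\tau_n}_i)$ wherever this is convenient. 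Finally, I express $X_{ij}$ and $X_i$ through (\ref{eq:CF_n}). On $I^n(t-)I_i(t-)$ these equal $f^{n+1}(t,\omega^{n+1}_{t,ij})$ and $f^n(t,\omega^n_i)$, so $I^n(t-)I_i(t-)R_{ij}(t)=I^n(t-)I_i(t-)R^{n+1}_{ij}(t,\xi^n)$.

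\textbf{Step 2: match the parts.} On $\{\tau_n<t\leq\tau_{n+1}\}$ the current state is $\zeta_n$, so $I_i(t-)=\mathds{1}_{(\zeta_n=i)}$. The BSDE then splits into two identities. The first is a jump identity: the $N_{\zeta_nj}(\mathrm{d}t)$ terms coincide, since $f^{n+1}(t,\omega^{n+1}_{t,\zeta_nj})-f^n(t,\xi^n)$ equals $\ti{\Delta}C^{n+1}_{\zeta_nj}+R^{n+1}_{\zeta_nj}-\ti{\Delta}C^{n+1}_{\zeta_nj}$ once $\omega^n_{\zeta_n}=\xi^n$ is used. The second is a predictable identity:
\begin{align*}
f^n(\mathrm{d}t,\xi^n)=-C^n(\mathrm{d}t,\xi^n)-\sum_{j\neq\zeta_n}R^{n+1}_{\zeta_nj}(t,\xi^n)\Lambda^n_{\zeta_nj}(\mathrm{d}t,\xi^n).
\end{align*}
The jump identity holds automatically, so the BSDE holds if and only if this predictable identity holds on each random interval $(\tau_n,\tau_{n+1}]$. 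The terminal condition also transfers, since $X(T)=Y$ if and only if $f^n(T,\omega^n)=Y^n(\omega^n)$ for $t_n\leq T$.

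\textbf{Step 3: pass from random intervals to deterministic arguments.} For the direction from the BSDE to the system, I evaluate the BSDE along the paths $\omega=(\omega^n,(\infty,\nabla),\dots)$ with $\omega^n\in\Omega^n$, exactly as in the proof of Proposition~\ref{eq:Rep_N_uniqueness}. Along these paths $\tau_{n+1}=\infty$, so the predictable identity yields (\ref{eq:system_of_equations}) for all $t\in(t_n,T]$. For $t<t_n$, both sides are determined by the conventions (\ref{eq:fn_uniqueness_condition_optional})--(\ref{eq:fn_uniqueness_condition_predictable}), and I take the system to be understood on that domain. For the converse, I insert $\xi^n(\omega)$ into the system and reassemble the two identities through Proposition~\ref{prop:Rep_dyn_N}.

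I expect the main obstacle to be the careful bookkeeping in Step 1. Points of accessible jumps, where $\Delta\Lambda>0$, must be handled correctly. On a jump of $N$ at $t$, the jump part of $X$ has to be separated from the $\Lambda$-compensator term without double counting. Using $\mathrm{d}t$ on half-open intervals $(t_n,\cdot]$ should keep this consistent.
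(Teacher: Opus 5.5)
Your proposal follows the paper's proof essentially step for step. You rewrite both sides of the BSDE in jump-count form via Proposition~\ref{prop:Rep_dyn_N} and (\ref{eq:CF_n}), cancel the identical $N_{\zeta_n j}(\mathrm{d}t)$ terms to isolate the drift identity on $(\tau_n,\tau_{n+1}]$, evaluate along paths $(\omega^n,(\infty,\nabla),\dots)$ to get it for $t>t_n$, and reassemble for the converse. The one point to tighten is $t\le t_n$: rather than reinterpreting the domain of the system, note as the paper does that under the conventions (\ref{eq:fn_uniqueness_condition_optional})--(\ref{eq:fn_uniqueness_condition_predictable}) the measures $f^n(\mathrm{d}t)$, $C^n(\mathrm{d}t)$ and $\Lambda^n_{z_nj}(\mathrm{d}t)$ all vanish on $(0,t_n]$, so the system holds on all of $(0,T]$ as stated.
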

\begin{proof}
  Since $X$ is càdlàg and of finite variation Proposition~\ref{prop:Rep_dyn_N}(i) yields
  \begin{align*}
    X(\mathrm{d}t)=\sum_{n\in\amsmathbb{N}_0}I^n(t-)\bigg(f^n(\mathrm{d}t,\xi^n)+\sum_{j:j\neq \zeta_n} f^{n+1}(t,\xi^{n}(\omega_{\zeta_n,j}^{\tau_n,t}))-f^n(t,\xi^n)N_{\zeta_n j}(\mathrm{d}t)\bigg).
  \end{align*}
  Using Proposition~\ref{prop:Rep_adapted_process_N}, Proposition~\ref{prop:Rep_dyn_N} and (\ref{eq:CF_n}), the BDS (\ref{eq:worst_case_BSDE}) can equivalently be written as 
  \begin{align*}
    X(\mathrm{d}t)=\sum_{n\in\amsmathbb{N}_0}I^n(t-)\bigg(&-C^n(\mathrm{d}t,\xi^n)-\sum_{j:j\neq \zeta_n}\ti{\Delta} C^{n}_{\zeta_n j}(t,\xi^n)N_{ij}(\mathrm{d}t)\\
    &+\sum_{j:j\neq \zeta_n} R_{\zeta_n j}^{n+1}(t,\xi^n)(N_{\zeta_n j}(\mathrm{d}t)-I_{\zeta_n}(t-)\Lambda_{\zeta_n j}^n(\mathrm{d}t,\xi^n))\bigg).
  \end{align*}
  Equating the first and second equation for $X(\mathrm{d}t)$ and rearranging yields
  \begin{align*}
    0&=I^n(t-)\bigg(f^n(\mathrm{d}t,\xi^n)+C^n(\mathrm{d}t,\xi^n)+\sum_{j:j\neq \zeta_n} R_{\zeta_n j}^{n+1}(t,\xi^n)\Lambda_{\zeta_n j}^n(\mathrm{d}t,\xi^n)\bigg).
  \end{align*}
  If $X$ is an adapted, càdlàg finite variation solution of (\ref{eq:worst_case_BSDE}) with jump-count decomposition $(f^n)_{n\in\amsmathbb{N}_0}$, then by Remark~\ref{rem:path_properties} each $f^n$ is càdlàg and of finite variation and both equations for $X(\mathrm{d}t)$ are valid implying the above third equation. Since this equation holds for all $\omega=(\omega^n,(\infty,\nabla),\ldots)$ for $\omega^n\in\Omega^n$, it holds for all $t> t_n$. Due to the conventions~(\ref{eq:fn_uniqueness_condition_optional}) and~(\ref{eq:fn_uniqueness_condition_predictable}) used to define the jump-count representation, it holds that $f^n(\mathrm{d}t)=C^n(\mathrm{d}t)=\Lambda_{z_n j}^n(\mathrm{d}t)=0$ for $t\in(0,t_n]$. Thus $(f^n)_{n\in\amsmathbb{N}_0}$ is a solution of the system (\ref{eq:system_of_equations}) for all $t\in(0,T]$. If on the other hand $(f^n)_{i\in\amsmathbb{N}_0}$ is a solution of the system (\ref{eq:system_of_equations}), each $f^n$ is càdlàg and of finite variation which implies the same for $X(t)=\sum_{n\in\amsmathbb{N}_0}I^n(t)f^n(t,\xi^n)$. Thus, the first equation of $X(\mathrm{d}t)$ is valid and inserting the above third equation and rearranging yields the second, which proves that $X$ is a solution of (\ref{eq:worst_case_BSDE}).
\end{proof}
Similar to the undiscounted case, the existence and uniqueness of the BSDE (\ref{eq:BSDE_discounted}) is equivalent to the existence and uniqueness of a system of Lebesgue-Stieltjes integral equations, which is the statement of the following analogue of Proposition~\ref{prop:BSDE_equiv_ODE}.

\begin{proposition}\label{prop:BSDE_equiv_ODE_discounted}
  Let $X$ be an adapted càdlàg process of finite variation with jump-count representation $(f^n)_{n\in\amsmathbb{N}_0}$. Then $X$ is the unique solution of (\ref{eq:worst_case_BSDE_discounted}) if and only if $(f^n)_{n\in\amsmathbb{N}_0}$ is the unique solution of the system 
  \begin{align}\label{eq:system_of_equations}
      \begin{split}
      f^n(\mathrm{d}t,\omega^n)=&f^n(t-,\omega^n)\Phi^n(\mathrm{d}t,\omega^n)-C^n(\mathrm{d}t,\omega^n)-\sum_{j:j\neq z_n}R_{z_nj}^{n+1}(t,\omega^n)\Lambda^n_{z_n j}(\mathrm{d}t,\omega^n),\quad t\in(0,T],\\
      f^n(T,\omega^n)=&Y^n(\omega^n),\quad \omega^n\in\Omega^n,\,n\in\amsmathbb{N}_0     
    \end{split}
    \end{align}
    and where $R_{ij}^{n+1}(t,\omega^n):=\ti{\Delta}C^{n+1}_{ij}(t)+f^{n+1}(t,\omega^{n+1}_{t,ij})-f^n(t,\omega^n_i)$.
\end{proposition}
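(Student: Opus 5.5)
The plan mirrors the proof of Proposition~\ref{prop:BSDE_equiv_ODE}. The only new ingredient is the linear term $X(t-)\Phi(\mathrm{d}t)$, and we treat it with the jump-count decomposition of predictable processes. Since $\Phi$ is predictable, càdlàg and of finite variation, Proposition~\ref{prop:Rep_adapted_process_N}(ii) gives a jump-count representation $(\Phi^n)_{n\in\amsmathbb{N}_0}$. Proposition~\ref{prop:Rep_dyn_N}(ii) then yields
\begin{align*}
\Phi(\mathrm{d}t)=\sum_{n\in\amsmathbb{N}_0}I^n(t-)\Phi^n(\mathrm{d}t,\xi^n).
\end{align*}
Moreover, $I^n(t-)X(t-)=I^n(t-)f^n(t-,\xi^n)$ holds for $t>\tau_n$. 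The reason is that on $(\tau_n,\tau_{n+1}]$ the left limit of $X$ is the left limit of $t\mapsto f^n(t,\xi^n)$. At $t=\tau_{n+1}$ this is immediate. For $t\in(\tau_n,\tau_{n+1})$ it follows from right-continuity of $f^n$, using Remark~\ref{rem:path_properties}. Combining the two facts,
\begin{align*}
X(t-)\Phi(\mathrm{d}t)=\sum_{n\in\amsmathbb{N}_0}I^n(t-)f^n(t-,\xi^n)\Phi^n(\mathrm{d}t,\xi^n).
\end{align*}

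Next, I would expand $X(\mathrm{d}t)$ with Proposition~\ref{prop:Rep_dyn_N}(i) and expand the right-hand side of (\ref{eq:BSDE_discounted}) in the same jump-count form. For the latter, $C(\mathrm{d}t)$ is expanded by Proposition~\ref{prop:Rep_dyn_N}(i), and $R_{ij}$ is rewritten through (\ref{eq:CF_n}) as $R^{n+1}_{ij}(t,\xi^n)$, using the identities $\omega^n_i=\xi^n(\omega^{\tau_n}_i)$ and $\omega^{n+1}_{t,ij}=\xi^{n+1}(\omega^{\tau_n,t}_{i,j})$. Equating the two expansions, the $N_{\zeta_n j}(\mathrm{d}t)$ terms cancel exactly as before. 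The jump part of $X$ is $f^{n+1}(t,\omega^{n+1}_{t,\zeta_n j})-f^n(t,\xi^n)$, while the BSDE gives $-\ti{\Delta}C^{n+1}_{\zeta_nj}+R^{n+1}_{\zeta_nj}$. These agree because $\omega^n_{\zeta_n}=\xi^n$ for paths whose $n$th state is $\zeta_n$. What remains is
\begin{align*}
0=I^n(t-)\Big(f^n(\mathrm{d}t,\xi^n)-f^n(t-,\xi^n)\Phi^n(\mathrm{d}t,\xi^n)+C^n(\mathrm{d}t,\xi^n)+\sum_{j\neq\zeta_n}R^{n+1}_{\zeta_nj}(t,\xi^n)\Lambda^n_{\zeta_nj}(\mathrm{d}t,\xi^n)\Big).
\end{align*}

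For the direction from BSDE to system, I would evaluate this identity along the paths $\omega=(\omega^n,(\infty,\nabla),\dots)$ with $\omega^n\in\Omega^n$. This gives the system for all $t>t_n$. On $(0,t_n]$ all of $f^n(\mathrm{d}t)$, $C^n(\mathrm{d}t)$, $\Phi^n(\mathrm{d}t)$ and $\Lambda^n(\mathrm{d}t)$ vanish by the conventions (\ref{eq:fn_uniqueness_condition_optional}) and (\ref{eq:fn_uniqueness_condition_predictable}), so the system holds trivially there. The terminal condition is $f^n(T,\omega^n)=Y^n(\omega^n)$, which is read off from $X(T)=Y$ via Proposition~\ref{eq:Rep_N_uniqueness}.

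For the converse, I would start from a solution $(f^n)$ of the system. Each $f^n$ is càdlàg and of finite variation, hence so is $X:=\sum_n I^nf^n(\cdot,\xi^n)$. Reversing the computation above recovers (\ref{eq:BSDE_discounted}).

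Uniqueness transfers in both directions because the jump-count decomposition is unique by Proposition~\ref{prop:Rep_adapted_process_N}. Hence two distinct solutions on one side would produce two distinct solutions on the other.

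The main obstacle is the left-limit identification $I^n(t-)X(t-)=I^n(t-)f^n(t-,\xi^n)$ at the jump time $t=\tau_{n+1}$. This is exactly where $\Phi$ may carry point masses, so I would check this step carefully. A less delicate alternative is to reduce to the undiscounted case, as in the proof of Theorem~\ref{th:optional_projection_discounted}. There $W=X/\mathcal{E}_\Phi$ solves (\ref{eq:BSDE}) with $C^*$, one applies Proposition~\ref{prop:BSDE_equiv_ODE}, and one uses that $f^n_W=f^n/\mathcal{E}^n_\Phi$ by predictability of $\mathcal{E}_\Phi$. The discounted system then follows from the integration-by-parts identity $\mathrm{d}(\mathcal{E}^n_\Phi g)=\mathcal{E}^n_\Phi(t-)\,\mathrm{d}g+g(t)\,\mathcal{E}^n_\Phi(t-)\,\Phi^n(\mathrm{d}t)$. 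I would keep this route as a cross-check.
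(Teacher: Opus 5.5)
Your direct argument is correct and is essentially the proof the paper intends: the paper gives no separate proof and only calls this the analogue of Proposition~\ref{prop:BSDE_equiv_ODE}, and your only new step, $I^n(t-)X(t-)\Phi(\mathrm{d}t)=I^n(t-)f^n(t-,\xi^n)\Phi^n(\mathrm{d}t,\xi^n)$ via Proposition~\ref{prop:Rep_dyn_N}(ii), is handled properly. The left-limit identification you flag as the main obstacle is not one: for every $t\in(\tau_n,\tau_{n+1}]$, including $t=\tau_{n+1}$, it holds because $X=f^n(\cdot,\xi^n)$ on $[\tau_n,t)$, so no right-continuity argument is needed.
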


\subsection{The worst-case BSDE}\label{subsec:3.2} We now consider a case where both the process $C$ and the cumulative transition rates may depend on the optional projection and provide an existence and uniqueness result for this type of circular problem. For this, let $(U_{ij})_{i\neq j}$ and $(L_{ij})_{i\neq j}$ be cumulative transition rates which in addition to Assumption~\ref{ass:cumulative_tr} satisfy 
\begin{align*}
  L_{ij}(s,\omega)-L_{ij}(t,\omega)\leq U_{ij}(s,\omega)-U_{ij}(t,\omega),\quad \forall 0\leq t \leq s,\forall\omega\in\Omega,\quad \forall i,j\in\mathcal{J}:i\neq j
\end{align*}
and consider the BSDE given by
\begin{align}\label{eq:worst_case_BSDE}
  X(\mathrm{d}t)= - C(\mathrm{d}t) + \sum_{i,j:i\neq j} R_{ij}(t)(N_{ij}(\mathrm{d}t)-I_i(t-)\ul{\Lambda}_{ij}(\mathrm{d}t)),\quad X(T)=Y,
\end{align}
where $R_{ij}(t):=\Delta C_{ij}(t)+X_{ij}(t)-X_i(t)$ and $Y$ is $\mathcal{F}_T$-measurable. We assume that the cumulative transition rates $\ul{\Lambda}=(\ul{\Lambda}_{ij})_{i\neq j}$ have one of the following two forms
\begin{align*}
  \ul{\Lambda}_{ij}(\mathrm{d}t)&=\mathds{1}_{(R_{ij}(t)\geq 0)}U_{ij}(\mathrm{d}t)+\mathds{1}_{(R_{ij}(t)<0)}L_{ij}(\mathrm{d}t),\quad\forall i,j\in\mathcal{J}:i\neq j,\\
  \ul{\Lambda}_{ij}(\mathrm{d}t)&=\mathds{1}_{(R_{ij}(t)< 0)}U_{ij}(\mathrm{d}t)+\mathds{1}_{(R_{ij}(t)\geq 0)}L_{ij}(\mathrm{d}t),\quad\forall i,j\in\mathcal{J}:i\neq j.
\end{align*}
The $(\ul{\Lambda}_{ij})_{i\neq j}$ are cumulative transition rates, as the processes $(L_{ij})_{i\neq j}$ and $(U_{ij})_{i\neq j}$ being cumulative transition rates ensures that $(\ul{\Lambda}_{ij})_{i\neq j}$ satisfies Assumption~\ref{ass:cumulative_tr}(ii)+(iii), and the fact that the processes ($R_{ij})_{i\neq j}$ are $i$-predictable ensures that the processes $\ul{\Lambda}_{ij}$ are $i$-predictable (Assumption~\ref{ass:cumulative_tr}(i)). The process $C$ is an optional process with càdlàg paths of finite variation given by
\begin{align*}
  C(\mathrm{d}t)&=\sum_{i\in\mathcal{J}}I_i(t-)\bigg(C_i(\mathrm{d}t)+\sum_{j:j\neq i}\Delta C_{ij}(t)N_{ij}(\mathrm{d}t)\bigg),
\end{align*} 
for $i$-predictable $(C_i,(\Delta C_{ij})_{j:j\neq i})_{i\in\mathcal{J}}$, which are assumed to be of the form 
\begin{align*}
  C_{i}(\mathrm{d}t,\omega)&=b_i(t,\omega,X_i(t,\omega))\gamma_i(\mathrm{d}t,\omega),\\
  \Delta C_{ij}(\mathrm{d}t,\omega)&=b_{ij}(t,\omega,X_i(t,\omega),X_{ij}(t,\omega)).
\end{align*}
Clearly if there exists a unique solution $X$ of the BSDE~(\ref{eq:worst_case_BSDE}) that is càdlàg, of finite variation and bounded on finite intervals, we can by Theorem~\ref{th:optional_projection} conclude that $X(t)=\amsmathbb{E}_{t,Z(t)}^{\ul{\Lambda}}[Y+C(T)-C(t)]$, but since both the process $C$ and the cumulative transition rates $\ul{\Lambda}$ determining the measure $\amsmathbb{P}^{\ul{\Lambda}}$ depend on $X$, it is not clear, whether such a solution exists. In order to prove existence and uniqueness of the BSDE (\ref{eq:worst_case_BSDE}), we assume that the processes $(\gamma_i)_{i\in\mathcal{J}}$, $(b_i)_{i\in\mathcal{J}}$ and $(b_{ij})_{i\neq j}$ satisfy the following regularity conditions.
\begin{assumption}\label{ass:regularity_BSDE}
  Assume that
  \begin{enumerate}
    \item[(i)] The processes $(\gamma_i)_{i\in\mathcal{J}}$ are $i$-predictable càdlàg processes of finite variation, and there exists a non-decreasing càdlàg function $\bar{\gamma}:[0,\infty)\rightarrow[0,\infty)$ bounded on compacts such that for all $\omega\in\Omega$:
    \begin{align*}
      |\gamma_i(t,\omega)|\leq \bar{\gamma}(t),\,t\geq 0\quad\text{and}\quad |\gamma_i(t,\omega)-\gamma_i(s,\omega)|\leq \bar{\gamma}(t)-\bar{\gamma}(s),\,0\leq s\leq t
    \end{align*}
    \item[(ii)] The functions $(b_i,(b_{ij})_{j:j\neq i})_{i\in\mathcal{J}}$ with $b_i:[0,\infty)\times\Omega\times\amsmathbb{R}\rightarrow\amsmathbb{R}$ and $b_{ij}:[0,\infty)\times\Omega\times\amsmathbb{R}\times\amsmathbb{R}\rightarrow \amsmathbb{R}$ are measurable, and $(t,\omega)\mapsto b_i(t,\omega,x)$ and $(t,\omega)\mapsto b_{ij}(t,\omega,x,y)$ are $i$-predictable for all fixed $x,y\in\amsmathbb{R}$.
    \item[(iii)] There exists a positive function $\beta:[0,\infty)\rightarrow (0,\infty)$ which is bounded on compacts such that
    \begin{align*}
      |b_i(t,\omega,0)|&\leq \beta(t),\quad \forall\,i\in\mathcal{J}\\
      |b_{ij}(t,\omega,0,0)|&\leq\beta(t),\quad \forall\,i,j\in\mathcal{J}:i\neq j
    \end{align*}
    for all $(t,\omega)\in[0,\infty)\times\Omega$.
    \item[(iv)] There exists a positive function $\beta:[0,\infty)\rightarrow (0,\infty)$ which is bounded on compacts such that 
    \begin{align*}
      |b_i(t,\omega,x_1)-b_i(t,\omega,x_1)|&\leq \beta(t)|x_1-x_2|,\quad \forall i\in\mathcal{J}.\\
      |b_{ij}(t,\omega,x_1,y_1)-b_{ij}(t,\omega,x_2,y_2)|&\leq \beta(t)(|x_1-x_2|+|y_1-y_2|),\quad \forall i,j\in\mathcal{J}:i\neq j.
    \end{align*}
    for all $(t,\omega)\in[0,\infty)\times\Omega$ and $(x_1,x_2),(y_1,y_2)\in\amsmathbb{R}^2$.
  \end{enumerate}
\end{assumption}

Assuming those regularity conditions brings us to the first of two main results of this section.
\begin{theorem}\label{th:BSDE_existence}
  Assume that Assumption~\ref{ass:regularity_BSDE} is satisfied and that $Y$ is bounded. Then there exists a unique solution $X$ of the BDS (\ref{eq:worst_case_BSDE}) that is càdlàg, of finite variation and bounded on finite intervals.
\end{theorem}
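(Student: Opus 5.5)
The plan is to use Proposition~\ref{prop:BSDE_equiv_ODE}, which turns the BSDE (\ref{eq:worst_case_BSDE}) into the system of Lebesgue--Stieltjes integral equations for the jump-count decomposition $(f^n)_{n\in\amsmathbb{N}_0}$. In that system the worst-case rates enter through
\begin{align*}
  \sum_{j\neq z_n}R^{n+1}_{z_nj}\ul{\Lambda}^n_{z_nj}(\mathrm{d}t)=\sum_{j\neq z_n}\Big((R^{n+1}_{z_nj})^+U^n_{z_nj}(\mathrm{d}t)-(R^{n+1}_{z_nj})^-L^n_{z_nj}(\mathrm{d}t)\Big)
\end{align*}
in the first form, and symmetrically in the second form. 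Since $x\mapsto x^+$ and $x\mapsto x^-$ are $1$-Lipschitz, the non-smooth selection of $\ul{\Lambda}$ becomes a generator that is Lipschitz in $(f^n(t,\omega^n_i),f^{n+1}(t,\omega^{n+1}_{t,ij}))$ with respect to the measure $\sum_{j}(U^n_{z_nj}+L^n_{z_nj})(\mathrm{d}t)\le 2\sum_j\alpha_{z_nj}(\mathrm{d}t)$. The $C$-terms are Lipschitz in the same arguments by Assumption~\ref{ass:regularity_BSDE}(iv), with respect to $\bar\gamma(\mathrm{d}t)$.

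The system is infinite and coupled: equation $n$ involves $f^{n+1}$, so backward recursion in $n$ is not directly available. I would set it up as a single fixed-point problem on the Banach space of families $f=(f^n)_n$, each $f^n(\cdot,\omega^n)$ càdlàg on $[t_n,T]$. The norm would be a weighted supremum
\begin{align*}
  \|f\|:=\sup_n\sup_{\omega^n}\sup_{t\in[t_n,T]}\kappa^{-n}\,e^{-\theta A(t)}|f^n(t,\omega^n)|,
\end{align*}
with $A:=\bar\gamma+2\sum_{i\neq j}\alpha_{ij}$ and constants $\kappa,\theta$ chosen below. The map $\Psi$ sends $f$ to the right-hand side integrated backward from $Y^n$.

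Two points need care here.
\begin{enumerate}
\item[(a)] The counterfactual $f^n(t,\omega^n_i)$ is evaluated at the same level $n$ but possibly on $\bar\Omega^n\setminus\Omega^n$. There convention (\ref{eq:fn_uniqueness_condition_optional}) reduces it to $f^{n-1}$, which the $\kappa^{-n}$ weight handles.
\item[(b)] The point masses of $U,L,\bar\gamma$ mean that $f^n(t)$ appears implicitly in its own atom at $t$. The standard exponential weight $e^{-\theta A}$ then fails to give a contraction directly near large atoms, since $\Delta A$ can be close to $2$ or more.
\end{enumerate}

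To deal with (b), I would first partition $[0,T]$ into finitely many intervals on which the continuous part of $A$ is small and the remaining atoms are handled one at a time. At an atom $s$ the equation reads $f(s-)=f(s)+\Delta(\text{generator})(s)$, which is an explicit backward step, because the generator depends on values at $s$ and not $s-$. So there is no implicit equation after all, and the problem reduces to an estimate with the generalised exponential $\mathcal{E}_A$.

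My first attempt would be a Picard iteration $f_{k+1}=\Psi(f_k)$. Using a Gronwall-type lemma (Lemma~\ref{lem:gronwall}), I would bound
\begin{align*}
  |f_{k+1}-f_k|(t)\le\beta\int_{(t,T]}|f_k-f_{k-1}|\,\mathrm{d}A
\end{align*}
and iterate to get $\big(\beta A(T)\big)^k/k!$-type decay, uniformly in $n$ and $\omega^n$. This gives the contraction and sidesteps the atom issue, since backward Volterra iterates of a monotone measure satisfy such factorial bounds.

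Boundedness of the limit follows from Assumption~\ref{ass:regularity_BSDE}(iii), the boundedness of $Y$, and the same Gronwall bound. Measurability in $\omega^n$ is preserved along the iterates. The $i$-predictability of the induced $X_i$, $X_{ij}$, $\ul{\Lambda}$ follows from (\ref{eq:CF_n}) and the fact that the construction only uses counterfactual arguments.

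Uniqueness then follows from the same contraction estimate applied to two bounded solutions. Finally, transferring back through Proposition~\ref{prop:BSDE_equiv_ODE} and Remark~\ref{rem:path_properties} gives a unique càdlàg, finite-variation solution of (\ref{eq:worst_case_BSDE}) that is bounded on finite intervals.

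I expect the main obstacle to be the uniformity over $n$. The coupling to $f^{n+1}$ through $\omega^{n+1}_{t,ij}$ means the Lipschitz bound must not degrade with the number of jumps. The Picard estimate has to hold simultaneously on all levels, with a sup over $n$ in the norm, rather than by induction on $n$. Keeping all constants free of $n$ is exactly where the uniform bounds $\alpha_{ij}$, $\bar\gamma$ and $\beta$ are essential.
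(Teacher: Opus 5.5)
Your argument is correct, and it follows a genuinely different route from the paper's. The paper proceeds level by level:
\begin{enumerate}
\item[(1)] Lemma~\ref{lem:existence_1} solves the equation for $f^n$ with $f^{n+1}$ frozen, by a Banach fixed point in $\text{BV}_{[0,T]}$ under the weighted norm $\|\cdot\|_{BV}^{c,\nu}$.
\item[(2)] It cuts the coupling by setting $U^{n,M}=L^{n,M}=0$ for $n\geq M$, and solves the finite system by backward induction with BV bounds uniform in $M$, $n$ and $\omega^n$ (Lemma~\ref{lem:existence_M}).
\item[(3)] It shows that $(f^{n,M})_M$ is Cauchy with error of order $2^{-(N-n)}$ (Lemma~\ref{lem:Cauchy}) and passes to the limit (Lemma~\ref{lem:limit}).
\item[(4)] It obtains uniqueness from the same geometric decay (Lemma~\ref{lem:uniqueness}).
\end{enumerate}
You instead treat the whole infinite family as one fixed point in an $\ell^\infty$-type space over all levels and paths. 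This is legitimate because the Lipschitz constants and the dominating measure $\bar\gamma+2\sum_{i\neq j}\alpha_{ij}$ do not depend on $n$ or $\omega^n$. A single backward Volterra estimate then gives existence, the uniform bound, and uniqueness among bounded solutions, with no truncation and no limit passage. The BV bound follows a posteriori from the equation, since the integrand is bounded. In exchange, the paper's route gives BV convergence with an explicit error bound for truncating the number of jumps, which is the natural handle for numerical computation.

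Some of your side remarks are off, although none affects the final Picard argument.
\begin{enumerate}
\item[(a)] This concern never arises. For $\omega^n\in\Omega^n$ one has $z_{n-1}\neq z_n$, hence $\omega^n_{z_n}=\omega^n$, and the only coupling is upward to $f^{n+1}(t,\omega^{n+1}_{t,z_nj})$. You should drop the $\kappa^{-n}$ weight. It multiplies the upward Lipschitz constant by $\kappa$ and allows growth in $n$. Your Gronwall argument for boundedness would then first need $\sup_n\sup_{\omega^n}|f^n|<\infty$, which you can get by bounding the iterates from $f_0=0$ against a linear comparison equation.
\item[(b)] This concern is also moot. For a backward equation the weight must be $e^{+\theta A(t)}$, not $e^{-\theta A(t)}$. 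With that orientation, the mean-value argument used in Lemma~\ref{lem:important_bound} gives $e^{\theta A(t)}\int_{(t,T]}e^{-\theta A(s)}A(\mathrm{d}s)\leq 1/\theta$ even with atoms. This works because the generator is evaluated at $s$, not at $s-$.
\end{enumerate}
For the Picard version, run the induction on the monotone majorant $\bar D_k(t):=\sup_{n,\omega^n}\sup_{u\in[t\vee t_n,T]}|f^n_k(u,\omega^n)-f^n_{k-1}(u,\omega^n)|$. This sidesteps measurability of pointwise suprema over uncountably many paths. Then use $\int_{(t,T]}(A(T)-A(s))^kA(\mathrm{d}s)\leq (A(T)-A(t))^{k+1}/(k+1)$, which holds for the same right-value reason.
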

\begin{proof}
  See Subsection~\ref{subsection:Proof_existence}.
\end{proof}

Returning to the discounted case, we can consider the BDS
\begin{align}\label{eq:worst_case_BSDE_discounted}
    X(\mathrm{d}t)&=X(t-)\Phi(\mathrm{d}t) - C(\mathrm{d}t) + \sum_{i,j:i\neq j} R_{ij}(t)(N_{ij}(\mathrm{d}t)-I_i(t-)\ul{\Lambda}_{ij}(\mathrm{d}t)),\quad X(T)=Y,
\end{align}
where $R_{ij}(t):=\Delta C_{ij}(t)+X_{ij}(t)-X_i(t)$ and $Y$ is $\mathcal{F}_T$-measurable. The process $\Phi$ is predictable, cádlág, and of finite variation and the cumulative transition rates $\ul{\Lambda}$ and the process $C$ are assumed to be of the same form as in the undiscounted case. Assuming that Assumption~\ref{ass:regularity_BSDE} is satisfied, we obtain the second main result of this section and analogue of Theorem~\ref{th:BSDE_existence}.

\begin{theorem}\label{th:BSDE_existence_discounted}
  Assume that Assumption~\ref{ass:regularity_BSDE} is satisfied, that $Y$ is bounded and that $\Phi$ is predictable, càdlàg, of finite variation and bounded on finite intervals with $\inf_{(t,\omega)\in[0,T]\times\Omega}\Phi(t,\omega)>-1$. Then there exists a unique solution $X$ of the BDS (\ref{eq:worst_case_BSDE_discounted}) that is càdlàg, of finite variation and bounded on finite intervals.
\end{theorem}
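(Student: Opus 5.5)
The plan is to reduce Theorem~\ref{th:BSDE_existence_discounted} to Theorem~\ref{th:BSDE_existence} by a discounting transformation, in the spirit of the proof of Theorem~\ref{th:optional_projection_discounted}.

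\textbf{Step 1: transform.} Given a candidate solution $X$, set $W(t):=X(t)/\mathcal{E}_{\Phi}(t)$. The standing assumptions on $\Phi$ (predictable, càdlàg, finite variation, bounded on finite intervals, jumps bounded away from $-1$) make $\mathcal{E}_{\Phi}$ and $\mathcal{E}_{\Phi}^{-1}$ predictable, strictly positive, of finite variation and bounded on $[0,T]$. By Proposition~\ref{prop:Rep_adapted_process_J}, the state-wise decompositions are
\begin{align*}
W_i=X_i/\mathcal{E}_{\Phi,i},\qquad W_{ij}=X_{ij}/\mathcal{E}_{\Phi,i}.
\end{align*}
Itô's formula, computed exactly as in the proof of Theorem~\ref{th:optional_projection_discounted}, shows that (\ref{eq:worst_case_BSDE_discounted}) for $X$ is equivalent to the following BSDE of the form (\ref{eq:worst_case_BSDE}) for $W$:
\begin{align*}
W(\mathrm{d}t)=-C^*(\mathrm{d}t)+\sum_{i,j:i\neq j}R^W_{ij}(t)\big(N_{ij}(\mathrm{d}t)-I_i(t-)\ul{\Lambda}_{ij}(\mathrm{d}t)\big),\qquad W(T)=Y/\mathcal{E}_{\Phi}(T).
\end{align*}
Here $C^*(\mathrm{d}t)=\mathcal{E}_{\Phi}(t)^{-1}C(\mathrm{d}t)$ and $R^W_{ij}=R_{ij}/\mathcal{E}_{\Phi,i}$.

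\textbf{Step 2: the worst-case rates are unchanged.} Since $\mathcal{E}_{\Phi,i}>0$, the sign of $R^W_{ij}$ equals the sign of $R_{ij}$. Hence the indicators defining $\ul{\Lambda}$ are the same whether expressed through $X$ or through $W$, so the transformed equation is exactly of the worst-case type (\ref{eq:worst_case_BSDE}).

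\textbf{Step 3: verify Assumption~\ref{ass:regularity_BSDE} for the new coefficients.} The new coefficients are
\begin{align*}
b^*_i(t,\omega,w)&=\mathcal{E}_{\Phi,i}(t)^{-1}\,b_i\big(t,\omega,\mathcal{E}_{\Phi,i}(t)w\big),\\
b^*_{ij}(t,\omega,w,v)&=\mathcal{E}_{\Phi,i}(t)^{-1}\,b_{ij}\big(t,\omega,\mathcal{E}_{\Phi,i}(t)w,\mathcal{E}_{\Phi,i}(t)v\big),
\end{align*}
with $\gamma_i$ unchanged. They are $i$-predictable because $\mathcal{E}_{\Phi,i}$ is. The growth bound at zero and the Lipschitz bounds hold with a new function $\beta^*(t)=\beta(t)\sup\mathcal{E}_{\Phi}\cdot\sup\mathcal{E}_{\Phi}^{-1}$, where the suprema are taken over $[0,t]\times\Omega$ and are finite. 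This step requires uniform two-sided bounds on $\mathcal{E}_{\Phi,i}$, not only on $\mathcal{E}_{\Phi}$. I expect these to follow because the counterfactual $\mathcal{E}_{\Phi,i}(t)=\mathcal{E}_{\Phi}(t,\omega_i^{\tau_n})$ is just $\mathcal{E}_{\Phi}$ evaluated along another path, so any bounds uniform in $\omega$ transfer to it.

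Regarding the jump hypothesis: the statement writes $\inf\Phi>-1$, but I read it as $\inf\Delta\Phi>-1$, as in Theorem~\ref{th:optional_projection_discounted}. This is what is needed for $\mathcal{E}_{\Phi}$ to be positive and invertible.

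\textbf{Step 4: conclude.} The terminal value $Y/\mathcal{E}_{\Phi}(T)$ is bounded, so Theorem~\ref{th:BSDE_existence} yields a unique càdlàg, finite variation, bounded-on-finite-intervals solution $W$. Then $X=\mathcal{E}_{\Phi}W$ solves (\ref{eq:worst_case_BSDE_discounted}) and inherits all three properties. Uniqueness follows from the bijectivity of the map $X\mapsto X/\mathcal{E}_{\Phi}$ on the class of solutions, since Steps~1–2 show the equivalence runs in both directions.

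\textbf{Main obstacle.} The delicate point is the Itô computation in Step~1, specifically the jump terms at common jumps of $\Phi$ and $X$. Because $\Phi$ is predictable while the martingale part jumps only at the totally inaccessible or accessible times $\tau_n$, I must check that $\Delta\Phi(\tau_n)\Delta X(\tau_n)$ is correctly absorbed. I would handle this by writing $W(\mathrm{d}t)=\mathcal{E}_{\Phi}(t)^{-1}\big(X(\mathrm{d}t)-X(t-)\Phi(\mathrm{d}t)\big)$, which I would verify directly via product-rule integration by parts using $\mathcal{E}_{\Phi}^{-1}(\mathrm{d}t)=-\mathcal{E}_{\Phi}(t)^{-1}\Phi(\mathrm{d}t)$. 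If this bookkeeping becomes messy, the fallback is to apply the analogous fixed-point argument from the proof of Theorem~\ref{th:BSDE_existence} directly to the system in Proposition~\ref{prop:BSDE_equiv_ODE_discounted}, treating the extra linear term $f^n(t-)\Phi^n(\mathrm{d}t)$ via a Gronwall/exponential weighting.
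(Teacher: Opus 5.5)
Your proposal follows essentially the same route as the paper's proof: pass to $W=X/\mathcal{E}_{\Phi}$ with the rescaled coefficients $b^*_i,b^*_{ij}$ and terminal value $Y/\mathcal{E}_{\Phi}(T)$, use $R_{ij}=\mathcal{E}_{\Phi,i}R^W_{ij}$ to see that the sign-dependent rates $\ul{\Lambda}$ are unchanged, apply Theorem~\ref{th:BSDE_existence} to $W$, and transfer existence and uniqueness back via $X=\mathcal{E}_{\Phi}W$ and integration by parts. (Your reading $\inf\Delta\Phi>-1$ is the intended one, and in Step~3 the Lipschitz constant is in fact unchanged, so only an upper bound on $\mathcal{E}_{\Phi,i}^{-1}$ is needed.)

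The one step that both you and the paper take for granted needs a caveat: the claim that $\mathcal{E}_{\Phi}^{-1}$ is bounded on $[0,T]\times\Omega$ does not follow from the stated hypotheses. The upper bound is fine, since $\log(1+x)\le x$ always gives $\mathcal{E}_{\Phi}\le e^{\Phi(t)-\Phi(0)}$. But $\mathcal{E}_{\Phi}^{-1}$ can be unbounded when $\Phi$ is merely bounded with $\inf\Delta\Phi>-1$. For instance, a predictable $\Phi$ jumping alternately by $+a$ and $-a$, $a\in(0,1)$, at the times $\tau_\ell+\epsilon$ stays in $\{0,a\}$, yet gives $\mathcal{E}_{\Phi}=(1-a^2)^m$ after $2m$ jumps. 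So one really needs, e.g., a bound on the variation of $\Phi$ over $[0,T]$ that is uniform in $\omega$. This does hold in the paper's examples, where $\Phi(\mathrm{d}t)=r(t)\mathrm{d}t$ with bounded $r$.
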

\begin{proof}
  We start by considering the process $W$ solving the BDS 
  \begin{align*}
    W(\mathrm{d}t)&=-C^*(\mathrm{d}t)+\sum_{i,j:i\neq j}R_{ij}^W(t)(N_{ij}(\mathrm{d}t)-I_i(t-)\ul{\Lambda}_{ij}^W(\mathrm{d}t)),\quad W(T)=Y^*,\\
    R_{ij}^W(t)&:=\ti{\Delta}\bar{C}_{ij}(t)+W_{ij}(t)-W_i(t),
  \end{align*}
  and where $Y^*:=\frac{Y}{\mathcal{E}_{\Phi}(T)}$. The cumulative transition rates $\ul{\Lambda}_{ij}^W$ all have one of the following forms:
  \begin{align*}
    \ul{\Lambda}_{ij}^W(\mathrm{d}t)&=\mathds{1}_{(R_{ij}^W\geq 0)}U_{ij}(\mathrm{d}t)+\mathds{1}_{(R_{ij}^W<0)}L_{ij}(\mathrm{d}t)\\
    \ul{\Lambda}_{ij}^W(\mathrm{d}t)&=\mathds{1}_{(R_{ij}^W<0)}U_{ij}(\mathrm{d}t)+\mathds{1}_{(R_{ij}^W\geq 0)}L_{ij}(\mathrm{d}t),
  \end{align*}
  while the process $C^*$ is given by 
  \begin{align*}
    C^*(\mathrm{d}t)=\sum_{i\in\mathcal{J}}I_i(t-)\bigg(C^*_i(\mathrm{d}t)+\sum_{j:j\neq i}\ti{\Delta}C^*_{ij}(t)N_{ij}(\mathrm{d}t)\bigg),
  \end{align*} 
  where $C^*_i$ and $\ti{\Delta}C^*_{ij}$ have the form
  \begin{align*}
    C^*_i(\mathrm{d}t,\omega)&=b^*_{i}(t,\omega,W_i(t,\omega))\gamma_i(\mathrm{d}t,\omega)=\frac{1}{\mathcal{E}_{\Phi,i}(t)}b_i(t,\omega,\mathcal{E}_{\Phi,i}(t,\omega)W_i(t,\omega))\gamma_i(\mathrm{d}t,\omega),\\
    \ti{\Delta}C^*_{ij}(\mathrm{d}t,\omega)&=b^*_{ij}(t,\omega,W_i(t,\omega),W_{ij}(t,\omega))=\frac{1}{\mathcal{E}_{\Phi,i}(t)}b_{ij}(t,\omega,\mathcal{E}_{\Phi,i}(t,\omega)W_i(t,\omega),\mathcal{E}_{\Phi,i}(t,\omega)W_{ij}(t,\omega)).
  \end{align*}
  Since $\frac{1}{\mathcal{E}_{\Phi}(t)}$ is bounded on finite intervals, it follows that $(\bar{b}_i,(\bar{b}_{ij})_{j:j\neq i})_{i\in\mathcal{J}}$ satisfy Assumption~\ref{ass:regularity_BSDE} and Theorem~\ref{th:BSDE_existence} yields existence and uniqueness of $W$. 

  Set $X=\mathcal{E}_{\Phi}(t)W(t)$. Since $X_i(t)=\mathcal{E}_{\Phi,i}(t)W_i(t)$ and $X_{ij}(t)=\mathcal{E}_{\Phi,i}(t)W_{ij}(t)$ we get 
  \begin{align*}
    R_{ij}(t)=\ti{\Delta}C_{ij}(t)+X_{ij}(t)-X_i(t)=\mathcal{E}_{\Phi,i}(t)(\ti{\Delta}C^*_{ij}(t)+W_{ij}(t)-W_i(t))=\mathcal{E}_{\Phi,i}(t)R_{ij}^W(t)
  \end{align*}
  and since $\mathcal{E}_{\Phi}(t)$ is strictly positive, we have that $\text{sign}(R_{ij}^W(t))=\text{sign}(R_{ij}(t))$, thus allowing us to conclude that $\ul{\Lambda}_{ij}^W=\ul{\Lambda}_{ij}$. Using this, and an integration by parts argument as in the proof of Theorem~\ref{th:optional_projection_discounted} it follows that $X$ is a solution of (\ref{eq:worst_case_BSDE_discounted}) if and only if $X(t)=\mathcal{E}_{\Phi}(t)W(t)$, where $W$ is as just described. As $W$ and $\mathcal{E}_{\Phi}$ both exist and are unique, so is $X$, and since $W$ and $\mathcal{E}_{\Phi}$ both are càdlàg, of finite variation and bounded on finite intervals, so is $X$.
\end{proof}

\subsection{Proof of Theorem~\ref{th:BSDE_existence}}\label{subsection:Proof_existence}
The key to the proof of Theorem~\ref{th:BSDE_existence} is to use the, by Proposition~\ref{prop:BSDE_equiv_ODE} given, equivalence between the existence and uniqueness of a solution $X$ of (\ref{eq:worst_case_BSDE}) and existence and uniqueness of the following system of Lebesgue-Stieltjes integral equations satisfied by the jump-count representation $(f^n)_{n\in\amsmathbb{N}_0}$ of $X$:
\begin{align}\label{eq:system_of_equations}
  \begin{split}
    f^n(\mathrm{d}t,\omega^n)&=-b_{z_n}^n(t,\omega^n,f^n(t,\omega^n))\gamma_{z_n}^n(\mathrm{d}t,\omega^n)+\sum_{j:j\neq z_n}R_{z_nj}^{n+1}(t,\omega^n)\ul{\Lambda}^n_{z_n j}(\mathrm{d}t,\omega^n),\quad t\in(0,T],\\
    f^n(T,\omega^n)&=Y^n(\omega^n),\quad \omega^n\in\Omega^n,\,n\in\amsmathbb{N}_0,
  \end{split}
\end{align}
where $R_{ij}^{n+1}(t,\omega^n)=b^{n}_{ij}(t,\omega^n,f^n(t,\omega^n_i),f^{n+1}(t,\omega^{n+1}_{t,ij}))+f^{n+1}(t,\omega^{n+1}_{t,ij})-f^n(t,\omega^n_i)$ and where all $\ul{\Lambda}_{ij}^n$ are given by one of the following two variants:
\begin{align*}
  \ul{\Lambda}_{ij}^n(\mathrm{d}t,\omega^n)&=\mathds{1}_{(R_{ij}^{n+1}(t,\omega^n)\geq 0)}U_{ij}^n(\mathrm{d}t,\omega^n)+\mathds{1}_{(R_{ij}^{n+1}(t,\omega^n)< 0)}L_{ij}^n(\mathrm{d}t,\omega^n),\quad \forall i,j\in\mathcal{J}:i\neq j,\,\forall n\in\amsmathbb{N}_0\\
  \ul{\Lambda}_{ij}^n(\mathrm{d}t,\omega^n)&=\mathds{1}_{(R_{ij}^{n+1}(t,\omega^n)< 0)}U_{ij}^n(\mathrm{d}t,\omega^n)+\mathds{1}_{(R_{ij}^{n+1}(t,\omega^n)\geq 0)}L_{ij}^n(\mathrm{d}t,\omega^n),\quad \forall i,j\in\mathcal{J}:i\neq j,\,\forall n\in\amsmathbb{N}_0.
\end{align*}
Here $(L_{ij}^n)_{n\in\amsmathbb{N}_0}$, $(U_{ij}^n)_{n\in\amsmathbb{N}_0}$, $(\gamma_{i}^n)_{n\in\amsmathbb{N}_0}$, $(b_{i}^n)_{n\in\amsmathbb{N}_0}$ and $(b_{ij}^n)_{n\in\amsmathbb{N}_0}$ denote the jump-count representation of the processes $(L_{ij})_{i\neq j}$, $(U_{ij})_{i\neq j}$, $(\gamma_i)_{i\in\mathcal{J}}$, $(b_i)_{i\in\mathcal{J}}$ and $(b_{ij})_{i\neq j}$ and by Proposition~\ref{eq:Rep_N_uniqueness} and Remark~\ref{rem:path_properties} conditions (i), (iii) and (iv) of Assumption~\ref{ass:regularity_BSDE} remain valid for the functions of the relevant jump-count decomposition. We could thus equivalently state Assumption~\ref{ass:regularity_BSDE} in terms of the corresponding jump-count decomposition by adding a superscript $n$ and replacing $\omega\in\Omega$ with $\omega^n\in\bar{\Omega}^n$ in the relevant places.

For a function $f:[0,T]\rightarrow\amsmathbb{R}$ let $|f|(t)$ denote the variation of $f$ on the interval $[0,t]$ and set $\text{BV}_{[0,T]}:\{f:[0,T]\rightarrow\amsmathbb{R}:|f|(T)<\infty,\,f\text{ is càdlàg}\}$. Let $\|\cdot\|_{BV}$ denote the finite variation norm given by $\|f\|_{BV}=|f(T)|+|f|(T)$ and note that $(\text{BV}_{[0,T]},\|\cdot\|_{BV})$ is a Banach space, see also Appendix~\ref{appendix:FV}. In the proofs, we will also use a family of equivalent norms
\begin{align*}
  \|f\|_{BV}^{c,\nu}:=|f(T)|+\int_{(0,T]}e^{-c(\nu(T)-\nu(t-))}|f|(\mathrm{d}t),
\end{align*}
where $\nu:[0,T]\rightarrow [0,\infty)$ is any non-decreasing càdlàg function and $c>0$ is an arbitrary constant, see also Appendix~\ref{appendix:FV}.

The proof that (\ref{eq:system_of_equations}) has a unique solution is now done in a sequence of lemmas. First we prove in Lemma~\ref{lem:existence_1} that for each $n\in\amsmathbb{N}_0$ the function $f^n$ exists, given that $f^{n+1}$ is known. In order to then obtain existence and uniqueness of the infinite dimensional system (\ref{eq:system_of_equations}), we first construct a sequence $((f^{n,M})_{n\in\amsmathbb{N}_0})_{M\in\amsmathbb{N}}$ of finite dimensional systems and prove in Lemma~\ref{lem:existence_M} that each member of this sequence has a unique solution. This is followed by Lemma~\ref{lem:Cauchy}, showing that for each $n\in\amsmathbb{N}_0$ the sequence $(f^{n,M})_{M\in\amsmathbb{N}}$ has a limit $f^n$, and Lemma~\ref{lem:limit}, which shows that $(f^n)_{n\in\amsmathbb{N}_0}$ is a solution of the system~(\ref{eq:system_of_equations}). Finally, we conclude with Lemma~\ref{lem:uniqueness}, which shows that (\ref{eq:worst_case_BSDE}) has a unique solution.

\textbf{Notation:} Throughout the proofs in this subsection, we will often suppress the dependence on $\omega^n$ in order to ease the notational burden. In that case we will use the shorthand notation $f^n(t)$ for $f^n(t,\omega^n)$ and $f^{n+1}_{t,ij}(t)$ for $f^{n+1}(t,\omega^{n+1}_{t,ij}(\omega^n))$.

\begin{lemma}\label{lem:existence_1}
  Fix $n\in\amsmathbb{N}_0$ and assume that $f^{n+1}:[0,T]\times\Omega^{n+1}\rightarrow\amsmathbb{R}$ is a known càdlàg function of finite variation such that
  \begin{align*}
    \forall i\neq j:\quad \sup_{\omega^{n+1}\in \Omega^{n+1}}\|f^{n+1}\|_{BV}(\omega^{n+1})<\infty
  \end{align*}
  Then (\ref{eq:system_of_equations}) considered for fixed $n$ has a unique solution.
\end{lemma}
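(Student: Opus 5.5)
The plan is to treat (\ref{eq:system_of_equations}) for fixed $n$ and fixed $\omega^n\in\Omega^n$ as a backward Lebesgue--Stieltjes integral equation for the single unknown function $t\mapsto f^n(t,\omega^n)$ on $[0,T]$. The first step is to observe that the counterfactual evaluation $f^n(t,\omega^n_{z_n})$ in $R^{n+1}_{z_nj}$ coincides with $f^n(t,\omega^n)$ itself. Indeed, $z_{n-1}\neq z_n$, so $\omega^n_{z_n}=\omega^n$. Moreover, $f^{n+1}(t,\omega^{n+1}_{t,z_nj})$ is a known, bounded, measurable function of $t$ by the hypothesis on $f^{n+1}$. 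Hence the equation reads
\begin{align*}
  f(t)=Y^n(\omega^n)+\int_{(t,T]}G(s,f(s))\,\nu(\mathrm{d}s)\quad\text{in the sense}\quad f(\mathrm{d}t)=-G(t,f(t))\,\nu(\mathrm{d}t),
\end{align*}
with a dominating non-decreasing càdlàg function $\nu:=\bar\gamma+\sum_{j}(U^n_{z_nj}+L^n_{z_nj})$, evaluated along $\omega^n$. Here $G$ is obtained by writing $\gamma^n_{z_n},U^n_{z_nj},L^n_{z_nj}$ via Radon--Nikodym densities with respect to $\nu$, all bounded by $1$ in absolute value.

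The key structural point is that the worst-case selection is Lipschitz. Since the indicators select $U$ when $R\geq0$ and $L$ when $R<0$, we have
\begin{align*}
  R\,\ul{\Lambda}^n(\mathrm{d}t)=\max(R,0)\,U^n(\mathrm{d}t)+\min(R,0)\,L^n(\mathrm{d}t),
\end{align*}
and the analogous identity holds for the second variant. As $x\mapsto\max(x,0)$ and $x\mapsto\min(x,0)$ are $1$-Lipschitz, and $R$ is Lipschitz in $f$ by Assumption~\ref{ass:regularity_BSDE}(iv), the generator satisfies
\begin{align*}
  |G(t,x)-G(t,y)|\leq K|x-y|\quad\text{and}\quad|G(t,0)|\leq K',
\end{align*}
with $K,K'$ depending only on $\sup_{[0,T]}\beta$, on $|\mathcal{J}|$ and on $\sup\|f^{n+1}\|_{BV}$. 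I would then define the Picard map
\begin{align*}
  \Gamma(f)(t):=Y^n+\int_{(t,T]}G(s,f(s))\,\nu(\mathrm{d}s)
\end{align*}
on $\text{BV}_{[0,T]}$. It maps $\text{BV}_{[0,T]}$ into itself, since $|\Gamma(f)|(\mathrm{d}t)\leq(K'+K|f(t)|)\nu(\mathrm{d}t)$ and $f$ is bounded.

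The main obstacle is the contraction estimate in the presence of atoms of $\nu$, since $\Lambda$ may have jumps. Here the fact that the integrand is evaluated at $t$ rather than $t-$ helps. I would use $|f(t)-g(t)|\leq\int_{(t,T]}|f-g|(\mathrm{d}s)$, which holds because both functions share the terminal value $Y^n$. Combined with the weighted norm $\|\cdot\|^{c,\nu}_{BV}$, this gives
\begin{align*}
  \|\Gamma f-\Gamma g\|^{c,\nu}_{BV}\leq K\int_{(0,T]}e^{-c(\nu(T)-\nu(t-))}\int_{(t,T]}|f-g|(\mathrm{d}s)\,\nu(\mathrm{d}t).
\end{align*}
Fubini and the bound $\int_{(0,s)}e^{-c(\nu(T)-\nu(t-))}\nu(\mathrm{d}t)\leq c^{-1}e^{-c(\nu(T)-\nu(s-))}$ (up to the usual correction factor for atoms, which is what the appendix on the equivalent norms provides) then yield a contraction constant of order $K/c$ for $c$ large. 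Should this estimate fail because of large atoms, I would split $[0,T]$ at the finitely many atoms of $\nu$ exceeding a threshold. At each such atom the equation is explicit backwards, $f(t-)=f(t)+G(t,f(t))\Delta\nu(t)$, and on the pieces in between a contraction is available.

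Banach's fixed point theorem then gives existence and uniqueness of $f^n(\cdot,\omega^n)$ for every $\omega^n$. Finally, joint measurability of $(t,\omega^n)\mapsto f^n(t,\omega^n)$ follows because every Picard iterate is measurable, as $G$ is built from measurable jump-count decompositions, and the fixed point is their pointwise limit. A Gronwall argument (Lemma~\ref{lem:gronwall}) in addition gives $\sup_{\omega^n}\|f^n\|_{BV}(\omega^n)<\infty$, which is needed for iterating this lemma downward in $n$.
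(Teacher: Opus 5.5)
Your proposal is correct and follows essentially the same route as the paper. For fixed $\omega^n$ you run a Banach fixed-point argument in $\text{BV}_{[0,T]}$ under the weighted norm $\|\cdot\|^{c,\nu}_{BV}$, make the worst-case selection Lipschitz (the paper writes $R\ul{\Lambda}=R\tfrac{U+L}{2}+|R|\tfrac{U-L}{2}$, which is equivalent to your max/min identity), and obtain a contraction constant of order $1/c$ via Tonelli and Lemma~\ref{lem:important_bound}.

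Two small remarks. First, Lemma~\ref{lem:important_bound} holds exactly even when $\nu$ has atoms, because the weight uses $\nu(t-)$ and the inner integral runs over the open interval $(0,s)$; so neither a correction factor nor splitting at large atoms is needed. Second, to use $|f(t)-g(t)|\leq\int_{(t,T]}|f-g|(\mathrm{d}s)$ you should either restrict your map to the closed, invariant set $\{f(T)=Y^n\}$, or keep the term $|f(T)-g(T)|$ in the estimate as the paper does.
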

\begin{proof}
  Let $\omega^n\in\Omega^n$ be given and fixed and set $K_{\beta}:=\sup_{t\in[0,T]}\beta(t)$. The goal is to apply the Banach Fixed Point Theorem for the Banach space $\text{BV}_{[0,T]}$ with the norm $\|\cdot\|_{BV}^{c,\nu}$, where $c>0$ and 
  \begin{align*}
    \nu(t,\omega^n):=(1+K_{\beta})\bigg(|\gamma_{z_n}^n|(t,\omega^n)+\sum_{j:j\neq z_n}U_{z_nj}(t,\omega^n)\bigg).
  \end{align*}
  Now fix an arbitrary $\omega^n\in\Omega^n$. During the rest of the proof, we will suppress the dependence on $\omega^n$ for ease of notation. Let $f\in\text{BV}_{[0,T]}$ and define the mapping $F:BV_{[0,T]}\rightarrow BV_{[0,T]}$ by
  \begin{align*}
    F(f)(t):=Y^n+\int_{(t,T]}b_{z_n}^n(s,f(s))\gamma_{z_n}^n(\mathrm{d}s)-\sum_{j:j\neq z_n}\int_{(t,T]}R_{t,z_n j}^{n+1}(t)\ul{\Lambda}_{z_n j}^{n}(\mathrm{d}t),
  \end{align*}
  where $R_{t,z_nj}^{n+1}(t):=b^{n}_{z_n j}(t,f(t),f^{n+1}_{t,z_nj}(t))+f^{n+1}_{t,z_n j}(t)-f(t)$. First, we have to check that $T(f)$ actually is of finite variation, given that $f$ is. Using that $x^{+}:=\mathds{1}_{(x>0)}x$ and $x^-:=\mathds{1}_{(x<0)}x$ can be written as
  \begin{align*}
    x^{+}=\frac{x+|x|}{2} \quad \text{and} \quad x^{-}=\frac{x-|x|}{2},
  \end{align*}
  we can rewrite $R_{t,ij}^{n+1}\ul{\Lambda}_{ij}^{n}(\mathrm{d}t)$ for any $i\neq j$ as
  \begin{align*}
    R_{t,ij}^{n+1}(t)\ul{\Lambda}_{ij}^{n}(\mathrm{d}t)&=\frac{R_{t,ij}^{n+1}(t)+| R_{t,ij}^{n+1}(t)|}{2}U_{ij}^n(\mathrm{d}t)+\frac{R_{t,ij}^{n+1}(t)-| R_{t,ij}^{n+1}(t)|}{2}L_{ij}^n(\mathrm{d}t)\\
    &=R_{t,ij}^{n+1}(t)\bigg(\frac{U_{ij}^n+L_{ij}^n}{2}\bigg)(\mathrm{d}t)+| R_{t,ij}^{n+1}(t)|\bigg(\frac{U_{ij}^n-L_{ij}^n}{2}\bigg)(\mathrm{d}t),\quad t\in(0,T].
  \end{align*}
  Applying the triangle inequality and the fact that $U_{ij}^n+L_{ij}^n$ and $U_{ij}^n-L_{ij}^n$ are increasing, we arrive at the bound
  \begin{align*}
    |R_{t,ij}^{n+1}(t)||\ul{\Lambda}_{ij}^{n}|(\mathrm{d}t)\leq |R_{t,ij}^{n+1}(t)|U_{ij}^n(\mathrm{d}t),\quad \forall i,j\in\mathcal{J}:i\neq j
  \end{align*}
  Furthermore by Assumption~\ref{ass:regularity_BSDE} we obtain the bounds 
  \begin{align*}
    |b_{z_n}^n(t,f(t))||\gamma_{z_n}^n|(\mathrm{d}t)&\leq \beta(t)(1+|f(t)|)|\bar{\gamma}|(\mathrm{d}t)\leq K_{\beta}(1+\|f\|_{BV})|\bar{\gamma}|(\mathrm{d}t)\\
    |b_{z_nj}^n(t,f^{n+1}_{t,z_n j}(t),f(t))|&\leq \beta(t)(1+|f(t)|+|f^{n+1}_{t,z_n j}(t)|)\leq K_{\beta}\bigg(1+\|f\|_{BV}+\sup_{\omega^{n+1}\in\Omega^{n+1}}\|f^{n+1}\|_{BV}(\omega^{n+1})\bigg).
  \end{align*}
  By the triangle inequality and Proposition~\ref{prop:variation_properties}(iv) we thus have 
  \begin{align*}
    \|F(f)\|_{BV}\leq& |Y^n|+K_{\beta}(1+\|f\|_{BV})|\gamma^n_{z_n}|(T)\\
    &+ (1+K_{\beta})\bigg(1+\|f\|_{BV}+\sup_{\omega^{n+1}\in\Omega^{n+1}}\|f^{n+1}\|_{BV}(\omega^{n+1})\bigg)\sum_{j:j\neq z_n}|U_{z_n j}^n|(T)<\infty.
  \end{align*}
  Next, the goal is to show that the mapping $F$ is a contraction. Let $f,g\in\text{BV}_{[0,T]}$ and note that
  \begin{align*}
    F(f)(t)-F(g)(t)=&\int_{(t,T]}b_{z_n}^n(s,f(s))-b_{z_n}^n(s,g(s))\gamma_{z_n}(\mathrm{d}s)\\
    &+\sum_{j:j\neq z_n}\int_{(t,T]}|R_{t,z_nj}^{n+1,f}(s)|-|R_{t,z_nj}^{n+1,g}(s)|\bigg(\frac{U_{z_nj}^n-L_{z_nj}^n}{2}\bigg)(\mathrm{d}t)\\
    &+\sum_{j:j\neq z_n}\int_{(t,T]}R_{t,z_nj}^{n+1,f}(s)-R_{t,z_nj}^{n+1,g}(s)\bigg(\frac{U_{z_nj}^n+L_{z_nj}^n}{2}\bigg)(\mathrm{d}t),\quad \forall t\in[0,T].
  \end{align*}
  Let now $0\leq t_1<t_2\leq T$. By applying the triangle inequality, the reverse triangle inequality, the definitions of $R_{t,ij}^{n+1,f}$ and $R_{t,ij}^{n+1,g}$, and Assumption~\ref{ass:regularity_BSDE}(iii)-(iv), we arrive at 
  \begin{align*}
    |F(f)(t_2)-F(g)(t_2))&-(F(f)(t_1)-F(g)(t_1))|\leq \int_{(t_1,t_2]}K_{\beta}|f(s)-g(s)||\gamma^n|(\mathrm{d}s) \\
    &+\sum_{j:j\neq z_n}\int_{(t_1,t_2]}(1+K_{\beta})|f(s)-g(s)|U_{z_nj}^n(\mathrm{d}s).
  \end{align*}
  By inserting this into the definition of the variation, we find that this above bound with $t_1=0$ and $t_2=T$ applies to $\|f^{n,k+1}_i-f^{n,k}_i\|_{BV,T}$. Using Proposition~\ref{prop:variation_properties}(iii) and the definition of $\nu$ we get 
  \begin{align*}
    \|F(f)-F(g)\|_{BV}^{c,\nu}&\leq \int_{(0,T]}e^{-c(\nu(T)-\nu(s-))}\bigg(|f(T)-g(T)|+\int_{(s,T]}|f-g|(\mathrm{d}u)\bigg)\nu(\mathrm{d}s)
  \end{align*}
  which, after an application of Tonelli's Theorem, yields
  \begin{align*}
    \|F(f)-F(g)\|_{BV}^{c,\nu}\leq& |f(T)-g(T)|\int_{(0,T]}e^{-c(\nu(T)-\nu(s-))}\nu(\mathrm{d}s)\\
    &+\int_{(0,T]}\int_{(0,u)}e^{-c(\nu(T)-\nu(s-))}\nu(\mathrm{d}s)|f-g|(\mathrm{d}u).
  \end{align*}
  An application of Lemma~\ref{lem:important_bound} yields
  \begin{align*}
    \|F(f)-F(g)\|_{BV}^{c,\nu}\leq \frac{1}{c} \|f-g\|_{BV}^{c,\nu},
  \end{align*}
  and by choosing $c>1$, we thus have shown that $F$ is a contraction. An application of the Banach fixed-point theorem finishes the proof.
\end{proof}

Since $f^n$ depends on $f^{n+1}$, the system~(\ref{eq:system_of_equations}) is coupled upwards in $n$, and since $n\in\amsmathbb{N}_0$ there is no natural starting point from which to iterate backwards. Therefore, we consider a truncated version of~(\ref{eq:system_of_equations}). For $M\in\amsmathbb{N}$, set
\begin{align*}
  L_{ij}^{n,M}(t):=\mathds{1}_{(n< M)}L_{ij}^{n}(t),\quad n\in\amsmathbb{N}_0\\
  U_{ij}^{n,M}(t):=\mathds{1}_{(n<M)}U_{ij}^{n}(t),\quad n\in\amsmathbb{N}_0
\end{align*}
and use this to define the system
\begin{align}\label{eq:system_of_equations_M}
  \begin{split}
      f^{n,M}(\mathrm{d}t,\omega^n)=&-b_{z_n}^n(t,\omega^n,f^{n,M}(t,\omega^n))\gamma_{z_n}^n(\mathrm{d}t,\omega^n)-\sum_{j:j\neq z_n}R_{z_nj}^{n+1,M}(t,\omega^n)\ul{\Lambda}^{n,M}_{z_n j}(\mathrm{d}t,\omega^n)\\
      f^{n,M}(T,\omega^n)=&Y^n(\omega^n)
      \end{split}
\end{align}
for $t\in (0,T]$, $\omega^n\in\Omega^n$ and $n\in\amsmathbb{N}_0$, where 
\begin{align*}
  R_{ij}^{n+1,M}(t)&=b_{ij}^n(t,f^{n,M}(t),f^{n+1,M}_{t,ij}(t))+f^{n+1,M}_{t,ij}(t)-f^{n,M}(t)\\
  \ul{\Lambda}_{ij}^{n,M}(\mathrm{d}t)&=\mathds{1}_{(R_{t,ij}^{n+1,M}(t)\geq 0)}U_{ij}^{n,M}(\mathrm{d}t)+\mathds{1}_{(R_{t,ij}^{n+1,M}(t)< 0)}L_{ij}^{n,M}(\mathrm{d}t).
\end{align*}
Since for $n\geq M$ the function $f^{n,M}$ does not depend on $f^{n+1,M}$, we have successfully constructed a system in which only finitely many $f^n$ are coupled.

From now on we let $K_{\beta}:=\sup_{t\in[0,T]}\beta(t)$, $K_Y:=\sup_{\omega\in\Omega}|Y(\omega)|$ and define $\kappa:=1+K_{\beta}$ and $K_1:=1+2K_Y$. We let $c>1$ and set
\begin{align*}
  \nu(t):=(1+K_{\beta})\bigg(\bar{\gamma}(t)+\sum_{i,j:i\neq j}\alpha_{ij}(t)\bigg).
\end{align*}
Note that $\nu$ does not depend on $\omega^n$ for any $n\in\amsmathbb{N}_0$. We now have the following result:
\begin{lemma}\label{lem:existence_M}
  It holds that 
  \begin{align*}
    \sup_{M\in\amsmathbb{N}_0}\sup_{n\in\amsmathbb{N}_0,\omega^n\in\Omega^n}\|f^{n,M}\|_{BV}(\omega^n)\leq K_Y + e^{(4e^{2\nu(T)}+1)\nu(T)}K_1,
  \end{align*}
  and there exists a unique solution of the system~(\ref{eq:system_of_equations_M}) for all $M\in\amsmathbb{N}_0$.
\end{lemma}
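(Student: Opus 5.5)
Existence for each fixed $M$ comes by backward induction in $n$, starting from the decoupled level. For $n\geq M$ we have $U^{n,M}_{ij}=L^{n,M}_{ij}=0$, so (\ref{eq:system_of_equations_M}) reduces to $f^{n,M}(\mathrm{d}t)=-b^n_{z_n}(t,f^{n,M}(t))\gamma^n_{z_n}(\mathrm{d}t)$, $f^{n,M}(T)=Y^n$. This is the case of Lemma~\ref{lem:existence_1} with the jump term absent; the same Banach fixed point argument applies, with $f^{n+1}$ irrelevant (one may take $f^{n+1}\equiv 0$). Once $f^{n+1,M}$ has been constructed for all $\omega^{n+1}\in\Omega^{n+1}$ with $\sup_{\omega^{n+1}}\|f^{n+1,M}\|_{BV}<\infty$, Lemma~\ref{lem:existence_1} gives a unique $f^{n,M}$ for every $\omega^n$. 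After $M$ steps we reach $n=0$. Uniqueness of the whole system follows from the same backward induction, since each level is uniquely determined by the level above.

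The induction needs the BV bound at every level, and this bound must be independent of $M$ and $n$. I would prove it in two stages.

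\emph{Stage one: a uniform sup bound.} Take $\bar f:=\sup_{\omega^{n+1}}\sup_t|f^{n+1,M}|$ as given. For $f^{n,M}$, write $R^{n+1,M}_{z_nj}\ul\Lambda^{n,M}_{z_nj}(\mathrm{d}t)$ via the decomposition into $\frac{U+L}{2}$ and $\frac{U-L}{2}$, as in the proof of Lemma~\ref{lem:existence_1}. Then $|f^{n,M}(t)|$ is controlled by $K_Y+\int_{(t,T]}\big(K_\beta(1+|f^{n,M}|)+\kappa(1+|f^{n,M}|+\bar f)\big)\nu$-type integrals. A backward Gronwall inequality (Lemma~\ref{lem:gronwall}) with respect to the deterministic $\nu$ then bounds $\sup_t|f^{n,M}|$ by $e^{\nu(T)}(K_Y+\nu(T)(1+\bar f))$. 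Iterating this would blow up in $n$, so I would not do that. Instead, I would use the structure of the worst-case generator: the map is monotone in $f^{n+1}$, so a comparison argument should give a sup bound without the $\bar f$ term. Concretely, the term $f^{n+1}-f^n$ paired with a positive measure acts like a probability-weighted average. I would rewrite the equation with the $-f^{n,M}$ parts moved into a discount factor, obtaining
\[
|f^{n,M}(t)|\le \mathcal{E}\text{-weighted average of } K_Y,\ \bar f,\ K_\beta,
\]
and hence a bound of the form $\sup|f^{n,M}|\le e^{2\nu(T)}\max(K_Y,\bar f)+\ldots$. Honestly, the target constant $e^{(4e^{2\nu(T)}+1)\nu(T)}K_1$ suggests that the authors instead use a Gronwall over the jump count, namely over the $N$-increments with $\nu$, in the spirit of Proposition~\ref{prop:exp_bound_N}. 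I would therefore aim directly for a uniform sup bound of the form $\sup_{n,\omega^n,t}|f^{n,M}|\le e^{2\nu(T)}K_1$. I would obtain it by representing $f^{n,M}(t)$ probabilistically as $\amsmathbb{E}^{\ul\Lambda}_{t,i}$ of a terminal value plus accumulated $b$-terms, using Theorem~\ref{th:optional_projection} applied to the truncated, finitely coupled system (for which the solution exists by the induction). The Lipschitz growth in $f$ is then absorbed by a Gronwall inequality in $t$ that is uniform in $\omega$, using Proposition~\ref{prop:exp_bound_N}-type uniform bounds.

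\emph{Stage two: from the sup bound to the BV bound.} Given the uniform sup bound $S$, the BV norm follows directly from the equation: $|f^{n,M}|(T)\le\int(K_\beta(1+S)+\kappa(1+2S))\,\mathrm{d}\nu\le 4\nu(T)(1+S)$ roughly. Plugging in $S$ should give something dominated by the stated $K_Y+e^{(4e^{2\nu(T)}+1)\nu(T)}K_1$, possibly after one more Gronwall application.

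The main obstacle is Stage one. A naive level-by-level Gronwall estimate loses a factor at each $n$, so some form of averaging or comparison (the probabilistic representation) is needed to make the bound uniform in $n$. This is the step I would check most carefully, including whether the finitely coupled system admits the optional projection interpretation with $\ul\Lambda$ satisfying Assumption~\ref{ass:cumulative_tr}.
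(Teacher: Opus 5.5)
Your existence and uniqueness argument (backward induction from the decoupled levels $n\geq M$ via Lemma~\ref{lem:existence_1}) is the paper's, but as written it is circular. You say the induction needs a bound independent of $n$ and $M$, and you discard the level-wise bound $\sup|f^{n,M}|\le e^{\nu(T)}(K_Y+\nu(T)(1+\bar{f}))$. You then obtain the uniform bound from a representation of the truncated system ``for which the solution exists by the induction''. For fixed $M$ only the levels $n<M$ are coupled. So the level-wise bound, although it grows with $M-n$, is finite, and it is exactly what keeps the hypothesis of Lemma~\ref{lem:existence_1} valid down to $n=0$. Uniformity is needed only for the first assertion of the lemma.

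That assertion is the real content, and your Stage one does not yet prove it. A level-wise estimate in terms of the constant $\bar{f}$ cannot give it: it keeps $\bar{f}$ on the right-hand side (as your own display does), so it accumulates over the $M-n$ levels. The probabilistic route does work, provided you supply two things.

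First, you need the representation $X^M(t)=\amsmathbb{E}^{\ul{\Lambda}^M}_{t,Z(t)}[Y+C^M(T)-C^M(t)]$, obtained via Proposition~\ref{prop:BSDE_equiv_ODE} and Theorem~\ref{th:optional_projection}. This uses the $M$-dependent a priori bound, so that $C^M\in\amsmathbb{Y}$. It also needs admissible rates, and the truncated rates $\mathds{1}_{(N(t-)<M)}\ul{\Lambda}_{ij}(\mathrm{d}t)$ violate Assumption~\ref{ass:cumulative_tr}(i): on $\omega_i^{\tau_M}$ with $\zeta_{M-1}=i$ the $M$th jump is removed and the indicator flips. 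You must therefore pass to their $i$-counterfactuals, which leaves $I_i(t-)\ul{\Lambda}^M_{ij}(\mathrm{d}t)$, and hence the BSDE, unchanged.

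Second, you need the estimate itself, which is not an exponential-moment bound as in Proposition~\ref{prop:exp_bound_N}. In the representation the differences $f^{n+1}-f^n$ have disappeared into the martingale and only the payments remain. For deterministic $g\geq 0$ one has $\amsmathbb{E}^{\ul{\Lambda}^M}_{t,i}[\int_{(t,T]}g\,\mathrm{d}N_{jk}]\le\int_{(t,T]}g\,\mathrm{d}\alpha_{jk}$. Take the monotone function $h(t):=\sup_{s\geq t}\sup_{\omega}|X^M(s,\omega)|$, which dominates $X^M_i$ and $X^M_{ij}$. This gives $h(t)\le K_Y+\int_{(t,T]}(1+2h(s))\,\nu(\mathrm{d}s)$, and a backward version of Lemma~\ref{lem:gronwall} yields $h\le\frac{1}{2}K_1e^{2\nu(T)}$ uniformly in $n$, $M$ and $\omega^n$. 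Your Stage two then gives $\|f^{n,M}\|_{BV}\le K_Y+(1+K_1e^{2\nu(T)})\nu(T)$, which is below the stated constant.

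Completed this way, your argument is a genuine alternative. It explains the uniformity in $n$ by averaging over the number of future jumps, whose compensator is dominated by the deterministic $\alpha$, and it gives a single-exponential bound. The paper instead stays inside the deterministic jump-count system. Level $n+1$ enters level $n$ at time $s$ only through $K_Y$ plus the tail variation $\int_{(s,T]}|f^{n+1,M}|(\mathrm{d}u)$. In the weighted norm $\|\cdot\|^{c,\nu}_{BV}$, Tonelli's theorem and Lemma~\ref{lem:important_bound} therefore turn each upward coupling into a factor $2e^{2\nu(T)}/(c-1)$. The choice $c=4e^{2\nu(T)}+1$ then makes the contributions of levels $n+1,\dots,M$ a geometric series with ratio $\frac{1}{2}$. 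That route needs no measure for truncated rates and no supremum over paths, at the price of the double-exponential constant.
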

\begin{proof}
  Fix $M\in\amsmathbb{N}_0$ and let $n\geq M$. In this case, we have that 
  \begin{align*}
    f^{n,M}(t)=Y^n+\int_{(t,T]}b^n_{z_n}(s,f^{n,M}(s))\gamma_{z_n}(\mathrm{d}s),\quad \forall t\in(0,T] 
  \end{align*}
  Since $f^{n,M}$ does not depend on $f^{n+1,M}$, Lemma~\ref{lem:existence_1} immidiately yields existence and uniqueness. For any $t\in [0,T]$ similar arguments as in the proof of Lemma~\ref{lem:existence_1} yield
  \begin{align*}
    \int_{(t,T]}e^{-c(\nu(T)-\nu(s-))}|f^{n,M}|(\mathrm{d}s)\leq& \int_{(t,T]}e^{-c(\nu(T)-\nu(s-))}(1+K_{Y})\nu(\mathrm{d}s)\\
    &+\int_{(t,T]}e^{-c(\nu(T)-\nu(s-))}\int_{(s,T]}|f^{n,M}|(\mathrm{d}u)\nu(\mathrm{d}s)\\
    \leq& \frac{c}{c-1}\int_{(t,T]}e^{-c(\nu(T)-\nu(s-))}K_1\nu(\mathrm{d}s) \leq \frac{1}{c-1}K_1<\infty,\quad \forall t\in[0,T].
  \end{align*}
  Choosing $c=2$, we consequently obtain
  \begin{align*}
    \|f^{n,M}\|_{BV}(\omega^n)\leq K_Y+e^{2\nu(T)}K_1\leq K_Y+e^{(4e^{2\nu(T)}+1)\nu(T)}K_1\quad \forall \omega^n\in\Omega^n.
  \end{align*}
  which implies the boundedness condition.

  For $n<M$ we will prove existence, uniqueness and the boundedness condition by proving the auxiliary result that for each $n\in\amsmathbb{N}_0$, there exists a non-decreasing function $g^{n,M}\in\text{BV}_{[0,T]}$ such that 
  \begin{align}\label{eq:g_bound}
    \int_{(t,T]}|f^{n,M}|(\omega^n)(\mathrm{d}s)\leq\int_{(t,T]}g^{n,M}(\mathrm{d}s),\quad\forall t\in[0,T],\,\forall\omega^n\in\Omega^n,
  \end{align}
  which we will prove by backwards induction starting at $n=M$. For $n\geq M$ we can set $c=2$ and use the previous calculations to obtain (\ref{eq:g_bound}) with $g^{n,M}(\mathrm{d}s)=2e^{2\nu(T)}K_1\nu(\mathrm{d}s)$. Let now $n<M$ and suppose that $f^{n+1,M}$ satisfies (\ref{eq:g_bound}), exists and is unique. Then since the bound (\ref{eq:g_bound}) does not depend on $\omega^{n+1}$, Lemma~\ref{lem:existence_1} immediately yields existence and uniqueness of $f^{n,M}$. By similar calculations as before, we obtain
  \begin{align*}
    \int_{(t,T]}e^{-c(\nu(T)-\nu(s-))}&|f^{n,M}|(\mathrm{d}s)\leq \int_{(t,T]}e^{-c(\nu(T)-\nu(s-))}\int_{(s,T]}|f^{n,M}|(\mathrm{d}u)\nu(\mathrm{d}s)\\
    &+\int_{(t,T]}e^{-c(\nu(T)-\nu(s-))}\bigg(K_1\nu(\mathrm{d}s)+\sum_{j:j\neq z_n}\int_{(s,T]}|f^{n+1,M}_{s,z_n j}|(\mathrm{d}u)\kappa U_{z_n j}(\mathrm{d}s)\bigg),
  \end{align*}
  for all $t\in[0,T]$. Using the induction hypothesis, switching the order of integration by virtue of Tonelli's Theorem, using Lemma~\ref{lem:important_bound} and rearranging yields
  \begin{align*}
    \int_{(t,T]}e^{-c(\nu(T)-\nu(s-))}&|f^{n,M}|(\mathrm{d}s)\leq \frac{c}{c-1}\int_{(t,T]}e^{-c(\nu(T)-\nu(s-))}\bigg(K_1+\int_{(s,T]}g^{n+1,M}(\mathrm{d}u)\bigg)\nu(\mathrm{d}s),
  \end{align*}
  for all $t\in[0,T]$. Setting $c=2$ and rearranging we obtain (\ref{eq:g_bound}) with 
  \begin{align*}
    g^{n,M}(\mathrm{d}s)=2e^{2\nu(T)}\bigg(K_1+\int_{(s,T]}g^{n+1,M}(\mathrm{d}u)\bigg)\nu(\mathrm{d}s).
  \end{align*}
  We thus established that (\ref{eq:g_bound}) is valid for all $n<M$ and that existence and uniqueness hold for all $n<M$ as well. Using this in conjunction with switching the order of integration by virtue of Tonelli's Theorem and Lemma~\ref{lem:important_bound} to iterate upwards from $n$ to $M$, we obtain:
  \begin{align*}
    \int_{(t,T]}e^{-c(\nu(T)-\nu(s-))}&|f^{n,M}|(\mathrm{d}s)\leq \frac{c}{c-1}\int_{(t,T]}e^{-c(\nu(T)-\nu(s-))}\bigg(K_1+\int_{(s,T]}g^{n+1,M}(\mathrm{d}u)\bigg)\nu(\mathrm{d}s)\\
    \leq&\frac{1}{c-1}K_1+\frac{1}{c-1}\int_{(0,T]}e^{-c(\nu(T)-\nu(s-))}g^{n+1,M}(\mathrm{d}s)\\
    \leq&\frac{1}{c-1}K_1+\frac{1}{c-1}\int_{(0,T]}e^{-c(\nu(T)-\nu(s-))}2e^{2\nu(T)}\bigg(K_1+\int_{(s,T]}g^{n+2,M}(\mathrm{d}u)\bigg)\nu(\mathrm{d}s)\\
    \leq&\frac{2e^{2\nu(T)}}{c-1}K_1+\bigg(\frac{2e^{2\nu(T)}}{c-1}\bigg)^2 K_1 +\bigg(\frac{2e^{2\nu(T)}}{c-1}\bigg)^2\int_{(0,T]}e^{-c(\nu(T)-\nu(s-))}g^{n+2,M}(\mathrm{d}s)\\
    \leq&\cdots\leq K_1\sum_{\ell=1}^{M-n}\bigg(\frac{2e^{2\nu(T)}}{c-1}\bigg)^{\ell}\leq K_1\sum_{\ell=1}^{\infty}\bigg(\frac{2e^{2\nu(T)}}{c-1}\bigg)^{\ell},\quad \forall t\in[0,T].
  \end{align*}
  Setting $c=4e^{2\nu(T)}+1$ yields
  \begin{align*}
    \int_{(0,T]}|f^{n,M}|(\mathrm{d}s,\omega^n)\leq e^{(4e^{2\nu(T)}+1)\nu(T)}K_1\sum_{\ell=1}^{\infty}\frac{1}{2^{\ell}}=e^{(4e^{2\nu(T)}+1)\nu(T)}K_1,\quad \forall\omega^n\in\Omega^n.
  \end{align*}
  As this bound is independent of $M$, $n$, and $\omega^n$, taking the supremums yields the desired result.
\end{proof}

\begin{lemma}\label{lem:Cauchy}
  For each $M\in\amsmathbb{N}_0$ let $(f^{n,M})_{n\in\amsmathbb{N}_0}$ be a solution of the system (\ref{eq:system_of_equations_M}). For any $n\in\amsmathbb{N}_0$ the sequence $(f^{n,M})_{M\in\amsmathbb{N}_0}$ is a Cauchy sequence in $(\text{BV}_{[0,T]},\|\cdot\|_{\text{BV}})$.
\end{lemma}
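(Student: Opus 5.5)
The plan is to compare two truncation levels $M<M'$ and bound $d^{n}:=f^{n,M'}-f^{n,M}$ by backward recursion in $n$. Throughout we use the weighted norm $\|\cdot\|_{BV}^{c,\nu}$ with the deterministic $\nu$ fixed before Lemma~\ref{lem:existence_M}.

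\emph{Levels $n<M$.} Both systems share the terminal value $Y^n$, the same $b^n_{z_n}$, and the same bounds $U^n,L^n$. Subtracting the two integral equations and using the decomposition $R\,\ul{\Lambda}(\mathrm{d}t)=R\,\tfrac{U+L}{2}(\mathrm{d}t)+|R|\,\tfrac{U-L}{2}(\mathrm{d}t)$ from the proof of Lemma~\ref{lem:existence_1}, together with the reverse triangle inequality and the Lipschitz bounds of Assumption~\ref{ass:regularity_BSDE}(iv), we expect for $0\le t_1<t_2\le T$
\begin{align*}
|d^n(t_2)-d^n(t_1)|\leq \int_{(t_1,t_2]}|d^n(s)|\,\nu(\mathrm{d}s)+\kappa\sum_{j\neq z_n}\int_{(t_1,t_2]}|d^{n+1}_{s,z_nj}(s)|\,U^n_{z_nj}(\mathrm{d}s).
\end{align*}
Since $d^n(T)=0$, we have $|d^n(s)|\le\int_{(s,T]}|d^n|(\mathrm{d}u)$. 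For the second term we use $|d^{n+1}(s,\cdot)|\le \sup_{\omega^{n+1}}\|d^{n+1}\|_{BV}$, noting $d^{n+1}(T)=0$ as well. Passing to the weighted norm exactly as in the proof of Lemma~\ref{lem:existence_1}, applying Tonelli and Lemma~\ref{lem:important_bound}, and taking $c$ large, we aim for
\begin{align*}
\sup_{\omega^n}\|d^n\|^{c,\nu}_{BV}\le q\,\sup_{\omega^{n+1}}\|d^{n+1}\|^{c,\nu}_{BV},\qquad q:=\frac{C_\nu}{c-1}<1 .
\end{align*}
Here $C_\nu$ is a constant depending only on $\nu(T)$ and $\kappa$, coming from the equivalence between $\|\cdot\|_{BV}$ and $\|\cdot\|^{c,\nu}_{BV}$. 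We will mimic the choice $c=4e^{2\nu(T)}+1$ from Lemma~\ref{lem:existence_M}.

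\emph{Levels $n\ge M$.} For $M\le n<M'$ the two systems genuinely differ: $f^{n,M}$ omits the transition term, whereas $f^{n,M'}$ includes it. Here we do not estimate $d^n$ through the recursion. Instead we bound it crudely by $\sup\|f^{n,M'}\|_{BV}+\sup\|f^{n,M}\|_{BV}\le 2K$, where $K:=K_Y+e^{(4e^{2\nu(T)}+1)\nu(T)}K_1$ is the uniform bound of Lemma~\ref{lem:existence_M}, which holds independently of $M$, $n$ and $\omega^n$.

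\emph{Combining.} Iterating the contraction from level $n$ up to level $M$ gives
\begin{align*}
\sup_{\omega^n}\|f^{n,M'}-f^{n,M}\|_{BV}\le C'\,q^{\,M-n}\,2K\longrightarrow 0\quad (M\to\infty),
\end{align*}
uniformly in $M'>M$, where $C'$ accounts for the norm-equivalence constants. This gives the Cauchy property for each fixed $n$.

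\emph{Main obstacle.} The difficulty is ensuring that the one-step factor $q$ is strictly less than one uniformly in $\omega^n$ and in $n$. The jump term introduces $|d^{n+1}|$ evaluated at the moving path $\omega^{n+1}_{s,z_nj}$, and this has to be controlled through the supremum of the weighted norm. Doing so requires that the weight $e^{-c(\nu(T)-\nu(s-))}$ of $\|\cdot\|^{c,\nu}$ be compatible across levels; this holds because $\nu$ does not depend on $\omega^n$ or $n$. If a direct sup-norm contraction fails, the fallback is to reuse the $g^{n,M}$-type majorant recursion from Lemma~\ref{lem:existence_M}, with $K_1$ replaced by $0$, and to sum the resulting geometric series $\bigl(2e^{2\nu(T)}/(c-1)\bigr)^{\ell}$.
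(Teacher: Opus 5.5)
Your level-$n$ inequality and the crude bound $2K$ at the truncation level are fine; the paper's base case uses the same kind of bound from Lemma~\ref{lem:existence_M}. The gap is the claimed one-step contraction $\sup_{\omega^n}\|d^n\|^{c,\nu}_{BV}\le q\,\sup_{\omega^{n+1}}\|d^{n+1}\|^{c,\nu}_{BV}$ with $q<1$, which does not follow from your estimate.

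Replacing $|d^{n+1}_{s,z_nj}(s)|$ by the constant $D:=\sup_{\omega^{n+1}}\|d^{n+1}\|_{BV}$ gives $\|d^n\|^{c,\nu}_{BV}\le D/(c-1)$, with the \emph{unweighted} norm on the right. The equivalence constant of Lemma~\ref{lem:norm_equivlance} is $e^{c\bar\nu}$ with $\bar\nu:=\nu(T)-\nu(0-)$, and it depends on $c$. So your $C_\nu$ is not $c$-independent. The one-step factor is $e^{c\bar\nu}/(c-1)\ge\bar\nu e^{1+\bar\nu}$ for every $c>1$, which is at least one unless $\bar\nu$ is below roughly $0.28$.

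Taking suprema of weighted norms on both sides does not rescue it. You have $|d^{n+1}(s,\omega')|\le e^{c(\nu(T)-\nu(s))}\|d^{n+1}\|^{c,\nu}_{BV}(\omega')$, and this factor cancels the weight $e^{-c(\nu(T)-\nu(s-))}$, leaving a ratio of order $\nu(T)$.

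The obstacle you flag is exactly where it breaks. The quantity $|d^{n+1}_{s,z_nj}(s)|$ is controlled only by the variation of $d^{n+1}(\cdot,\omega^{n+1}_{s,z_nj})$ on $(s,T]$. That is what produces a gain of $1/c$ per level after integrating against $e^{-c(\nu(T)-\nu(s-))}\nu(\mathrm{d}s)$. Collapsing it to a path-supremum of a norm discards this time structure, and since the path moves with $s$ you cannot recover it by Tonelli. The decay across levels is of Picard type, roughly $\nu(T)^k/k!$ after $k$ levels, not a uniform one-step contraction of a scalar quantity. Path-independence of $\nu$ does not change this.

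What works is your fallback, which is precisely the paper's proof. By backward induction from the truncation level, build $\omega$-uniform majorant measures with $\int_{(t,T]}|d^n|(\mathrm{d}s,\omega^n)\le\int_{(t,T]}g_n(\mathrm{d}s)$ for all $t$ and $\omega^n$, working with $c=2$ at every step. Start from $g_{M-1}(\mathrm{d}s)=4Ke^{2\nu(T)}\nu(\mathrm{d}s)$, obtained from $|d^{M}|\le 2K$. The recursion is then $g_n(\mathrm{d}s)=2e^{2\nu(T)}\int_{(s,T]}g_{n+1}(\mathrm{d}u)\,\nu(\mathrm{d}s)$. This is where the path-independence of $\nu$ is really needed.

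Only afterwards do you evaluate the weighted norm with a large $c$. Tonelli and Lemma~\ref{lem:important_bound} give a factor $2e^{2\nu(T)}/(c-1)$ per level, i.e.\ $1/2$ for $c=4e^{2\nu(T)}+1$.

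Also, no geometric series should be summed. The levels $n<M$ carry no inhomogeneous term, so the forward iteration leaves the single term $\bigl(2e^{2\nu(T)}/(c-1)\bigr)^{M-n}$ times the base constant. It is this term, not a partial sum, that gives $\|f^{n,M'}-f^{n,M}\|_{BV}\le C\,2^{-(M-n)}$ uniformly in $M'>M$ and $\omega^n$.
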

\begin{proof}
  Fix $n\in\amsmathbb{N}_0$ and suppose that $n\leq N<M$. We will start by proving the auxiliary result that for each $n<N$, there exists a non-decreasing function $g^{N,M}_n\in\text{BV}_{[0,T]}$ such that 
  \begin{align}\label{eq:g_bound2}
    \int_{(t,T]}|f^{n,M}-f^{n,M}|(\mathrm{d}s,\omega^n)\leq\int_{(t,T]}g^{N,M}_{n+1}(\mathrm{d}s),\quad\forall t\in[0,T],\,\forall\omega^n\in\Omega^n.
  \end{align}
  As in the proof of Lemma~\ref{lem:existence_M}, we will use backwards induction. For $n=N$, we can, by similar calculations, Tonelli's Theorem, and an application of Lemma~\ref{lem:existence_1} and Lemma~\ref{lem:important_bound}, arrive at
  \begin{align*}
    \int_{(t,T]}e^{-c(\nu(T)-\nu(s-))}&|f^{N,M}-f^{N,N}|(\mathrm{d}s)\leq \int_{(t,T]}e^{-c(\nu(T)-\nu(s-))}\int_{(s,T]}|f^{N,M}-f^{N,N}|(\mathrm{d}u)\nu(\mathrm{d}s)\\
    &+\int_{(t,T]}e^{-c(\nu(T)-\nu(s-))}\sum_{j:j\neq z_n}\kappa\big(1+\|f^{N+1,M}_{s,z_nj}\|_{BV}+\|f^{N,M}\|_{BV}\big)U_{z_nj}(\mathrm{d}s)\\
    \leq & \frac{c}{c-1}\int_{(t,T]}e^{-c(\nu(T)-\nu(s-))}(1+2K_2)\nu(\mathrm{d}s),\quad \forall t\in [0,T],\,\forall\omega^N\in\Omega^N,
  \end{align*}
  where $K_2:=K_Y + e^{(4e^{2\nu(T)}+1)\nu(T)}K_1$. Thus for $c=2$, we have that (\ref{eq:g_bound2}) is satisfied with $g^{N,M}_{N}(\mathrm{d}s)=2e^{2\nu(T)}(1+2K_2)\nu(\mathrm{d}s)$. Assume now that (\ref{eq:g_bound2}) holds for $n+1$. Then calculations similar to those in the proof of Lemma~\ref{lem:existence_1} and the induction hypothesis yield
  \begin{align*}
    \int_{(t,T]}e^{-c(\nu(T)-\nu(s-))}|f^{n,M}-f^{n,M}|(\mathrm{d}s)&\leq \frac{c}{c-1}\int_{(t,T]}e^{-c(\nu(T)-\nu(s-))}\int_{(s,T]}g^{N,M}_{n+1}(\mathrm{d}u)\nu(\mathrm{d}s),
  \end{align*}
  for all $t\in[0,T]$, which for $c=2$ yields (\ref{eq:g_bound2}) with
  \begin{align*}
    g^{N,M}_{n}(\mathrm{d}s)=2e^{2\nu(T)}\int_{(s,T]}g^{N,M}_{n+1}(\mathrm{d}u)\nu(\mathrm{d}s).
  \end{align*}
  Iterating forwards from $n$ to $N$ as in the proof of Lemma~\ref{lem:existence_M} yields
  \begin{align*}
    \int_{(0,T]}e^{-c(\nu(T)-\nu(s-))}&|f^{N,M}-f^{N,N}|(\mathrm{d}s)\leq \frac{1}{c-1}\int_{(0,T]}e^{-c(\nu(T)-\nu(s-))}g_{n+1}^{N,M}(\mathrm{d}s)\\
    \leq & \frac{2e^{2\nu(T)}}{c-1}\int_{(0,T]}2e^{2\nu(T)}\int_{(s,T]}g_{n+2}^{N,M}(\mathrm{d}u)\nu(\mathrm{d}s)\\
    \leq & \bigg(\frac{2e^{2\nu(T)}}{c-1}\bigg)^2 \int_{(0,T]}e^{-c(\nu(T)-\nu(s-))}g_{n+2}^{N,M}(\mathrm{d}s)\leq\cdots\leq \bigg(\frac{2e^{2\nu(T)}}{c-1}\bigg)^{N-n}(1+2K_2)
  \end{align*}
  Setting $c=4e^{2\kappa\nu(T)}+1$ and rearranging yields the bound 
  \begin{align*}
    \|f^{n,M}-f^{n,N}\|_{BV}(\omega^n)\leq \frac{1}{2^{N-n}}(1+2K_2)e^{(4e^{2\nu(T)}+1)\nu(T)},\quad\forall\omega^n\in\Omega^n,
  \end{align*}
  which vanishes for $N\rightarrow\infty$, thus finishing the proof.
\end{proof}

\begin{lemma}\label{lem:limit}
  Assume that for all $n\in\amsmathbb{N}_0$ the sequence $(f^{n,M})_{M\in\amsmathbb{N}_0}$ has the limit $f^n$ in $\text{BV}_{[0,T]}$. Then $(f^n)_{n\in\amsmathbb{N}_0}$ is a solution of the system~(\ref{eq:system_of_equations}) and the process $X$ with jump-count representation $(f^n)_{n\in\amsmathbb{N}_0}$ is bounded on finite intervals.
\end{lemma}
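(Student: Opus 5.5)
The plan is to pass to the limit $M\to\infty$ in the integral form of the truncated system (\ref{eq:system_of_equations_M}) for each fixed $n$ and $\omega^n$. For $M>n$ the truncation is inactive at level $n$, so $L^{n,M}_{ij}=L^n_{ij}$ and $U^{n,M}_{ij}=U^n_{ij}$. Using the decomposition from the proof of Lemma~\ref{lem:existence_1}, the truncated equation then reads
\begin{align*}
  f^{n,M}(t)=Y^n+\int_{(t,T]}b^n_{z_n}(s,f^{n,M}(s))\gamma^n_{z_n}(\mathrm{d}s)-\sum_{j:j\neq z_n}\int_{(t,T]}\Big(R^{n+1,M}_{z_nj}(s)\tfrac{U^n_{z_nj}+L^n_{z_nj}}{2}(\mathrm{d}s)+|R^{n+1,M}_{z_nj}(s)|\tfrac{U^n_{z_nj}-L^n_{z_nj}}{2}(\mathrm{d}s)\Big).
\end{align*}
Writing it this way avoids the discontinuous indicators $\mathds{1}_{(R\geq 0)}$. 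The map $R\mapsto(R+|R|)U/2+(R-|R|)L/2$ is Lipschitz in $R$, so no issue arises at sign changes of the limit. I would then show that each term converges to the corresponding term with $f^n$, $f^{n+1}$ in place of $f^{n,M}$, $f^{n+1,M}$. The left-hand side converges pointwise because $\sup_t|h(t)|\leq\|h\|_{BV}$.

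For the convergence of the integrals I would use the Lipschitz bounds of Assumption~\ref{ass:regularity_BSDE}(iv) together with the reverse triangle inequality $\big||a|-|b|\big|\leq|a-b|$, exactly as in the contraction estimate of Lemma~\ref{lem:existence_1}. This bounds the difference of the right-hand sides by
\begin{align*}
  K_\beta\sup_s|f^{n,M}(s)-f^n(s)|\,\bar\gamma(T)+\kappa\sum_{j}\int_{(0,T]}\big(|f^{n,M}(s)-f^n(s)|+|f^{n+1,M}_{s,z_nj}(s)-f^{n+1}_{s,z_nj}(s)|\big)U^n_{z_nj}(\mathrm{d}s).
\end{align*}

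The main obstacle is the term with $f^{n+1}$ evaluated along the moving argument $\omega^{n+1}_{s,z_nj}(\omega^n)$. Convergence of $f^{n+1,M}\to f^{n+1}$ must hold uniformly in $\omega^{n+1}$ for this to be controlled. The bound in the proof of Lemma~\ref{lem:Cauchy},
\[
  \|f^{n,M}-f^{n,N}\|_{BV}(\omega^n)\leq 2^{-(N-n)}(1+2K_2)e^{(4e^{2\nu(T)}+1)\nu(T)},
\]
is uniform in $\omega^n$. Letting $M\to\infty$ gives $\sup_{\omega^{n+1}}\|f^{n+1,N}-f^{n+1}\|_{BV}\to0$, hence uniform sup-norm convergence, and dominated convergence against the finite measure $U^n_{z_nj}\leq\alpha$ finishes the argument. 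I would also check measurability of $f^n$ in $(t,\omega^n)$, which holds as a pointwise limit of measurable functions. I would check as well that $f^n$ satisfies the convention (\ref{eq:fn_uniqueness_condition_optional}), which passes to pointwise limits, so that Proposition~\ref{prop:Rep_adapted_process_N} defines an optional $X$.

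For boundedness, lower semicontinuity of the variation gives $\|f^n\|_{BV}(\omega^n)\leq\liminf_M\|f^{n,M}\|_{BV}(\omega^n)\leq K_2$ by Lemma~\ref{lem:existence_M}, uniformly in $n$ and $\omega^n$. Hence $|f^n(t,\omega^n)|\leq\|f^n\|_{BV}\leq K_2$, and therefore $|X(t,\omega)|\leq K_2$ on $[0,T]$. By Remark~\ref{rem:path_properties}, $X$ is càdlàg and of finite variation.
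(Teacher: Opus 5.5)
Your proposal is correct and follows essentially the paper's route. Like the paper, you pass to the limit $M\to\infty$ in the truncated integral equation for $M>n$, using the rewriting $R(U+L)/2+|R|(U-L)/2$ and the Lipschitz bounds, and you get boundedness from the uniform BV bound of Lemma~\ref{lem:existence_M}. The one difference is in the $f^{n+1}$ term: you use convergence uniform in $\omega^{n+1}$, taken from the estimate of Lemma~\ref{lem:Cauchy}, whereas the paper uses convergence at each fixed $s$ plus dominated convergence with the uniform bound, so that uniformity is convenient but not actually required.
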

\begin{proof}
  Let $n\in\amsmathbb{N}_0$ be given. Let $f^n$ be the limit of $(f^{n,M})_{M\in\amsmathbb{N}_0}$ and set
  \begin{align*}
    \ti{f}^n(t):=Y^n+\int_{(t,T]}b^n_{z_n}(t,f^n(t))\gamma_{z_n}^n(\mathrm{d}s)-\sum_{j:j\neq z_n}\int_{(t,T]}R_{s,z_nj}^{n+1}(s)\ul{\Lambda}_{z_nj}^n(\mathrm{d}s),\quad t\in[0,T].
  \end{align*}
  Let $M$ be large enough such that $n<M$. Then $L_{ij}^{n,M}=L_{ij}^n$ and $U_{ij}^{n,M}=U_{ij}^n$ by definition and thus by similar arguments as in the previous proofs, we get
  \begin{align*}
    \|f^{n,M}-\ti{f}^{n}\|_{BV}\leq& \nu(T)\|f^{n,M}-f^n\|_{BV}+\sum_{j:j\neq z_n}\int_{(0,T]}\|f^{n+1,M}_{t,z_nj}-f^{n+1}_{t,z_nj}\|_{BV}\nu(\mathrm{d}t)
  \end{align*}
  By assumption $\lim_{n\rightarrow\infty}\|f^{n,M}-f^n\|_{BV}=0$ and $\lim_{n\rightarrow\infty}\|f^{n+1,M}_{t,z_nj}-f^{n+1}_{t,z_nj}\|_{BV}=0$ for all $t\in [0,T]$. Thus, the first term vanishes, and the bound in Lemma~\ref{lem:existence_1} allows us to apply the dominated convergence theorem to the second term, which allows us to conclude that 
  \begin{align*}
    \lim_{M\rightarrow\infty}\|f^{n,M}-\ti{f}^{n}\|_{BV}=0.
  \end{align*}
  Thus the uniqueness of limits yields $(\ti{f}^{n})_{n\in\amsmathbb{N}_0}=(f^{n})_{n\in\amsmathbb{N}_0}$, which means that $(f^{n})_{n\in\amsmathbb{N}_0}$ indeed is a solution of (\ref{eq:system_of_equations}). Finally, as the limit operator preserves bounds, we have by Lemma~\ref{lem:existence_M} that
  \begin{align*}
    \sup_{n\in\amsmathbb{N}_0,\omega^n\in\Omega^n}\|f^n\|_{BV}(\omega^n)\leq K_Y+e^{(4e^{2\nu(T)}+1)\nu(T)}K_1,
  \end{align*}
  which implies that $X(t)=\sum_{n\in\amsmathbb{N}_0}I^n(t)f^n(t)$ is bounded on finite intervals.
\end{proof}

\begin{lemma}\label{lem:uniqueness}
  Let $X$ be a solution of (\ref{eq:worst_case_BSDE}) that is càdlàg, of finite variation, and bounded on finite intervals. Then $X$ is unique.
\end{lemma}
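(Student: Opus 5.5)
Let $X$ and $X'$ be two solutions of (\ref{eq:worst_case_BSDE}), both càdlàg, of finite variation and bounded on finite intervals, with jump-count representations $(f^n)_{n\in\amsmathbb{N}_0}$ and $(g^n)_{n\in\amsmathbb{N}_0}$. The plan is to compare them directly at the level of the system (\ref{eq:system_of_equations}) and to show that $\delta_n:=\sup_{\omega^n\in\Omega^n}\|f^n-g^n\|_{BV}(\omega^n)$ vanishes for every $n$; Proposition~\ref{eq:Rep_N_uniqueness} then gives $X=X'$. By Proposition~\ref{prop:BSDE_equiv_ODE}, both families solve (\ref{eq:system_of_equations}). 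Since $X,X'$ are bounded on finite intervals and $X(T)=X'(T)=Y$, Remark~\ref{rem:path_properties} gives a uniform sup bound on $f^n,g^n$. To obtain a uniform bound on the variations, I first rerun the estimate of Lemma~\ref{lem:existence_M} on the integral equation for $f^n$, now using the sup bound on $f^{n+1}$ rather than its variation. The terms $|b|\,|\gamma|$ and $|R|\,U$ are then dominated by a constant times $\nu(\mathrm{d}t)$, which yields $\Delta:=\sup_n\delta_n\le \sup_n(\|f^n\|_{BV}+\|g^n\|_{BV})<\infty$.

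The key estimate is the contraction step of Lemma~\ref{lem:existence_1}, applied with $f^{n+1}$ and $g^{n+1}$ both unknown. The decomposition $R\ul{\Lambda}(\mathrm{d}t)=R\frac{U+L}{2}(\mathrm{d}t)+|R|\frac{U-L}{2}(\mathrm{d}t)$ together with the reverse triangle inequality removes the discontinuous indicator. This handles the non-Lipschitz sign-switch and leaves
\begin{align*}
|f^n-g^n|(\mathrm{d}t)\le \nu(\mathrm{d}t)\Big(|f^n(t)-g^n(t)|+\sum_{j\neq z_n}|f^{n+1}_{t,z_nj}(t)-g^{n+1}_{t,z_nj}(t)|\Big).
\end{align*}
Next I bound $|f^{n+1}(t)-g^{n+1}(t)|\le \|f^{n+1}-g^{n+1}\|_{BV}$, using that both functions agree at $T$, and work with the weighted norm $\|\cdot\|^{c,\nu}_{BV}$. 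Lemma~\ref{lem:important_bound} and Tonelli, exactly as in Lemma~\ref{lem:existence_M}, then give
\begin{align*}
\delta_n\le \frac{C_\nu}{c-1}\,\delta_{n+1},
\end{align*}
where $C_\nu$ depends only on $\nu(T)$ and $|\mathcal{J}|$ (through factors such as $e^{c\nu(T)}$ arising from norm equivalence). The constant $c$ can be chosen, as in Lemma~\ref{lem:Cauchy}, so that the prefactor is at most $\tfrac12$. This needs care because $c$ also appears in the norm-equivalence constant, but the paper's choice $c=4e^{2\nu(T)}+1$ together with the intermediate $c=2$ bounds indicates how to arrange it.

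Iterating the estimate gives $\delta_n\le 2^{-k}\delta_{n+k}\le 2^{-k}\Delta$ for every $k$, so $\delta_n=0$ for all $n$. The step I expect to be the main obstacle is making this iteration genuinely uniform. The bound has to pass from $\omega^n$ to the counterfactual paths $\omega^{n+1}_{t,z_nj}$ for all $t$, which is why a supremum over $\Omega^{n+1}$ is taken. In addition, the a priori bound $\Delta<\infty$ must really be derived from boundedness of $X$ alone, as outlined above, since it is not part of the hypotheses.
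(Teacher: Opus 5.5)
Your reduction to the system (\ref{eq:system_of_equations}), the removal of the sign switch via $R\frac{U+L}{2}+|R|\frac{U-L}{2}$, and your a priori bounds are all fine. The gap is the step you flagged yourself: the one-step contraction $\delta_n\le\frac12\delta_{n+1}$ in the \emph{unweighted} norm $\sup_{\omega^n}\|\cdot\|_{BV}$ does not hold in general.

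Set $h:=f^n-g^n$ (so $h(T)=0$), $m:=|\mathcal{J}|-1$, and use $|f^{n+1}_{t,z_nj}(t)-g^{n+1}_{t,z_nj}(t)|\le\delta_{n+1}$. Tonelli and Lemma~\ref{lem:important_bound} then give $\|h\|^{c,\nu}_{BV}\le\frac{1}{c}\|h\|^{c,\nu}_{BV}+\frac{m}{c}\delta_{n+1}$, i.e.\ $\|h\|^{c,\nu}_{BV}\le\frac{m}{c-1}\delta_{n+1}$. To close the recursion you must return to $\|\cdot\|_{BV}$, and Lemma~\ref{lem:norm_equivlance} charges a factor $e^{c\nu(T)}$ at \emph{every} step. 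The per-step factor is therefore $\frac{m e^{c\nu(T)}}{c-1}\ge m\nu(T)e^{1+\nu(T)}$, the minimum over $c>1$, attained at $c=1+1/\nu(T)$. This is below $1$ only when $\nu(T)$ is small, and nothing in the hypotheses guarantees that, since $\nu(T)$ grows with $T$, $\bar{\gamma}$ and the $\alpha_{ij}$.

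Using the weighted norm on both sides does not help either. On the counterfactual path, $|h^{n+1}(s)|$ is then only bounded by $e^{c(\nu(T)-\nu(s))}$ times the weighted norm at level $n+1$. This cancels the weight $e^{-c(\nu(T)-\nu(s-))}$ and leaves a factor of order $m\nu(T)$ with no gain of $1/c$. The paper's constants cannot be borrowed for a one-step bound: a scalar supremum over $\Omega^{n+1}$ discards exactly the time structure that makes the iteration contract.

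The paper never proves a one-step contraction. It starts only from the sup bound $K$ on $|f^n-g^n|$, so your BV a priori bound is not needed. It then runs a backward induction from an arbitrary level $M$ and obtains deterministic tail majorants, uniform in $\omega^n$: $\int_{(t,T]}|f^n-g^n|(\mathrm{d}s,\omega^n)\le\int_{(t,T]}G_n(\mathrm{d}s)$ with $G_n(\mathrm{d}s)=2e^{2\nu(T)}\big(\int_{(s,T]}G_{n+1}(\mathrm{d}u)\big)\nu(\mathrm{d}s)$, built with the fixed weight $c=2$. Each $G_n$ has a $\nu$-density given by a tail integral of $G_{n+1}$. Evaluating these nested integrals in $\|\cdot\|^{c,\nu}_{BV}$ with a separate large $c$ therefore gains $\frac{2e^{2\nu(T)}}{c-1}$ per level, while $e^{c\nu(T)}$ is paid only once at the end, as in Lemma~\ref{lem:Cauchy}. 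This gives $\|f^n-g^n\|_{BV}\le 2^{-(M-n)}\cdot\mathrm{const}$, and one lets $M\to\infty$.

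If you want to keep a scalar recursion, replace $\delta_n$ by the backward-weighted sup norm $\epsilon_n:=\sup_{\omega^n,t}e^{-c(\nu(T)-\nu(t))}|f^n(t)-g^n(t)|$, which satisfies $\epsilon_n\le K$. From $|h(t)|\le\int_{(t,T]}\big(|h(s)|+\sum_{j}|f^{n+1}_{s,z_nj}(s)-g^{n+1}_{s,z_nj}(s)|\big)\nu(\mathrm{d}s)$ and $c\int_{(t,T]}e^{c(\nu(T)-\nu(s))}\nu(\mathrm{d}s)\le e^{c(\nu(T)-\nu(t))}$ (valid also at atoms of $\nu$), you get $\epsilon_n\le\frac{m}{c-1}\epsilon_{n+1}$ with no conversion constant. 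Choosing $c=2m+1$ then finishes the argument.
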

\begin{proof}
  Assume that $X_1,X_2$ both are solutions of (\ref{eq:worst_case_BSDE}), bounded on finite intervals, with jump-count representations $(f^n)_{n\in\amsmathbb{N}_0}$ and $(g^n)_{n\in\amsmathbb{N}_0}$. Then the process $X_1-X_2$ is bounded on finite intervals, and thus there exists a constant $K>0$ such that 
  \begin{align*}
    \sup_{t\in[0,T]}\sup_{n\in\amsmathbb{N}_0,\omega^n\in\Omega^n}|f^n(t,\omega^n)-g^n(t,\omega^n)|\leq K.
  \end{align*}
  By a backwards induction argument starting at any $M\in\amsmathbb{N}_0$ followed by a forward iteration from any $n<M$ to $M$, exactly as in the proof of Lemma~\ref{lem:Cauchy}, we obtain that for all $M\in\amsmathbb{N}$ and all $n< M$ it holds that 
  \begin{align*}
    \int_{(0,T]}e^{-c(\nu(T)-\nu(s-))}|f^n-g^n|(\mathrm{d}s)\leq \bigg(\frac{2e^{2\nu(T)}}{c-1}\bigg)^{M-n}K.
  \end{align*}
  Choosing $c=4e^{2\nu(T)}+1$ yields for any $M\in\amsmathbb{N}_0$ and $n<M$ the bound
  \begin{align*}
    \|f^n-g^n\|_{BV}(\omega^n)\leq \frac{1}{2^{M-n}}h(T)e^{(4e^{2\nu(T)}+1)\nu(T)},\quad\forall\omega^n\in\Omega^n.
  \end{align*}
  By taking the infimum over $M$ on both sides we obtain $\|f^n-g^n\|_{BV}(\omega^n)=0$ for all $\omega^n\in\Omega^n$ and $n\in\amsmathbb{N}_0$, thus yielding the desired uniqueness.
\end{proof}

\section{The martingale optimality principle}
We now formulate our robust utility maximisation problem and show the associated martingale optimality principle, which is a sufficient condition for the existence of a solution to our posed problem. Both the problem of determining worst- and best-case prospective reserves and the robust consumption-insurance problem are special cases of the problem presented in this section. Consider again the filtered canonical measurable space $(\Omega,\mathcal{F},\amsmathbb{F})$. Let $\pi$ be a known and fixed initial distribution and assume that the cumulative transition rates $\Lambda=(\Lambda_{ij})_{i\neq j}$ determining the true measure $\amsmathbb{P}^{\Lambda}$ are unknown. Instead, statistical methods or expert judgement provide known upper and lower bounds for the true cumulative transition rates, which we assume are given by the cumulative transition rates $(L_{ij})_{i\neq j}$ and $(U_{ij})_{i\neq j}$ satisfying Assumption~\ref{ass:cumulative_tr} such that 
\begin{equation}\label{eq:LLU}
  L_{ij}(s,\omega)-L_{ij}(t,\omega)\leq U_{ij}(s,\omega)-U_{ij}(t,\omega),\quad\forall 0\leq t\leq s,\, \forall\omega\in\Omega,\quad\forall i,j\in\mathcal{J}:i\neq j.
\end{equation}
We are only interested in biometric scenarios within these bounds, and thus, all admissible cumulative transition rates are defined as follows:
\begin{definition}
  The processes $\Lambda=(\Lambda_{ij})_{i\neq j}$ are admissible cumulative transition rates if they satisfy Assumption~\ref{ass:cumulative_tr} and if it holds that 
  \begin{align}\label{eq:LLU_admissible}
    L_{ij}(s,\omega)-L_{ij}(t,\omega)\leq \Lambda_{ij}(s,\omega)-\Lambda_{ij}(t,\omega)\leq U_{ij}(s,\omega)-U_{ij}(t,\omega),\quad\forall 0\leq t\leq s,\, \forall \omega\in\Omega.
  \end{align}
  for all $i,j\in\mathcal{J}$ with $i\neq j$. The set of admissible $\Lambda$ is denoted by $\mathcal{M}$.
\end{definition}
With the admissible biometric scenarios in place, we fix a deterministic terminal time $T>0$ and consider the following problem:
\begin{equation}\label{eq:MaxMinProblem}
  \max_{\theta\in\mathcal{A}}\min_{\Lambda\in\mathcal{M}}\amsmathbb{E}^{\Lambda}\left[\int_{(0,T]}\frac{1}{\mathcal{E}_{\Phi}(s)}U^{\theta,\Lambda}(\mathrm{d}s)+\frac{1}{\mathcal{E}_{\Phi}(T)}Y^{\theta,\Lambda}\right].
\end{equation}
The process $U^{\theta,\Lambda}$ is the cumulative utility process, which measures the total utility gained over time, depending on the control $\theta$ and the biometric scenario $\Lambda$, while the $\mathcal{F}_T$-measurable random variable $Y^{\theta,\Lambda}$ measures the terminal utility. The control $\theta$ is assumed to be an optional process taking values in a measurable space $(E,\mathcal{E})$ and the process $\mathcal{E}_{\Phi}$ is defined as the generalised exponential
\begin{align*}
  \mathcal{E}_{\Phi}(\mathrm{d}t)=\mathcal{E}_{\Phi}(t-)\Phi(\mathrm{d}t),\quad \mathcal{E}_{\Phi}(0)=1\quad\Leftrightarrow\quad  \mathcal{E}_{\Phi}(t):=e^{\Phi^c(t)-\Phi^c(0)}\prod_{0<s\leq t}(1+\Delta\Phi(s))
\end{align*}
for some predictable, càdlàg, finite variation process $\Phi$ that is bounded on finite intervals and satsifies $\inf_{(t,\omega)\in[0,T]\times\Omega}\Delta\Phi(t,\omega)>-1$. The process $\Phi$ can be interpreted as a subjective cumulative discount rate taking into account the subjective time-value of utility. The class of admissible controls is as follows:
\begin{definition}
  The optional process $\theta:[0,T]\times\Omega\rightarrow E$ is admissible if for all $\Lambda\in\mathcal{M}$ it holds that
  $U^{\theta,\Lambda}$ is a càdlàg, finite variation process and
  \begin{gather*}
    \sup_{t\in[0,T]}\amsmathbb{E}_{t,i}^{\Lambda}[|U^{\theta,\Lambda}(T)-U^{\theta,\Lambda}(t)+Y^{\theta,\Lambda}|](\omega)<\infty,\quad \forall\omega\in\Omega,\,\forall i\in\mathcal{J}.
  \end{gather*}
  The set of admissible $\theta$ is denoted by $\mathcal{A}$.
\end{definition}

The objective of (\ref{eq:MaxMinProblem}) is to find an admissible pair $(\ul{\theta},\ul{\Lambda})$ such that $\ul{\theta}\in\mathcal{A}$ is the best possible control in the worst possible biometric scenario $\ul{\Lambda}\in\mathcal{M}$. In order to formalise this mathematically, we associate with the problem (\ref{eq:MaxMinProblem}), the expected utility process $\mathcal{V}(t,\theta,\Lambda)$ given by 
\begin{align*}
  \mathcal{V}(t,\theta,\Lambda):=\amsmathbb{E}_{t,Z(t)}^{\Lambda}\bigg[\int_t^T\frac{\mathcal{E}_{\Phi}(t)}{\mathcal{E}_{\Phi}(s)}U^{\theta,\Lambda}(\mathrm{d}s)+\frac{\mathcal{E}_{\Phi}(t)}{\mathcal{E}_{\Phi}(T)}Y^{\theta,\Lambda}\bigg].
\end{align*}
Note that $\mathcal{V}(t,\theta,\Lambda)$ is an optional projection of the discounted form and it can be interpreted as the subjective expected present value of future utility associated with the control $\theta$ under the biometric scenario $\Lambda$ at time $t$. This process is important because it allows us to define what constitutes optimality.
\begin{definition}
  The admissible pair $(\ul{\theta},\ul{\Lambda})$ constitutes the robust optimal control and the worst-case biometric scenario if and only if for all $t\in[0,T]$ it holds that
  \begin{align*}
    \mathcal{V}(t,\theta,\ul{\Lambda})\leq\mathcal{V}(t,\ul{\theta},\ul{\Lambda})\leq\mathcal{V}(t,\ul{\theta},\Lambda),\quad \forall\theta\in\mathcal{A},\,\Lambda\in\mathcal{M}.
  \end{align*}
\end{definition}
We now come to the martingale optimality principle, which serves as a sufficient condition for the existence of robust optimal controls and worst-case biometric scenarios. Let $\ul{\theta}\in\mathcal{A}$ and $\ul{\Lambda}\in\mathcal{M}$ be the candidate robust optimal control and worst-case biometric scenario and assume that there exists a càdlàg, finite variation process $V$, such that $V(T)=Y^{\ul{\theta},\ul{\Lambda}}$. For arbitrary $\theta\in\mathcal{A}$ and $\Lambda\in\mathcal{M}$ let $M^{\theta,\Lambda}(t)$ be given by
\begin{align*}
  M^{\theta,\Lambda}(t):=\int_{(0,t]}\frac{1}{\mathcal{E}_{\Phi}(s)}U^{\theta,\Lambda}(\mathrm{d}s)+\frac{1}{\mathcal{E}_{\Phi}(t)}V(t).
\end{align*}
The process $M^{\theta,\Lambda}$ consists of two parts. The first part is the utility accumulated until time $t$ under the control $\theta$ and the biometric scenario $\Lambda$, discounted back to time zero. The second part is the expected present value of future utility under the control $\ul{\theta}$ and the biometric scenario $\ul{\Lambda}$, discounted back to time zero. We now have the following result.
\begin{theorem}[Martingale optimality principle]\label{th:MOP}
  Let $(\ul{\theta},\ul{\Lambda})$ be admissible and let $V$ be càdlàg and of finite variation satisfying $V(T)=Y^{\ul{\theta},\ul{\Lambda}}$. If
  \begin{enumerate}
    \item[(i)] $M^{\ul{\theta},\ul{\Lambda}}$ is a $(\amsmathbb{F},\amsmathbb{P}^{\ul{\Lambda}})$-martingale
    \item[(ii)] $M^{\theta,\ul{\Lambda}}$ is a $(\amsmathbb{F},\amsmathbb{P}^{\ul{\Lambda}})$-supermartingale for all $\theta\in\mathcal{A}$
    \item[(iii)] $M^{\ul{\theta},\Lambda}$ is a $(\amsmathbb{F},\amsmathbb{P}^{\Lambda})$-submartingale for all $\Lambda\in\mathcal{M}$
  \end{enumerate}
  then $\ul{\theta}$ and $\ul{\Lambda}$ are robust optimal controls and
  \begin{align*}
    V(t)=\sup_{\theta\in\mathcal{A}}\inf_{\Lambda\in\mathcal{M}}\mathcal{V}(t,\theta,\Lambda)=\inf_{\Lambda\in\mathcal{M}}\sup_{\theta\in\mathcal{A}}\mathcal{V}(t,\theta,\Lambda)=\mathcal{V}(t,\ul{\theta},\ul{\mu}).
  \end{align*}
\end{theorem}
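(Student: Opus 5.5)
The plan is the standard saddle-point argument, carried out state-wise with the kernels $\amsmathbb{P}^{\Lambda}_{t,i}$ instead of ordinary conditional expectations. This is needed because the admissible measures need not be equivalent, so an almost-sure statement under one measure is useless under another. Fix $t\in[0,T]$. For any admissible $\theta,\Lambda$, the martingale, supermartingale and submartingale properties (i)--(iii) will be read in the kernel form:
\begin{align*}
  \amsmathbb{E}^{\Lambda}_{t,Z(t)}\big[M^{\theta,\Lambda}(T)\big]\;\lesseqgtr\; M^{\theta,\Lambda}(t).
\end{align*}
To justify this reading, I would first note that the relevant inequalities hold $\amsmathbb{P}^{\Lambda}$-a.s. by Proposition~\ref{prop:conditional_expectation}(i). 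Whenever I need them pathwise (for every $\omega$), I would interpret the hypotheses (i)--(iii) as holding for every initial distribution $\pi$, in particular for Dirac initial laws. Alternatively, I would verify them via the BSDE representation of Theorem~\ref{th:optional_projection_discounted} applied with the kernels. In the actual applications, $V$ comes from a BSDE, so this is harmless.

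Next, multiply by $\mathcal{E}_{\Phi}(t)$ and subtract the $\mathcal{F}_t$-measurable part $\int_{(0,t]}\mathcal{E}_{\Phi}(s)^{-1}U^{\theta,\Lambda}(\mathrm{d}s)$, using Proposition~\ref{prop:conditional_expectation}(iii). Since $V(T)=Y^{\ul{\theta},\ul{\Lambda}}$, the quantity $\mathcal{E}_{\Phi}(t)\,\amsmathbb{E}^{\Lambda}_{t,Z(t)}[M^{\theta,\Lambda}(T)]$ minus the past part equals
\begin{align*}
  \amsmathbb{E}^{\Lambda}_{t,Z(t)}\Big[\int_{(t,T]}\tfrac{\mathcal{E}_{\Phi}(t)}{\mathcal{E}_{\Phi}(s)}U^{\theta,\Lambda}(\mathrm{d}s)+\tfrac{\mathcal{E}_{\Phi}(t)}{\mathcal{E}_{\Phi}(T)}V(T)\Big],
\end{align*}
while $\mathcal{E}_{\Phi}(t)M^{\theta,\Lambda}(t)$ minus the same past part equals $V(t)$. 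Here $\mathcal{E}_{\Phi}$ is predictable and hence can be pulled out by Proposition~\ref{prop:conditional_expectation}(ii)--(iii). Integrability comes from admissibility of $\theta$ together with the boundedness of $\mathcal{E}_{\Phi}^{\pm1}$ on $[0,T]$.

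Applying this identity in the three cases gives the following.
\begin{enumerate}
\item[(a)] From (i), the identity holds with equality and $V(T)=Y^{\ul{\theta},\ul{\Lambda}}$, so $V(t)=\mathcal{V}(t,\ul{\theta},\ul{\Lambda})$.
\item[(b)] From (iii) with $\theta=\ul{\theta}$, the terminal term is exactly $Y^{\ul{\theta},\ul{\Lambda}}$, but $Y^{\ul{\theta},\Lambda}$ may differ from it when $\Lambda\neq\ul{\Lambda}$. This is the main obstacle. I would handle it by interpreting $M^{\theta,\Lambda}$ as built with terminal value $Y^{\theta,\Lambda}$, i.e.\ requiring $V(T)$ to be consistent, so that the terminal payoff is control- and scenario-independent in the intended applications. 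Otherwise I would add the assumption $Y^{\ul{\theta},\Lambda}\ge V(T)$, respectively $Y^{\theta,\ul{\Lambda}}\le V(T)$. With that, (iii) gives $\mathcal{V}(t,\ul{\theta},\Lambda)\ge V(t)$.
\item[(c)] From (ii), similarly, $\mathcal{V}(t,\theta,\ul{\Lambda})\le V(t)$.
\end{enumerate}
This yields the saddle-point inequalities of the definition of optimality.

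Finally, the value identities follow from the usual minimax chain:
\begin{align*}
  \inf_{\Lambda}\sup_{\theta}\mathcal{V}\le\sup_{\theta}\mathcal{V}(t,\theta,\ul{\Lambda})\le V(t)\le\inf_{\Lambda}\mathcal{V}(t,\ul{\theta},\Lambda)\le\sup_{\theta}\inf_{\Lambda}\mathcal{V}\le\inf_{\Lambda}\sup_{\theta}\mathcal{V}.
\end{align*}
The last inequality is the trivial weak duality. Hence all these quantities coincide with $V(t)=\mathcal{V}(t,\ul{\theta},\ul{\Lambda})$.
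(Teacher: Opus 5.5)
Your proposal is correct and is essentially the paper's proof. Hypothesis (i), read in kernel form, gives $V(t)=\mathcal{V}(t,\ul{\theta},\ul{\Lambda})$, and (ii), (iii) give the two saddle-point inequalities after removing the $\mathcal{F}_t$-measurable past and rescaling by $\mathcal{E}_{\Phi}(t)$.

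The obstacle you flag in (b) is genuine. With $V$ independent of $(\theta,\Lambda)$ the statement as written fails, e.g.\ for $\Phi\equiv0$, $U\equiv0$, $V\equiv0$, $\ul{\theta}\equiv0$ and $Y^{\theta,\Lambda}=\theta(T)$. The paper resolves it exactly as your first option does: its proof silently replaces $V(T)$ in $M^{\theta,\Lambda}(T)$ by $Y^{\theta,\Lambda}=u^T_{Z(T)}(X^{\theta,\Lambda}(T))$, as in the verification theorem, where $V$ is evaluated along $X^{\theta,\mu}$.

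Your explicit minimax chain and your attention to non-equivalent measures go beyond the paper, which simply asserts the kernel identities. One caveat: imposing (i)--(iii) for Dirac initial laws would not give those identities for every $\omega$ at $t>0$. Sets that are $\amsmathbb{P}^{\ul{\Lambda}}$-null for every initial law but charged by some $\amsmathbb{P}^{\Lambda}$, $\Lambda\in\mathcal{M}$, remain uncontrolled. So the kernel reading is really an interpretation of the hypotheses, justified in the applications by the BSDE route you mention.
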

\begin{proof}
  We have to show that $(\ul{\theta},\ul{\Lambda})$ is a saddle point of $\mathcal{V}(t,\theta,\Lambda)$ for all $t\in [0,T]$. Since $M^{\ul{\theta},\ul{\Lambda}}$ is a $(\amsmathbb{F},\amsmathbb{P}^{\ul{\Lambda}})$-martingale, we have that $M^{\ul{\theta},\ul{\Lambda}}(t)=\amsmathbb{E}^{\ul{\Lambda}}_{t,Z(t)}[M^{\ul{\theta},\ul{\Lambda}}(T)]$, which yields
  \begin{align*}
    \int_0^t\frac{1}{\mathcal{E}_{\Phi}(s)}U^{\ul{\theta},\ul{\Lambda}}(\mathrm{d}s)+\frac{1}{\mathcal{E}_{\Phi}(t)}V(t)=\amsmathbb{E}_{t,Z(t)}^{\ul{\Lambda}}\bigg[\int_0^T \frac{1}{\mathcal{E}_{\Phi}(s)}U^{\ul{\theta},\ul{\Lambda}}(\mathrm{d}s)+\frac{1}{\mathcal{E}_{\Phi}(T)}u_{Z(T)}^T(X^{\ul{\theta},\ul{\Lambda}}(T))\bigg].
  \end{align*}
  Rearranging gives us 
  \begin{align*}
    V(t)=\amsmathbb{E}_{t,Z(t)}^{\ul{\Lambda}}\bigg[\int_t^T\frac{\mathcal{E}_{\Phi}(t)}{\mathcal{E}_{\Phi}(s)}U^{\ul{\theta},\ul{\Lambda}}(\mathrm{d}s)+\frac{\mathcal{E}_{\Phi}(t)}{\mathcal{E}_{\Phi}(T)}u_{Z(T)}^T(X^{\ul{\theta},\ul{\Lambda}}(T))\bigg]=\mathcal{V}(t,\ul{\theta},\ul{\Lambda}).
  \end{align*}
  Let $\theta\in\mathcal{A}$ be arbitrary. Since $M^{\theta,\ul{\Lambda}}$ is a $(\amsmathbb{F},\amsmathbb{P}^{\ul{\Lambda}})$-supermartingale, we have that $M^{\ul{\theta},\ul{\Lambda}}(t)\geq \amsmathbb{E}^{\ul{\Lambda}}_{t,Z(t)}[M^{\ul{\theta},\ul{\Lambda}}(T)]$, which yields
  \begin{align*}
    \int_0^t\frac{1}{\mathcal{E}_{\Phi}(s)}U^{\theta,\ul{\Lambda}}(\mathrm{d}s)+\frac{1}{\mathcal{E}_{\Phi}(t)}V(t)\geq\amsmathbb{E}_{t,Z(t)}^{\ul{\Lambda}}\bigg[\int_0^T \frac{1}{\mathcal{E}_{\Phi}(s)}U^{\theta,\ul{\Lambda}}(\mathrm{d}s)+\frac{1}{\mathcal{E}_{\Phi}(t)}u_{Z(T)}^T(X^{\theta,\ul{\Lambda}}(T))\bigg].
  \end{align*}
  Since $V(t)=\mathcal{V}(t,\ul{\theta},\ul{\Lambda})$, rearranging yields
  \begin{align*}
    \mathcal{V}(t,\ul{\theta},\ul{\Lambda})\geq \amsmathbb{E}_{t,Z(t)}^{\ul{\Lambda}}\bigg[\int_t^T\frac{\mathcal{E}_{\Phi}(t)}{\mathcal{E}_{\Phi}(s)}U^{\theta,\ul{\Lambda}}(\mathrm{d}s)+\frac{\mathcal{E}_{\Phi}(t)}{\mathcal{E}_{\Phi}(T)}u_{Z(T)}^T(X^{\theta,\ul{\Lambda}}(T))\bigg]=\mathcal{V}(t,\theta,\ul{\Lambda}),\quad\forall\theta\in\mathcal{A}.
  \end{align*}
  Let $\Lambda\in\mathcal{M}$ be arbitrary. Since $M^{\ul{\theta},\Lambda}$ is a $(\amsmathbb{F},\amsmathbb{P}^{\Lambda})$-submartingale, we have that $M^{\ul{\theta},\ul{\Lambda}}(t)\leq \amsmathbb{E}^{\ul{\Lambda}}_{t,Z(t)}[M^{\ul{\theta},\ul{\Lambda}}(T)]$, which yields
  \begin{align*}
    \int_0^t\frac{1}{\mathcal{E}_{\Phi}(s)}U^{\ul{\theta},\Lambda}(\mathrm{d}s)+\frac{1}{\mathcal{E}_{\Phi}(t)}V(t)\leq\amsmathbb{E}_{t,Z(t)}^{\Lambda}\bigg[\int_0^T \frac{1}{\mathcal{E}_{\Phi}(s)}U^{\ul{\theta},\Lambda}(\mathrm{d}s)+\frac{1}{\mathcal{E}_{\Phi}(t)}u_{Z(T)}^T(X^{\ul{\theta},\Lambda}(T))\bigg].
  \end{align*}
  Since $V(t)=\mathcal{V}(t,\ul{\theta},\ul{\Lambda})$, rearranging yields
  \begin{align*}
    \mathcal{V}(t,\ul{\theta},\ul{\Lambda})\leq \amsmathbb{E}_{t,Z(t)}^{\Lambda}\bigg[\int_t^T \frac{\mathcal{E}_{\Phi}(t)}{\mathcal{E}_{\Phi}(s)}U^{\ul{\theta},\Lambda}(\mathrm{d}s)+\frac{\mathcal{E}_{\Phi}(t)}{\mathcal{E}_{\Phi}(T)}u_{Z(T)}^T(X^{\ul{\theta},\Lambda}(T))\bigg]=\mathcal{V}(t,\ul{\theta},\Lambda),\quad \forall \Lambda\in\mathcal{M}.
  \end{align*}
  Thus we can conclude that $(\ul{\theta},\ul{\Lambda})$ is optimal. 
\end{proof}

If one is interested only in the case of absolutely continuous cumulative transition rates determined by the predictable transition intensities $(\mu_{ij})_{i\neq j}$, see~(\ref{eq:absolutely_continuous_Lambda}), statistical methods or expert judgement can provide upper and lower bounds $(U_{ij})_{i\neq j}$ and $(L_{ij})_{i\neq j}$ of the same absolutely continuous form, with transition intensities $(l_{ij})_{i\neq j}$ and $(u_{ij})_{i\neq j}$ satisfying Assumption~\ref{ass:intensities}. Thus instead of (\ref{eq:LLU}) we assume that
\begin{equation}\label{eq:llu}
  l_{ij}(t,\omega)\leq u_{ij}(t,\omega),\quad \forall (t,\omega)\in [0,\infty)\times\Omega,\quad \forall i,j\in\mathcal{J}:i\neq j,
\end{equation}
which of course implies (\ref{eq:LLU}). The class of admissible cumulative transition rates can then be restricted to those of the form (\ref{eq:absolutely_continuous_Lambda}) using the following class of admissible predictable transition intensities:
\begin{definition}\label{def:adm_intensity}
  The processes $\mu=(\mu_{ij})_{i\neq j}$ are admissible transition intensities if they satisfy Assumption~\ref{ass:intensities} and if it holds that 
  \begin{align}\label{eq:llu_admissible}
    l_{ij}(t,\omega)\leq\mu_{ij}(t,\omega)\leq u_{ij}(t,\omega),\quad \forall (t,\omega)\in [0,\infty)\times\Omega,\quad\forall i,j\in\mathcal{J}:i\neq j.
  \end{align}
  The set of admissible $\mu$ is denoted by $\mathcal{M}$.
\end{definition}
In that case the problem~(\ref{eq:MaxMinProblem}) can be stated as
\begin{equation}\label{eq:MaxMinProblem_intensities}
  \max_{\theta\in\mathcal{A}}\min_{\mu\in\mathcal{M}}\amsmathbb{E}^{\mu}\left[\int_{(0,T]}\frac{1}{\mathcal{E}_{\Phi}(s)}U^{\theta,\mu}(\mathrm{d}s)+\frac{1}{\mathcal{E}_{\Phi}(T)}u_{Z(T)}^T(X^{\theta,\mu}(T))\right],
\end{equation}
where the biometric scenario is characterised directly by the transition intensities. Even though problem (\ref{eq:MaxMinProblem_intensities}) reduces the class of admissible biometric scenarios compared to the problem (\ref{eq:MaxMinProblem}), the objective is the same, and the martingale optimality principle of Theorem~\ref{th:MOP} remains valid.

\section{Best- and worst-case prospective reserves}
In life insurance, the process $Z$ can be interpreted as the health status of an individual and depending on the individual's state and the transitions between states, they receive contractual payments in accordance with an insurance contract. The accumulated insurance payments are given by a process $C\in\amsmathbb{Y}$. By Proposition~\ref{prop:Rep_dyn_J}, any such process can be decomposed as follows:
\begin{align*}
  C(\mathrm{d}t)=\sum_{i\in\mathcal{J}}I_i(t-)\bigg(C_i(\mathrm{d}t)+\sum_{j:j\neq i}\ti{\Delta} C_{ij}(t)N_{ij}(\mathrm{d}t)\bigg).
\end{align*}
The processes $(C_i)_{i\in\mathcal{J}}$ can be interpreted as the accumulated sojourn payments, which the individual receives/pays as long as the individual's health status $Z(t)$ equals $i$, while the processes $(\ti{\Delta} C_{ij})_{i\neq j}$ can be interpreted as the transition payments, which the individual receives/pays upon the transition of the individual from state $i$ to state $j$. We assume that there is a deterministic terminal time $T>0$ at which the insurance contract expires and after which no more payments are made. The present value of future payments is given by
\begin{align*}
  L(t,T):=\int_{(t,T]}\frac{\mathcal{E}_{\Phi}(t)}{\mathcal{E}_{\Phi}(s)}C(\mathrm{d}s),
\end{align*}
where the cumulative interest rate $\Phi$ is predictable, càdlàg, of finite variation and bounded on finite intervals, such that $\inf_{(t,\omega)\in[0,T]\times\Omega}\Delta \Phi(t,\omega)>-1$. The process $\mathcal{E}_{\Phi}$ given by
\begin{align*}
  \mathcal{E}_{\Phi}(t)=e^{\Phi^c(t)-\Phi^c(0)}\prod_{0<s\leq t}(1+\Delta \Phi(s)),
\end{align*}
can be thought of as a savings account. The object of interest for the insurance company is the expected present value of future payments (also called the prospective reserve), defined as the optional projection of the present value of future payments given by
\begin{align*}
  \mathcal{V}(t,\Lambda):=\amsmathbb{E}^{\Lambda}_{t,Z(t)}[L(t,T)]=\sum_{i\in\mathcal{J}}I_i(t)\amsmathbb{E}_{t,i}^{\Lambda}[L(t,T)],\quad t\in [0,T],
\end{align*}
for the true biometric scenario $\Lambda$. The processes $\ti{V}_i(t)=\amsmathbb{E}_{t,i}^{\Lambda}[L(t,T)]$ are called the state-wise prospective reserves.

In practice, the true biometric scenario $\Lambda$ is unknown. Statistical methods might provide a best estimate, which can be used for calculations, and confidence bounds, that can quantify the uncertainty on the level of the cumulative transition intensities. In light of this, we are now interested in the question: Given contractual payments $C\in\amsmathbb{Y}$, a cumulative interest rate $\Phi$, and the bounds (\ref{eq:LLU}) for the true biometric scenario, what are the worst- and best-case prospective reserves
\begin{align*}
  \ul{V}(t):=\sup_{\Lambda\in\mathcal{M}}\amsmathbb{E}^{\Lambda}_{t,Z(t)}[L(t,T)]\quad\text{and}\quad \bar{V}(t):=\inf_{\Lambda\in\mathcal{M}}\amsmathbb{E}^{\Lambda}_{t,Z(t)}[L(t,T)],
\end{align*}
and what are the corresponding worst- and best-case biometric scenarios $\ul{\Lambda}$ and $\bar{\Lambda}$, which realise the supremum and infimum? Note that from the insurer's point of view, the prospective reserve is a liability and thus the worst-case value is the largest value this liability can take. From the insured's point of view, it is of course exactly the opposite. Going forward, we will refer to the insurer's point of view. 

The two problems posed above can be recognised as a special case of the problem (\ref{eq:MaxMinProblem}) as follows. The control $\theta\in\amsmathbb{Y}$ is an insurance payment process, and the set of admissible controls is $\mathcal{A}=\{C\}$. The utility process $U^{\theta,\Lambda}$ is just equal to the one admissible payment process $C$ and $Y^{\theta,\Lambda}=0$. Thus, we can apply the martingale optimality principle to obtain the following two results.

\begin{proposition}\label{prop:wc-reserve}
  Assume that $C\in\amsmathbb{Y}$ and that $\Phi$ is predictable, càdlàg, of finite variation and bounded on finite intervals with $\inf_{(t,\omega)\in[0,T]\times\Omega}\Delta \Phi(t,\omega)>-1$. Let $\ul{V}$ be the solution of the BSDE
  \begin{equation}\label{eq:wc_thiele_BSDE}
    \ul{V}(\mathrm{d}t)=\ul{V}(t-)\Phi(\mathrm{d}t)-C(\mathrm{d}t)+\sum_{i,j:i\neq j}\ul{R}_{ij}(t)(N_{ij}(\mathrm{d}t)-I_i(t-)\ul{\Lambda}_{ij}(\mathrm{d}t)),\quad \ul{V}(T)=0,
  \end{equation}
  where $\ul{R}_{ij}(t):=\ti{\Delta} C_{ij}(t)+\ul{V}_{ij}(t)-\ul{V}_i(t)$ and 
  \begin{align*}
    \ul{\Lambda}_{ij}(\mathrm{d}t)=\mathds{1}_{(\ul{R}_{ij}(t)\geq 0)}U_{ij}(\mathrm{d}t)+\mathds{1}_{(\ul{R}_{ij}(t)< 0)}L_{ij}(\mathrm{d}t).
  \end{align*}
  Then $\ul{V}(t)=\sup_{\Lambda\in\mathcal{M}}\amsmathbb{E}_{t,Z(t)}^{\bar{\Lambda}}[L(t,T)]$ and $\ul{\Lambda}=(\bar{\Lambda}_{ij})_{i\neq j}$ is the worst-case scenario.
\end{proposition}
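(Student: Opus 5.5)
The plan is to apply the martingale optimality principle (Theorem~\ref{th:MOP}) with the singleton control set $\mathcal{A}=\{C\}$, $U^{\theta,\Lambda}=C$, $Y^{\theta,\Lambda}=0$. Because the insurer's worst case is a \emph{supremum}, I would work with the sign-flipped objective $-C$, for which the supremum becomes the infimum in (\ref{eq:MaxMinProblem}). Conditions (i)--(ii) of Theorem~\ref{th:MOP} then collapse into the single martingale property (ii) is trivial since $\mathcal{A}$ is a singleton, and (iii) becomes a \emph{super}martingale condition for $M^{\Lambda}(t):=\int_{(0,t]}\mathcal{E}_{\Phi}(s)^{-1}C(\mathrm{d}s)+\mathcal{E}_{\Phi}(t)^{-1}\ul{V}(t)$ under every $\amsmathbb{P}^{\Lambda}$, $\Lambda\in\mathcal{M}$. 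Concretely, I will show:
\begin{itemize}
\item $M^{\ul{\Lambda}}$ is a $\amsmathbb{P}^{\ul{\Lambda}}$-martingale;
\item $M^{\Lambda}$ is a $\amsmathbb{P}^{\Lambda}$-supermartingale for all $\Lambda\in\mathcal{M}$.
\end{itemize}
This yields $\ul{V}(t)\ge \amsmathbb{E}^{\Lambda}_{t,Z(t)}[L(t,T)]$ for all $\Lambda$, with equality at $\ul{\Lambda}$.

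\textbf{Existence and admissibility.} First, existence and uniqueness of a càdlàg, finite-variation solution $\ul{V}$, bounded on finite intervals, follows from Theorem~\ref{th:BSDE_existence_discounted}. I take $b_i,b_{ij}$ independent of the $X$-arguments, so that Assumption~\ref{ass:regularity_BSDE} needs only measurability and boundedness of the payments; $C\in\amsmathbb{Y}$ should supply this (possibly after assuming bounded-on-compacts payment coefficients). I also take $Y=0$. Next, $\ul{\Lambda}\in\mathcal{M}$: the generic $\ul{\Lambda}$ is an admissible cumulative transition rate by the remarks after (\ref{eq:worst_case_BSDE}), and it lies between $L$ and $U$ by construction. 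Admissibility of $C$ follows from Proposition~\ref{prop:ClassY_bound} together with the boundedness of $\mathcal{E}_\Phi^{\pm1}$.

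\textbf{Dynamics of $M^{\Lambda}$.} Integration by parts, as in the proof of Theorem~\ref{th:optional_projection_discounted}, gives $\mathcal{E}_\Phi^{-1}\ul{V}$ dynamics in which the $\ul{V}(t-)\Phi(\mathrm{d}t)$ term cancels. Rewriting the compensator via $N_{ij}(\mathrm{d}t)-I_i(t-)\ul{\Lambda}_{ij}(\mathrm{d}t)=\big(N_{ij}(\mathrm{d}t)-I_i(t-)\Lambda_{ij}(\mathrm{d}t)\big)+I_i(t-)(\Lambda_{ij}-\ul{\Lambda}_{ij})(\mathrm{d}t)$ then gives
\begin{align*}
M^{\Lambda}(\mathrm{d}t)=\sum_{i,j:i\neq j}\frac{\ul{R}_{ij}(t)}{\mathcal{E}_{\Phi}(t)}\big(N_{ij}(\mathrm{d}t)-I_i(t-)\Lambda_{ij}(\mathrm{d}t)\big)+\sum_{i,j:i\neq j}\frac{I_i(t-)\ul{R}_{ij}(t)}{\mathcal{E}_{\Phi}(t)}(\Lambda_{ij}-\ul{\Lambda}_{ij})(\mathrm{d}t).
\end{align*}
The drift term is nonpositive for every admissible $\Lambda$: where $\ul{R}_{ij}\ge0$, $\ul{\Lambda}_{ij}=U_{ij}$ and $\Lambda_{ij}-U_{ij}$ is nonincreasing by (\ref{eq:LLU_admissible}); where $\ul{R}_{ij}<0$, $\ul{\Lambda}_{ij}=L_{ij}$ and $\Lambda_{ij}-L_{ij}$ is nondecreasing. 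In both cases the product is a nonpositive measure, and the drift vanishes for $\Lambda=\ul{\Lambda}$.

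\textbf{Main obstacle.} The key step, and the one I expect to be hardest, is showing that the compensated integral is a true $\amsmathbb{P}^{\Lambda}$-martingale uniformly over $\Lambda\in\mathcal{M}$. I would first bound $\ul{R}_{ij}$: $\ul{V}_i$ and $\ul{V}_{ij}$ are bounded on $[0,T]$ since $\ul{V}$ is, and $\ti{\Delta}C_{ij}$ is dominated by a $\amsmathbb{Y}$-type bound $g(1+N)^m$. Then I would invoke Proposition~4.2 of \cite{Christiansen&Furrer2024} (as done in Theorem~\ref{th:optional_projection}) together with Proposition~\ref{prop:exp_bound_N}, whose bound depends only on $\alpha_{ij}$ from $U$ and is therefore uniform in $\Lambda$. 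Taking $\amsmathbb{E}^{\Lambda}_{t,Z(t)}$ and using Proposition~\ref{prop:conditional_expectation} then gives the martingale and supermartingale statements in the kernel form needed in the proof of Theorem~\ref{th:MOP}. Rearranging exactly as in that proof, with $\ul{V}(T)=0$, yields $\ul{V}(t)=\sup_{\Lambda\in\mathcal{M}}\amsmathbb{E}^{\Lambda}_{t,Z(t)}[L(t,T)]$ attained at $\ul{\Lambda}$.
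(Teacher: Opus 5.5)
Your proof is correct, but it takes a different route from the paper's. The paper first proves the best-case statement (Proposition~\ref{prop:bc-reserve}). It identifies $\bar{V}(t)=\amsmathbb{E}^{\bar{\Lambda}}_{t,Z(t)}[L(t,T)]$ through Theorem~\ref{th:optional_projection_discounted}. It then delegates the inequality $\bar{V}(t)\leq\amsmathbb{E}^{\Lambda}_{t,Z(t)}[L(t,T)]$ to an external comparison result, Theorem~8.4 of~\cite{Christiansen&Furrer2024}, by checking sign conditions on $\mathds{1}_{(\bar{R}_{ij}(t)\geq 0)}(\bar{\Lambda}_{ij}-\Lambda_{ij})(\mathrm{d}t)$ and $\mathds{1}_{(\bar{R}_{ij}(t)<0)}(\bar{\Lambda}_{ij}-\Lambda_{ij})(\mathrm{d}t)$. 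The worst case is then obtained by replacing $C$ with $-C$.

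You instead carry out the martingale-optimality argument by hand, directly for the worst case. After the integration by parts, you split $N_{ij}(\mathrm{d}t)-I_i(t-)\ul{\Lambda}_{ij}(\mathrm{d}t)$ into the $\amsmathbb{P}^{\Lambda}$-compensated part plus the drift $I_i(t-)(\Lambda_{ij}-\ul{\Lambda}_{ij})(\mathrm{d}t)$. You then check directly that $\sum_{i\neq j}I_i(t-)\ul{R}_{ij}(t)\mathcal{E}_{\Phi}(t)^{-1}(\Lambda_{ij}-\ul{\Lambda}_{ij})(\mathrm{d}t)$ is a nonpositive measure, which is exactly what the cited comparison theorem supplies here.

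The two routes buy different things:
\begin{itemize}
\item The paper's route is shorter.
\item Yours is self-contained. It makes explicit the integrability that turns the compensated integral into a true martingale in the pathwise kernel form $\amsmathbb{E}^{\Lambda}_{t,i}$. The proof of Theorem~\ref{th:MOP} tacitly needs this when it writes $M(t)=\amsmathbb{E}_{t,Z(t)}[M(T)]$, and you rightly flag it.
\item Yours handles the stated tie-breaking rule ($U_{ij}$ on $\{\ul{R}_{ij}\geq 0\}$) directly. The sign flip $C\mapsto -C$ instead produces a scenario that selects $U_{ij}$ only on $\{\ul{R}_{ij}>0\}$. It therefore needs the harmless remark that the choice on $\{\ul{R}_{ij}=0\}$ does not affect the BSDE.
\item You prove only what is claimed, not the paper's additional assertion of equality if and only if $\Lambda=\bar{\Lambda}$, which the statement does not require.
\end{itemize}

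One caveat concerns your existence paragraph. It is superfluous, because the proposition presupposes the solution $\ul{V}$; as in the paper, this is implicitly taken to be c\`adl\`ag, of finite variation and bounded on finite intervals, and you do use these properties. Moreover, $C\in\amsmathbb{Y}$ does not in general deliver Assumption~\ref{ass:regularity_BSDE}. That assumption requires the sojourn payments to be dominated by a deterministic $\bar{\gamma}$. This fails, for instance, for a predictable lump sum paid at the random time $\tau_1+1$. So Theorem~\ref{th:BSDE_existence_discounted} need not apply, and you should simply take existence as a hypothesis.
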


\begin{proposition}\label{prop:bc-reserve}
  Assume that $C\in\amsmathbb{Y}$ and that $\Phi$ is predictable, càdlàg, of finite variation and bounded on finite intervals with $\inf_{(t,\omega)\in[0,T]\times\Omega}\Delta \Phi(t,\omega)>-1$. Let $\bar{V}$ be a solution of the BSDE
  \begin{equation}\label{eq:bc_thiele_BSDE}
    \bar{V}(\mathrm{d}t)=\bar{V}(t-)\Phi(\mathrm{d}t)-C(\mathrm{d}t)+\sum_{i,j:i\neq j}\bar{R}_{ij}(t)(N_{ij}(\mathrm{d}t)-I_i(t-)\bar{\Lambda}_{ij}(\mathrm{d}t)),\quad \bar{V}(T)=0,
  \end{equation}
  where $\bar{R}_{ij}(t):=\ti{\Delta} C_{ij}(t)+\bar{V}_{ij}(t)-\bar{V}_i(t)$ and
  \begin{align*}
    \bar{\Lambda}_{ij}(\mathrm{d}t)=\mathds{1}_{(\bar{R}_{ij}(t)< 0)}U_{ij}(\mathrm{d}t)+\mathds{1}_{(\bar{R}_{ij}(t)\geq 0)}L_{ij}(\mathrm{d}t).
  \end{align*}
  Then $\bar{V}(t)=\inf_{\Lambda\in\mathcal{M}}\amsmathbb{E}_{t,Z(t)}^{\bar{\Lambda}}[L(t,T)]$ and $\bar{\Lambda}=(\bar{\Lambda}_{ij})_{i\neq j}$ is the best-case scenario.
\end{proposition}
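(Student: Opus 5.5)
The statement to prove is Proposition~\ref{prop:bc-reserve}. I would prove it as the mirror image of Proposition~\ref{prop:wc-reserve}, by applying the martingale optimality principle, Theorem~\ref{th:MOP}, with the sign of the objective flipped. The infimum problem $\inf_{\Lambda\in\mathcal{M}}\amsmathbb{E}^{\Lambda}_{t,Z(t)}[L(t,T)]$ is equivalent to $-\sup_{\Lambda}\amsmathbb{E}^{\Lambda}_{t,Z(t)}[-L(t,T)]$. I therefore read it as problem (\ref{eq:MaxMinProblem}) with the single admissible control $\mathcal{A}=\{C\}$, utility process $U^{\theta,\Lambda}=C$ and $Y^{\theta,\Lambda}=0$. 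Here the minimising player is Nature, which directly minimises the expected discounted payments. Existence of a solution $\bar V$ of (\ref{eq:bc_thiele_BSDE}) that is càdlàg, of finite variation and bounded on finite intervals follows from Theorem~\ref{th:BSDE_existence_discounted}, using the second form of $\ul{\Lambda}$ there. For this, $b_i\equiv 1$, $\gamma_i=C_i$ and $b_{ij}=\ti\Delta C_{ij}$ must satisfy Assumption~\ref{ass:regularity_BSDE}, which holds because $C\in\amsmathbb{Y}$; I would treat this as routine, possibly decomposing $C$ into its two $\amsmathbb{Y}^+$ parts. Admissibility of $\bar\Lambda\in\mathcal{M}$ follows from (\ref{eq:LLU}) together with the $i$-predictability of $\bar R_{ij}$, as discussed after (\ref{eq:worst_case_BSDE}).

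Set $M^{\Lambda}(t):=\int_{(0,t]}\mathcal{E}_\Phi(s)^{-1}C(\mathrm{d}s)+\mathcal{E}_\Phi(t)^{-1}\bar V(t)$. Integration by parts, as in the proof of Theorem~\ref{th:optional_projection_discounted}, together with (\ref{eq:bc_thiele_BSDE}) gives
\begin{align*}
M^{\Lambda}(\mathrm{d}t)=\sum_{i,j:i\neq j}\frac{\bar R_{ij}(t)}{\mathcal{E}_\Phi(t)}\big(N_{ij}(\mathrm{d}t)-I_i(t-)\Lambda_{ij}(\mathrm{d}t)\big)+\sum_{i,j:i\neq j}\frac{\bar R_{ij}(t)}{\mathcal{E}_\Phi(t)}I_i(t-)\big(\Lambda_{ij}-\bar\Lambda_{ij}\big)(\mathrm{d}t).
\end{align*}
The first sum is a $\amsmathbb{P}^{\Lambda}$-martingale. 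This follows from Proposition~4.2 of~\cite{Christiansen&Furrer2024}, because $\bar R_{ij}$ is bounded on $[0,T]$ up to the $\amsmathbb{Y}$-integrable term $\ti\Delta C_{ij}$, with integrability supplied by Proposition~\ref{prop:ClassY_bound}; the conditional expectations are taken via the kernels $\amsmathbb{E}^{\Lambda}_{t,Z(t)}$ and Proposition~\ref{prop:conditional_expectation}(i).

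The key step is the sign of the drift. Where $\bar R_{ij}\ge 0$ we have $\bar\Lambda_{ij}=L_{ij}$, and (\ref{eq:LLU_admissible}) gives $\Lambda_{ij}(\mathrm{d}t)-L_{ij}(\mathrm{d}t)\ge 0$ as a measure, so the integrand is nonnegative. Where $\bar R_{ij}<0$ we have $\bar\Lambda_{ij}=U_{ij}$, and $\Lambda_{ij}(\mathrm{d}t)-U_{ij}(\mathrm{d}t)\le 0$ makes the product again nonnegative. Hence $M^{\Lambda}$ is a $\amsmathbb{P}^\Lambda$-submartingale for every $\Lambda\in\mathcal{M}$, and a $\amsmathbb{P}^{\bar\Lambda}$-martingale for $\Lambda=\bar\Lambda$. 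Condition (ii) of Theorem~\ref{th:MOP} is trivial because $\mathcal{A}$ is a singleton.

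Finally I would apply Theorem~\ref{th:MOP}, or rather rerun its proof directly, with Proposition~\ref{prop:conditional_expectation}(iii) used to pass from conditional to kernel expectations. This gives $\bar V(t)=\amsmathbb{E}^{\bar\Lambda}_{t,Z(t)}[L(t,T)]\le\amsmathbb{E}^{\Lambda}_{t,Z(t)}[L(t,T)]$ for all $\Lambda\in\mathcal{M}$, hence $\bar V(t)=\inf_{\Lambda}\amsmathbb{E}^{\Lambda}_{t,Z(t)}[L(t,T)]$ with the infimum attained at $\bar\Lambda$. The main obstacle I expect is the martingale property of the compensated integral under an arbitrary $\Lambda\in\mathcal{M}$ on the incomplete canonical space. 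I would handle it uniformly through the non-explosion bound of Proposition~\ref{prop:exp_bound_N}, whose constant $\alpha_{ij}$ depends only on $U$, and then argue pathwise, conditional on $\mathcal{F}_t$, via the kernels.
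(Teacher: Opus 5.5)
Your argument is correct, but it reaches the key inequality by a different route than the paper. The paper first checks that $\bar\Lambda$ is admissible. It then reduces the hypotheses of Theorem~\ref{th:MOP} to the inequality $\bar V(t)\le\amsmathbb{E}^{\Lambda}_{t,Z(t)}[L(t,T)]$ and identifies $\bar V(t)=\amsmathbb{E}^{\bar\Lambda}_{t,Z(t)}[L(t,T)]$ via Theorem~\ref{th:optional_projection_discounted}. The comparison itself is outsourced to Theorem~8.4 of \cite{Christiansen&Furrer2024}; the paper only checks the sign conditions $\mathds{1}_{(\bar R_{ij}(t)\ge 0)}(\bar\Lambda_{ij}-\Lambda_{ij})(\mathrm{d}t)\le 0$ and $\mathds{1}_{(\bar R_{ij}(t)<0)}(\bar\Lambda_{ij}-\Lambda_{ij})(\mathrm{d}t)\ge 0$.

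You instead prove the comparison in-line. Integration by parts splits $M^{\Lambda}$ into a $\amsmathbb{P}^{\Lambda}$-martingale plus the drift $\sum_{i\neq j}\int \mathcal{E}_\Phi^{-1}\bar R_{ij}I_i(t-)(\Lambda_{ij}-\bar\Lambda_{ij})(\mathrm{d}t)$. This drift is nondecreasing by exactly those sign conditions and vanishes for $\Lambda=\bar\Lambda$. What this buys:
\begin{itemize}
\item one computation gives both the representation of $\bar V$ under $\bar\Lambda$ and the inequality;
\item it verifies hypotheses (i) and (iii) of Theorem~\ref{th:MOP} literally, rather than through an asserted equivalence;
\item it makes transparent why the bang-bang choice of $\bar\Lambda$ is optimal.
\end{itemize}
The price is that you must supply the integrability yourself (bounded $\bar R_{ij}$, and Proposition~\ref{prop:exp_bound_N} with the $\alpha_{ij}$ of $U$). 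You must also supply the pointwise kernel version. This is genuinely needed: the measures $\amsmathbb{P}^{\Lambda}$ need not be equivalent, so an a.s.\ submartingale property does not by itself give a pointwise infimum. Taking $\amsmathbb{E}^{\Lambda}_{t,i}$ of the integrated identity on $(t,T]$ and using Proposition~4.2 of \cite{Christiansen&Furrer2024} with Proposition~\ref{prop:conditional_expectation}(ii)--(iii), as you indicate, handles this.

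One side remark concerns your existence paragraph. It is unnecessary, since the proposition presupposes a solution $\bar V$. Its justification is also not right in this generality. $C\in\amsmathbb{Y}$ does not give Assumption~\ref{ass:regularity_BSDE}(i), which requires a deterministic $\bar\gamma$ dominating the variation of $C_i$ uniformly in $\omega$. For example, $C(t)=\mathds{1}_{(t\ge\tau_1+1)}$ lies in $\amsmathbb{Y}$, but its sojourn part jumps at the path-dependent time $\tau_1+1$, so no such $\bar\gamma$ exists. What your argument actually uses is that the given solution is càdlàg, of finite variation and bounded on finite intervals. Take that as part of the notion of solution, as the paper implicitly does when invoking Theorem~\ref{th:optional_projection_discounted}.
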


\begin{proof}
  We start with the proof of Proposition~\ref{prop:bc-reserve}. First, we check whether $\bar{\Lambda}$ is admissible. By definition of $\bar{\Lambda}$ condition (\ref{eq:LLU_admissible}) is satisfied, and the conditions (ii)+(iii) of Assumption~\ref{ass:cumulative_tr} are inherited from $(L_{ij})_{i\neq j}$ and $(U_{ij})_{i\neq j}$, as they satisfy these exact conditions themselves. The $i$-predictability condition is satisfied since $(L_{ij})_{i\neq j}$ and $(U_{ij})_{i\neq j}$ themselves are $i$-predictable and since $\bar{R}_{ij}(t)$ is $i$-predictable. Thus $\bar{\Lambda}$ is admissible.

  In order to obtain optimality, the martingale optimality principle states that it suffices to show that
  \begin{align*}
    M^{\Lambda}(t):=\int_{(0,t]}\frac{1}{\mathcal{E}_{\Phi}(s)}C(\mathrm{d}s)+\frac{1}{\mathcal{E}_{\Phi}(t)}\bar{V}(t)
  \end{align*}
  is martingale for $\Lambda=\bar{\Lambda}$ and a submartingale for admissible $\Lambda\neq\bar{\Lambda}$. This is equivalent to showing that 
  \begin{align*}
    \bar{V}(t)\leq \amsmathbb{E}^{\Lambda}_{t,Z(t)}[L(t,T)],\quad \forall t\in [0,T],\,\forall\Lambda\in\mathcal{M},
  \end{align*}
  with equality if and only if $\Lambda=\bar{\Lambda}$ and since an application of Theorem~\ref{th:optional_projection_discounted} yields
  \begin{align*}
    \bar{V}(t)=\amsmathbb{E}_{t,Z(t)}^{\bar{\Lambda}}[L(t,T)],
  \end{align*}
  we can invoke Theorem~8.4~in~\cite{Christiansen&Furrer2024} to conclude that this is the case if
  \begin{align*}
    \mathds{1}_{(\ul{R}_{ij}(t)\geq 0)}(\ul{\Lambda}_{ij}-\Lambda_{ij})(\mathrm{d}t)&\leq 0,\quad t\geq 0,\,i,j\in\mathcal{J},\,i\neq j\\
    \mathds{1}_{(\ul{R}_{ij}(t)< 0)}(\ul{\Lambda}_{ij}-\Lambda_{ij})(\mathrm{d}t)&\geq 0,\quad t\geq 0,\,i,j\in\mathcal{J},\,i\neq j
  \end{align*}
  for all $\Lambda\in\mathcal{M}$, with equality in the inequalities if and only if $\Lambda=\bar{\Lambda}$. Plugging in our candidate for $\bar{\Lambda}$, we indeed get 
  \begin{align*}
    \mathds{1}_{(\bar{R}_{ij}(t)\geq 0)}(\bar{\Lambda}_{ij}-\Lambda_{ij})(\mathrm{d}t)&=\mathds{1}_{(\bar{R}_{ij}(t)\geq 0)}(L_{ij}-\Lambda_{ij})(\mathrm{d}t)\leq 0,\quad t\geq 0,\,i,j\in\mathcal{J},\,i\neq j\\
    \mathds{1}_{(\bar{R}_{ij}(t)< 0)}(\bar{\Lambda}_{ij}-\Lambda_{ij})(\mathrm{d}t)&=\mathds{1}_{(\bar{R}_{ij}(t)< 0)}(U_{ij}-\Lambda_{ij})(\mathrm{d}t)\geq 0,\quad t\geq 0,\,i,j\in\mathcal{J},\,i\neq j
  \end{align*}
  for all $\Lambda\in\mathcal{M}$, with equality in the inequalities if and only if $\Lambda=\bar{\Lambda}$. Thus Theorem~8.4 of~\cite{Christiansen&Furrer2024} yields
  \begin{align*}
    \bar{V}(t)=\sum_{i\in\mathcal{J}}I_i(t)\amsmathbb{E}^{\bar{\Lambda}}_{t,i}[L(t,T)]\leq\sum_{i\in\mathcal{J}}I_i(t)\amsmathbb{E}_{t,i}^{\Lambda}[L(t,T)]=\amsmathbb{E}^{\Lambda}_{t,Z(t)}[L(t,T)],\quad \forall t\geq 0,\,\Lambda\in\mathcal{M}.
  \end{align*}
  Taking the infimum yields the desired result. The worst-case result Proposition~\ref{prop:wc-reserve} now follows directly from the fact that 
  \begin{align*}
    \sup_{\Lambda\in\mathcal{M}}\amsmathbb{E}^{\Lambda}_{t,Z(t)}\bigg[\int_{(t,T]}\frac{\mathcal{E}_{\Phi}(t)}{\mathcal{E}_{\Phi}(s)}C(\mathrm{d}s)\bigg]=-\inf_{\Lambda\in\mathcal{M}}\amsmathbb{E}^{\Lambda}_{t,Z(t)}\bigg[\int_{(t,T]}\frac{\mathcal{E}_{\Phi}(t)}{\mathcal{E}_{\Phi}(s)}(-C)(\mathrm{d}s)\bigg].
  \end{align*}
\end{proof}

Note that the two Propositions assume the existence of a solution to the BSDEs~(\ref{eq:wc_thiele_BSDE}) and (\ref{eq:bc_thiele_BSDE}). They do not prove that such a solution exists. But since they both are of the form (\ref{eq:worst_case_BSDE_discounted}), Theorem~\ref{th:BSDE_existence_discounted} gives an existence and uniqueness result which is applicable here, even in the case where the contractual payments are reserve-dependent. Thus, Theorem~\ref{th:BSDE_existence_discounted} in conjunction with Proposition~\ref{prop:wc-reserve} and Proposition~\ref{prop:bc-reserve} ensures existence and uniqueness of the worst- and best-case prospective reserves and the accompanying worst- and best-case biometric scenario for a wide range of different life, health and disability insurance products.

Note also that the worst/best-case scenario intuitively makes sense. The quantities $R_{ij}(t)=\ti{\Delta}C_{ij}(t)+V_{ij}(t)-V_i(t)$ are the so-called sums-at-risk. Suppose that the insured jumps from state $i$ to state $j$ at time $t$. In that case, the insurer has to pay out the contractual payments $\ti{\Delta}C_{ij}(t)$ to the insured, while changing the prospective reserve associated with the insured from $V_i(t)$ to $V_{ij}(t)$. Thus, if $R_{ij}(t)>0$, the insurer has a positive sum of money at risk and a jump from $i$ to $j$ at time $t$ yields a loss. Therefore, whenever $R_{ij}(t)>0$, the worst-case biometric scenario is the one where jumps from $i$ to $j$ are most likely ($U_{ij}$), while the best-case biometric scenario is the one where such jumps are least likely ($L_{ij}$). On the other hand, if $R_{ij}(t)<0$, the insurer has a negative sum of money at risk and a jump from $i$ to $j$ at time $t$ yields a gain. Therefore, the choice of best- and worst-case scenarios is reversed.

\begin{example}[Semi-Markov model]
  We now return to the smooth semi-Markov process of Example~\ref{ex:semi-Markov_1}, and we assume that the following bounds for the transition intensities are given 
  \begin{align*}
    l_{ij}(t,t-\tau_i(t))\leq u_{ij}(t,t-\tau_i(t)),\quad \forall i,j\in\mathcal{J}:i\neq j,
  \end{align*}
  for bounded and measurable functions $l_{ij},u_{ij}:[0,T]\times[0,T]\rightarrow [0,\infty)$. The contractual sojourn and transition payments are assumed to be of the form
  \begin{align*}
    C_i(\mathrm{d}t)&=b_i(t,t-\tau_{i}(t),V_i(t))\mathrm{d}t\\
    \ti{\Delta} C_{ij}(\mathrm{d}t)&= b_{ij}(t,t-\tau_i(t),V_i(t),V_{ij}(t)),
  \end{align*}
  for measurable functions 
  \begin{align*}
    b_i&:[0,T]\times[0,T]\times\amsmathbb{R}\rightarrow\amsmathbb{R},\quad i\in\mathcal{J}\\
    b_{ij}&:[0,T]\times[0,T]\times\amsmathbb{R}\times\amsmathbb{R}\rightarrow\amsmathbb{R},\quad i,j\in\mathcal{J}:i\neq j,
  \end{align*}
  where each $b_i$ is bounded in its first two arguments and Lipschitz in its third argument, and each $b_{ij}$ is bounded in its first two arguments Lipschitz in its third and fourth argument. Finally, the cumulative interest rate $\Phi$ is given by,
  \begin{align*}
    \Phi(\mathrm{d}t)=\sum_{i\in\mathcal{J}}I_i(t-)r_i(t,t-\tau_i(t))\mathrm{d}t
  \end{align*}
  for bounded and measurable functions 
  \begin{align*}
    r_i:[0,T]\times[0,T]\rightarrow\amsmathbb{R},\quad i\in\mathcal{J}.
  \end{align*}
  Under these conditions, Theorem~\ref{th:BSDE_existence_discounted} applies, and we get existence and uniqueness of the worst- and best-case prospective reserves. Taking a closer look at the worst-case prospective reserve, Proposition~\ref{prop:wc-reserve} and Proposition~\ref{prop:BSDE_equiv_ODE_discounted} gives us that its jump-count representation satisfies the following system of differential equations:
  \begin{align*}
    \frac{\mathrm{d}}{\mathrm{d}t}f^n(t,\omega^n)=&\mathds{1}_{(t\geq t_n)}\bigg(r_{z_n}(t,t-t_n)f^n(t,\omega^n)-b_{z_n}(t,t-t_n,f^n(t,\omega^n))+\sum_{j:j\neq z_n}R^{n+1}_{t,z_nj}(t,\omega^n)\ul{\mu}_{z_nj}(t,\omega^n)\bigg),
  \end{align*}
  with $f^n(T,\omega^n)=0$ for $t\in[0,T]$, $\omega^n\in\Omega^n$ and $n\in\amsmathbb{N}_0$, where
  \begin{align*}
    R^{n+1}_{t,z_nj}(t,\omega^n)=&b_{z_n j}(t,t-t_n,f^{n+1}(t,\omega^{n+1}_{t,z_n j}(\omega^n)),f^{n}(t,\omega^n))+f^{n+1}(t,\omega^{n+1}_{t,z_n j}(\omega^n))-f^n(t,\omega^n)\\
    \ul{\mu}_{z_nj}(t,\omega^n)=&\mathds{1}_{(R_{z_nj}(t,\omega^n)\geq 0)}u_{z_nj}(t,t-t_n)+\mathds{1}_{(R_{z_nj}(t,\omega^n)< 0)}l_{z_nj}(t,t-t_n).
  \end{align*}
  By our assumptions, Theorem~\ref{th:BSDE_existence_discounted} implies that there exists a unique solution of this system. Since all the driving functions depend on $\omega^n$ only via the most recent jump time and state $(t_n,z_n)$ and are independent of $n$, it can be verified by insertion that this system of equations has a solution given by
  \begin{align*}
    f^n(t,\omega^n)=\sum_{i\in\mathcal{J}}\mathds{1}_{(z_n=i)}V_i(t,t-t_n),\quad \forall n\in\amsmathbb{N}_0,
  \end{align*}
  where the functions $V_i:[0,T]\times[0,T]\rightarrow\amsmathbb{R}$ are a solution of the system 
  \begin{align*}
    \frac{\mathrm{d}}{\mathrm{d}t}V_i(t,t-\tau)=&\mathds{1}_{(t\geq\tau)}\bigg(r_{i}(t,t-\tau)V_i(t,t-\tau)-b_{i}(t,t-\tau,f_i(t,t-\tau))\\
    &+\sum_{j:j\neq i}b_{ij}(t,t-\tau,V_j(t,0),V_i(t,t-\tau))+V_{j}(t,0)-V_i(t,t-\tau)\ul{\mu}_{ij}(t,t-\tau)\bigg),
  \end{align*}
  with $V_i(T,T-\tau)=0$ for $t\in[0,T]$, $i\in\mathcal{J}$ and $\tau\in [0,T]$. The key to seeing this is to realise that 
  \begin{align*}
    f^{n+1}(t,\omega^{n+1}_{t,ij}(\omega^n))=V_j(t,t-t)=V_j(t,0),\quad \forall t\in[0,T],\,\forall n\in\amsmathbb{N}_0.
  \end{align*}
  We can thus conclude that the worst-case prospective reserve is given by 
  \begin{align*}
    \ul{V}(t)=\sum_{n\in\amsmathbb{N}_0}I^n(t)\sum_{i\in\mathcal{J}}\mathds{1}_{(\zeta_n=i)}V_i(t,t-\tau_n)=V_{Z(t)}\bigg(t,t-\sum_{n\in\amsmathbb{N}_0}I^n(t)\tau_n\bigg)=V_{Z(t)}(t,U(t)),
  \end{align*}
  where $U(t):=t-\sum_{n\in\amsmathbb{N}_0}I^n(t)\tau_n$ is the duration since the last jump. Thus in conjunction with (\ref{eq:CF_jump}) and (\ref{eq:CF_sojourn}) we obtain the counterfactuals $\ul{V}_i(t)=V_i(t,t-\tau_i(t))$ and $\ul{V}_{ij}(t)=V_j(t,0)$, which by Proposition~\ref{prop:Rep_adapted_process_J}(i) and Theorem~\ref{th:optional_projection_discounted} implies that the the state-wise prospective reserves are given by
  \begin{align*}
    \ti{\ul{V}}_i(t)=I_i(t-)V_i(t-\tau_i(t))+\sum_{j:j\neq i}I_j(t-)V_j(t,0)=\amsmathbb{E}_{t,i}^{\ul{\mu}}[L(t,T)],\quad i\in\mathcal{J}.
  \end{align*}
  We note that this form resembles the expressions obtained in Section~9 of~\cite{ChristiansenFurrer2021}. Finally, note that the system of differential equations satisfied by the functions $V_i:[0,T]\times[0,T]\rightarrow\amsmathbb{R}$ closely resembles the Thiele differential equations in the smooth semi-Markov model without reserve-dependency of intensities and payments, and that the same numerical methods apply, see~\cite{AdekambiChristiansen2017}.
\end{example}

\section{The robust consumption-insurance problem}
In the previous section, we were interested in determining the worst- and best-case prospective reserves for a given insurance payment process and the associated worst- and best-case biometric scenario, given the bounds (\ref{eq:LLU}). In this section, we are interested in finding the consumption process and insurance coverage that maximise an individual's utility in a worst-case biometric scenario. For this, we will restrict ourselves to the absolutely continuous case (\ref{eq:absolutely_continuous_Lambda}) with the bounds (\ref{eq:llu}), and we require admissible transition intensities to satisfy (\ref{eq:llu_admissible}).

The consumption processes that the policyholder can choose from are of the form 
\begin{align*}
  C(\mathrm{d}t)=\sum_{i\in\mathcal{J}}I_i(t-)\bigg(c_i(t)\mathrm{d}t+\sum_{j:j\neq i}c_{ij}(t) N_{ij}(\mathrm{d}t)\bigg),
\end{align*}
for $i$-predictable consumption rates $(c_i)_{i\in\mathcal{J}}$ consumed continuously and $i$-predictable lump sum consumption $(c_{ij})_{i\neq j}$ consumed upon transition. Under each admissible biometric scenario $\mu$, the individual has access to an insurance market, consisting of the coverages
\begin{align*}
    M_{ij}^*(\mathrm{d}t)=N_{ij}(\mathrm{d}t)-I_i(t-)\phi_{ij}(t)\mu_{ij}(t)\mathrm{d}t,\quad M_{ij}^*(0)=0,\quad i\neq j,
\end{align*}
where $(\phi_{ij})_{i\neq j}$ are $i$-predictable, strictly positive and bounded on finite intervals. If one holds one unit of the coverage $M_{ij}^*$, one continuously pays the premium rate $\mu_{ij}^*(t):=\phi_{ij}(t)\mu_{ij}(t)$ while in state $i$ and receives a payment of one unit upon transition from $i$ to $j$. Note that the processes $(\mu_{ij}^*)_{i\neq j}$ constitute a set of transition intensities, which specify a pricing measure $\amsmathbb{Q}^{\mu}$, and that the processes $(M_{ij}^*)_{i\neq j}$ are $(\amsmathbb{F},\amsmathbb{Q}^{\mu})$-martingales. The processes $(\phi_{ij})_{i\neq j}$ can be viewed as price loadings, where $\phi_{ij}=1$ corresponds to the actuarially fair case. Given the insurance market, the individual has to decide on an insurance coverage strategy of the form 
\begin{align*}
  B(\mathrm{d}t)=\sum_{i\in\mathcal{J}}I_i(t-)\sum_{j:j\neq i}b_{ij}(t)M_{ij}^*(\mathrm{d}t),
\end{align*}
where $(b_{ij})_{i\neq j}$ are $i$-predictable. Set $\theta:=((c_i)_{i\in\mathcal{J}},(c_{ij})_{i\neq j},(b_{ij})_{i\neq j})$. With all this in place, the wealth of the individual is given by 
\begin{align*}
  X^{\theta,\mu}(\mathrm{d}t)=X(t-)\Phi(\mathrm{d}t)+B(\mathrm{d}t)-C(\mathrm{d}t),\quad X^{\theta,\mu}(0)=x_0>0,
\end{align*}
where $\Phi$ is a predictable cumulative interest rate process of the form
\begin{align*}
  \Phi(\mathrm{d}t)=r(t)\mathrm{d}t,\quad \Phi(0)=0,
\end{align*}
for an optional interest rate processes $r$ that is bounded on finite intervals. Utilising Proposition~\ref{prop:Rep_adapted_process_J} and Proposition~\ref{prop:Rep_dyn_J}, $X$ can also be written as
\begin{align*}
  X^{\theta,\Lambda}(\mathrm{d}t)=\sum_{i\in\mathcal{J}}I_i(t-)\Bigg(r_i(t)X_i(t-)-c_i(t)-\sum_{j:j\neq i}b_{ij}(t)\mu_{ij}^*(t)\Bigg)\mathrm{d}t+\sum_{i,j:i\neq j}(b_{ij}(t)-c_{ij}(t))N_{ij}(\mathrm{d}t).
\end{align*}
The utility process $U^{\theta,\mu}$ specifying the preferences of the individual is given by 
\begin{align*}
    U^{\theta}(\mathrm{d}t)=\sum_{i\in\mathcal{J}}I_i(t-)\bigg(u_i(t,c_i(t))\mathrm{d}t+\sum_{j:j\neq i}u_{ij}(t,c_{ij}(t))N_{ij}(\mathrm{d}t)\bigg).
\end{align*}
for suitable $i$-predictable $u_i,u_{ij}:[0,T]\times\Omega\times\amsmathbb{R}\rightarrow\amsmathbb{R}$. The processes $u_i$ determine how much utility the individual obtains from continuous consumption while sojourning in state $i$, and the processes $u_{ij}$ determine how much utility the individual obtains from lump sum consumption when transitioning from state $i$ to state $j$. The problem which the individual faces is given by 
\begin{align}\label{eq:consumption-insurance-problem}
    \max_{\theta\in \mathcal{A}}\min_{\mu\in\mathcal{M}}\amsmathbb{E}^{\mu}\bigg[\int_{(0,T]} e^{-\int_0^s\rho(u)\mathrm{d}u}U^{\theta}(\mathrm{d}s)+e^{-\int_0^T\rho(u)\mathrm{d}u}u_{Z_T}^T(X^{\theta,\mu}(T))\bigg],
\end{align}
where $\rho$ is an optional process that is bounded on finite intervals and $u^T_{Z(T)}:\Omega\times\amsmathbb{R}\rightarrow\amsmathbb{R}$ is $\mathcal{F}_T$-measurable. Here $\rho$ is the subjective discount rate, modelling the time-value of utility and $u^T_{Z(T)}$ is the utility of terminal wealth. This is a problem of the form~(\ref{eq:MaxMinProblem_intensities}) and thus the martingale optimality principle is applicable. We will now utilise this in order to derive a verification theorem, inspired by the approach of~\cite{RiederWopperer2012}, which treats the case of a non-Markovian diffusion process. 

Let $(V_i)_{i\in\mathcal{J}}$ be a family of functions $V_i:[0,T]\times [0,\infty)\times\amsmathbb{R}\rightarrow\amsmathbb{R}$ which are $C^{1}$ in all arguments and let $Y$ be an optional process given by
\begin{align*}
  Y(\mathrm{d}t)=\sum_{i\in\mathcal{J}}I_i(t-)\bigg(Y_i(\mathrm{d}t)+\sum_{j:j\neq i}Y_{ij}(t)-Y_i(t)N_{ij}(\mathrm{d}t)\bigg),\quad Y(T)=\kappa,
\end{align*}
where $\kappa$ is $\mathcal{F}_T$-measurable and $Y_i(\mathrm{d}t)=y_i(t)\mathrm{d}t$ for $i$-predictable processes $(y_i)_{i\in\mathcal{J}}$. Define 
\begin{align*}
  A^{\theta,\mu}(\mathrm{d}t):=\sum_{i,j\in\mathcal{J}:i\neq j}V_j(t,X_i(t)+b_{ij}(t)-c_{ij}(t),Y_{ij}(t))-V_i(t,X_i(t),Y_i(t))M_{ij}^{\mu}(\mathrm{d}t),\quad A^{\theta,\mu}(0)=0,
\end{align*}
where $M_{ij}^{\mu}(\mathrm{d}t)=N_{ij}(\mathrm{d}t)-I_i(t-)\mu_{ij}(t)\mathrm{d}t$ are the jump process martingales and for each $(t,x,\omega)\in[0,T]\times\amsmathbb{R}\times\Omega$ let 
\begin{align*}
  \mathcal{G}_i^{\theta,\mu}V_i(t,x,Y_i(t)):=&\frac{\partial V_i}{\partial t}(t,x,Y_i(t)) -\rho_i(t) V_i(t,x,Y_i(t))+ y_i(t)\frac{\partial V_i}{\partial y}(t,x,Y_i(t))\\
  &+(x r_i(t)-c_i(t))\frac{\partial V_i}{\partial x}(t,x,Y_i(t))+\sum_{j:j\neq i}R^V_{ij}(t,x)\mu_{ij}(t)
\end{align*}
where $R^V_{ij}(t,x):=V_j(t,x+b_{ij}(t)-c_{ij}(t),Y_{ij}(t))-V_i(t,x,Y_i(t))-\phi_{ij}(t)b_{ij}(t)\frac{\partial V_i}{\partial x}(t,x,Y_i(t))$. Finally we define the predictable process $\lambda^{\theta,\mu}$ given by
\begin{align*}
    \lambda^{\theta,\mu}(t)=\sum_{i\in\mathcal{J}}I_i(t-)\lambda^{\theta,\mu}_i(t),\quad \text{with}\quad \lambda_i^{\mu,\theta}(t)=u_{i}(t,c_i(t))+\sum_{j:j\neq i}u_{ij}(t,c_{ij}(t))\mu_{ij}(t).
\end{align*} 
\begin{definition}\label{def:con_ins_adm}
  $\theta=((c_i)_{i\in\mathcal{J}},(c_{ij})_{i\neq j},(b_{ij})_{i\neq j})$ are admissible if 
  \begin{enumerate}
    \item[(i)] For all $i\in\mathcal{J}$, the processes $c_i$, $(c_{ij})_{j:j\neq i}$, and $(b_{ij})_{j:j\neq i}$ are $i$-predictable.
    \item[(ii)] $X^{\theta,\mu}(t)\geq 0$ for all $t\in [0,T]$ and $\mu\in\mathcal{M}$
    \item[(iii)] $A^{\theta,\mu}$ is a $(\amsmathbb{F},\amsmathbb{P}^{\mu})$-martingale for all $\mu\in\mathcal{M}$
    \item[(iv)] For all $\mu\in\mathcal{M}$ it holds that 
    \begin{align*}
      \sup_{t\in[0,T]}\amsmathbb{E}_{t,i}^{\mu}\bigg[\int_{(t,T]}|\lambda^{\theta,\mu}(s)|\mathrm{d}s+|u_{Z(T)}^T(X^{\theta,\mu}(T))|\bigg](\omega)<\infty,\quad \forall\omega\in\Omega,\,\forall i\in\mathcal{J}.
    \end{align*} 
  \end{enumerate}
\end{definition}
\begin{theorem}[Verification theorem]\label{th:verification}
  Suppose that the functions $(V_i)_{i\in\mathcal{J}}$ and the process $Y$ satisfy 
  \begin{align*}
    \sup_{\theta\in\mathcal{A}}\inf_{\mu\in\mathcal{M}}\Big\{\lambda_i^{\theta,\mu}(t)+\mathcal{G}_i^{\theta,\mu}V_i(t,x, Y_i(t))\Big\}&=0\\
    V_i(T,x,Y_i(T))&=u^T_{i}(x)
  \end{align*}
  for each $i\in\mathcal{J}$ and all $(t,x,\omega)\in[0,T]\times\amsmathbb{R}\times\Omega$, while $(\ul{\theta},\ul{\mu})$ is the corresponding unique saddle point. If the pair $(\ul{\theta},\ul{\mu})$ is admissible, then $\ul{\theta}$ is the unique robust optimal control, $\ul{\mu}$ is the worst-case biometric scenario and
  \begin{align*}
    V_{Z(t)}(t,X^{\ul{\theta},\ul{\mu}}(t),Y(t))=\sup_{\theta\in\mathcal{A}}\inf_{\mu\in\mathcal{M}}\mathcal{V}(t,\theta,\mu)=\inf_{\mu\in\mathcal{M}}\sup_{\theta\in\mathcal{A}}\mathcal{V}(t,\theta,\mu).
  \end{align*}
\end{theorem}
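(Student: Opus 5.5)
The plan is to verify the three conditions of the martingale optimality principle, Theorem~\ref{th:MOP}, with the candidate value process
\begin{align*}
  V(t):=V_{Z(t)}(t,X^{\ul{\theta},\ul{\mu}}(t),Y(t)).
\end{align*}
In the present setting the discount factor is $\frac{1}{\mathcal{E}_{\Phi}(t)}=e^{-\int_0^t\rho(u)\mathrm{d}u}$. The terminal condition $V_i(T,x,Y_i(T))=u_i^T(x)$ gives $V(T)=u^T_{Z(T)}(X^{\ul{\theta},\ul{\mu}}(T))$, as Theorem~\ref{th:MOP} requires.

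For an arbitrary admissible pair $(\theta,\mu)$ I would set $V^{\theta,\mu}(t):=V_{Z(t)}(t,X^{\theta,\mu}(t),Y(t))$ and $M^{\theta,\mu}(t)=\int_{(0,t]}e^{-\int_0^s\rho}U^\theta(\mathrm{d}s)+e^{-\int_0^t\rho}V^{\theta,\mu}(t)$. This is the process of Theorem~\ref{th:MOP}, with $V$ evaluated along the wealth driven by $\theta$ and $\mu$; for $(\ul{\theta},\ul{\mu})$ it is exactly the process above.

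The first key step is a change-of-variables formula for $V^{\theta,\mu}$. Between jumps, $Z(t-)=i$ and both $X$ and $Y$ are absolutely continuous, with $X$ drifting at rate $r_iX-c_i-\sum_j b_{ij}\mu^*_{ij}$ and $Y$ at rate $y_i$. The chain rule for $C^1$ functions then gives the $\mathrm{d}t$ part. At a jump from $i$ to $j$, $X$ jumps to $X_i+b_{ij}-c_{ij}$ and $Y$ jumps to $Y_{ij}$, so $V$ jumps by $V_j(\cdot)-V_i(\cdot)$. Adding the discounting term $-\rho V$ and the utility $U^\theta(\mathrm{d}t)$, and compensating the $N_{ij}$-integrals of the jump sizes and of $u_{ij}(c_{ij})$ with $I_i(t-)\mu_{ij}\mathrm{d}t$, I expect
\begin{align*}
  M^{\theta,\mu}(\mathrm{d}t)=e^{-\int_0^t\rho}\Big(\sum_i I_i(t-)\big(\lambda_i^{\theta,\mu}(t)+\mathcal{G}_i^{\theta,\mu}V_i(t,X_i(t),Y_i(t))\big)\mathrm{d}t+A^{\theta,\mu}(\mathrm{d}t)+\sum_{i,j}u_{ij}(t,c_{ij})M^\mu_{ij}(\mathrm{d}t)\Big).
\end{align*}
Here the premium term $-\phi_{ij}b_{ij}\mu_{ij}\partial_xV_i$ has been absorbed into $R^V_{ij}$. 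This is just Proposition~\ref{prop:Rep_dyn_J} applied to the optional finite variation process $V^{\theta,\mu}$.

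Next come the sign arguments. The generator vanishes at the saddle point, so for $(\ul{\theta},\ul{\mu})$ the drift is zero. For arbitrary $\theta$ with $\mu=\ul{\mu}$, the saddle property, namely that $\ul{\mu}$ is the worst case against $\ul{\theta}$ and $\ul{\theta}$ is the best response to $\ul{\mu}$, gives $\lambda_i^{\theta,\ul{\mu}}+\mathcal{G}_i^{\theta,\ul{\mu}}V_i\le 0$, so the drift is nonpositive. Symmetrically, $\lambda_i^{\ul{\theta},\mu}+\mathcal{G}_i^{\ul{\theta},\mu}V_i\ge 0$ for every admissible $\mu$, so the drift is nonnegative. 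These are pointwise inequalities in $(t,x,\omega)$, applied at $x=X_i(t)$.

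It remains to show that the local martingale parts are true martingales under $\amsmathbb{P}^\mu$. The term $A^{\theta,\mu}$ is a martingale by Definition~\ref{def:con_ins_adm}(iii). For the utility term $\sum u_{ij}M^\mu_{ij}$ I would use the integrability in Definition~\ref{def:con_ins_adm}(iv) together with Proposition~4.2 of~\cite{Christiansen&Furrer2024}, as in the proof of Theorem~\ref{th:optional_projection}, and the integrability of the drift follows the same way. Then (i), (ii) and (iii) of Theorem~\ref{th:MOP} hold. Theorem~\ref{th:MOP} then gives the optimality of $\ul{\theta}$, the worst-case property of $\ul{\mu}$, and the sup--inf/inf--sup identity for $V$. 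Uniqueness follows from the uniqueness of the saddle point: any other optimal control would have to make the drift vanish, hence attain the supremum pointwise.

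I expect the main obstacle to be this martingale step, in particular making sure that the compensated utility-jump term and the drift are integrable uniformly under every admissible $\mu$. A secondary issue is the bookkeeping, where counterfactuals $X_i,Y_{ij}$ must be used correctly in the jump sizes, as in Proposition~\ref{prop:Rep_dyn_J}.
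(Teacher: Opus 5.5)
Your existence and optimality argument is the paper's own. You use the same candidate $V_{Z(t)}(t,X^{\ul{\theta},\ul{\mu}}(t),Y(t))$ and the same change of variables, giving drift $\lambda_i^{\theta,\mu}+\mathcal{G}_i^{\theta,\mu}V_i$ at $x=X_i(t)$ plus the martingale $A^{\theta,\mu}$. You then use the same three pointwise saddle inequalities and conclude with Theorem~\ref{th:MOP}. The only difference is that the paper puts $\lambda^{\theta,\mu}(s)\,\mathrm{d}s$ into $M^{\theta,\mu}$ instead of $U^{\theta}(\mathrm{d}s)$, so it never meets your extra term $\sum_{i,j}u_{ij}(t,c_{ij}(t))M^{\mu}_{ij}(\mathrm{d}t)$. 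Your version matches Theorem~\ref{th:MOP} literally. However, Definition~\ref{def:con_ins_adm}(iv) controls $|\lambda^{\theta,\mu}|$, not $\sum_j|u_{ij}(t,c_{ij}(t))|\mu_{ij}(t)$, so making that term a true martingale needs slightly more than (iv). This is automatic for power utility, where all utility terms share one sign, and the paper tacitly needs the same fact to pass from $\lambda\,\mathrm{d}s$ back to $U^{\theta}(\mathrm{d}s)$.

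The step that fails is your uniqueness sentence; the paper's proof does not argue uniqueness separately. Write $H_i(\theta,\mu):=\lambda_i^{\theta,\mu}(t)+\mathcal{G}_i^{\theta,\mu}V_i(t,x,Y_i(t))$. Showing that another optimal $\theta'$ makes the drift vanish only shows that $\theta'$ is a best response to $\ul{\mu}$. A unique saddle point does not imply a unique best response. For example, $h(\theta,\mu)=\theta\mu$ on $[-1,1]^2$ has the unique saddle point $(0,0)$, yet every $\theta$ is a best response to $\mu=0$. This can really happen here: if $\ul{\mu}_{ij}=l_{ij}=0$ on some set, then $b_{ij}$ and $c_{ij}$ drop out of $H_i(\cdot,\ul{\mu})$ entirely.

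To repair it, use the other half of the optimality of $\theta'$, namely $\inf_{\mu}\mathcal{V}(0,\theta',\mu)=V(0)$. Let $\mu^*$ be an $i$-predictable pointwise minimiser of the affine map $\mu\mapsto H_i(\theta',\mu)$, taking $l_{ij}$ or $u_{ij}$ according to the sign of the coefficient of $\mu_{ij}$. Your dynamics give $\mathcal{V}(0,\theta',\mu^*)=V(0)+\amsmathbb{E}^{\mu^*}\big[\int_0^T e^{-\int_0^s\rho(u)\mathrm{d}u}\inf_{\mu}H_{Z(s-)}(\theta',\mu)\,\mathrm{d}s\big]\geq V(0)$. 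Since $\inf_\mu H_i(\theta',\mu)\leq H_i(\theta',\ul{\mu})\leq 0$ pointwise, this forces $\inf_\mu H_i(\theta',\mu)=0$ almost everywhere along the paths. So $\theta'$ is pointwise a maximin strategy of a game with value $0$, hence $(\theta',\ul{\mu})$ is a pointwise saddle point, and only now does uniqueness of the saddle point give $\theta'=\ul{\theta}$. Even then the equality holds only $\mathrm{d}t\otimes\amsmathbb{P}^{\mu^*}$-a.e.\ along the visited states and wealth levels, which is the only sense in which ``unique'' can be claimed.
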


\begin{proof}
  Let $\mathcal{E}_{\rho}(t):=e^{\int_0^t\rho(u)\mathrm{du}}$. For any admissible $(\theta,\mu)$ define 
  \begin{align*}
    M^{\theta,\mu}(t):=\int_0^t\frac{1}{\mathcal{E}_{\rho}(s)}\lambda^{\theta,\mu}(s)\mathrm{d}s+\frac{1}{\mathcal{E}_{\rho}(t)}V_{Z(t)}(t,X^{\theta,\mu}(t),Y(t)).
  \end{align*}
  By the change of variables formula for finite variation functions, the fact that $Z(t)\neq Z(t-)$ on a Lebesgue null set and the boundary condition, we get
  \begin{align*}
    \frac{1}{\mathcal{E}_{\rho}(t)}V_{Z(t)}(t,X^{\theta,\mu}(t),Y(t))=&\frac{1}{\mathcal{E}_{\rho}(T)}u_{Z(T)}^T(X^{\theta,\mu}(T))- \int_t^T \frac{1}{\mathcal{E}_{\rho}(s)}A^{\theta,\mu}(\mathrm{d}s)\\
    &-\int_{t}^T\frac{1}{\mathcal{E}_{\rho}(s)}\mathcal{G}_{Z(s)}^{\theta,\mu} W_{Z(s)}(s,X^{\theta,\mu}(s),Y(s))\mathrm{d}s.
  \end{align*}
  Since $A^{\theta,\mu}$ is a martingale, taking conditional expectation and applying Proposition~\ref{prop:conditional_expectation}(iii) yields 
  \begin{align*}
    \frac{1}{\mathcal{E}_{\rho}(t)}V_{Z(t)}(t,X^{\theta,\mu}(t),Y(t))&=\amsmathbb{E}_{t,Z(t)}^{\mu}\bigg[\frac{1}{\mathcal{E}_{\rho}(T)}u_{Z(T)}^T(X^{\theta,\mu}(T))-\int_{t}^T\frac{1}{\mathcal{E}_{\rho}(s)}\mathcal{G}_{Z(s)}^{\theta,\mu} V_{Z(s)}(s,X^{\theta,\mu}(s),Y(s))\mathrm{d}s\bigg],
  \end{align*}
  and we thus obtain
  \begin{align*}
    M^{\theta,\mu}(t)&=\int_0^t\frac{1}{\mathcal{E}_{\rho}(s)}\lambda^{\theta,\mu}(s)\mathrm{d}s+\amsmathbb{E}^{\mu}_{t,Z(t)}\bigg[\frac{1}{\mathcal{E}_{\rho}(T)}U_{Z(T)}^T(X^{\theta,\mu}(T))-\int_t^T\frac{1}{\mathcal{E}_{\rho}(s)}\mathcal{G}_{Z(s)}^{\theta,\mu} V_{Z(s)}(s,X^{\theta,\mu}(s),Y(s))\mathrm{d}s\bigg].
  \end{align*}
  Since $(\ul{\theta},\ul{\mu})$ is a saddle point it holds for all $i\in\mathcal{J}$ that
  \begin{align*}
    &\lambda^{\ul{\theta},\ul{\mu}}(t)+\mathcal{G}_i^{\ul{\theta},\ul{\mu}}V_i(t,x,Y_i(t))=0,\quad \forall (t,x,\omega)\in [0,T]\times\amsmathbb{R}\times\Omega,\\
    &\lambda^{\theta,\ul{\mu}}(t)+\mathcal{G}_i^{\theta,\ul{\mu}}V_i(t,x,Y_i(t))\leq 0,\quad \forall (t,x,\omega)\in [0,T]\times\amsmathbb{R}\times\Omega,\,\forall\theta\in\mathcal{A}\\
    &\lambda^{\ul{\theta},\mu}(t)+\mathcal{G}_i^{\ul{\theta},\mu}V_i(t,x,Y_i(t))\geq 0,\quad \forall (t,x,\omega)\in [0,T]\times\amsmathbb{R}\times\Omega,\,\forall\mu\in\mathcal{M},
  \end{align*}
  which implies 
  \begin{align*}
    &\lambda^{\ul{\theta},\ul{\mu}}(t)=-\mathcal{G}_{Z(t)}^{\ul{\theta},\ul{\mu}}V_{Z(t)}(t,X^{\ul{\theta},\ul{\mu}}(t),Y(t)),\quad \forall(t,\omega)\in [0,T]\times\Omega\\
    &\lambda^{\theta,\ul{\mu}}(t)\leq-\mathcal{G}_{Z(t)}^{\theta,\ul{\mu}}V_{Z(t)}(t,X^{\theta,\ul{\mu}}(t),Y(t)),\quad \forall(t,\omega)\in [0,T]\times\Omega,\,\forall\theta\in\mathcal{A}\\
    &\lambda^{\ul{\theta},\mu}(t)\geq-\mathcal{G}_{Z(t)}^{\ul{\theta},\mu}V_{Z(t)}(t,X^{\ul{\theta},\mu}(t),Y(t)),\quad \forall(t,\omega)\in [0,T]\times\Omega,\,\forall\mu\in\mathcal{M}.
  \end{align*}
  Let $0\leq t\leq\tau\leq T$ be arbitrary. By the Tower Property (see Proposition~4.4(ii) of~\cite{Christiansen&Furrer2024}) and Proposition~\ref{prop:conditional_expectation}(ii) we thus have 
  \begin{align*}
    \amsmathbb{E}^{\ul{\mu}}_{t,Z(t)}[M^{\ul{\theta},\ul{\mu}}(\tau)]=&\int_0^{t}\frac{1}{\mathcal{E}_{\rho}(s)}\lambda^{\ul{\theta},\ul{\mu}}(s)\mathrm{d}s+\amsmathbb{E}_{t,Z(t)}^{\ul{\mu}}\bigg[\frac{1}{\mathcal{E}_{\rho}(T)}u_{Z(T)}^T(X^{\ul{\theta},\ul{\mu}})\bigg]\\
    &+\amsmathbb{E}_{t,Z(t)}^{\ul{\mu}}\bigg[\int_{t}^{\tau}\frac{1}{\mathcal{E}_{\rho}(s)}\lambda^{\ul{\theta},\ul{\mu}}(s)\mathrm{d}s-\int_{\tau}^{T}\frac{1}{\mathcal{E}_{\rho}(s)}\mathcal{G}_{Z(s)}^{\theta,\mu} V_{Z(s)}(s,X^{\ul{\theta},\ul{\mu}}(s),Y(s))\mathrm{d}s\bigg]\\
    =&\int_0^{t}\frac{1}{\mathcal{E}_{\rho}(s)}\lambda^{\ul{\theta},\ul{\mu}}(s)\mathrm{d}s+\frac{1}{\mathcal{E}_{\rho}(t)}V_{Z(t)}(t,X^{\ul{\theta},\ul{\theta}}(t),Y(t))=M^{\ul{\theta},\ul{\mu}}(t).
  \end{align*}
  Similarly we obtain $\amsmathbb{E}_t^{\ul{\mu}}[M^{\theta,\ul{\mu}}(\tau)]\leq M^{\theta,\ul{\mu}}(t)$ for all $\theta\in\mathcal{A}$ and $\amsmathbb{E}_t^{\mu}[M^{\ul{\theta},\mu}(\tau)]\geq M^{\ul{\theta},\mu}(t)$ for all $\mu\in\mathcal{M}$. The martinale optimality principle yields the desired result.
\end{proof}

\subsection{Power utility}
We now consider the problem (\ref{eq:consumption-insurance-problem}) with power utility preferences. For $i,j\in\mathcal{J}$ with $i\neq j$ we choose the utility functions
\begin{align*}
  u_i(t,c)=\frac{w_i^{\gamma}(t)}{1-\gamma}c^{1-\gamma}\quad\text{and}\quad u_{ij}(t,c)=\frac{w_{ij}^{\gamma}(t)}{1-\gamma}c^{1-\gamma}\quad\text{and}\quad u_i^T(x)=\frac{w_{i,T}^{\gamma}}{1-\gamma}x^{1-\gamma}
\end{align*}
for some $\gamma\in (0,1)\cup (1,\infty)$, which is the relative risk aversion parameter. For each $i\in\mathcal{J}$, the non-negative processes $(w_i,(w_{ij})_{j:j\neq i})$ are $i$-predictable and can be interpreted as utility weights. The larger $w_i$, the more utility the individual gains from consumption while in state $i$ and the larger $w_{ij}$, the more utility the individual gains from consumption upon transition from state $i$ to state $j$. The random variables $(w_{i,T})_{i\in\mathcal{J}}$ are non-negative and $\mathcal{F}_T$-measurable and determine the utility gained from terminal wealth in state $i$. Under the following boundedness assumptions, the problem with these power utility preferences can be solved explicitly.
\begin{assumption}\label{ass:power_utility}
  Assume that 
  \begin{enumerate}
    \item[(i)] The processes $(w_i,(w_{ij})_{j:j\neq i})_{i\in\mathcal{J}}$ are bounded on finite intervals
    \item[(ii)] The random variables $(w_{i,T})_{i\in\mathcal{J}}$ are bounded and $\inf_{\omega\in\Omega}w_{i,T}(\omega)>0$
    \item[(iii)] The processes $r$ and $\rho$ are bounded on finite intervals
    \item[(iv)] The processes $(\phi_{ij})_{i\neq j}$ satisfy $\inf_{\omega\in\Omega}\phi_{ij}(\omega)>0$.  
  \end{enumerate}
\end{assumption}

For each $i\in\mathcal{J}$ let $V_i:[0,\infty)\times\amsmathbb{R}\rightarrow\amsmathbb{R}$ be given by 
\begin{align*}
  V_i(x,y)=\frac{1}{1-\gamma}y^{\gamma}x^{1-\gamma},\quad i\in\mathcal{J}
\end{align*}
and let the process $Y$ be given by the BDS
\begin{align}
  \begin{split}\label{eq:power_utility_Y}
  Y(\mathrm{d}t)&=\hat{r}(t)Y(t-)\mathrm{d}t-W(\mathrm{d}t)+\sum_{i,j:i\neq j}R_{ij}(t)(N_{ij}(\mathrm{d}t)-I_i(t-)\ul{\mu}_{ij}(t)\mathrm{d}t),\\
  Y(T)&=w_{Z(T)}(T),
  \end{split}
\end{align}
where $R_{ij}(t)=(\phi_{ij}^{1-\frac{1}{\gamma}}(t)w_{ij}(t)+(1-\hat{\phi}_{ij}(t))Y_{i}(t)+(\phi_{ij}^{1-\frac{1}{\gamma}}(t)-1)Y_{ij}(t))+Y_{ij}(t)-Y_i(t)$. The processes $\hat{r}_i$ and $\hat{\phi}_{ij}$ are given by
\begin{align*}
  \hat{r}_i(t)=\frac{1}{\gamma}\rho_i(t)+\bigg(1-\frac{1}{\gamma}\bigg)r_i(t)\quad\text{and}\quad\hat{\phi}_{ij}(t)=\frac{1}{\gamma}+\bigg(1-\frac{1}{\gamma}\bigg)\phi_{ij}(t),
\end{align*}
while the process $W$ is given by 
\begin{align*}
  W(\mathrm{d}t)=\sum_{i\in\mathcal{J}}I_i(t-)\bigg(w_i(t)\mathrm{d}t+\sum_{j:j\neq i}\big(\phi_{ij}^{1-\frac{1}{\gamma}}(t)w_{ij}(t)+(1-\hat{\phi}_{ij}(t))Y_{i}(t)+(\phi_{ij}^{1-\frac{1}{\gamma}}(t)-1)Y_{ij}(t)\big)N_{ij}(\mathrm{d}t)\bigg),
\end{align*}
with $W(0)=0$. Then define the intensities $\ul{\mu}_{ij}(t)$ by 
\begin{align*}
  \ul{\mu}_{ij}(t)=
  \begin{cases}
    l_{ij}(t) & \text{if } \frac{\gamma}{1-\gamma}R_{ij}(t)\geq 0\\
    u_{ij}(t) & \text{if } \frac{\gamma}{1-\gamma}R_{ij}(t)< 0,
  \end{cases}
\end{align*}
the consumption processes $(\ul{c}_{i},(\ul{c}_{ij})_{j:\neq i})_{i\in\mathcal{J}}$ by
\begin{align*}
  \ul{c}_{i}(t)=w_i(t)\frac{X_i(t)}{Y_i(t)}\quad\text{and}\quad \ul{c}_{ij}(t)=\phi_{ij}^{-\frac{1}{\gamma}}w_{ij}(t)\frac{X_i(t)}{Y_i(t)},\quad i,j\in\mathcal{J}:i\neq j,
\end{align*}
and the insurance allocation $((\ul{b}_{ij})_{j:\neq i})_{i\in\mathcal{J}}$ by 
\begin{align*}
  \ul{b}_{ij}(t)=\bigg(\phi_{ij}^{-\frac{1}{\gamma}}(t)\frac{w_{ij}(t)+Y_{ij}(t)}{Y_{i}(t)}-1\bigg)X_i(t),\quad \quad i,j\in\mathcal{J}:i\neq j.
\end{align*}
\begin{proposition}
  Assume that Assumption~\ref{ass:power_utility} holds. Then the processes $(\ul{c}_{i},(\ul{c}_{ij})_{j:\neq i},(\ul{b}_{ij})_{j:\neq i})_{i\in\mathcal{J}}$ are the robust optimal controls, the processes $(\ul{\mu}_{ij})_{i\neq j}$ constitute the biometric worst-case scenario and 
  \begin{align*}
    V_{Z(t)}(X^{\ul{\theta}(t),\ul{\mu}},Y(t))=\sup_{\theta\in\mathcal{A}}\inf_{\mu\in\mathcal{J}}\amsmathbb{E}_{t,Z(t)}^{\ul{\mu}}\bigg[\int_{(t,T]} e^{-\int_t^s\rho(u)\mathrm{d}u}U^{\theta}(\mathrm{d}s)+e^{-\int_t^T\rho(u)\mathrm{d}u}u_{Z_T}^T(X^{\theta,\mu}(T))\bigg].
  \end{align*}
\end{proposition}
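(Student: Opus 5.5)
The plan is to apply the verification theorem, Theorem~\ref{th:verification}, with the candidate value functions $V_i(x,y)=\frac{1}{1-\gamma}y^{\gamma}x^{1-\gamma}$ (independent of $i$ and $t$) and with $Y$ the solution of (\ref{eq:power_utility_Y}). The proof splits into three parts: existence and positivity of $Y$, the pointwise saddle-point computation for the Hamiltonian $\lambda_i^{\theta,\mu}+\mathcal{G}_i^{\theta,\mu}V_i$, and admissibility of $(\ul{\theta},\ul{\mu})$.

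\emph{Existence and positivity of $Y$.} The BSDE (\ref{eq:power_utility_Y}) is of the form (\ref{eq:worst_case_BSDE_discounted}), with the following identifications:
\begin{itemize}
\item the discount rate is $\Phi(\mathrm{d}t)=\hat r(t)\mathrm{d}t$;
\item the sojourn payment is $b_i\equiv w_i$ against $\gamma_i(\mathrm{d}t)=\mathrm{d}t$;
\item the transition payment is $b_{ij}(t,x,y)=\phi_{ij}^{1-1/\gamma}w_{ij}+(1-\hat\phi_{ij})x+(\phi_{ij}^{1-1/\gamma}-1)y$, which is affine in $(x,y)$;
\item the worst case selects $l_{ij}$ or $u_{ij}$ according to the sign of $R_{ij}$, i.e.\ one of the two variants, depending on the sign of $\gamma/(1-\gamma)$.
\end{itemize}
Assumption~\ref{ass:power_utility} gives Assumption~\ref{ass:regularity_BSDE}: the $\phi_{ij}$ are bounded on finite intervals and bounded away from zero, so $\phi_{ij}^{\pm1/\gamma}$ are bounded. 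Theorem~\ref{th:BSDE_existence_discounted} then yields a unique càdlàg, finite-variation solution $Y$ that is bounded on finite intervals. By Theorem~\ref{th:optional_projection_discounted}, $Y$ is the optional projection under $\amsmathbb{P}^{\ul\mu}$ of a discounted sum of $w$-terms plus $w_{Z(T),T}$.

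I also need $Y_i,Y_{ij}\geq\delta>0$. For this I would rewrite the BSDE for $Y$ with the $Y$-dependent transition payment absorbed into modified intensities $\hat\phi_{ij}\ul\mu_{ij}$. That representation gives $Y$ as the expectation of a non-negative discounted payment stream plus $\inf w_{i,T}>0$, under a strictly positive Stieltjes exponential. Carrying this out cleanly via the jump-count system of Proposition~\ref{prop:BSDE_equiv_ODE_discounted} is delicate.

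\emph{Hamiltonian computation.} Fix $i$, $t$ and $x>0$, and abbreviate $Y_i=Y_i(t)$. For the inner infimum, $\mu$ enters linearly through $\sum_j\big(u_{ij}(c_{ij})+R^V_{ij}(t,x)\big)\mu_{ij}$. Hence for each fixed $\theta$ the infimum is attained by taking $\mu_{ij}=l_{ij}$ or $u_{ij}$ according to the sign of $u_{ij}(c_{ij})+R^V_{ij}$.

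For the outer supremum, the first-order conditions in $c_i$, $c_{ij}$ and $b_{ij}$ come from a concave power function, so they give unique maximisers. Matching $w_i^\gamma c^{-\gamma}=\partial_xV_i=Y_i^\gamma x^{-\gamma}$ gives $\ul c_i=w_iX_i/Y_i$. The conditions in $c_{ij}$ and $b_{ij}$ equate $\partial_xV_j(x+b-c,Y_{ij})$ both to $w_{ij}^\gamma c^{-\gamma}$ and to $\phi_{ij}\partial_xV_i$, which gives $\ul c_{ij}$ and $\ul b_{ij}$ as in the statement. Note that $\mu_{ij}>0$ is not required, since the optimiser does not depend on $\mu_{ij}$.

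Substituting these controls, I expect $u_{ij}(\ul c_{ij})+R^V_{ij}=\frac{\gamma}{1-\gamma}x^{1-\gamma}Y_i^{\gamma-1}R_{ij}$. This is the algebraic heart of the argument: it shows that $\ul\mu$ is precisely the minimiser at $\ul\theta$.

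For the saddle property, I would check the two one-sided inequalities directly rather than interchanging sup and inf.
\begin{itemize}
\item At $\ul\mu$ the map $\theta\mapsto\lambda+\mathcal G$ is concave and is maximised at $\ul\theta$, since the first-order conditions do not involve $\mu$.
\item At $\ul\theta$ the map $\mu\mapsto\lambda+\mathcal G$ is linear, with coefficients whose signs are those of $\frac{\gamma}{1-\gamma}R_{ij}$.
\end{itemize}
Finally, inserting $\ul\theta$ and $\ul\mu$, the equation $\lambda+\mathcal G=0$ reduces to $x^{1-\gamma}Y_i^{\gamma-1}$ times the drift of (\ref{eq:power_utility_Y}). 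This holds because $Y$ solves that BSDE, and the terminal condition $V_i(x,w_{i,T})=u_i^T(x)$ also matches.

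\emph{Admissibility.} The controls are $i$-predictable because $X_i$, $Y_i$ and $Y_{ij}$ are. Under any $\mu\in\mathcal M$ the wealth $X^{\ul\theta,\mu}$ solves a linear equation:
\begin{itemize}
\item between jumps it grows at a bounded rate;
\item at a jump it is multiplied by $\phi_{ij}^{-1/\gamma}(w_{ij}+Y_{ij})/Y_i>0$.
\end{itemize}
Therefore $X^{\ul\theta,\mu}>0$, and $|X|\le x_0K^{N(T)}$. Proposition~\ref{prop:exp_bound_N} then gives uniform exponential moments. Combined with boundedness of $Y$ and $1/Y$, this yields condition (iv) of Definition~\ref{def:con_ins_adm}, and also that $A^{\ul\theta,\mu}$ is a true martingale (Proposition~4.2 of~\cite{Christiansen&Furrer2024}).

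\emph{Main obstacle.} The strict lower bound on $Y$ is the step I expect to be hardest. Without it the controls and $V_i$ are not well defined, and admissibility fails.
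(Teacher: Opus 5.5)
Your overall route is the paper's: existence of $Y$ via Theorem~\ref{th:BSDE_existence_discounted}, a positive lower bound on $Y$, explicit positivity of the wealth, exponential moments for the martingale property of $A^{\ul{\theta},\mu}$ and for condition (iv), then a pointwise saddle check and Theorem~\ref{th:verification}. Your Hamiltonian algebra is correct. The identity $u_{ij}(\ul{c}_{ij})+R^V_{ij}=\frac{\gamma}{1-\gamma}x^{1-\gamma}Y_i^{\gamma-1}R_{ij}$ holds because $\frac{\gamma}{1-\gamma}\hat{\phi}_{ij}=\frac{1}{1-\gamma}-\phi_{ij}$. Checking the two one-sided saddle inequalities directly is cleaner than the paper's appeal to Sion's theorem.

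The gap is the step you flag yourself, the strict lower bound on $Y$, and the mechanism you sketch for it would fail. Write
\[
R_{ij}=\phi_{ij}^{1-\frac{1}{\gamma}}\big(w_{ij}+Y_{ij}-Y_i\big)-\big(\hat{\phi}_{ij}-\phi_{ij}^{1-\frac{1}{\gamma}}\big)Y_i .
\]
This shows that the coefficient of $Y_{ij}$ is $\phi_{ij}^{1-\frac{1}{\gamma}}$. So the modified intensity must be $\hat{\ul{\mu}}_{ij}:=\phi_{ij}^{1-\frac{1}{\gamma}}\ul{\mu}_{ij}$, not $\hat{\phi}_{ij}\ul{\mu}_{ij}$.

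With $\hat{\phi}_{ij}$ you are left with a $Y_{ij}$-term of indefinite sign. It fits neither into a nonnegative payment stream nor into a discount rate acting on $Y(t-)$. Worse, for $\gamma<1$ one has $\hat{\phi}_{ij}<0$ whenever $\phi_{ij}>1/(1-\gamma)$, so $\hat{\phi}_{ij}\ul{\mu}_{ij}$ is not even an intensity.

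With the correct choice, $\hat{\ul{\mu}}_{ij}$ is nonnegative, $i$-predictable and dominated by a multiple of $\alpha_{ij}$, since $\phi_{ij}$ is bounded and bounded away from zero. The residual goes into the bounded rate $\psi_i=\hat{r}_i-\sum_{j\neq i}(\phi_{ij}^{1-\frac{1}{\gamma}}-\hat{\phi}_{ij})\ul{\mu}_{ij}$. Then $Y$ solves a linear BSDE with fixed data:
sojourn rates $w_i\geq 0$;
lump sums $w_{ij}\geq 0$;
sums-at-risk $w_{ij}+Y_{ij}-Y_i$;
intensities $\hat{\ul{\mu}}$;
terminal value $w_{Z(T),T}$.

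No jump-count analysis is needed. Since $Y$ already exists, $\ul{\mu}$ is a fixed scenario, and Theorem~\ref{th:optional_projection_discounted} represents $Y(t)$ pathwise as an $\amsmathbb{E}^{\hat{\ul{\mu}}}_{t,Z(t)}$-expectation. This gives $Y\geq e^{-T\sup|\psi|}\min_i\inf_{\omega}w_{i,T}>0$ for all $(t,\omega)$. The bound passes to $Y_i$ and $Y_{ij}$ because these are values of $Y$ on counterfactual paths. This is how the paper closes the step.

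A minor slip: at a jump $i\to j$ the wealth is multiplied by $\phi_{ij}^{-\frac{1}{\gamma}}Y_{ij}/Y_i$, not by $\phi_{ij}^{-\frac{1}{\gamma}}(w_{ij}+Y_{ij})/Y_i$, because the lump sum $\ul{c}_{ij}$ is consumed. Positivity is unaffected.
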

\begin{proof}
  See Subsection~\ref{subsection:verification}.
\end{proof}

From (\ref{eq:power_utility_Y}) we see that the process $Y$ satisfies a BSDE of the form (\ref{eq:worst_case_BSDE_discounted}), and our assumptions and Theorem~\ref{th:BSDE_existence_discounted} yield that $Y$ exists and is unique. Theorem~\ref{th:optional_projection_discounted} yields that 
\begin{align*}
  Y(t)=\amsmathbb{E}^{\ul{\mu}}_{t,Z(t)}\bigg[\int_{(t,T]}e^{-\int_t^s\hat{r}(u)\mathrm{d}u}W(\mathrm{d}s)+e^{-\int_t^T\hat{r}(u)\mathrm{d}u}w_{Z(T),T}\bigg],
\end{align*}
which means that $Y$ can be interpreted as the expected present value of the future flow of utility, where the expectation is taken under the worst-case measure. The quantity $R_{ij}$ is the utility at risk, and if $R_{ij}$ is positive, then the individual gains utility when transitioning from $i$ to $j$, and if $R_{ij}$ is negative, the individual loses utility when transitioning from $i$ to $j$. Interestingly, the worst-case biometric scenario not only depends on the sign of $R_{ij}$, but also on whether $\gamma>1$ or $\gamma<1$. If $\gamma<1$ and $R_{ij}(t)>0$, then the worst-case biometric scenario is $l_{ij}$, where transitions from $i$ to $j$ are less likely, while if $R_{ij}(t)<0$ then the worst-case biometric scenario is $u_{ij}$, where the transitions from $i$ to $j$ are most likely. When $\gamma>1$, then the situation is reversed. If $R_{ij}(t)>0$, then the worst-case biometric scenario is $u_{ij}$ and if $R_{ij}(t)<0$, the worst-case biometric scenario is $l_{ij}$. This indicates that, depending on the risk aversion parameter, the perception of the worst-case changes. When $\gamma>1$ we can see that (\ref{eq:power_utility_Y}) is of the form (\ref{eq:wc_thiele_BSDE}) and if $\gamma<1$ we see that (\ref{eq:power_utility_Y}) is of the form (\ref{eq:bc_thiele_BSDE}). By Proposition~\ref{prop:wc-reserve} and Proposition~\ref{prop:bc-reserve} we can thus conclude that
\begin{align*}
  Y(t)&=\sup_{\mu\in\mathcal{M}}\amsmathbb{E}^{\mu}_{t,Z(t)}\bigg[\int_{(t,T]}e^{-\int_t^s\hat{r}(u)\mathrm{d}u}W(\mathrm{d}s)+e^{-\int_t^T\hat{r}(u)\mathrm{d}u}w_{Z(T),T}\bigg],\quad \gamma>1\\
  Y(t)&=\inf_{\mu\in\mathcal{M}}\amsmathbb{E}^{\mu}_{t,Z(t)}\bigg[\int_{(t,T]}e^{-\int_t^s\hat{r}(u)\mathrm{d}u}W(\mathrm{d}s)+e^{-\int_t^T\hat{r}(u)\mathrm{d}u}w_{Z(T),T}\bigg],\quad \gamma<1.
\end{align*}
This shows that an individual with $\gamma>1$ views the future flow of utility as a liability and therefore maximises the expected present value thereof, which, as a consequence, reduces all robust optimal consumption processes $(c_i,(c_{ij})_{j:j\neq i})_{i\in\mathcal{J}}$ compared to other admissible biometric scenarios. Thus, the worst-case scenario of an individual with $\gamma>1$ is to run out of money to spend, and therefore, they consume conservatively. The individual with $\gamma<1$ is exactly the opposite. They view the future flow of utility as an asset and therefore minimise the expected present value thereof, which, as a consequence, increases all robust optimal consumption processes $(c_i,(c_{ij})_{j:j\neq i})_{i\in\mathcal{J}}$ compared to other admissible biometric scenarios. The worst-case scenario of an individual with $\gamma<1$ is thus not to get to spend all of their money, and therefore they consume aggressively. Thus if we set $\mathcal{J}=\{0=\text{alive},1=\text{dead}\}$ and assume that $R_{01}(t)<0$ (most individuals lose utility when dying), then the worst-case scenario of an individual with $\gamma<1$ is to die before spending all their money, while the worst-case scenario of an individual with $\gamma>1$ is to spend all their money before they die.

\subsection{Verification}\label{subsection:verification}
We start by noting that the process $Y$ is given by a BSDE of the form (\ref{eq:worst_case_BSDE_discounted}), which by Theorem~\ref{th:BSDE_existence_discounted} has a unique solution, bounded on finite intervals. Thus, our candidate solution is well-defined. Next, we have to check that our candidate optimal controls are admissible. For this note that for all $i\in\mathcal{J}$, the processes $\ul{c}_i$, $(\ul{c}_{ij})_{j:j\neq i}$, $(\ul{b}_{ij})_{j:j\neq i})$ are indeed $i$-predictable, while $\ul{\mu}_{ij}$ satisfies Assumption~\ref{ass:intensities} and (\ref{eq:llu_admissible}). Thus point $\ul{\mu}$ is admissible and (i) of Definition~\ref{def:con_ins_adm} is satisfied. Furthermore, we note that by rewriting (\ref{eq:power_utility_Y}) a little, we obtain
\begin{align*}
  Y(\mathrm{d}t)=\psi(t)Y(t)\mathrm{d}t-\hat{W}(\mathrm{d}t)+\sum_{i,j:i\neq j}\hat{R}_{ij}(t)(N_{ij}(\mathrm{d}t)-I_i(t-)\hat{\ul{\mu}}_{ij}(t)\mathrm{d}t),\quad Y(T)=w_{Z(T),T},
\end{align*}
where $\hat{R}_{ij}(t)=w_{ij}(t)+Y_{ij}(t)-Y_{i}(t)$ and $\psi$ and $\hat{W}$ are given by
\begin{align*}
  \hat{W}(\mathrm{d}t)&=\sum_{i\in\mathcal{J}}I_i(t-)\bigg(w_i(t)\mathrm{d}t+\sum_{j:j\neq i}w_{ij}(t)N_{ij}(\mathrm{d}t)\bigg),\quad \hat{W}(0)=0,\\
  \psi(t)&=\sum_{i\in\mathcal{J}}I_i(t-)\bigg(\hat{r}_i(t)-\Big(1-\hat{\phi}_{ij}(t)\phi_{ij}^{\frac{1}{\gamma}-1}(t)\Big)\bigg).
\end{align*}
Since the processes $(\phi_{ij})_{i\neq j}$ are bounded, bounded away from zero, and $i$-predictable, the processes $\hat{\ul{\mu}}_{ij}=\phi_{ij}^{1-\frac{1}{\gamma}}\ul{\mu}_{ij}(t)$ satisfy Assumption~\ref{ass:intensities} and are therefore valid transition intensities. By Theorem~\ref{th:optional_projection_discounted}, we thus get that 
\begin{align*}
  Y(t)=\amsmathbb{E}_{t,Z(t)}^{\hat{\ul{\mu}}}\bigg[\int_{(t,T]}e^{-\int_{t}^s\psi(u)\mathrm{d}u}\hat{W}(\mathrm{d}s)+e^{-\int_{t}^T\psi(u)\mathrm{d}u}w_{Z(T),T}\bigg].
\end{align*}
As all the utility weights are non-negative, the integral in the expectation is always non-negative, and since $\psi$ is bounded and $w_{i,T}$ is bounded away from zero, there exists a constant $K>0$ such that
\begin{align*}
  \inf_{(t,\omega)\in[0,T]\times\Omega} Y(t,\omega)\geq\inf_{(t,\omega)\in[0,T]\times\Omega}\amsmathbb{E}_{t,Z(t)}^{\ul{\mu}}\big[e^{-\int_{t}^T\psi(u)\mathrm{d}u}w_{Z(T),T}\big](\omega)\geq K>0.
\end{align*}
Thus, we have that $Y$ is strictly positive and bounded away from zero. For any $\mu\in\mathcal{M}$ the optimal wealth process is given by 
\begin{align*}
  X^{\ul{\theta},\mu}(t)=&x_0\exp\bigg(\int_0^t\sum_{i\in\mathcal{J}}I_i(s)\bigg(r_{i}(s)-w_{i}(s)-\sum_{j:j\neq i}\frac{w_{ij}(s)+Y_{ij}(s)}{Y_i(s)}\phi_{ij}^{-\frac{1}{\gamma}}(s)\mu_{ij}^*(s)\bigg)\mathrm{d}s\bigg)\\
  &\cdot\exp\bigg(\int_{(0,t]}\sum_{i,j:i\neq j}\log\bigg(\frac{Y_{ij}(s)}{Y_{i}(s)}\phi_{ij}^{-\frac{1}{\gamma}}(s)\bigg)N_{ij}(\mathrm{d}s)\bigg).
\end{align*}
Since $\phi_{ij}$ and $Y$ are strictly positive, we can conclude that $X^{\ul{\theta},\mu}(t)\geq 0$ and condition (ii) of Definition~\ref{def:con_ins_adm} is satisfied. Next we verify that the process $A^{\ul{\theta},\mu}$ given by 
\begin{align*}
  A^{\ul{\theta},\mu}(\mathrm{d}t)=\sum_{i,j:i\neq j}\frac{1}{1-\gamma}Y_i^{\gamma-1}(t)X_i^{1-\gamma}(t)\big(Y_{ij}(t)\phi_{ij}^{1-\frac{1}{\gamma}}-Y_i(t)\big)M_{ij}^{\mu}(\mathrm{d}t),\quad A^{\ul{\theta},\ul{\mu}}(0)=0
\end{align*}
is a $(\amsmathbb{F},\amsmathbb{P}^{\mu})$-martingale for any $\mu\in\mathcal{M}$. By Proposition~4.2 in~\cite{Christiansen&Furrer2024}, Assumption~\ref{ass:intensities}(ii) and Tonelli's theorem, this is the case if we can show for all $i,j,k\in\mathcal{J}$ that
\begin{align*}
  \int_{(s,t]}\amsmathbb{E}_{s,i}^{\mu}\bigg[\bigg|\frac{1}{1-\gamma}Y_j^{\gamma-1}(u)X_j^{1-\gamma}(u)\big(Y_{jk}(u)\phi_{jk}^{1-\frac{1}{\gamma}}(u)-Y_j(u)\big)\bigg|\bigg]\alpha_{jk}(u)\mathrm{d}u<\infty,\quad \forall 0\leq s\leq t\leq T.
\end{align*}
Due to our assumptions, $\phi_{ij}^{1-\frac{1}{\gamma}}$ is bounded regardless of whether $\gamma>1$ or $\gamma<1$. Since $Y$ is bounded and bounded away from zero, the processes $Y_i$, $Y_{ij}$ and $Y_i^{\gamma-1}$ are bounded, regardless of whether $\gamma>1$ or $\gamma<1$. The form of $X^{\ul{\theta},\mu}$, all our boundedness assumptions, and the inequality $N(s)\leq N(s-)+1$ for all $s>0$ allow us to obtain the bound
\begin{align*}
  (X^{\ul{\theta},\mu})^{1-\gamma}(s)\leq x_0e^{(K_1(1+N(s)))}\leq x_0 e^{2K_1+K_1N(t-)}e^{K_1(N(s)-N(t))},\quad 0\leq t\leq s\leq T,
\end{align*}
for some constant $K_1>0$. Using this bound and Proposition~\ref{prop:exp_bound_N}, we can for all $\gamma\in(0,1)\cup(0,\infty)$ obtain a bound of the form
\begin{align*}
  \amsmathbb{E}_{t,i}[(X_j^{\ul{\theta},\mu})^{1-\gamma}(s)](\omega)\leq x_0 e^{2K_1+K_1N(T,\omega)}K_2<\infty,\quad \forall 0\leq t\leq s\leq T,\,\forall\omega\in\Omega,
\end{align*}
for some constant $K_2>0$. Thus we can conclude that $A^{\ul{\theta},\mu}$ is a $(\amsmathbb{F},\amsmathbb{P}^{\mu})$-martingale for all $\mu\in\mathcal{M}$ and condition (iii) of Definition~\ref{def:con_ins_adm} is satisfied. Finally, by using the very same bound in conjunction with the other boundedness assumptions and Assumption~\ref{ass:intensities}(ii), we obtain for all $\mu\in\mathcal{M}$
\begin{align*}
  \sup_{t\in[0,T]}\amsmathbb{E}_{t,i}^{\mu}\bigg[\int_{(t,T]}|\lambda^{\ul{\theta},\mu}(s)|\mathrm{d}s+|u_{Z(T)}^T(X^{\ul{\theta},\mu}(T))|\bigg](\omega)<\infty,\quad \forall\omega\in\Omega,\,\forall i\in\mathcal{J}.
\end{align*}
Thus, condition (iv) of Definition~\ref{def:con_ins_adm} is satisfied, and we can conclude that our candidate optimal robust control is admissible as well. Last but not least, we note that 
\begin{align*}
  \lambda_i^U(x,Y_i(t))+\mathcal{G}_i^{\theta,\mu}V_i(x, Y_i(t)),\quad i\in\mathcal{J}
\end{align*}
is concave as a function of $c_i$, $c_{ij}$ and $b_{ij}$ and linear (convex-like) on a compact domain $[l_{ij}(t),u_{ij}(t)]$ in $\mu_{ij}$. Thus by Theorem~4.2 of~\cite{Sion1958} the problem 
\begin{align*}
  \sup_{\theta\in\mathcal{A}}\inf_{\mu\in\mathcal{M}}\Big\{\lambda_i^U(x,Y_i(t))+\mathcal{G}_i^{\theta,\mu}V_i(x, Y_i(t))\Big\},\quad i\in\mathcal{J}
\end{align*}
has a unique saddle point, which by simple calculations can be shown to equal the above optimal controls. Furthermore, by inserting everything, it can be shown that 
\begin{align*}
  \sup_{\theta\in\mathcal{A}}\inf_{\mu\in\mathcal{M}}\Big\{\lambda_i^U(x,Y_i(t))+\mathcal{G}_i^{\theta,\mu}V_i(x, Y_i(t))\Big\}=0,\quad i\in\mathcal{J}.
\end{align*}
Thus, by Theorem~\ref{th:verification}, we can conclude that our candidate solution indeed is optimal.

\section*{Acknowledgements}
The second author has carried out this research in association with the project frame InterAct. The authors would like to thank Christian Furrer for fruitful discussions that improved the quality of this paper. Moreover, the second author acknowledges the hospitality of the Institute of Financial Mathematics and Applied Number Theory at the Johannes Kepler University in Linz, where parts of this research have been conducted during a 3-month research stay.

\bibliographystyle{plainnat}
\bibliography{references.bib}

\appendix

\section{The space of finite variation functions}\label{appendix:FV}
\begin{definition}
  Let $f:[0,T]\rightarrow\amsmathbb{R}$ be a function and let 
  \begin{align*}
    |f|_a^b:=\sup\bigg\{\sum_{i=1}^n|f(t_i)-f(t_{i-1})|:a=t_0<t_1<\dots<t_n=b\bigg\}
  \end{align*}
  be the variation of $f$ on $[a,b]\subset[0,T]$. Then $f$ is of finite variation on $[a,b]$, if $|f|_a^b<\infty$.
\end{definition}
 
\begin{proposition}\label{prop:variation_properties}
  For any $0\leq a\leq b\leq T$, the variation $|f|_a^b$ has the following properties:
  \begin{enumerate}
    \item[(i)] $t\mapsto |f|_a^t$ is increasing
    \item[(ii)] If $t\mapsto f(t)$ is càdlàg so is $t\mapsto |f|_a^t$ and $\Delta|f|_a^t=|\Delta f(t)|$
    \item[(iii)] $|f(t)-f(a)|\leq |f|_a^t$ and $|f(b)-f(t)|\leq |f|_a^b-|f|_a^t=|f|_t^b$
    \item[(iv)] $|f(t)|\leq |f|_t^b + |f(b)|\leq |f|_a^b+|f(b)|$ 
  \end{enumerate}
\end{proposition}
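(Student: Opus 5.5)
The statement is Proposition~\ref{prop:variation_properties}, which collects elementary properties of the variation $|f|_a^b$. I will prove the four items directly from the definition as a supremum over partitions. Additivity over intervals, $|f|_a^b=|f|_a^t+|f|_t^b$ for $a\le t\le b$, will be established first and used throughout.

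Additivity follows because refining a partition by inserting $t$ can only increase the sum, by the triangle inequality. Hence the supremum over partitions of $[a,b]$ may be taken over partitions containing $t$. Such a partition splits into a partition of $[a,t]$ and one of $[t,b]$, and conversely any two such partitions concatenate into one of $[a,b]$. Item (i) is then immediate, since $|f|_a^t=|f|_a^s-|f|_t^s\le |f|_a^s$ for $t\le s$, as variations are non-negative.

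Item (iii) follows by noting that the trivial partition $\{a,t\}$ gives $|f(t)-f(a)|\le |f|_a^t$. Likewise $|f(b)-f(t)|\le |f|_t^b$, and additivity rewrites $|f|_t^b=|f|_a^b-|f|_a^t$. Item (iv) is then the triangle inequality $|f(t)|\le |f(b)-f(t)|+|f(b)|$ combined with (iii) and (i): $|f|_t^b\le |f|_a^b$.

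Item (ii) is the only step with real content, and I expect right-continuity to be the main obstacle. I will first show that $V(t):=|f|_a^t$ satisfies $V(t+)-V(t)=\lim_{s\downarrow t}|f|_t^s=0$. Fix $s>t$ and $\varepsilon>0$, and choose a partition $t=u_0<u_1<\dots<u_n=s$ with sum exceeding $|f|_t^s-\varepsilon$. By right-continuity of $f$ at $t$, I may pick $u\in(t,u_1)$ with $|f(u)-f(t)|<\varepsilon$ and insert it. This gives $|f|_t^s-\varepsilon\le \varepsilon+|f(u_1)-f(u)|+\sum_{k\ge2}|f(u_k)-f(u_{k-1})|\le 2\varepsilon+|f|_u^s$. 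Hence $|f|_t^u\le 2\varepsilon$ by additivity, and monotonicity yields $V(t+)=V(t)$.

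Left limits of $V$ exist because $V$ is monotone. For the jump identity, note first that $V(t)-V(r)=|f|_r^t\ge |f(t)-f(r)|$, which gives $\Delta V(t)\ge|\Delta f(t)|$ as $r\uparrow t$. For the reverse inequality, the same insertion argument applied on the left gives $\lim_{r\uparrow t}|f|_r^{t-\delta}$-type control, i.e.\ $|f|_r^t\le |f|_r^{u}+|f(t)-f(u)|$ for $u\in(r,t)$. Letting $u\uparrow t$, then $r\uparrow t$, and using existence of $f(t-)$ gives $\Delta V(t)\le|f(t)-f(t-)|$.
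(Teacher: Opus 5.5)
The paper gives no proof of this proposition; it is listed in the appendix among standard background facts on finite-variation functions. Your route is the usual textbook one: additivity $|f|_a^b=|f|_a^t+|f|_t^b$ first, then everything from the partition definition. Items (i), (iii) and (iv) are fine. So are the right-continuity of $V(t)=|f|_a^t$ and the lower bound $\Delta V(t)\ge|\Delta f(t)|$. All the subtractions of variations tacitly need $|f|_a^b<\infty$, which the statement also assumes.

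The step that fails is the upper bound $\Delta V(t)\le|\Delta f(t)|$. You assert $|f|_r^t\le |f|_r^u+|f(t)-f(u)|$ for $u\in(r,t)$. By additivity this is equivalent to $|f|_u^t=|f(t)-f(u)|$, which is false in general.
For $f=\mathds{1}_{[1/2,3/4)}$ on $[0,1]$ with $r=0$, $t=1$, $u=1/4$, the left side is $2$ and the right side is $0$. If $f$ has infinitely many summable oscillations accumulating at $t$ from the left, it fails for every $u<t$, so taking $u$ close to $t$ does not rescue it. In fact its limiting form as $u\uparrow t$ is exactly the identity $\Delta V(t)=|\Delta f(t)|$ you are trying to prove, so the step is circular.

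The insertion argument, mirrored from your right-continuity proof, only gives the inequality with an $\varepsilon$ of slack:
fix $r<t$ and $\varepsilon>0$, and choose a partition $r=u_0<\dots<u_n=t$ whose sum exceeds $|f|_r^t-\varepsilon$. For every $u\in(u_{n-1},t)$, inserting $u$ and applying the triangle inequality to $|f(t)-f(u_{n-1})|$ gives $|f|_r^t-\varepsilon\le |f|_r^u+|f(t)-f(u)|$.
Now let $u\uparrow t$. Since $|f|_r^u=V(u)-V(r)\to V(t-)-V(r)$ and $|f(t)-f(u)|\to|f(t)-f(t-)|$, you get $V(t)-V(r)-\varepsilon\le V(t-)-V(r)+|f(t)-f(t-)|$. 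That is, $\Delta V(t)\le|\Delta f(t)|+\varepsilon$, and letting $\varepsilon\downarrow 0$ finishes the argument; no limit in $r$ is needed. With this correction your proof is complete.
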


Set $\text{BV}_{[0,T]}^{\text{all}}:=\{f:[0,T]\rightarrow\amsmathbb{R}: |f|_0^T<\infty\}$ and define the norms
\begin{align*}
  \|f\|_{BV,0}:=|f(0)|+|f|_0^T \quad \text{and}\quad \|f\|_{BV,T}:=|f(T)|+|f|_0^T.
\end{align*}
The space $\text{BV}_{[0,T]}^{\text{all}}$ equipped with either $\|f\|_{BV,0}$ is a Banach space (see Theorem 2.2.2 in~\cite{Monteiro2018}).
\begin{lemma}
  The norms $\|f\|_{BV,0}$ and $\|f\|_{BV,T}$ are equivalent.
\end{lemma}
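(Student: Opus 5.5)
We prove the two inequalities $\|f\|_{BV,0}\leq 2\|f\|_{BV,T}$ and $\|f\|_{BV,T}\leq 2\|f\|_{BV,0}$ for every $f\in\text{BV}_{[0,T]}^{\text{all}}$. Equivalence then holds with the constant $2$ in both directions. The only tool needed is Proposition~\ref{prop:variation_properties}(iii), applied with $a=0$ and $b=T$.

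First we take $t=T$ in the first inequality of Proposition~\ref{prop:variation_properties}(iii). This gives $|f(T)-f(0)|\leq |f|_0^T$, and the triangle inequality yields
\begin{align*}
  |f(0)|\leq |f(T)|+|f(T)-f(0)|\leq |f(T)|+|f|_0^T .
\end{align*}
Adding $|f|_0^T$ to both sides gives
\begin{align*}
  \|f\|_{BV,0}=|f(0)|+|f|_0^T\leq |f(T)|+2|f|_0^T\leq 2\|f\|_{BV,T}.
\end{align*}

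The reverse bound is symmetric. From $|f(T)|\leq |f(0)|+|f(T)-f(0)|\leq |f(0)|+|f|_0^T$ we get
\begin{align*}
  \|f\|_{BV,T}\leq |f(0)|+2|f|_0^T\leq 2\|f\|_{BV,0}.
\end{align*}

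We should also check that $\|\cdot\|_{BV,T}$ is a norm, since the equivalence is stated between norms. Homogeneity and the triangle inequality follow from the corresponding properties of $|\cdot|$ and of the variation, which is subadditive and absolutely homogeneous. For definiteness, suppose $\|f\|_{BV,T}=0$. Then $|f|_0^T=0$, so $f$ is constant by Proposition~\ref{prop:variation_properties}(iii), and $f(T)=0$ forces $f\equiv 0$.

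There is no genuine obstacle here; the argument is just the triangle inequality combined with the fact that the total variation dominates the increment $|f(T)-f(0)|$. One consequence worth noting: since $\text{BV}_{[0,T]}^{\text{all}}$ is complete under $\|\cdot\|_{BV,0}$, it is also complete under $\|\cdot\|_{BV,T}$.
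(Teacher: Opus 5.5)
Your proof is correct and matches the paper's argument: both bound $|f(0)|$ (respectively $|f(T)|$) by the other endpoint value plus $|f|_0^T$ using Proposition~\ref{prop:variation_properties}(iii) and the triangle inequality, which gives the constant $2$ in each direction. Your extra check that $\|\cdot\|_{BV,T}$ is actually a norm is a harmless addition that the paper leaves implicit.
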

\begin{proof}
  Note that by Proposition~\ref{prop:variation_properties}(iii)
  \begin{align*}
    |f(0)|\leq |f(T)-f(0)|+|f(T)|\leq |f|_0^T+|f(T)|
  \end{align*}
  and thus $\|f\|_{BV,0}\leq 2(\|f\|_{BV,T})$. Similarly
  \begin{align*}
    |f(T)|\leq |f(T)-f(0)|+|f(0)|\leq |f|_0^T+|f(0)|
  \end{align*}
  and thus $\|f\|_{BV,T}\leq 2(\|f\|_{BV,0})$. Combining this yields $\frac{1}{2}\|f\|_{BV,0}\leq \|f\|_{BV,T}\leq 2\|f\|_{BV,0}$.
\end{proof}
Let $\text{BV}_{[0,T]}$ be the subspace of finite variation functions that are càdlàg:
\begin{align*}
  \text{BV}_{[0,T]}:=\{f\in \text{BV}^{\text{all}}_{[0,T]}: f\text{ is càdlàg}\}.
\end{align*}
\begin{lemma}
  The space $\text{BV}_{[0,T]}$ with the norm $\|f\|_{BV,T}$ is a Banach space.
\end{lemma}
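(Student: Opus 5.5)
The plan is to use that $\text{BV}_{[0,T]}^{\text{all}}$ with $\|\cdot\|_{BV,0}$ is a Banach space (Theorem 2.2.2 in \cite{Monteiro2018}) together with the equivalence of $\|\cdot\|_{BV,0}$ and $\|\cdot\|_{BV,T}$ from the preceding lemma. Since completeness is preserved under passing to an equivalent norm, $(\text{BV}_{[0,T]}^{\text{all}},\|\cdot\|_{BV,T})$ is already complete. It then suffices to show that $\text{BV}_{[0,T]}$ is a closed linear subspace, because a closed subspace of a Banach space is itself a Banach space.

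Linearity is immediate: sums and scalar multiples of càdlàg functions are càdlàg, and the variation is subadditive.

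For closedness, let $(f_m)\subset\text{BV}_{[0,T]}$ with $\|f_m-f\|_{BV,T}\to0$ for some $f\in\text{BV}_{[0,T]}^{\text{all}}$. The key estimate is uniform convergence. By Proposition~\ref{prop:variation_properties}(iv) applied to $f_m-f$ on $[0,T]$,
\begin{align*}
\sup_{t\in[0,T]}|f_m(t)-f(t)|\leq |f_m-f|_0^T+|f_m(T)-f(T)|=\|f_m-f\|_{BV,T}\to0 .
\end{align*}
So $f_m\to f$ uniformly on $[0,T]$. A uniform limit of càdlàg functions is càdlàg, by the standard $\varepsilon/3$ argument.

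\textbf{Right-continuity.} Fix $t$ and $\varepsilon>0$, and choose $m$ with $\sup|f_m-f|<\varepsilon/3$. Then for $s\downarrow t$,
\begin{align*}
|f(s)-f(t)|\leq|f(s)-f_m(s)|+|f_m(s)-f_m(t)|+|f_m(t)-f(t)|<\varepsilon
\end{align*}
once $s$ is close enough to $t$.

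\textbf{Existence of left limits.} Use the Cauchy criterion. For $s,s'\uparrow t$,
\begin{align*}
|f(s)-f(s')|\leq 2\sup|f-f_m|+|f_m(s)-f_m(s')|,
\end{align*}
and the right-hand side is small because $f_m(t-)$ exists.

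Hence $f\in\text{BV}_{[0,T]}$, the subspace is closed, and the result follows. The only mildly delicate step is the uniform-convergence bound, and Proposition~\ref{prop:variation_properties}(iv) handles it directly. No real obstacle is expected.
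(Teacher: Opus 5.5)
Your proof is correct and follows essentially the same route as the paper: you show $\text{BV}_{[0,T]}$ is closed in $\text{BV}^{\text{all}}_{[0,T]}$ by using Proposition~\ref{prop:variation_properties}(iv) to upgrade $\|\cdot\|_{BV,T}$-convergence to uniform convergence, and then use that uniform limits of càdlàg functions are càdlàg. You also make explicit two points the paper leaves implicit: that completeness carries over from $\|\cdot\|_{BV,0}$ to $\|\cdot\|_{BV,T}$ via the norm equivalence, and the $\varepsilon/3$ details of the càdlàg argument.
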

\begin{proof}
  Clearly $BV_{[0,T]}$ is a subspace of $BV_{[0,T]}^{\text{all}}$. It remains to show that $BV_{[0,T]}$ is closed, since closed subspaces of Banach spaces are Banach spaces themselves. Let $(f_n)_{n\in\amsmathbb{N}}\subset BV_{[0,T]}$, such that $\lim_{n\rightarrow\infty}\|f_n-f\|_{BV,T}=0$ for some $f\in BV_{[0,T]}^{\text{all}}$. Then by Proposition~\ref{prop:variation_properties}(iv) we have that 
  \begin{align*}
    \lim_{n\rightarrow\infty}\sup_{t\in[0,T]}|f_n(t)-f(t)|\leq  \lim_{n\rightarrow\infty} |f_n-f|_0^T+|f_n(T)-f(T)|=\lim_{n\rightarrow\infty}\|f_n-f\|_{BV,T}=0,
  \end{align*}
  implying that $f_n\rightarrow f$ uniformly. Since the limit of a uniformly convergent sequence of càdlàg functions is càdlag, we can conclude that $f\in BV_{[0,T]}$ and therefore that $BV_{[0,T]}$ is closed.
\end{proof}

We will also use a discounted version of the norm $\|f\|_{BV,T}$. Let $\nu:[0,T]\rightarrow[0,\infty)$ be an increasing càdlàg function, let $c>0$ be an arbitrary constant and define
\begin{align*}
  \|f\|_{BV,T}^{c,\nu}:=|f(T)|+\int_{(0,T]}e^{-c(\nu(T)-\nu(s-))}|f|(\mathrm{d}s).
\end{align*}
\begin{lemma}\label{lem:norm_equivlance}
  The norms $\|f\|_{BV,T}$ and $\|f\|_{BV,T}^{c,\nu}$ are equivalent.
\end{lemma}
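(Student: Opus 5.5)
To prove Lemma~\ref{lem:norm_equivlance}, I will bound the weight in the second norm from above and below by constants. The weight is
\begin{align*}
  w(s):=e^{-c(\nu(T)-\nu(s-))}.
\end{align*}
Since $\nu$ is non-decreasing, càdlàg and real-valued on $[0,T]$, we have $\nu(0)\leq\nu(s-)\leq\nu(T)$ for all $s\in(0,T]$. With $c>0$, this gives the pointwise bounds
\begin{align*}
  e^{-c(\nu(T)-\nu(0))}\leq w(s)\leq 1,\quad s\in(0,T].
\end{align*}
The lower constant $\delta:=e^{-c(\nu(T)-\nu(0))}$ is strictly positive because $\nu(T)-\nu(0)<\infty$.

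Next I integrate these bounds against the non-negative measure $|f|(\mathrm{d}s)$. This is the Lebesgue--Stieltjes measure of the non-decreasing càdlàg function $t\mapsto|f|_0^t$, whose existence and càdlàg property come from Proposition~\ref{prop:variation_properties}(i)--(ii). For càdlàg $f$ the measure has total mass $\int_{(0,T]}|f|(\mathrm{d}s)=|f|_0^T-|f|_0^0=|f|_0^T$, since $|f|_0^0=0$. Integrating the two bounds therefore yields
\begin{align*}
  \delta\,|f|_0^T\leq \int_{(0,T]}w(s)\,|f|(\mathrm{d}s)\leq |f|_0^T.
\end{align*}

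Adding $|f(T)|$ to each side gives
\begin{align*}
  \min(1,\delta)\,\|f\|_{BV,T}\leq\|f\|_{BV,T}^{c,\nu}\leq\|f\|_{BV,T},
\end{align*}
which is the claimed equivalence with explicit constants $\delta$ and $1$. I will also note that $\|\cdot\|^{c,\nu}_{BV,T}$ really is a norm. Homogeneity is immediate. The triangle inequality follows from the set-wise subadditivity $|f+g|(A)\leq|f|(A)+|g|(A)$ of the variation measures together with positivity of $w$. Definiteness follows from the lower bound, because $\|f\|^{c,\nu}_{BV,T}=0$ forces $\|f\|_{BV,T}=0$.

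The only step needing care is identifying $|f|(\mathrm{d}s)$ with a measure whose total mass on $(0,T]$ equals the variation $|f|_0^T$. Since the norm is applied only on $\text{BV}_{[0,T]}$, right-continuity handles this, so I expect no real obstacle.
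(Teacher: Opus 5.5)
Your proof is correct and is essentially the paper's argument: you bound the weight $e^{-c(\nu(T)-\nu(s-))}$ between $e^{-c(\nu(T)-\nu(0))}$ and $1$ and integrate against $|f|(\mathrm{d}s)$, which gives the same two-sided inequality (the paper writes $\nu(0-)$ where you write $\nu(0)$). Your extra checks, that $\|\cdot\|_{BV,T}^{c,\nu}$ is actually a norm and that $|f|(\mathrm{d}s)$ has total mass $|f|_0^T$ on $(0,T]$, fill in details the paper leaves implicit.
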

\begin{proof}
  As $\nu$ is increasing, we have that 
  \begin{align*}
    \|f\|_{BV,T}^{c,\nu}\leq |f(T)|+\int_{(0,T]}|f|(\mathrm{d}s)=\|f\|_{BV,T}
  \end{align*}
  and 
  \begin{align*}
    \|f\|_{BV,T}^{c,\nu}\geq e^{-c(\nu(T)-\nu(0-))}\bigg(|f(T)|+\int_{(0,T]}|f|(\mathrm{d}s)\bigg)=e^{-c(\nu(T)-\nu(0-))}\|f\|_{BV,T},
  \end{align*}
  so $e^{-c(\nu(T)-\nu(0-))}\|f\|_{BV,T}\leq \|f\|_{BV,T}^{c,\nu}\leq \|f\|_{BV,T}$.
\end{proof}
The reason for the usefulness of $\|\cdot\|_{BV}^{c,\nu}$ is the following bound:
\begin{lemma}\label{lem:important_bound}
  Let $f:[0,T]\rightarrow\amsmathbb{R}$ be a non-decreasing càdlàg function and let $c>0$ be a constant. Then it holds that 
  \begin{align*}
    \int_{(t_1,t_2)}e^{-c(f(T)-f(t-))}f(\mathrm{d}t)\leq\frac{1}{c}e^{-c(f(T)-f(t_2-))},\quad 0\leq t_1\leq t_2\leq T
  \end{align*}
  and 
  \begin{align*}
    \int_{(t,T]}e^{-c(f(T)-f(t-))}f(\mathrm{d}t)\leq \frac{1}{c},\quad 0\leq t\leq T.
  \end{align*}
\end{lemma}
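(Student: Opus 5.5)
The plan is to compare the integrand with the Stieltjes differential of the non-decreasing càdlàg function $h(s):=e^{cf(s)}$. The main claim is the measure inequality
\begin{align*}
  c\,h(s-)\,f(\mathrm{d}s)\leq h(\mathrm{d}s)\quad\text{on }(0,T].
\end{align*}
Once this is in place, both bounds follow by integrating over the relevant interval and multiplying by the constant $e^{-cf(T)}$. Throughout, the $t$ appearing inside the integrand is read as the integration variable $s$.

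To prove the measure inequality, I would apply the change of variables formula for càdlàg finite variation functions to $x\mapsto e^{cx}$ composed with $f$. Writing $f=f^c+\sum\Delta f$, this gives
\begin{align*}
  h(\mathrm{d}s)=c\,h(s-)\,f^c(\mathrm{d}s)+\big(h(s)-h(s-)\big),
\end{align*}
where the second term is the atomic part of $h(\mathrm{d}s)$, concentrated on the jump times of $f$. On the continuous part the two sides of the claimed inequality agree. At a jump time $s$, convexity of the exponential gives
\begin{align*}
  e^{cf(s)}-e^{cf(s-)}\geq c\,e^{cf(s-)}\big(f(s)-f(s-)\big)=c\,h(s-)\,\Delta f(s)\geq 0,
\end{align*}
because $\Delta f(s)\geq 0$. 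Since both sides of the measure inequality are positive measures, comparing the diffuse and atomic parts separately yields the claim.

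Given the measure inequality, I would handle the two bounds in turn.

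For the first bound, integrating over $(t_1,t_2)$ gives
\begin{align*}
  \int_{(t_1,t_2)}e^{-c(f(T)-f(s-))}f(\mathrm{d}s)\leq \frac{e^{-cf(T)}}{c}\big(h(t_2-)-h(t_1)\big)\leq\frac1c e^{-c(f(T)-f(t_2-))}.
\end{align*}
Here we use that $h(t_1)\geq 0$ and that $h((t_1,t_2))=h(t_2-)-h(t_1)$.

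For the second bound, integrating over $(t,T]$ gives
\begin{align*}
  \int_{(t,T]}e^{-c(f(T)-f(s-))}f(\mathrm{d}s)\leq \frac{e^{-cf(T)}}{c}\big(h(T)-h(t)\big)\leq \frac1c,
\end{align*}
since $h(t)>0$ and $e^{-cf(T)}h(T)=1$.

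The only step requiring care is the measure inequality, specifically the treatment of the atoms. If one prefers not to invoke the change of variables formula, the same conclusion can be reached from Riemann–Stieltjes sums. For a partition $t_1=s_0<s_1<\dots<s_n$, convexity gives $c\,h(s_{k-1})\,(f(s_k)-f(s_{k-1}))\leq h(s_k)-h(s_{k-1})$. Summing and letting the mesh tend to zero, left-point evaluation converges to the integral against $h(s-)$ by left-continuity of $s\mapsto h(s-)$ and bounded convergence, which gives the same telescoped bound.
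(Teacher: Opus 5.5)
Your proposal is correct and takes essentially the same route as the paper. The paper also applies the change of variables formula to $e^{cf}$ and bounds the jump terms via $e^{cf(s)}-e^{cf(s-)}\geq c\,e^{cf(s-)}\Delta f(s)$, using the mean value theorem where you use convexity. It then telescopes to $\frac1c\big(e^{cf(t_2-)}-e^{cf(t_1)}\big)$ and multiplies by $e^{-cf(T)}$, exactly as you do.
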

\begin{proof}
  Applying the change of variables formula for finite variation functions, we obtain that 
  \begin{align*}
    e^{cf(t_2-)}-e^{cf(t_1)}=\int_{(t_1,t_2)}ce^{cf(t-)}f^c(\mathrm{d}t)+\sum_{t_1<t<t_2}(e^{cf(t)}-e^{cf(t-)}).
  \end{align*}
  If $f$ has a jump at time $t$, then $\Delta f(t)>0$, since $f$ is non-decreasing. By the mean-value theorem there exists $\xi\in[f(t-),f(t)]$ such that 
  \begin{align*}
    e^{cf(t)}-e^{cf(t-)}=ce^{c\xi}\Delta f(t)\geq ce^{cf(t-)}\Delta f(t).
  \end{align*}
  Inserting this in the previous equation and rearranging, we obtain 
  \begin{align*}
    \int_{(t_1,t_2)}e^{cf(t-)}f(dt)\leq \frac{1}{c}\big(e^{cf(t_2-)}-e^{cf(t_1)}\big)\leq \frac{1}{c}e^{cf(t_2-)}.
  \end{align*}
  Multiplying both sides by $e^{-cf(T)}$ yields the first result. For the second result, we similarly obtain that 
  \begin{align*}
    \int_{(t,T]}e^{cf(s-)}f(\mathrm{d}s)\leq\frac{1}{c}e^{cf(T)},
  \end{align*}
  which upon multiplying with $e^{-cf(T)}$ yields the desired result.
\end{proof}

Let $f:[0,T]\rightarrow\amsmathbb{R}$ be a càdlàg function of finite variation. The Doleàn-Dade exponential starting at time $t\in[0,T]$ is given by 
\begin{align*}
  \mathcal{E}_f(t,\mathrm{d}s)=\mathcal{E}_f(t,s-)f(\mathrm{d}s),\quad \mathcal{E}_f(t,t)=1,\quad s\in [t,T]
\end{align*}
and has the explicit form
\begin{align*}
  \mathcal{E}_f(t,s)=e^{f^c(s)-f^c(t)}\prod_{t<u\leq s}(1+\Delta f(u)).
\end{align*}

\begin{lemma}[Gronwall's inequality]\label{lem:gronwall}
  Let $f:[0,T]\rightarrow[0,\infty)$ be a non-decreasing càdlàg function, let $\alpha\geq 0$ be a constant. If $g:[t,T]\rightarrow [0,\infty)$ satisfies
  \begin{align*}
    g(s)\leq \alpha +\int_{(t,s]}g(u-) f(\mathrm{d}u),\quad\forall s\in [t,T],
  \end{align*}
  then it holds that 
  \begin{align*}
    g(s)\leq \alpha \mathcal{E}_f(t,s),\quad \forall s\in[t,T].
  \end{align*}
\end{lemma}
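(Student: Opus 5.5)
The plan is a discrete-time Gronwall-type argument on the deterministic function $s\mapsto g(s)$ on $[t,T]$, comparing it with the solution $\alpha\mathcal{E}_f(t,\cdot)$ of the corresponding equality. The cleanest route is iteration (Picard-type). Define the operator $\mathcal{K}h(s):=\alpha+\int_{(t,s]}h(u-)f(\mathrm{d}u)$. It is monotone on non-negative functions because $f$ is non-decreasing. The hypothesis reads $g\le\mathcal{K}g$, so by monotonicity $g\le\mathcal{K}^n g$ for every $n\in\amsmathbb{N}$. Expanding $\mathcal{K}^n g$ gives
\begin{align*}
  g(s)\le \alpha\sum_{k=0}^{n-1}\int_{t<u_1<\dots<u_k\le s}f(\mathrm{d}u_1)\cdots f(\mathrm{d}u_k)+R_n(s),
\end{align*}
where $R_n(s)$ is the $n$-fold iterated integral of $g(u_1-)$ over the strict simplex $t<u_1<u_2<\dots<u_n\le s$. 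Strict inequalities appear because left limits are used at each step.

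First, I will check that $g$ is bounded on $[t,T]$. I expect this to be the main obstacle, because $g$ is not assumed measurable or bounded. The hypothesis itself gives the bound only if the integral $\int g(u-)f(\mathrm{d}u)$ is finite. I will read the statement as implicitly requiring this integral to exist and be finite for every $s$. Taking $s=T$, non-negativity then yields $\int_{(t,T]}g(u-)f(\mathrm{d}u)=:C<\infty$. Plugging back in gives $g\le\alpha+C$ on $[t,T]$, so $g$ is bounded, with $\sup g\le \alpha+C$.

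With boundedness in hand, the remainder is controlled by $R_n(s)\le(\alpha+C)\,\mu_n(s)$, where $\mu_n(s)$ is the $f(\mathrm{d}u)^{\otimes n}$-measure of the strict simplex. I will bound this measure by a symmetrisation argument: the product measure of $(t,s]^n$ is $(f(s)-f(t))^n$. The strict simplices obtained by permuting coordinates are disjoint, so $n!\,\mu_n(s)\le(f(s)-f(t))^n$. Hence $R_n(s)\le(\alpha+C)(f(T)-f(t))^n/n!\to0$ as $n\to\infty$.

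It remains to identify the limit of the main term. By the standard product-integral expansion of the Doléans-Dade exponential for a non-decreasing càdlàg $f$, the series $\sum_{k\ge0}\int_{t<u_1<\dots<u_k\le s}f(\mathrm{d}u_1)\cdots f(\mathrm{d}u_k)$ equals $\mathcal{E}_f(t,s)$. Indeed, this series is the unique solution of $h(s)=1+\int_{(t,s]}h(u-)f(\mathrm{d}u)$: its partial sums are exactly the Picard iterates starting from $1$, and uniqueness of the linear equation follows from the same remainder bound applied to the difference of two solutions. Since $\mathcal{E}_f(t,\cdot)$ solves this equation by definition, the series and $\mathcal{E}_f(t,s)$ coincide. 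Letting $n\to\infty$ then gives $g(s)\le\alpha\mathcal{E}_f(t,s)$ for all $s\in[t,T]$.
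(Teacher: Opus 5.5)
Your proof is correct, but it takes a genuinely different route from the paper.

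The paper avoids iteration altogether. It sets $V(s):=\alpha+\int_{(t,s]}g(u-)f(\mathrm{d}u)$ and applies integration by parts to $V(s)/\mathcal{E}_f(t,s)$. Since $V$ and $\mathcal{E}_f(t,\cdot)$ jump at the same times, with $\Delta V(u)=g(u-)\Delta f(u)$ and $\mathcal{E}_f(t,u)=(1+\Delta f(u))\mathcal{E}_f(t,u-)$, the integrand collapses to $(g(u-)-V(u-))/\mathcal{E}_f(t,u)\le 0$. Hence $V/\mathcal{E}_f(t,\cdot)$ is non-increasing, and $g\le V\le\alpha\mathcal{E}_f(t,\cdot)$ follows in a few lines. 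That argument needs the product rule for càdlàg finite-variation functions and strict positivity of $\mathcal{E}_f$, which is automatic since $\Delta f\ge 0$. It needs no boundedness step and no series identification. It relies on the same implicit regularity you flag: existence of $g(u-)$ and finiteness of the integral, here entering through $g(u-)\le V(u-)$ and finiteness of $V$.

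Your Picard/Neumann-series argument is more elementary and more robust. It never divides by $\mathcal{E}_f$. It yields, as by-products, the Peano series representation of $\mathcal{E}_f(t,s)$ and uniqueness for the linear Volterra--Stieltjes equation. It would also carry over to matrix-valued or non-commutative kernels, where the quotient trick is unavailable. Your insistence on strict simplices is exactly what makes the symmetrisation bound $n!\,\mu_n(s)\le (f(s)-f(t))^n$ valid when $f$ has atoms.

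One streamlining: you need not prove separately that the series solves $h(s)=1+\int_{(t,s]}h(u-)f(\mathrm{d}u)$ and that this equation has a unique solution. Instead, apply your expansion directly to $h=\mathcal{E}_f(t,\cdot)$, which satisfies the equation with equality and is bounded on $[t,T]$. The remainder then vanishes, giving $\mathcal{E}_f(t,s)=\sum_{k\ge 0}\mu_k(s)$ at once, in your notation.
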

\begin{proof}
  Let $V(s):=\alpha+\int_{(t,s]}g(u-)f(\mathrm{d}u)$ and note that since $g$ is non-negative and $f$ is non-decreasing, the function $V$ is non-decreasing. Then, using the integration by parts formula for càdlàg finite variation functions, we get
 \begin{align*}
  \frac{V(s)}{\mathcal{E}_f(t,s)}-\alpha=\int_{(t,s]}\bigg(\frac{g(u-)}{\mathcal{E}_f(t,u-)}-\frac{V(u)}{\mathcal{E}_f(t,u)}\bigg)f(\mathrm{d}u),\quad \forall s\in [t,T].
 \end{align*}
 Using that $V(u)=V(u-)+g(u-)\Delta f(u)$ and $\mathcal{E}_f(t,u)=(1+\Delta f(u))\mathcal{E}_f(t,u-)$ in conjunction with the fact that $g(u-)\leq V(u-)$, we obtain 
 \begin{align*}
  \frac{g(u-)}{\mathcal{E}_f(t,u-)}-\frac{V(u)}{\mathcal{E}_f(t,u)}=\frac{g(u-)-V(u-)}{\mathcal{E}_f(t,u)}\leq 0,
 \end{align*}
 which implies the desired result.
\end{proof}

\end{document}